\documentclass{amsart}

\usepackage{amssymb}
\usepackage{amsmath}
\usepackage{url}
\usepackage{enumerate}
\usepackage{graphicx}

\usepackage{extarrows}

%%%%%%%%%%%%%%%%%%%%%%%%%%%%%%%%%%%%%%%%%%%%
%
%   MY MACROS: start 2016 Sep 02
%
%%%%%%%%%%%%%%%%%%%%%%%%%%%%%%%%%%%%%%%%%%%%%

\newcommand{\erase}[1]{}

\newtheorem{theorem}{Theorem}[section]
\newtheorem{lemma}[theorem]{Lemma}
\newtheorem{proposition}[theorem]{Proposition}
\newtheorem{corollary}[theorem]{Corollary}

\newtheorem{_definition}[theorem]{Definition}
\newenvironment{definition}{\begin{_definition}\rm}{\end{_definition}}

\newtheorem{_remark}[theorem]{\it Remark}
\newenvironment{remark}{\begin{_remark}\rm}{\end{_remark}}

\newtheorem{_example}[theorem]{Example}
\newenvironment{example}{\begin{_example}\rm}{\end{_example}}

\numberwithin{equation}{section}
\numberwithin{table}{section}
\numberwithin{figure}{section}
\renewcommand{\qed}{\hfill {$\Box$}}

%FONTS

%\newcommand{\A}{\mathord{\mathbb A}}
\newcommand{\C}{\mathord{\mathbb C}}
\newcommand{\F}{\mathord{\mathbb F}}
\renewcommand{\H}{\mathord{\mathbb H}}
\renewcommand{\P}{\mathord{\mathbb  P}}
\newcommand{\Q}{\mathord{\mathbb  Q}}
\newcommand{\R}{\mathord{\mathbb R}}
\newcommand{\Z}{\mathord{\mathbb Z}}

\newcommand{\AAA}{\mathord{\mathcal A}}
\newcommand{\CCC}{\mathord{\mathcal C}}
\newcommand{\EEE}{\mathord{\mathcal E}}
\newcommand{\FFF}{\mathord{\mathcal F}}
\newcommand{\LLL}{\mathord{\mathcal L}}
\newcommand{\MMM}{\mathord{\mathcal M}}

\newcommand{\PPP}{\mathord{\mathcal P}}
\newcommand{\QQQ}{\mathord{\mathcal Q}}

\newcommand{\TTT}{\mathord{\mathcal T}}
\newcommand{\VVV}{\mathord{\mathcal V}}
\newcommand{\WWW}{\mathord{\mathcal W}}
\newcommand{\XXX}{\mathord{\mathcal X}}

\newcommand{\SSSS}{\mathord{\mathfrak S}}

%ARROWS

\newcommand{\maprightsp}[1]{\; \smash{\mathop{\; \longrightarrow \; }\limits\sp{#1}}\; }

\newcommand{\mapdown}{\phantom{\Big\downarrow}\hskip -8pt \downarrow}
\newcommand{\mapdownright}[1]{\mapdown\rlap{$\vcenter{\hbox{$\scriptstyle#1$}}$}}
\newcommand{\mapdownleft}[1]{\llap{$\vcenter{\hbox{$\scriptstyle#1$\;}}$}\mapdown}

\newcommand{\inj}{\hookrightarrow}
\newcommand{\surj}{\mathbin{\to \hskip -7pt \to}}

% SETS 

\newcommand{\set}[2]{\{\, {#1} \mid {#2} \, \}}
\newcommand{\shortset}[2]{\{ {#1} \mid {#2}   \}}
\newcommand{\sethd}[3]{\left\{\;  {#1}\; \left|\;  \vcenter{\hbox{\parbox{#2}{#3}}}\;  \right. \right\}}

% MISC

\newcommand{\gen}[1]{\langle {#1}  \rangle}

\newcommand{\wt}{\widetilde}

\newcommand{\tensor}{\otimes}

\newcommand{\sprime}{\sp\prime}
\newcommand{\sprimeinv}{\sp{\prime-1}}
\newcommand{\spar}[1]{\sp{(#1)}}
\newcommand{\spprime}{\sp{\prime\prime}}

\newcommand{\sptimes}{\sp{\times}}
\newcommand{\sperp}{\sp{\perp}}

\newcommand{\dual}{\sp{\vee}}

\newcommand{\semidirectproduct}{\rtimes}

\newcommand{\inv}{\sp{-1}}

\newcommand{\Hom}{\mathord{\mathrm{Hom}}}

\newcommand{\GL}{\mathord{\mathrm{GL}}}
\newcommand{\PGU}{\mathord{\mathrm{PGU}}}
\newcommand{\PGL}{\mathord{\mathrm{PGL}}}
\newcommand{\PSL}{\mathord{\mathrm{PSL}}}
\newcommand{\OG}{\mathord{\mathrm{O}}}

\newcommand{\Ker}{\operatorname{\mathrm{Ker}}\nolimits}
\newcommand{\Aut}{\operatorname{\mathrm{Aut}}\nolimits}
\newcommand{\Gal}{\operatorname{\mathrm{Gal}}\nolimits}

\newcommand{\pr}{\mathord{\mathrm{pr}}}
\newcommand{\Sing}{\operatorname{\mathrm{Sing}}\nolimits}
\newcommand{\Spec}{\operatorname{\mathrm{Spec}}\nolimits}
\newcommand{\rank}{\operatorname{\mathrm{rank}}\nolimits}

\newcommand{\mystruthd}[2]{\phantom{\hbox{\vrule  height #1 depth #2}}}

%%%%%%%%%%%%%%%

\newcommand{\thePGU}{\PGU_4(\F_9)}
\newcommand{\IV}{{\rm I~\hspace{-1.1mm}V}}

\newcommand{\NS}[1]{S_{#1}}
\newcommand{\PP}[1]{{\mathcal P}_{#1}}
\newcommand{\NN}[1]{N_{#1}}

\newcommand{\intf}[1]{\langle #1\rangle}

\newcommand{\vare}{\varepsilon}

\newcommand{\Stab}{\mathord{\rm Stab}}
\newcommand{\res}{\mathord{\rm res}}

\newcommand{\Triv}{\mathord{\rm Triv}}
\newcommand{\MW}{\mathord{\rm MW}}

\newcommand{\Roots}{{\mathcal R}}

\newcommand{\TL}[1]{T_{#1}}

\newcommand{\QP}{\mathord{\rm QP}}

\newcommand{\wcirc}{\setlength\unitlength{.2truecm}%
\begin{picture}(1.7,1.7)(0,0)%
\put(.9,.5){\circle{.7}}%
\end{picture}}

\newcommand{\PG}{\PPP}
\newcommand{\QPG}{\QQQ}

\newcommand{\discg}[1]{A(#1)}
\newcommand{\discf}[1]{q(#1)}

\newcommand{\latgraph}[1]{\gen{#1}}
\newcommand{\nullintf}{\intf{\phantom{\cdot}, \phantom{\cdot}}}
\newcommand{\OGplus}{\OG^+}

\newcommand{\out}{\mathord{\rm out}}
\newcommand{\inn}{\mathord{\rm inn}}

\newcommand{\YIV}[1]{Y_{\hbox to 2.6mm {\scriptsize\rm IV}, \hskip 1pt #1}}

\newcommand{\dppinvol}{g}

\newcommand{\dcirc}{\setlength\unitlength{.2truecm}%
\begin{picture}(2,1.5)(0,0)%
\put(1,.5){\circle{1}}%
\put(1,.5){\circle{.5}}%
\end{picture}}

\newcommand{\theOGpair}{\OGplus(\NS{3}, \NS{0})}
\newcommand{\dpp}{b}
\newcommand{\gdpp}[1]{g(\dpp_{#1})}

\newcommand{\Cr}{\mathord{\rm {Cr}}}
\newcommand{\prV}{{\mathcal V}}

\newcommand{\PStextplot}{{}}

\newcommand{\Pic}{\mathord{\mathrm {Pic}}}
\newcommand{\PPb}{\mathbf{P}}
%%%%%%%%%%%%%%%%%
% End of my macros
%%%%%%%%%%%%%%%%%

\begin{document}
\title[Elliptic modular surface of level $4$]
{The elliptic modular surface of level $4$ \\
and its reduction modulo $3$}
\author{Ichiro Shimada}
\address{Department of Mathematics, 
Graduate School of Science, 
Hiroshima University,
1-3-1 Kagamiyama, 
Higashi-Hiroshima, 
739-8526 JAPAN}
\email{ichiro-shimada@hiroshima-u.ac.jp}
\thanks{Supported by JSPS KAKENHI Grant Number 15H05738, ~16H03926,  and~16K13749}

\begin{abstract}
The elliptic modular surface of level $4$ is 
a complex $K3$ surface with Picard number $20$.
This surface has a model over a number field such that 
its reduction modulo $3$
yields a surface  isomorphic to the Fermat quartic surface in characteristic $3$,
which is supersingular.
The specialization  induces an embedding 
of the N\'eron-Severi lattices.
Using this embedding,
we determine the automorphism group 
of this $K3$ surface over a discrete valuation ring of mixed characteristic 
whose residue field is of characteristic $3$.
\par
The elliptic modular surface of level $4$ has a fixed-point free involution that
gives rise to the Enriques surface
of type IV in  Nikulin--Kondo--Martin's classification of  Enriques surfaces
with finite automorphism group.
We investigate  the specialization of this  involution to characteristic $3$.
\end{abstract}

%\dedicatory{Dedicated to Professor 
%Shigeyuki Kondo on the occasion of his 60th birthday}
\subjclass[2010]{14J28, 14Q10}
\keywords{K3 surface, Enriques surface, automorphism group, Petersen graph}
\maketitle

\section{Introduction}\label{sec:intro}
Let $R$ be a discrete valuation ring,
and let $\XXX\to \Spec R$ be a smooth proper family of varieties over $R$.
We denote by $X_{\bar{\eta}}$ the geometric generic fiber,
and by $X_{\bar{s}}$ the geometric special fiber.
Let $\Aut(\XXX/R)$ denote the group of automorphisms of $\XXX$ over $\Spec R$.
Then we have natural homomorphisms 
$$
\Aut(X_{\bar{s}})\leftarrow \Aut(\XXX/R)\rightarrow \Aut(X_{\bar{\eta}}).
$$
In this paper,
we calculate the group  $\Aut(\XXX/R)$ 
in the case where  $\XXX$ is a certain natural model 
of the elliptic modular surface 
of level $4$, 
and the special fiber $X_{\bar{s}}$ is its reduction modulo $3$.
%$X_{\bar{\eta}}$ is the elliptic modular surface of level $4$
%and $X_{\bar{s}}$ is its reduction modulo $3$.
In this case, the surfaces $X_{\bar{\eta}}$ and $X_{\bar{s}}$  are $K3$ surfaces, and 
their  automorphism groups 
have been calculated in~\cite{KK2001} and~\cite{KondoShimadaFQ3},
respectively, by Borcherds' method~\cite{Bor1, Bor2}.
This paper gives the first application of 
Borcherds' method to 
the calculation of the automorphism group of a family of $K3$ surfaces.
%The result of this paper shows that Borcherds' method is useful
%not only for the calculation of the automorphism group of a $K3$ surface,
%but also for the calculation of the automorphism group of a family of $K3$ surfaces over a complete discrete valuation ring.
%
\subsection{Elliptic modular surface of level $4$}\label{subsec:ShiodaKK}
The \emph{elliptic modular surface of level $N$} is a natural  compactification of the total space of the universal family
over $\Gamma(N)\backslash \H$ of 
complex elliptic curves
with level $N$ structure,
where $\H\subset \C$ is the upper-half plane and $\Gamma(N)\subset \PSL_2(\Z)$ is
the congruence subgroup of level $N$.
This important class of surfaces was introduced and studied by Shioda~\cite{ShiodaEllMod}.
\par
The elliptic modular surface of level $4$ is a $K3$ surface birational to the surface  defined by the Weierstrass equation
\begin{equation}\label{eq:theellfib4}
Y^2=X(X-1)\left(X-\left(\frac{1}{2}\left(\sigma+\frac{1}{\sigma}\right)\right)^2\right),
\end{equation}
where $\sigma$ is an affine parameter of the base curve 
$\P^1=\overline{\Gamma(4)\backslash \H}$
(see Section 3 in~\cite{ShiodaManifoldsTokyo}).
Shioda~\cite{ShiodaEllMod, ShiodaManifoldsTokyo}
studied  the reduction of this surface in odd characteristics.
On the other hand,
Keum and Kondo~\cite{KK2001} calculated the automorphism group of the elliptic modular surface of level $4$.
\par 
To describe the results of Shioda~\cite{ShiodaEllMod, ShiodaManifoldsTokyo}
and Keum--Kondo~\cite{KK2001}, we fix some notation.
A \emph{lattice} is a free $\Z$-module $L$ of finite rank with a nondegenerate symmetric bilinear form 
$\nullintf\colon L\times L\to \Z$. 
The group of isometries of a lattice $L$ is denoted by $\OG(L)$, which we  let act on $L$ from the \emph{right}.
A lattice $L$ of rank $n$ is said to be \emph{hyperbolic} (resp.~\emph{negative-definite})
if the signature of $L\tensor\R$ is $(1, n-1)$ (resp.~$(0,n)$).
For a hyperbolic lattice $L$,
we denote by  $\OGplus (L)$ the stabilizer subgroup
of a connected component of $\shortset{x\in L\tensor\R}{\intf{x, x}>0}$ in $\OG(L)$. 
Let $Z$ be a smooth projective surface
defined over an algebraically closed field.
We denote by $\NS{Z}$ the lattice of numerical equivalence classes $[D]$ of 
divisors $D$ on $Z$,
and call it the \emph{N\'eron-Severi lattice} of $Z$.
Then  $\NS{Z}$ 
 is hyperbolic by the Hodge index theorem. 
We denote by $ \PP{Z}$ the 
connected component of 
$\shortset{x\in \NS{Z}\tensor\R}{\intf{x, x}>0}$
that contains an ample class.
We then put
$$
\NN{Z}:=\set{x\in  \PP{Z}}{\intf{x, [C]}\ge 0\;\; \textrm{for all curves $C$ on $Z$}}.
$$
We let  the automorphism group $\Aut(Z)$ of $Z$ act on $\NS{Z}$ from the \emph{right} by pull-back of divisors.
Then we have a natural homomorphism
$$
\Aut(Z)\to \Aut(\NN{Z}):=\set{g\in \OGplus(\NS{Z})}{\NN{Z}^g=\NN{Z}}.
$$
For an ample class $h\in \NS{Z}$,
we put
$$
\Aut(Z, h):=\set{g\in \Aut(Z)}{h^g=h},
$$
and call it the \emph{projective automorphism group} of the polarized  surface $(Z, h)$.
\par
Let $k_p$ be an algebraically closed field of characteristic $p\ge 0$.
From now on, we assume that $p\ne 2$.
Let $\sigma\colon X_p\to \P^1$ be the smooth minimal elliptic  surface  defined over $k_p$
 by~\eqref{eq:theellfib4}.
Then $X_p$ is a $K3$ surface.
For simplicity, we use the following notation throughout this paper:
$$
\NS{p}:=\NS{X_p}, \quad \PPP_p:=\PPP_{X_p}, \quad \NN{p}:=\NN{X_p}.
$$
Shioda~\cite{ShiodaEllMod, ShiodaManifoldsTokyo} proved the following:
\begin{theorem}[Shioda~\cite{ShiodaEllMod, ShiodaManifoldsTokyo}]\label{thm:Shioda}
Suppose that $p\ne 2$.
\par
{\rm (1)} The elliptic surface $\sigma\colon X_p\to \P^1$
has exactly $6$ singular fibers.
These singular fibers are located over  $\sigma=0, \pm 1, \pm i, \infty$,
and each of them is  of type ${\rm I}_4$.
The torsion part of the Mordell-Weil group of $\sigma\colon X_p\to \P^1$
is isomorphic to $(\Z/4\Z)^2$.
\par
{\rm (2)}  The Picard number $\rank (\NS{p})$ of $X_p$ is
$$
\begin{cases}
 20 & \textrm{if $p=0$ or $p\equiv 1\bmod 4$}, \\
 22 & \textrm{if $p\equiv 3 \bmod 4$}.
\end{cases}
$$
\par
{\rm (3)} If $k_0=\C$,  the transcendental lattice of the complex $K3$ surface $X_0$ is
\begin{equation*}\label{eq:4004}
\left(\begin{array}{cc} 4 & 0 \\ 0 &4 \end{array}\right).
\end{equation*}
\par
{\rm (4)}  The $K3$ surface $X_3$ is isomorphic to the Fermat quartic surface
$$
F_3 \;\colon \;x_1^4+x_2^4+x_3^4+x_4^4=0
$$
in characteristic $3$.
\end{theorem}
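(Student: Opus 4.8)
Parts (1)--(3) I would establish by the standard theory of elliptic surfaces, since they only serve as input to part (4). Writing the cubic in~\eqref{eq:theellfib4} as $X(X-1)(X-\lambda^2)$ with $\lambda=\tfrac12(\sigma+\sigma\inv)$, its three roots collide precisely over $\lambda\in\{0,\pm1\}$, that is over $\sigma\in\{\pm i,\pm1\}$, together with the two poles $\sigma\in\{0,\infty\}$ of $\lambda$; computing the order of vanishing of the discriminant and checking that the fibers are nodal yields the six ${\rm I}_4$ fibers of (1), while the level-$4$ structure produces the torsion sections realizing $(\Z/4\Z)^2$. The Shioda--Tate formula then gives $\rank\NS{p}=20+\rank\MW$, the trivial lattice $\Triv=U\oplus A_3^{\oplus6}$ having rank $20$. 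Over $\C$ the period computation of (3) shows the transcendental lattice has rank $2$ and Gram matrix $\left(\begin{smallmatrix}4&0\\0&4\end{smallmatrix}\right)$, so $\rank\NS{0}=20$ and $\MW$ is finite; for $p\equiv3\bmod4$, on the other hand, $X_p$ is supersingular---$3$ being inert in the CM field $\Q(i)$ of $X_0$---whence $\rank\NS{p}=22$.

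For part (4), note that $p=3\equiv3\bmod4$, so by (2) $\rank\NS{3}=22$ and $X_3$ is supersingular; the Fermat quartic $F_3$ is likewise supersingular in characteristic $3$. A supersingular $K3$ surface over an algebraically closed field of characteristic $p$ has N\'eron--Severi lattice of rank $22$ and discriminant $-p^{2a}$ with $1\le a\le10$, this lattice depends only on the Artin invariant $a$, and (as is known in characteristic $3$) the surface with $a=1$ is unique up to isomorphism. It therefore suffices to show that both surfaces have $a=1$. For $F_3$ this is classical, with $\det\NS{F_3}=-9$. For $X_3$ I would use the specialization embedding $\NS{0}\inj\NS{3}$ of the abstract: by (3) the transcendental lattice of $X_0$ has discriminant $16$, so $|\det\NS{0}|=16$, and $\NS{3}$ is the overlattice obtained by adjoining the rank-$2$ lattice spanned by the two sections present in characteristic $3$. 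Evaluating $\det\NS{3}$---equivalently, determining these sections and their height pairing through the Shioda--Tate formula in characteristic $3$---should give $-9$, hence $a=1$ and $X_3\cong F_3$.

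The heart of the matter, and the main obstacle, is precisely this evaluation $\det\NS{3}=-9$: one must locate the two extra generators of $\NS{3}$ in characteristic $3$ explicitly and control their intersection numbers against $\Triv$ and the torsion sections finely enough to verify that the $2$-power contributions carried by the $A_3^{\oplus6}$ summand and by the $(\Z/4\Z)^2$ gluing are entirely absorbed, leaving a discriminant that is a power of $3$, and in fact $-9$. A secondary delicate point is the uniqueness of the supersingular $K3$ surface with $a=1$ in the exceptional characteristic $p=3$, where the cleanest form of Ogus's crystalline Torelli theorem assumes $p\ge5$; to close this gap I would instead invoke the classification of Rudakov--Shafarevich, or realize both $X_3$ and $F_3$ as the Kummer surface of the unique supersingular abelian surface in characteristic $3$.
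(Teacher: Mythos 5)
You should first be aware that the paper does not prove Theorem~\ref{thm:Shioda} at all: it is quoted from Shioda \cite{ShiodaEllMod, ShiodaManifoldsTokyo}, and parts (1)--(3) never receive even a sketch (your outlines for them are the standard ones and are fine at the level of quoted background). The only part the paper engages with is (4), in Section~\ref{subsec:LLL40}, where Shioda's argument is recalled: one exhibits the explicit genus-one fibration $\sigma_F$ of~\eqref{eq:sigmaF} on $F_3$, observes that in characteristic $3$ it has a section, namely the line $z_F$ of~\eqref{eq:thez} (no such section exists in characteristic $0$), and identifies the Jacobian of $\sigma_F$ with the fibration~\eqref{eq:theellfib4} via Ba\v smakov--Faddeev \cite{BF1959}; this yields an isomorphism $F_3\cong X_3$ \emph{over} $\P^1$, which is what the paper actually needs later, since it is the source of the embedding $\rho_{\LLL}\colon\LLL_{40}\inj\LLL_{112}$ on which the whole computation is built. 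Your route to (4) --- supersingularity, Artin invariant $1$, uniqueness --- is genuinely different from Shioda's, and it coincides with the alternative argument the paper itself records in the remark closing Section~\ref{subsec:LLL40}; there, however, the key assertion that $X_3$ has Artin invariant $1$ is not re-derived but cited from \cite{ShimadaReduction}. Shioda's method buys the explicit fibred isomorphism; yours buys independence from the rather miraculous formulas \eqref{eq:sigmaF} and \eqref{eq:thez}, at the price of producing only abstract existence.

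The genuine gap is the one you flagged yourself: the evaluation $\det\NS{3}=-9$ is called ``the heart of the matter'' and then not carried out, so as written the proposal does not prove (4). Two remarks on closing it. First, half of the announced difficulty is spurious: once $X_3$ is known to be supersingular of Picard number $22$, Artin's theorem already forces $\det\NS{3}=-3^{2a}$, so there are no $2$-power contributions to ``absorb''; the only real issue is excluding $a\ge 2$, and doing that by locating the two new sections and their height pairing is precisely the computation that Shioda's explicit model was designed to avoid. Second, your fallback is in fact the cleanest complete argument and should be promoted to the main one: by part (3) and Shioda--Inose \cite{ShiodaInose}, $X_0\cong\mathrm{Km}(E_{\sqrt{-1}}\times E_{\sqrt{-1}})$; the Kummer construction commutes with reduction in odd residue characteristic, and $E_{\sqrt{-1}}$ has supersingular reduction at $3$ because $3$ is inert in $\Q(\sqrt{-1})$; hence the reduction of $X_0$ is the Kummer surface of the superspecial abelian surface in characteristic $3$ (to compare this Kummer model with the Weierstrass model defining $X_3$, one needs a Matsusaka--Mumford specialization argument \cite{MM1964}, exactly as in the paper's proof of Theorem~\ref{thm:mainR}). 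Since $F_3$ is likewise the Kummer surface of that abelian surface --- by its classical Artin-invariant computation together with Ogus's characterization of Artin-invariant-one supersingular $K3$ surfaces as Kummer surfaces \cite{Ogus1, Ogus2} --- this gives $X_3\cong F_3$ while bypassing both the discriminant computation and your concern about crystalline Torelli at $p=3$, a concern the paper implicitly shares: it cites Ogus only for $p>3$ and \cite{BL2018} for $p\ge 3$.
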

It follows from Theorem~\ref{thm:Shioda}~(3)  and the theorem of Shioda-Inose~\cite{ShiodaInose} 
that, over the complex number field, $X_0$ is isomorphic to the Kummer surface 
associated with $E_{\sqrt{-1}}\times E_{\sqrt{-1}}$,
where $E_{\sqrt{-1}}$ is the  elliptic curve $\C/(\Z\oplus \Z {\sqrt{-1}})$.
(See also Proposition~15 of Barth--Hulek~\cite{BH1985}.)
Therefore the result of Keum--Kondo~\cite{KK2001} contains the calculation of $\Aut(X_0)$.
\begin{definition}
Let $Z$ be a $K3$ surface defined over $k_p$.
A \emph{double-plane polarization}
is a vector $\dpp=[H]\in \NN{Z}\cap\NS{Z}$ with $\intf{\dpp, \dpp}=2$
such that the corresponding complete linear system $|H|$
is base-point free, so that   $|H|$ induces a surjective morphism
$\Phi_\dpp\colon Z\to \P^2$.
Let $\dpp$ be a double-plane polarization, and let $Z\to Z_\dpp\to \P^2$ be 
the Stein factorization of $\Phi_\dpp$.
Then we have a \emph{double-plane involution} $\dppinvol(\dpp)\in \Aut(Z)$
associated with the finite double covering $Z_\dpp\to \P^2$.
Let  $\Sing (\dpp)$ 
denote the singularities of the normal $K3$ surface $Z_\dpp$.
Since  $Z_\dpp$ has only rational double points as its singularities,
we have the \emph{$ADE$-type} of $\Sing (\dpp)$.
\end{definition}
\begin{remark}\label{rem:dppalgo}
Suppose that  an ample class $a\in \NS{Z}$ and a vector $b\in \NS{Z}$ with $\intf{b, b}=2$ are given.
Then we can determine whether  $b$ is a double-plane polarization or not,
and if $b$ is a double-plane polarization,
we can calculate the set  of classes of smooth rational curves
contracted by $\Phi_b\colon Z\to \P^2$,
and compute the matrix representation of the double-plane involution $g(b)\colon Z\to Z$ on $\NS{Z}$.
These algorithms are described in detail in~\cite{ShimadaChar5}~(and also in~\cite{ShimadaSalem}).
They are the key tools of this paper.
\end{remark}
We re-calculated $\Aut(X_0)$ by using these algorithms, and obtained a  generating set of  $\Aut(X_0)$
different from the one given in~\cite{KK2001}.
\begin{theorem}[Keum--Kondo~\cite{KK2001}]\label{thm:KK2001}
There exist an ample class $h_0\in \NS{0}$  of degree $\intf{h_0, h_0}=40$
and four double-plane polarizations 
$\dpp_{80}, \dpp_{112}, \dpp_{296}, \dpp_{688}\in \NS{0}$
such that   $\Aut(X_0)$  is generated by the projective automorphism group 
$ \Aut(X_0, h_0)\cong (\Z/2\Z)^5 \mathord{\,:\,} \SSSS_5$ 
and the  double-plane involutions
$\dppinvol(\dpp_{80}), \dppinvol(\dpp_{112}), 
\dppinvol(\dpp_{296}), \dppinvol(\dpp_{688})$.
\end{theorem}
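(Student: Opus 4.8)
The plan is to compute by Borcherds' method the group $\Aut(\NN{0})$ of isometries of $\NS{0}$ that preserve the nef cone $\NN{0}$, and then to extract $\Aut(X_0)$ from it by the Torelli theorem. First I would fix a primitive embedding of the hyperbolic lattice $\NS{0}$, of rank $20$, into an even unimodular hyperbolic lattice $L_{26}$ of rank $26$. The reflections in the $(-2)$-vectors of $L_{26}$ cut the positive cone $\PPP_0$ into induced chambers, and $\NN{0}$ is a union of such chambers, on which $\Aut(\NN{0})$ acts with only finitely many orbits. I would choose the ample class $h_0$ of degree $\intf{h_0,h_0}=40$ to lie in the interior of one induced chamber $D_0$, arranged so that the stabilizer of $D_0$ is as large and as explicit as possible.

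The second step is to determine the stabilizer $\Aut(D_0)$ of $D_0$ in $\Aut(\NN{0})$ and to enumerate the walls of $D_0$. A direct computation on $\NS{0}$ identifies $\Aut(D_0)$ with $(\Z/2\Z)^5\mathord{\,:\,}\SSSS_5$, and the Torelli theorem confirms that every such lattice isometry is induced by an automorphism, so that $\Aut(D_0)$ coincides with the projective automorphism group $\Aut(X_0,h_0)$. Each wall of $D_0$ is defined by a vector of negative square and is of one of two kinds: either it lies on the boundary of $\NN{0}$, in which case its defining vector is the class of a smooth rational curve; or it is interior to $\NN{0}$ and is shared with an adjacent induced chamber. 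The finiteness built into Borcherds' method guarantees that, modulo the action of $\Aut(D_0)$, only finitely many interior walls occur.

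For each $\Aut(D_0)$-orbit of interior walls I would exhibit an explicit element of $\Aut(\NN{0})$ that carries $D_0$ across the wall and then identify it geometrically. Here the double-plane algorithm of Remark~\ref{rem:dppalgo} is the essential tool: fed the ample class $h_0$ together with a class of square $2$, it decides whether that class is a double-plane polarization and, if so, returns the matrix of the associated double-plane involution on $\NS{0}$. Running this over the orbit representatives, I expect to recover exactly four double-plane polarizations $\dpp_{80},\dpp_{112},\dpp_{296},\dpp_{688}$, whose involutions $\dppinvol(\dpp_{80}),\dppinvol(\dpp_{112}),\dppinvol(\dpp_{296}),\dppinvol(\dpp_{688})$ realize the interior wall-crossings. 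By the general structure of Borcherds' method, $\Aut(\NN{0})$ is then generated by $\Aut(D_0)=\Aut(X_0,h_0)$ together with these four involutions.

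It remains to pass from $\Aut(\NN{0})$ to $\Aut(X_0)$. The inclusion $\Aut(X_0)\subseteq\Aut(\NN{0})$ always holds, while all of the exhibited generators are genuine automorphisms of $X_0$ by construction; hence these generators, the projective group and the four double-plane involutions, already generate $\Aut(X_0)$, and in fact $\Aut(X_0)=\Aut(\NN{0})$. The main obstacle is computational completeness: one must verify that the chamber walk terminates, that no interior wall escapes the four orbits, and that the double-plane algorithm correctly matches each lattice-theoretic wall-crossing to an honest double-plane involution. Controlling the Hodge-theoretic side through the rank-$2$ transcendental lattice of Theorem~\ref{thm:Shioda}~(3), so as to be sure that the finite stabilizer $\Aut(D_0)$ is entirely geometric, is the one genuinely non-combinatorial ingredient.
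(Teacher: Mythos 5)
Your outline follows the same route as the paper: embed $\NS{0}$ primitively into $L_{26}$, take the induced chamber $D_0$ containing the ample class $h_0$ of degree $40$, compute the chamber stabilizer and the orbits of inner walls, realize each orbit of wall-crossings by a double-plane involution via the algorithm of Remark~\ref{rem:dppalgo}, and conclude generation by the general machinery of Borcherds' method. However, there is a genuine gap in your treatment of the stabilizer. You assert that ``the Torelli theorem confirms that every such lattice isometry is induced by an automorphism,'' so that the full stabilizer of $D_0$ equals $\Aut(X_0,h_0)\cong(\Z/2\Z)^5\mathord{\,:\,}\SSSS_5$, and you conclude $\Aut(X_0)=\Aut(\NN{0})$. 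This is not what the Torelli theorem says, and both claims are false. By Theorem~\ref{thm:TorelliAut}, an isometry $g\in\OGplus(\NS{0})$ is induced by an automorphism if and only if it preserves $\NN{0}$ \emph{and} satisfies the period condition $\eta_{\NS{0}}(g)\in\OG(\discf{\NS{0}},\omega)$; the second condition is not automatic and genuinely cuts the group down. Concretely, the full stabilizer of $D_0$ in $\OGplus(\NS{0})$ is $\Aut(\LLL_{40})$, of order $7680$ (it permutes the outer walls, which are indexed by $\LLL_{40}$, and fixes $h_0=\frac{1}{2}\sum_{\ell\in\LLL_{40}}[\ell]$), whereas $\Aut(X_0,h_0)$ is exactly the index-$2$ subgroup of elements satisfying the period condition, of order $3840$. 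So the group you would find ``by direct computation on $\NS{0}$'' is twice as large as $(\Z/2\Z)^5\mathord{\,:\,}\SSSS_5$, and the non-geometric half of it preserves $\NN{0}$ without being induced by any automorphism; in particular $\Aut(X_0)$ is a \emph{proper} subgroup of $\Aut(\NN{0})$, and your final identification collapses.

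The repair is precisely what the paper does in steps (III)--(IV) of Section~\ref{subsec:geomBorcherds} and in Section~\ref{subsec:AutX0}: define $\Aut(X_0,D_0)$ as the subgroup of the chamber stabilizer cut out by the period condition (this is $\Aut(X_0,h_0)$), decompose $\WWW_{\inn}(D_0)$ into orbits under \emph{this} group --- obtaining the four orbits $O_{64}$, $O_{40}$, $O_{160}$, $O_{320}$ of Proposition~\ref{prop:X0gdpp} --- and only then produce, for each orbit, a wall-crossing element that satisfies the period condition (automatic for double-plane involutions, since they are honest automorphisms). Note also that the orbit count must be taken under the period-constrained group; under the larger group $\Aut(\LLL_{40})$ some of these orbits could merge, so even the number of needed generators is sensitive to this distinction. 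Your closing sentence gestures at the Hodge-theoretic side, but frames it as a verification that the stabilizer is ``entirely geometric''; that verification would in fact fail, and the argument as written proves generation of the strictly larger group $\Aut(\NN{0})$ rather than of $\Aut(X_0)$.
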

\begin{table}
$$
\renewcommand{\arraystretch}{1.2}
\begin{array}{ccc}
 \intf{h_0, \dpp_d} & \textrm{$ADE$-type of  $\Sing (\dpp_d)$}  &  d=\intf{h_0, h_0^{g(\dpp_d)}} \\
\hline
 16 & 2 A_3 +3 A_2 +2A_1  & 80 \\
 18 & 4 A_3 +3A_1  & 112 \\
 26 & A_5+2A_4+A_3  & 296 \\
 38 & 2A_7+A_3+A_1 & 688 
\end{array}
$$
\caption{Four double-plane involutions on $X_0$}\label{table:fourdpps}
\end{table}
See Table~\ref{table:fourdpps} for the properties of the double-plane polarizations $\dpp_d$. 
See Proposition~\ref{prop:X0gdpp}  for the geometric meaning of these generators of $ \Aut(X_0)$ 
with respect to the action of $\Aut(X_0)$ on  $\NN{0}$.
In Section~\ref{subsec:AutX0h0},
we also give a detailed description of the finite group $\Aut(X_0, h_0)$
in terms of a certain graph $\LLL_{40}$. 
\begin{remark}
In~\cite{KondoShimadaFQ3},
the automorphism group $\Aut(X_3)\cong \Aut(F_3)$ of
 the Fermat quartic surface $F_3$ in characteristic $3$ was calculated (see Theorem~\ref{thm:KondoShimadaFQ3}).
This calculation also plays an important role in the proof of our main results.
\end{remark}
\subsection{Main results}
In~\cite{KK2001}~and~\cite{KondoShimadaFQ3}, the following was proved, and 
hence, from now on,  we regard $\Aut(X_0)$ as a subgroup of $\OGplus (\NS{0})$ and 
$\Aut(X_3)$ as a subgroup of $\OGplus (\NS{3})$.
\begin{proposition}\label{prop:injpsi}
In each case of $X_0$ and $X_3$,
the action of the automorphism group 
on the N\'eron-Severi lattice is faithful.
\qed
\end{proposition}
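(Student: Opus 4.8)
The plan is to prove faithfulness of the action of $\Aut(Z)$ on $\NS{Z}$ for each of the two $K3$ surfaces $Z \in \{X_0, X_3\}$ by appealing to the general theory of automorphisms of $K3$ surfaces acting on cohomology, specialized through the structure of the transcendental lattice. The key observation is that an automorphism $g \in \Aut(Z)$ acting trivially on $\NS{Z}$ must be studied through its induced action on the full second cohomology (in the complex case) or the $\ell$-adic cohomology / crystalline cohomology (in characteristic $3$), where the transcendental lattice $T_Z$ is the orthogonal complement of $\NS{Z}$.

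\emph{First} I would treat the complex surface $X_0$. Let $g \in \Aut(X_0)$ act as the identity on $\NS{0}$. Because $g$ preserves the Hodge structure on $H^2(X_0, \Z)$ and acts trivially on the algebraic part $\NS{0}$, it acts on the transcendental lattice $T_{X_0}$ as an isometry commuting with the Hodge decomposition; hence $g$ acts on the one-dimensional space $H^{2,0}(X_0) = \C\,\omega_{X_0}$ by a scalar $\lambda$, and acts on $T_{X_0}$ as multiplication by a root of unity. By Theorem~\ref{thm:Shioda}~(3) the transcendental lattice is $\left(\begin{smallmatrix} 4 & 0 \\ 0 & 4 \end{smallmatrix}\right)$, of rank $2$, so the only isometries of $T_{X_0}$ compatible with the Hodge structure are given by a finite cyclic group; a standard argument (the induced isometry must be $\pm 1$ together with the period constraint, ruling out $-1$ by the action on $\omega_{X_0}$) forces $g$ to act trivially on $T_{X_0}$ as well, hence trivially on all of $H^2(X_0, \Z)$. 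By the Torelli theorem for $K3$ surfaces, an automorphism acting trivially on $H^2$ must be the identity, so $g = \mathrm{id}$.

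\emph{Second}, for the supersingular surface $X_3$ in characteristic $3$, the complex-analytic Torelli argument is unavailable, so I would instead invoke the cited calculation in~\cite{KondoShimadaFQ3}, where $\Aut(X_3) \cong \Aut(F_3)$ was determined via Borcherds' method; faithfulness of the action on $\NS{3}$ is established there directly. The conceptual reason is that for a supersingular $K3$ surface the transcendental lattice is trivial (the Picard number is $22$, so $\NS{3} \tensor \Q$ exhausts the relevant cohomology), and one uses instead the injectivity of the representation of $\Aut(X_3)$ on crystalline cohomology together with the fact that a nontrivial automorphism of finite order acting trivially on $\NS{3}$ would have to act nontrivially on the formal Brauer group or be ruled out by an explicit fixed-locus / Lefschetz-type count. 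Since the statement is quoted as already proved in the references, for $X_3$ it suffices to cite that result.

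The main obstacle is the characteristic-$3$ case: unlike the complex case, there is no transcendental lattice to leverage and no direct Torelli theorem, so the faithfulness genuinely relies on the explicit machine-assisted computation of~\cite{KondoShimadaFQ3} rather than on a soft lattice-theoretic argument. I would therefore structure the proof to give the self-contained Hodge-theoretic argument for $X_0$ and to reduce the $X_3$ claim to the quoted theorem, noting that the absence of a nontrivial transcendental part makes the naive argument inapplicable and that the finiteness of any automorphism acting trivially on $\NS{3}$ (combined with the classification of such automorphisms on supersingular $K3$ surfaces) is what ultimately secures injectivity.
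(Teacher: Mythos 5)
The paper offers no proof of Proposition~\ref{prop:injpsi} at all: it is quoted as a result already established in \cite{KK2001} (for $X_0$) and \cite{KondoShimadaFQ3} (for $X_3$), which is why it carries a \qed\ with no argument. Your treatment of $X_3$ --- reduce to the citation, since no transcendental-lattice argument is available in the supersingular case --- therefore coincides with what the paper does (your side remarks about the formal Brauer group and a Lefschetz count are vague, but they are not load-bearing). The problem is your self-contained argument for $X_0$, whose decisive step is incorrect as stated.

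You assert that an automorphism $g$ acting trivially on $\NS{0}$ must act on $T_{X_0}$ by $\pm 1$, and that $-1$ is ``ruled out by the action on $\omega_{X_0}$.'' Neither claim holds. First, $\OG(T_{X_0})$ for $T_{X_0}\cong\mathrm{diag}(4,4)$ (Theorem~\ref{thm:Shioda}\,(3)) is dihedral of order $8$ and contains order-$4$ rotations whose eigenvalues $\pm i$ are perfectly compatible with the Hodge decomposition $T_{X_0}\tensor\C=H^{2,0}\oplus H^{0,2}$, so ``$\pm1$'' is not forced. Second, there is no general principle by which the action on $\omega$ excludes $-1$: a non-symplectic involution acts as $-1$ on the transcendental lattice and on $\omega$, and for instance the covering involution of a double plane branched along a generic sextic does this while acting \emph{trivially} on the (rank-one) N\'eron--Severi lattice. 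So faithfulness genuinely fails for some complex $K3$ surfaces, and any correct proof must use data specific to $X_0$. The missing ingredient is the gluing condition in the unimodular lattice $H^2(X_0,\Z)$: by Proposition~\ref{prop:unimoduoverlat}, an isometry that is the identity on $\NS{0}$ and equals $\rho$ on $T_{X_0}$ exists on $H^2(X_0,\Z)$ only if $\rho$ induces the identity on $\discg{T_{X_0}}\cong\discg{\NS{0}}\cong(\Z/4\Z)^2$. A direct check shows that no nontrivial element of the dihedral group $\OG(T_{X_0})$ --- neither $-1$, nor the order-$4$ rotations, nor the reflections --- acts trivially on $(\Z/4\Z)^2$. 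Hence $g$ is trivial on all of $H^2(X_0,\Z)$, and the Torelli theorem gives $g=\mathrm{id}$. With that replacement your outline becomes a correct, self-contained alternative to the paper's bare citation of \cite{KK2001}; as written, it has a genuine gap precisely at the point where the specific lattice $\mathrm{diag}(4,4)$ must enter.
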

\par
Let $R$ be a discrete valuation ring whose fraction field $K$ is of characteristic $0$
and whose residue field $k$ is of characteristic $3$.
Suppose that $\sqrt{-1}\in R$.
In Section~\ref{subsec:geomQPcovering}, 
we construct explicitly 
a smooth family of $K3$ surfaces 
$\XXX\to \Spec R$
over $R$ 
such that the geometric generic fiber $\XXX\tensor_{R}\bar{K}$
is isomorphic to $X_0$ and 
the geometric special fiber $\XXX\tensor_{R}\bar{k}$
is isomorphic to $X_3$.
The construction of this model $\XXX$ is natural in the sense 
that it uses the inherent elliptic fibration of $X_0$. 
Note that the model of $X_0$ over $R$ is not unique,
and that the main results on $\Aut(\XXX/R)$ below 
may depend on the choice of the model.
\par
%(see Section~\ref{subsec:geomQPcovering}).
By Proposition 3.3 of Maulik and Poonen~\cite{MP2012},
the specialization from $\XXX\tensor_{R} K$ to $\XXX\tensor_{R}k$  
gives rise to a homomorphism   
\begin{equation*}
\rho\colon \NS{0}\to \NS{3}.
\label{eq:rho}
\end{equation*}
In Section~\ref{subsec:LLL40},
we give an explicit description of $\rho$.
It turns out that $\rho$ is a primitive embedding of lattices.
We  regard $\NS{0}$ as a sublattice of $\NS{3}$ by $\rho$,
and  put
$$
\theOGpair:=\set{g\in \OGplus(\NS{3})}{\NS{0}^g=\NS{0}}.
$$
Then we have a natural restriction homomorphism
$$
\tilde{\rho}\colon \theOGpair\to \OGplus(\NS{0}).
$$
The main results of this paper are as follows:
\begin{theorem}\label{thm:main}
The restriction of $\tilde{\rho}$ to $\theOGpair\cap \Aut(X_3)$
induces an injective homomorphism
$$
\tilde{\rho}|_{\Aut}\colon  \theOGpair\cap \Aut(X_3) \inj \Aut(X_0).
$$
The image of $\tilde{\rho}|_{\Aut}$ is generated by the finite subgroup  $ \Aut(X_0, h_0)$
and the two  double-plane involutions $\dppinvol(\dpp_{112}), \dppinvol(\dpp_{688})$.
The other double-plane involutions $\dppinvol(\dpp_{80})$ and $\dppinvol(\dpp_{296})$ do not belong to the image of $\tilde{\rho}|_{\Aut}$.
\end{theorem}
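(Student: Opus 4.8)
The plan is to carry out the whole analysis inside the primitive embedding $\rho\colon\NS{0}\inj\NS{3}$ of Section~\ref{subsec:LLL40}, writing $M\subset\NS{3}$ for the orthogonal complement of $\NS{0}$. Since $\rho$ is primitive and $\NS{3},\NS{0}$ are hyperbolic of ranks $22$ and $20$, the lattice $M$ is negative-definite of rank $2$, and $\NS{3}$ is an overlattice of $\NS{0}\oplus M$ governed by a gluing isomorphism between subgroups of $\discg{\NS{0}}$ and $\discg{M}$; the two external inputs I rely on are the Borcherds-method descriptions of $\Aut(X_0)$ and $\Aut(X_3)$ as groups acting on $\NN{0}$ and $\NN{3}$ (Theorem~\ref{thm:KK2001} and~\cite{KondoShimadaFQ3}) together with the double-plane algorithm of Remark~\ref{rem:dppalgo}. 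First I would prove injectivity. By Proposition~\ref{prop:injpsi} the group $\Aut(X_3)$ acts faithfully on $\NS{3}$, so it suffices to show that any $g\in\theOGpair\cap\Aut(X_3)$ with $g|_{\NS{0}}=\mathrm{id}$ is trivial on all of $\NS{3}$. Such a $g$ induces the identity on the glued subgroup of $\discg{\NS{0}}$, hence, through the gluing isomorphism, the identity on the corresponding subgroup of $\discg{M}$; a direct check with the explicit form of $M$ shows that this already forces $g|_M=\mathrm{id}_M$, because $\OG(M)$ acts faithfully on that subgroup. Then $g$ is the identity on $\NS{0}\oplus M$ and hence on $\NS{3}$.

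Next I would establish the lower bound for the image, after recording that liftability is a \emph{finite} problem: an isometry $g_0\in\OGplus(\NS{0})$ extends to $\theOGpair$ only through the finitely many $g_M\in\OG(M)$ compatible with $g_0$ on the gluing, so $g_0$ lies in the image exactly when one of its finitely many extensions $\tilde g$ preserves $\NN{3}$ — a condition tested against the walls of $\NN{3}$ supplied by~\cite{KondoShimadaFQ3}. For the finite group $\Aut(X_0,h_0)$, every element fixes $h_0$ and hence $\rho(h_0)$; since the configuration responsible for these symmetries (the six fibers of type $\mathrm{I}_4$ and the $(\Z/4\Z)^2$ of torsion sections) persists under reduction by Theorem~\ref{thm:Shioda}, each element lifts to an automorphism of $X_3$ stabilizing $\NS{0}$. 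For the two surviving involutions I would apply the algorithm of Remark~\ref{rem:dppalgo} to the specialized classes $\rho(\dpp_{112})$ and $\rho(\dpp_{688})$, checking that they remain double-plane polarizations on $X_3$, so that the associated involutions lie in $\Aut(X_3)$, stabilize $\NS{0}$, and restrict to $\dppinvol(\dpp_{112})$ and $\dppinvol(\dpp_{688})$. This shows the image contains $\langle\Aut(X_0,h_0),\dppinvol(\dpp_{112}),\dppinvol(\dpp_{688})\rangle$.

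For the reverse inclusion I would first argue that the restriction of any $g\in\theOGpair\cap\Aut(X_3)$ preserves $\NN{0}$: the cone $C:=\NN{3}\cap(\NS{0}\tensor\R)$ is $g$-invariant and full-dimensional (it contains the image of an ample class), and since specialization carries effective classes to effective classes one has $C\subset\NN{0}$; as $\NN{0}$ is the unique Weyl chamber of $\NS{0}$ meeting the full-dimensional cone $C$ in its interior, its $g$-image must equal it. Hence the image is a subgroup of the chamber-preserving group underlying Theorem~\ref{thm:KK2001}, so it is generated by the stabilizer $\Aut(X_0,h_0)$ of the Borcherds chamber together with the wall-crossing involutions it realizes. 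Because the four orbits of outer walls correspond precisely to the four double-plane involutions, it remains to run the finite liftability test on the orbits of $\dpp_{80}$ and $\dpp_{296}$; the additional $(-2)$-curves carried by $X_3$ (Picard number $22$ versus $20$) obstruct every extension of $\dppinvol(\dpp_{80})$ and $\dppinvol(\dpp_{296})$ from preserving $\NN{3}$, so neither lifts. Therefore the image equals $\langle\Aut(X_0,h_0),\dppinvol(\dpp_{112}),\dppinvol(\dpp_{688})\rangle$, which in particular sits inside $\Aut(X_0)$.

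The hard part will be the negative statement, that $\dppinvol(\dpp_{80})$ and $\dppinvol(\dpp_{296})$ do not lift: one must exclude \emph{every} extension into the infinite group $\Aut(X_3)$, not just an evident candidate. This is exactly why reducing liftability to the finite set of $\OG(M)$-extensions is indispensable, as it converts the question into finitely many membership tests in $\Aut(X_3)$, each decided by the explicit wall data of $\NN{3}$. The accompanying technical burden is the careful comparison of the two chamber structures under $\rho$ — in particular confirming that restriction really maps $\NN{3}$ into $\NN{0}$, that $C$ is full-dimensional, and that the four outer-wall orbits of $X_0$ are matched correctly after specialization.
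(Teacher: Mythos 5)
Your overall architecture is reasonable, and one piece is genuinely better than the paper's: your injectivity argument via the gluing of discriminant forms along $\rho$ (identity on $\NS{0}$ forces identity on the $2$-part of $\discg{Q}$, where $Q$ is your $M\cong\mathrm{diag}(-12,-12)$, hence identity on $Q$ since $\OG(Q)\to\OG(\discf{Q}_2)$ is injective) is correct and cleaner than the paper's route, which extracts injectivity from an explicit finite computation (Lemma~\ref{lem:inj}). Your negative statement is also essentially right: any extension of $\dppinvol(\dpp_{80})$ or $\dppinvol(\dpp_{296})$ sends $\rho(h_0)$ to $\rho(h_0^{\dppinvol(\dpp)})$ independently of the choice on $Q$, and these two vectors are separated by a root of $\NS{3}$, which is exactly the computation behind the paper's Lemma~\ref{lem:wallNX3X0}. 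However, there is a genuine gap running through the positive half of your proof: you never address the period condition. By Theorem~\ref{thm:TorelliAut}, membership in $\Aut(X_0)$ requires \emph{both} preservation of $\NN{0}$ \emph{and} the period condition~\eqref{eq:periodcondition}, and the two are not equivalent here: $\Aut(X_0,h_0)$ has index $2$ in $\Aut(\LLL_{40})$, i.e.\ there exist isometries preserving the chamber $D_0$ (hence $\NN{0}$) that are \emph{not} automorphisms of $X_0$. Your plan to deduce ``image $\subset\Aut(X_0)$'' only at the end, from the generation statement, is circular: the generation argument itself needs to know that an element of the image fixing $D_0$ restricts into $\Aut(X_0,h_0)$ rather than merely into $\Aut(\LLL_{40})$, and without the paper's Lemma~\ref{lem:period} (the dihedral-group/characteristic-subgroup argument through $Q$) or a substitute computation you cannot exclude the extra index-$2$ elements. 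For the same reason your ``lies in the image exactly when one of its finitely many extensions preserves $\NN{3}$'' is wrong as an equivalence: an extension preserving $\NN{3}$ lies in $\Aut(X_3)$ only if it also satisfies the period condition for $X_3$.

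The second gap is that your positive liftings are asserted rather than proved. That the configuration $\LLL_{40}$ ``persists under reduction'' does not show that a given isometry in $\Aut(X_0,h_0)$ admits an extension to $\NS{3}$ lying in $\Aut(X_3)$; one must actually verify, for some gluing-compatible element of $\OG(Q)$, preservation of $\NN{3}$ and the $X_3$-period condition (the paper does the equivalent check in the opposite direction in Lemma~\ref{lem:inj}, inside the four explicit cosets of $\thePGU$, which also uses Lemma~\ref{lem:facecodim2} and Corollary~\ref{cor:backF}). Likewise, knowing that $\rho(\dpp_{112})$, $\rho(\dpp_{688})$ are again double-plane polarizations produces involutions $\dppinvol(\rho(\dpp))\in\Aut(X_3)$, but it does not follow formally that these involutions stabilize $\rho(\NS{0})$ nor that their restrictions equal $\dppinvol(\dpp_{112})$, $\dppinvol(\dpp_{688})$; that compatibility needs a matrix comparison via Remark~\ref{rem:dppalgo} or the geometric lifting over $R$ that the paper uses only later, in the proof of Theorem~\ref{thm:mainR} (in the proof of the present theorem the paper instead argues chamber-theoretically, via Lemma~\ref{lem:twolifts}). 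Finally, ``a subgroup of the chamber-preserving group is generated by the chamber stabilizer and the wall-crossings it realizes'' is not a general principle; it becomes valid only after one proves that the subgroup generated by $\Aut(X_0,h_0)$, $\dppinvol(\dpp_{112})$, $\dppinvol(\dpp_{688})$ acts transitively on the $\prV(i_0)$-chambers contained in the convex region $\NN{3}\cap\PP{0}$, which is the gallery induction the paper carries out using Lemma~\ref{lem:wallNX3X0}. All of these missing steps are finite computations or short arguments that would succeed, so your plan is repairable, but as written the proof has real holes exactly where the paper invests its lemmas.
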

Let $R\sprime$ be a finite extension of $R$,
and let $\XXX\sprime:=\XXX\tensor_R R\sprime \to \Spec R\sprime$
be the pull-back of $\XXX\to \Spec R$.
We have a natural embedding
$\Aut(\XXX/R)\inj \Aut(\XXX\sprime/R\sprime)$.
We put
$$
\Aut(\overline{\XXX/R}):=\mathord{\rm colim}_{R\sprime} \Aut(\XXX\sprime/R\sprime).
$$
Let $\res_3\colon \Aut(\overline{\XXX/R})\to \Aut(X_3)$ and $\res_0\colon \Aut(\overline{\XXX/R})\to \Aut(X_0)$
denote the restriction homomorphisms.
It is obvious that $\res_0$ is injective, and  that the following diagram commutes.
\begin{equation}\label{eq:triangle}
\begin{array}{cccccc}
&\Aut(\overline{\XXX/R})&\\
\hskip 3cm\hbox{\scriptsize  $\res_3$} \swarrow && \searrow \hbox{\scriptsize  $\res_0$}\hskip  4mm\\
 \theOGpair\cap \Aut(X_3)  & \vbox{\hbox{\scriptsize$\tilde{\rho}|_{\Aut}$}\vskip 5pt \hbox{$\inj$} }& \hskip  2mm \Aut(X_0) &\phantom{aaaaa}
\end{array}
\end{equation}
\begin{theorem}\label{thm:mainR}
The image of $\res_0$
is equal to the image of $\tilde{\rho}|_{\Aut}$.
\end{theorem}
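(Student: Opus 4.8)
The plan is to prove Theorem~\ref{thm:mainR} by showing the two inclusions between $\mathrm{image}(\res_0)$ and $\mathrm{image}(\tilde\rho|_{\Aut})$. The commutativity of diagram~\eqref{eq:triangle} immediately gives one of them: since $\res_3$ lands in $\theOGpair\cap\Aut(X_3)$ and the triangle commutes, every automorphism in $\Aut(\overline{\XXX/R})$ restricts on the generic fiber to an element $\res_0(\phi)=\tilde\rho|_{\Aut}(\res_3(\phi))$, so $\mathrm{image}(\res_0)\subseteq \mathrm{image}(\tilde\rho|_{\Aut})$. This containment is formal and requires no new input beyond what is already set up.

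The substantive direction is the reverse inclusion $\mathrm{image}(\tilde\rho|_{\Aut})\subseteq\mathrm{image}(\res_0)$. By Theorem~\ref{thm:main} the image of $\tilde\rho|_{\Aut}$ is generated by the finite group $\Aut(X_0,h_0)$ together with the two double-plane involutions $\dppinvol(\dpp_{112})$ and $\dppinvol(\dpp_{688})$. Since $\mathrm{image}(\res_0)$ is a subgroup, it suffices to exhibit, for each of these generators $g\in\Aut(X_0)$, an element of $\Aut(\overline{\XXX/R})$ whose restriction to the generic fiber is $g$. In other words, I must show that after passing to a suitable finite extension $R'$ of $R$, each generator lifts to an honest automorphism of the family $\XXX'\to\Spec R'$. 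First I would handle the finite projective automorphism group $\Aut(X_0,h_0)\cong(\Z/2\Z)^5\mathord{:}\SSSS_5$: because these automorphisms preserve the polarization $h_0$ and are induced by the very explicit structure of the model (built from the elliptic fibration~\eqref{eq:theellfib4} and its Mordell-Weil torsion $(\Z/4\Z)^2$ together with the action of $\SSSS_5$ on the six singular fibers), I expect each generator to be defined over $R$ or a small extension as a morphism of the whole family, so it lies in $\Aut(\overline{\XXX/R})$ and restricts correctly. Then I would treat the two double-plane involutions separately.

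The heart of the argument, and the step I expect to be the main obstacle, is lifting the double-plane involutions $\dppinvol(\dpp_{112})$ and $\dppinvol(\dpp_{688})$ to automorphisms of the family. The key is that a double-plane involution is canonically attached to a double-plane polarization $\dpp$ via the Stein factorization $Z\to Z_\dpp\to\P^2$ of $\Phi_\dpp$, so if the class $\dpp$ and the associated linear system extend over $R'$ and the covering morphism $Z_\dpp\to\P^2$ spreads out to a finite double cover of relative projective planes over $R'$, then the covering involution automatically gives an element of $\Aut(\XXX'/R')$ restricting to $\dppinvol(\dpp)$ on the generic fiber. Here the crucial point — and precisely the reason $\dpp_{80}$ and $\dpp_{296}$ are excluded in Theorem~\ref{thm:main} — is that $\dpp_{112}$ and $\dpp_{688}$ are exactly the polarizations whose classes lie in the image of $\rho$ and remain double-plane polarizations on the special fiber $X_3$; thus the $ADE$-configuration of rational curves contracted by $\Phi_\dpp$ specializes without degeneration, the double cover extends flatly, and the involution propagates. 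I would verify this using the explicit description of $\rho$ from Section~\ref{subsec:LLL40} together with the algorithms of Remark~\ref{rem:dppalgo}, checking that the relevant divisor classes and contracted curves deform over $R'$; the delicate part is ensuring that the finite double covering, rather than merely the numerical class, extends as a family, which is where the good-reduction hypothesis and the primitivity of $\rho$ are essential.
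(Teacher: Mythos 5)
Your skeleton coincides with the paper's proof: the inclusion of the image of $\res_0$ in the image of $\tilde{\rho}|_{\Aut}$ follows formally from the commutativity of~\eqref{eq:triangle}, and Theorem~\ref{thm:main} reduces the converse to lifting the three generators $\Aut(X_0,h_0)$, $\dppinvol(\dpp_{112})$, $\dppinvol(\dpp_{688})$ to $\Aut(\overline{\XXX/R})$. Your plan for the two double-plane involutions is also the paper's: after a finite extension, extend the line bundle with class $\dpp$ over $R$, verify (Remark~\ref{rem:dppalgo}) that $\rho(\dpp)$ is again a double-plane polarization on $X_3$ and that the contracted configurations match, i.e.\ $\Sigma(\rho(\dpp))=\rho(\Sigma(\dpp))$ as in~\eqref{eq:rhoSigma}, and then extend the finite double covering over $R$ so that its deck transformation is the required lift. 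You flag but do not resolve the ``delicate part'' of extending the covering as a family; the paper does this concretely, by noting that $\pi_*\MMM$ is free of rank $3$, hence gives $\wt{\Phi}\colon\XXX\to\P^2_R$, and then running Horikawa's canonical resolution relatively over $R$, blowing up $R$-valued points until the covering becomes finite of degree $2$. That omission is a matter of detail; the idea is the right one.

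The genuine gap is your treatment of $\Aut(X_0,h_0)$. You claim each element should extend over $R$ because these automorphisms are ``induced by the very explicit structure of the model.'' This reasoning only applies to the subgroup that actually preserves the structures from which $\XXX$ was built: the model uses the single elliptic fibration $\sigma$, and by Proposition~\ref{prop:exactAutX0f} the stabilizer $\Aut(X_0,f)$ of its fiber class has order $768$, of index $5$ in $\Aut(X_0,h_0)$. The quotient $\SSSS_5$ of Proposition~\ref{prop:GalAutX0h0} permutes the five fibration classes $f^{(1)},\dots,f^{(5)}$ of Remark~\ref{rem:5fs}, so most elements of $\Aut(X_0,h_0)$ preserve neither $\sigma$ nor the degree-$8$ model in $\P^5$, and there is no visible reason why they should be morphisms of $\XXX$ over $R$. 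Moreover, the naive principle ``explicitly described automorphisms of $X_0$ extend over $R$'' is simply false: $\dppinvol(\dpp_{80})$ and $\dppinvol(\dpp_{296})$ are attached to perfectly explicit double-plane structures on $X_0$, yet by Theorem~\ref{thm:main} and the commutativity of~\eqref{eq:triangle} they do \emph{not} lift. What is missing is a lifting theorem together with its hypothesis on the special fiber: one must (i) extend a line bundle $L_K$ with class $h_0$ to a line bundle $\LLL$ on $\XXX$; (ii) check that its restriction to $X_k$, whose class is $\rho(h_0)$, is ample on $X_3$ --- this is Lemma~\ref{lem:facecodim2}\,(1), a genuine computation on the chamber $D_3$, not an automatic fact --- so that $\LLL$ is relatively ample and $\XXX$ embeds in some $\P^N_R$; and then (iii) apply the theorem of Matsusaka--Mumford (using that $X_3$ is not birationally ruled), equivalently Lieblich--Maulik, to extend each projective automorphism of the generic fiber to an automorphism of $\XXX/R$. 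Without steps (ii) and (iii) your argument for the finite group does not go through.
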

Thus we have obtained a set of generators of $\Aut(\overline{\XXX/R})$.
\subsection{Enriques surfaces}
By Nikulin~\cite{Nikulin84} and Kondo~\cite{KondoFinite},
the complex Enriques surfaces with finite automorphism group
are classified,
and this classification is extended to Enriques surfaces in odd characteristics
by Martin~\cite{Martin}.
The Enriques surfaces in characteristic $\ne 2$ with finite automorphism group
are divided into seven classes ${\rm I}$-${\rm VII}$.
In this paper, we concentrate on the Enriques surface of type~$\IV$.
\begin{definition}
A fixed-point free involution of a $K3$ surface 
 in characteristic $\ne 2$
 is called an \emph{Enriques involution}.
An Enriques surface $Y$ in characteristic $\ne 2$
is \emph{of type~$\IV$} if $\Aut(Y)$ is of order $320$.
An Enriques involution  of a $K3$ surface
is \emph{of type~$\IV$}
if the quotient Enriques surface is of type~$\IV$.
\end{definition}
\begin{proposition}[Kondo~\cite{KondoFinite}, Martin~\cite{Martin}]
In each characteristic $\ne 2$,
an Enriques surface of type~$\IV$ exists and is unique up to isomorphism.
There exist exactly $20$ smooth rational curves on an Enriques surface of type~$\IV$.
\qed
\end{proposition}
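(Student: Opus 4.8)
The plan is to analyze $Y$ through its canonical K3 cover and the lattice theory of the Enriques lattice. Let $Y$ be an Enriques surface in characteristic $\ne 2$ and let $\pi\colon X\to Y$ be its K3 double cover. The key structural fact, valid in every characteristic $\ne 2$, is that the numerical Néron--Severi lattice $\NS{Y}$ is isometric to the fixed hyperbolic lattice $U\oplus E_8(-1)$ of rank $10$; in particular the lattice-theoretic data is independent of the characteristic. Smooth rational curves on $Y$ are exactly the classes in $\NS{Y}$ of self-intersection $-2$ represented by irreducible effective divisors, and these are precisely the classes spanning the walls of the nef cone $\NN{Y}$. I would first record the standard dictionary: $\Aut(Y)$ is finite if and only if the subgroup $W\subset\OGplus(\NS{Y})$ generated by the reflections in the smooth rational curves has finite index, equivalently if and only if $\NN{Y}$ is a finite-sided polytope; and, since the action of $\Aut(Y)$ on $\NS{Y}$ is faithful for the surfaces under consideration, $\Aut(Y)$ embeds into the group $\Aut(\NN{Y})$ of isometries preserving $\NN{Y}$ and hence permutes the walls. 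Thus the whole proposition is governed by the dual graph $\Gamma$ of smooth rational curves together with the finite group acting on it.

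For existence I would treat characteristic $0$ first and then specialize. Over $\C$ the required surface is produced explicitly as the quotient of the elliptic modular surface $X_0$ of level $4$ by the fixed-point-free involution $\enrinvol$ referred to in the introduction: one exhibits $20$ smooth rational curves on $Y_0=X_0/\enrinvol$ --- arising as images of sections and fiber components of the genus-one pencils inherited from $X_0$ --- computes their intersection matrix, and checks that the resulting group of lattice isometries preserving $\NN{Y_0}$ has order $320$, so that $Y_0$ is of type~$\IV$. To reach an arbitrary characteristic $\ne 2$ I would use that an Enriques surface of type~$\IV$ lifts to characteristic $0$ and, conversely, that the characteristic-$0$ model reduces modulo any prime $p\ne 2$ with its full configuration $\Gamma$ of curves and all intersection numbers preserved; since $\NS{Y}\cong U\oplus E_8(-1)$ does not depend on $p$, the geometric realization survives specialization. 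This is precisely the mechanism that the body of the paper develops in the case $p=3$.

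Uniqueness is where I would invoke a Torelli-type theorem. Over $\C$ the Torelli theorem for Enriques surfaces determines the isomorphism class of $Y$ from the period of its K3 cover together with the Enriques involution; the type~$\IV$ condition forces the transcendental lattice of $X$ and the primitive embedding $\NS{Y}\inj \NS{X}$ to be of a single fixed isometry type, so the period point is rigid and uniquely pinned down. In positive characteristic, where Hodge theory is unavailable, I would instead combine the lifting of the previous step with the rigidity of this lattice data, transporting uniqueness from characteristic $0$ by a specialization argument; the essential input is that for $p\ne 2$ the relevant Néron--Severi lattice remains $U\oplus E_8(-1)$ and does not degenerate.

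The main obstacle is the completeness assertion: that there are \emph{exactly} $20$ smooth rational curves, not merely at least $20$. The twenty curves above are easy to write down; the real content is that no further $(-2)$-class is represented by an irreducible curve. I would establish this by showing that the polytope cut out in $\PP{Y}$ by the $20$ listed classes is already a finite-sided fundamental domain for $W$: one verifies that each of the $20$ bounding hyperplanes is a genuine wall, that reflections in them generate $W$, and that this polytope is the closure of a single $W$-chamber, so that every $(-2)$-curve is $W$-equivalent to one of the twenty and the stabilizer of the chamber --- namely $\Aut(Y)$ --- permutes exactly these walls. Equivalently this is Nikulin's root-invariant bookkeeping, identifying $\Gamma$ with one of the finitely many admissible configurations for which $W$ has finite index and enumerating the $(-2)$-vectors bounding the fundamental domain. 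The delicate point throughout is ruling out a stray $(-2)$-curve lying outside the proposed chamber, which is exactly what the finite-polytope verification is designed to exclude, and which in positive characteristic is guaranteed by the characteristic-independence of $\NS{Y}$ together with the specialization of the curve configuration.
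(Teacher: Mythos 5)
The paper does not actually prove this proposition: it is stated with a reference to Kondo~\cite{KondoFinite} (characteristic $0$) and Martin~\cite{Martin} (odd positive characteristic), so your proposal has to be judged as a reconstruction of those classification results, and as such it has genuine gaps at exactly the hard points. The existence step over $\C$ is fine in outline and matches Kondo's route (quotient of $X_0$ by an Enriques involution, verification of the $20$-curve configuration and of $|\Aut(Y)|=320$; this is what Proposition~\ref{prop:Enriques} of the paper records). But the passage to arbitrary characteristic $p\ne 2$ by ``reducing the characteristic-$0$ model mod $p$ with its full configuration preserved'' does not work as stated. First, one needs a smooth model over a ring in which every odd prime is invertible, which must be exhibited. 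Second, specialization can only \emph{enlarge} the set of effective $(-2)$-classes (effectivity is preserved under specialization, not detected by it), and an irreducible $(-2)$-curve on the generic fiber can specialize to a \emph{reducible} effective divisor; so neither the count ``exactly $20$'' nor even finiteness of $\Aut$ is automatically inherited by the special fiber, and constancy of $\NS{Y}\cong U\oplus E_8(-1)$ does not help, since the curves are not determined by the lattice alone. The paper's own subject matter shows how badly geometry can move under reduction: the K3 cover $X_0$ reduces mod $3$ to a supersingular surface whose Picard number jumps from $20$ to $22$ and which carries $112$ lines; that the Enriques quotient's configuration nevertheless survives is a theorem requiring work (cf.\ Theorem~\ref{thm:main2}), not a formal consequence of specialization.

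The uniqueness step is the more serious gap. In this paper ``type $\IV$'' is \emph{defined} by the numerical condition $|\Aut(Y)|=320$, so uniqueness means: every Enriques surface (in any characteristic $\ne 2$) whose automorphism group has order $320$ is isomorphic to the constructed one. Your argument asserts that ``the type $\IV$ condition forces the transcendental lattice of $X$ and the primitive embedding $\NS{Y}\inj\NS{X}$ to be of a single fixed isometry type,'' but that assertion \emph{is} the classification: one must enumerate all Nikulin root invariants (equivalently, all admissible dual graphs of $(-2)$-curves) compatible with a finite automorphism group, compute the resulting automorphism groups, and check that order $320$ occurs for exactly one configuration, and that the period point realizing it is unique rather than moving in positive-dimensional moduli. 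None of this is supplied. In positive characteristic the situation is worse: there is no Torelli theorem for Enriques surfaces, and ``transporting uniqueness by specialization'' is circular, since it presupposes that an \emph{arbitrary} characteristic-$p$ Enriques surface with $|\Aut|=320$ lifts, together with its curves, to a characteristic-$0$ surface of type $\IV$ --- precisely the hard content of Martin's work, which instead reruns the lattice-theoretic classification with characteristic-$p$ inputs (Vinberg's criterion, and separate treatment of supersingular and unirational cases). Finally, the ``exactly $20$ curves'' verification you propose (finite-sided $W$-fundamental domain whose walls are the $20$ classes) is the right kind of argument for a \emph{given} surface, but it too rests on first knowing the configuration is the type-$\IV$ one, i.e.\ on the classification you have not carried out.
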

Let $\YIV{p}$ denote an Enriques surface of type~$\IV$ in characteristic $p\ne 2$.
Kondo~\cite{KondoFinite} showed that
the covering $K3$ surface of $\YIV{0}$ is isomorphic to $X_0$. 
\begin{proposition}\label{prop:Enriques}
There exist exactly $6$ Enriques involutions
in the projective automorphism group $\Aut(X_0, h_0)$.
These $6$ Enriques involutions are conjugate in $\Aut(X_0, h_0)$,
and hence 
the corresponding Enriques surfaces are isomorphic to each other.
All of them are of type~$\IV$.
\end{proposition}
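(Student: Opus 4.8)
The plan is to reduce the entire statement to a finite computation on the explicitly described finite group $\Aut(X_0,h_0)\cong (\Z/2\Z)^5\mathord{\,:\,}\SSSS_5$ acting on $\NS 0$, combined with Nikulin's lattice-theoretic criterion for freeness. First I would recall that, since $X_0$ has Picard number $20$ with transcendental lattice $\langle 4\rangle\oplus\langle 4\rangle$, every $g\in\Aut(X_0,h_0)$ preserves the Hodge structure and hence acts on the rank-$2$ transcendental lattice $T_0$ by a root of unity on $H^{2,0}$; for an involution this scalar is $\pm 1$, and $T_0\tensor\R$ being spanned by the real and imaginary parts of the period forces the action on all of $T_0$ to be $+1$ (symplectic) or $-1$ (anti-symplectic). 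An Enriques involution must be anti-symplectic, and by the theory of non-symplectic involutions such a $g$ is fixed-point free if and only if its invariant lattice $H^2(X_0,\Z)^g$ is isometric to $U(2)\oplus E_8(2)$, equivalently is $2$-elementary with invariants $(r,a,\delta)=(10,10,0)$. Because $g$ acts as $-1$ on $T_0$, the invariant lattice lies in $\NS 0\tensor\Q$, and primitivity of $\NS 0$ in $H^2(X_0,\Z)$ gives $H^2(X_0,\Z)^g=(\NS 0)^g$; thus the whole criterion can be tested on $\NS 0$ alone.

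With this criterion in hand the count becomes mechanical. Using the matrix representation of $\Aut(X_0,h_0)$ on $\NS 0$ furnished by the description in terms of the graph $\LLL_{40}$, I would run over the involutions of $(\Z/2\Z)^5\mathord{\,:\,}\SSSS_5$, compute for each the invariant sublattice $(\NS 0)^g$, and record its rank together with the discriminant invariants $(r,a,\delta)$. The symplectic involutions are discarded at once: a symplectic involution has coinvariant lattice $E_8(2)$ of rank $8$ inside $\NS 0$, so $(\NS 0)^g$ has rank $12$, not $10$. Among the anti-symplectic involutions, exactly those with $(r,a,\delta)=(10,10,0)$ are Enriques involutions, and the computation should return precisely $6$ of them, giving the first assertion.

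Next I would treat the conjugacy and the isomorphism of the quotients. Since $\Aut(X_0,h_0)$ is an explicit finite group, its conjugation action on the set of these $6$ involutions is a finite check, and I expect it to be transitive, so that the $6$ form a single conjugacy class (with centralizer of order $640$). Conjugate involutions produce isomorphic quotient surfaces, which yields the second assertion.

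Finally, for the identification with type $\IV$, I would anchor to Kondo's result \cite{KondoFinite} that the covering $K3$ surface of $\YIV 0$ is isomorphic to $X_0$, so that $X_0$ does carry an Enriques involution whose quotient is the (by \cite{Martin}, unique) Enriques surface of type $\IV$. It then suffices to match one of the $6$ involutions with this one: for a representative $\enrinvol$ I would compute the classes of smooth rational curves on $Y=X_0/\gen{\enrinvol}$ and verify that there are exactly $20$ of them, forming the dual graph characteristic of type $\IV$ (the configuration related to the Petersen graph); equivalently one may confirm $|\Aut(Y)|=320$. Combined with the conjugacy of the $6$ involutions, this shows that all $6$ quotients are of type $\IV$. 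The main obstacle is precisely this last identification: the enumeration and the conjugacy are routine once the lattice criterion is set up, but pinning the quotient down as the type-$\IV$ surface — rather than some other Enriques surface covered by $X_0$ — requires either an explicit comparison with Kondo's type-$\IV$ involution on $\NS 0$ or the computation of the dual graph of the $20$ rational curves, which is where the real work lies.
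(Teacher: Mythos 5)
Your proposal is correct in substance and reaches the same finite computation as the paper, but it rests on a genuinely different key lemma. The paper's own proof uses only \emph{necessary} lattice conditions --- $\NS{0}^{+\vare}$ hyperbolic of rank $10$ whose halved Gram matrix is even unimodular, and no roots in $\NS{0}^{-\vare}$ --- to cut the involutions of $\Aut(X_0,h_0)$ down to the $6$ candidates; since these conditions are not sufficient, it must then prove fixed-point freeness separately, and does so geometrically: the $6$ candidates lie in $\Gal(\mu)$, hence preserve the fiber class $f$ of $\sigma$, induce an involution of the base $\P^1$ fixing exactly two of the critical values, and act on the two corresponding $I_4$ fibers by interchanging opposite components, so no fixed point can exist. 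You replace this two-step argument by Nikulin's classification of non-symplectic involutions, for which the condition that the invariant lattice be $U(2)\oplus E_8(2)$, i.e.\ $(r,a,\delta)=(10,10,0)$, is necessary \emph{and} sufficient for freeness; your reduction $H^2(X_0,\Z)^g=(\NS{0})^g$ (anti-symplecticity plus primitivity of $\NS{0}$ in $H^2$) is justified correctly, as is the exclusion of symplectic involutions by the rank of the invariant lattice. This buys a cleaner, purely lattice-theoretic proof of the existence and conjugacy assertions, with no geometry needed. What the paper's geometric detour buys is reusability: essentially the same fibration argument is repeated for $\vare_3$ on the supersingular surface $X_3$ in Section~\ref{subsec:Enriques3}, where Nikulin's theorem, a complex-analytic statement, is not available off the shelf, and the membership of the $6$ involutions in $\Gal(\mu)$ also feeds directly into Proposition~\ref{prop:tauijkenr}. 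For the type~$\IV$ identification the two arguments coincide in substance: the paper pushes the $20$ disjoint pairs $\{C,C^{\vare_0}\}$, $C\in\LLL_{40}$, down to $20$ smooth rational curves on the quotient, checks that their dual graph is the type~$\IV$ configuration, and applies Theorem 6.1 of \cite{Martin}. Note that this recognition theorem only requires \emph{exhibiting} such a configuration; your stronger demand to verify that the quotient has \emph{exactly} $20$ smooth rational curves (or that $|\Aut(Y)|=320$) is more than is needed, and your alternative of matching against Kondo's involution would still require an explicit model of that involution on $\NS{0}$ up to conjugacy in $\Aut(X_0)$ --- so the configuration check is the efficient path, exactly as you suspected at the end.
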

By Theorem~\ref{thm:main}, these $6$ Enriques involutions in  
$\Aut(X_0, h_0)$  specialize to  involutions of $X_3$.
\begin{theorem}\label{thm:main2}
Let  $\vare_3\in \Aut(X_3)$ be an involution  that is mapped to 
an Enriques involution  in  $\Aut(X_0, h_0)$  by $\tilde{\rho}|_{\Aut}$.
Then $\vare_3$ is an Enriques involution of type~$\IV$,
and the pull-backs of the $20$ smooth rational curves on 
$X_3/\gen{\vare_3}\cong \YIV{3}$ 
by the quotient morphism $X_3\to X_3/\gen{\vare_3}$   are lines of 
the Fermat quartic surface $F_3\cong X_3$.
\end{theorem}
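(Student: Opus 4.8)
The plan is to exhibit $\vare_3$ as the reduction of a genuine relative involution of the family $\XXX$ and then to transport the type-$\IV$ geometry from characteristic $0$ to characteristic $3$. Write $\vare_0:=\tilde{\rho}|_{\Aut}(\vare_3)$ for the Enriques involution to which $\vare_3$ is sent. Since $\vare_0\in\Aut(X_0,h_0)$, Theorem~\ref{thm:main} shows that $\vare_0$ lies in the image of $\tilde{\rho}|_{\Aut}$, which by Theorem~\ref{thm:mainR} equals the image of $\res_0$; hence $\vare_0=\res_0(\tilde{\vare})$ for some $\tilde{\vare}\in\Aut(\overline{\XXX/R})$, realized in $\Aut(\XXX'/R')$ over a finite extension $R'$ of $R$. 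As $\res_0$ is injective and $\vare_0^2=1$, the element $\tilde{\vare}$ is an involution; and since $\tilde{\rho}|_{\Aut}$ is injective with $\tilde{\rho}|_{\Aut}(\res_3(\tilde{\vare}))=\res_0(\tilde{\vare})=\vare_0=\tilde{\rho}|_{\Aut}(\vare_3)$ by the commutativity of~\eqref{eq:triangle}, we get $\res_3(\tilde{\vare})=\vare_3$. Thus $\vare_3$ is the special fibre of a relative involution $\tilde{\vare}$ of the smooth family $\XXX'\to\Spec R'$.

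Next I would prove that $\vare_3$ is fixed-point free. Because $\sqrt{-1}\in R$, the integer $2$ is a unit in $R'$, so $\gen{\tilde{\vare}}$ has order invertible on the base and the fixed-point scheme $(\XXX')^{\tilde{\vare}}$ is smooth, hence flat, over $\Spec R'$. Its geometric generic fibre is the fixed locus of the Enriques involution $\vare_0$, which is empty; a flat scheme over a discrete valuation ring with empty generic fibre is itself empty, so $X_3^{\vare_3}$ is empty as well. Therefore $\vare_3$ is an Enriques involution, $\pi\colon X_3\to Y_3:=X_3/\gen{\vare_3}$ is an étale double cover, and $Y_3$ is an Enriques surface in characteristic $3$.

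To identify the preimages of the rational curves with lines I would pass to explicit classes. On $X_0$ the preimages of the $20$ smooth rational curves of $\YIV{0}$ form $40$ smooth rational curves $C'$, exchanged in $20$ pairs by $\vare_0$ (each preimage splits, since $\P^1$ admits no nontrivial connected étale cover). Under the specialization $\rho\colon\NS{0}\to\NS{3}$ described in Section~\ref{subsec:LLL40}, effective classes go to effective classes, so each $\rho([C'])$ is an effective class of square $-2$. Writing $h_{F_3}\in\NS{3}$ for the hyperplane class of the Fermat quartic under the isomorphism $X_3\cong F_3$ of Theorem~\ref{thm:Shioda}~(4) (with $\intf{h_{F_3},h_{F_3}}=4$, computed in~\cite{KondoShimadaFQ3}), the core of the proof is the finite verification that $\intf{\rho([C']),h_{F_3}}=1$ for all $40$ curves. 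As $h_{F_3}$ is ample, an effective class of $h_{F_3}$-degree $1$ is automatically irreducible, hence the class of a smooth rational curve of degree $1$, i.e.\ a line of $F_3$. This single computation therefore shows at once that the $40$ classes are represented by lines and that they do not degenerate on the supersingular surface $X_3$.

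It remains to record that $Y_3$ is of type~$\IV$ and that these lines are exactly the preimages of its $20$ rational curves. The $40$ lines are permuted by $\vare_3$, and no line is $\vare_3$-invariant (a fixed-point-free involution cannot preserve a $\P^1$), so they form $20$ pairs, each descending through $\pi$ to a smooth rational curve on $Y_3$. Because $\rho$ is an isometric embedding, the intersection numbers among the classes $\rho([C'])$ coincide with those among the $[C']$, so the $20$ curves on $Y_3$ have the same dual graph as the $20$ rational curves of $\YIV{0}$, and $Y_3$ carries the configuration characteristic of type~$\IV$. By the uniqueness of the type-$\IV$ Enriques surface in characteristic $3$ we conclude $Y_3\cong\YIV{3}$ and that these $20$ curves are its $20$ smooth rational curves, whose pull-backs are the $40$ lines just found. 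The main obstacle throughout is the passage to characteristic $3$: on the supersingular $X_3$ one must exclude both the appearance of fixed points and the degeneration of the preimage curves, and one cannot simply quote the complex classification for the quotient; the flatness of the fixed-point scheme handles the former, while the degree computation against $h_{F_3}$ handles the latter.
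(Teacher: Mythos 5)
Your proposal is correct, and it reaches the theorem by a genuinely different route from the paper. The paper never goes back to the family $\XXX/R$: it notes that $\vare_3$ preserves the face $D_0=\PP{0}\cap D_3$, hence lies in the finite group $\Aut(X_3,D_0)$ of~\eqref{eq:AutX3D0}, locates $\vare_3$ there as an explicit matrix, and proves fixed-point freeness by pure geometry of the elliptic fibration --- the induced involution of the base of $\sigma\colon X_3\to\P^1$ fixes exactly two of the six critical values, and over each fixed critical value the ${\rm I}_4$-quadrangle has its opposite components swapped, exactly as for $\vare_0$ on $X_0$; the pull-backs of the $20$ curves on $Y_3$ are then literally the curves of $\LLL_{40}\subset\LLL_{112}$, which are lines by the construction of $\rho_{\LLL}$. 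You instead lift $\vare_3$ to a relative involution $\tilde{\vare}$ over a finite extension $R'$ (this is legitimate and non-circular: Theorems~\ref{thm:main} and~\ref{thm:mainR} are proved before this theorem and their proofs do not use it), kill the fixed locus via smoothness of the fixed-point scheme of a tame involution over $\Spec R'$ plus flatness and emptiness of the generic fiber, and recover the lines by specializing effective classes and applying the degree-one-against-ample argument. Your route buys conceptual generality (any automorphism of the special fiber that lifts over the DVR and acts freely on the generic fiber acts freely on the special fiber, with no fibration analysis), and it shows in passing that $\vare_3$ extends over $R'$; the paper's route buys independence from Theorem~\ref{thm:mainR} and from the (standard, but nowhere stated in the paper) smoothness result for tame fixed-point schemes, and it yields the explicit matrix of $\vare_3$, the value $\intf{h_3,h_3^{\vare_3}}=16$, and the identification of $D_3^{\vare_3}$ with the chamber $D_3^{\vare}$ of Figure~\ref{fig:fourDX3s}, which the paper uses elsewhere.

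Three small repairs. First, $2$ is a unit in $R'$ because the residue characteristic is $3$ (mixed characteristic), not because $\sqrt{-1}\in R$; as stated your justification is a non sequitur, though the needed fact is immediate. Second, ``no line is $\vare_3$-invariant'' only gives $L\ne L^{\vare_3}$; for $\pi_3(L)$ to be a \emph{smooth} rational curve you also need $L\cap L^{\vare_3}=\emptyset$, which follows from $\intf{\rho([C']),\rho([C'])^{\vare_3}}=\intf{[C'],[C']^{\vare_0}}=0$ --- this is contained in your isometry remark, but it, rather than non-invariance, is the correct justification. Third, your ``finite verification'' presupposes explicit knowledge of the classes $[C']$, i.e.\ that the pull-backs of the $20$ rational curves on $\YIV{0}$ are precisely $\LLL_{40}$; this is established in the proof of Proposition~\ref{prop:Enriques}, which precedes the theorem, and should be cited at that point.
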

\begin{figure}
\begin{center}
%\hskip -2cm 
\includegraphics{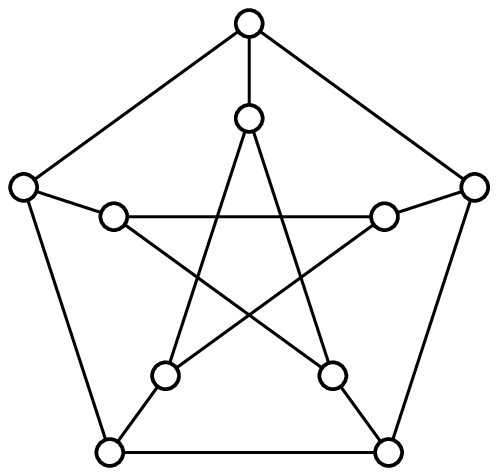}%
\end{center}
\caption{Petersen graph }\label{fig:Petersen}
\end{figure}
During the investigation,
we have come to notice that the geometry of  $X_p$ and $\YIV{p}$
is closely related to the \emph{Petersen graph} (Figure~\ref{fig:Petersen}). 
See Section~\ref{sec:S0S3}  for this relation.
As a by-product,  we see that the 
dual graph of the $20$ smooth rational curves on $\YIV{p}$
is as in Figure~\ref{fig:Enrstar}.
Compare Figure~\ref{fig:Enrstar} with  the picturesque but complicated  figure 
of Kondo~(Figure 4.4 of~\cite{KondoFinite}).
%These two figures  in fact depict the same configuration.
%
\par
It has been observed that 
the Petersen graph is related to various $K3$/Enriques surfaces.
See, for example, Vinberg~\cite{Vinberg1983} for the relation
with the singular $K3$ surface with the transcendental lattice of discriminant $4$.
See also  Dolgachev-Keum~\cite{DK2002} and 
Dolgachev~\cite{Dolgachev2018} for the relation
with  Hessian quartic surfaces and associated Enriques surfaces.
\begin{figure}
\includegraphics{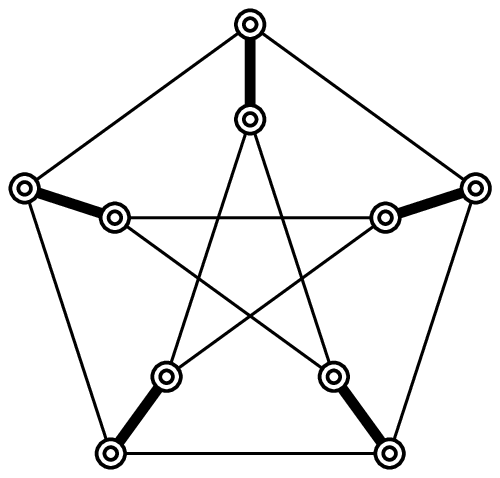}
\raise 2.4cm \hbox{\parbox{6cm}{\small Here $\dcirc$ is a pair of disjoint 
smooth rational curves,
and $
\setlength\unitlength{.2truecm}
\begin{picture}(5,1)(0,0)
\put(1,.5){\circle{1}}
\put(1,.5){\circle{.5}}
\put(1.5,.5){\line(1,0){2}}
\put(4,.5){\circle{1}}
\put(4,.5){\circle{.5}}
\end{picture}
$ 
means that the two pairs of 
smooth rational curves intersect as 
\begin{center}
\includegraphics{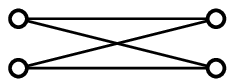} \hskip -3mm \raise 1mm \hbox{$,$} 
\end{center}
whereas 
$
\setlength\unitlength{.2truecm}
 \linethickness{.6mm}
\begin{picture}(5,2)(0,0)
\put(1,.5){\circle{1}}
\put(1,.5){\circle{.5}}
\put(1.5,.5){\line(1,0){2}}
\put(4,.5){\circle{1}}
\put(4,.5){\circle{.5}}
\end{picture}
$ 
means  that the two pairs  intersect as 
\begin{center}
\includegraphics{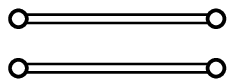} \hskip -3mm \raise 1mm \hbox{$.$}%
\end{center}
}}
\caption{Smooth rational curves on $\YIV{p}$}\label{fig:Enrstar}
\end{figure}
\subsection{Plan of the paper}
In Section~\ref{sec:S0S3},
we present a precise description of the embedding  $\rho\colon \NS{0}\inj \NS{3}$.
First we introduce the notion of $\QP$-graphs.
Then, using an isomorphism $X_3\cong F_3$ given by Shioda~\cite{ShiodaManifoldsTokyo},
we show that $\NS{0}$ is a lattice obtained from a $\QP$-graph,
and calculate the embedding  $\rho\colon \NS{0}\inj \NS{3}$ explicitly.
An elliptic modular surface of level $4$ over a discrete valuation ring 
is constructed, and the relation with the Petersen graph is explained geometrically.
In Section~\ref{sec:Borcherds}, 
we review the method of Borcherds~\cite{Bor1, Bor2}
to calculate the orthogonal group of an even hyperbolic lattice,
and fix terminologies about  \emph{chambers}.
The application of this method to $K3$ surfaces is also explained.
In Section~\ref{sec:BorcherdsX0X3}, 
we review the results of~\cite{KK2001} for $\Aut(X_0)$ and 
of~\cite{KondoShimadaFQ3} for  $\Aut(X_3)$.
Using the chamber tessellations of $\NN{0}$ and  $\NN{3}$ obtained in these works, 
we give a proof of Theorems~\ref{thm:main}~and~\ref{thm:mainR} 
in Section~\ref{sec:proof}.
In Section~\ref{sec:Enriques}, 
we investigate  Enriques involutions of $X_0$ and  $X_3$.
\par
In this paper, 
we fix bases of lattices
and reduce proofs of our results to simple computations of vectors and matrices.
Unfortunately, these  vectors and matrices are too large to be presented in the paper.
We refer the reader to the author's web site~\cite{thecompdata}
for this data.
In the computation, we used {\tt GAP}~\cite{GAP}.
\par
Thanks are due to
Professors I.~Dolgachev, G.~van der Geer, S.~Kondo, 
Y.~Matsumoto, S.~Mukai, H.~Ohashi, T.~Shioda, and T.~Terasoma.
In particular,  the contents of Section~\ref{subsec:geomQPcovering} are  
obtained through discussions with S.~Mukai and T.~Terasoma.
Thanks are also due to the referees of the first and second version of this paper
for their many comments and suggestions.
In particular,  the contents of Section~\ref{subsec:anothermodel} are  
suggested by one of the referees.
\section{The lattices  $\NS{0}$ and $\NS{3}$}\label{sec:S0S3}
\subsection{Graphs and lattices}\label{subsec:graphsandlattices}
First we fix terminologies and notation about graphs and lattices.
\par
A \emph{graph} (or more precisely, a \emph{weighted graph}) is a pair $(V, \eta)$,
where $V$ is a set of \emph{vertices} and $\eta$ is a map from the set $V \choose 2$ of non-ordered pairs of distinct elements of $V$ to $\Z_{\ge 0}$.
When the image of $\eta$ is contained in $\{0, 1\}$,
we say that  $(V, \eta)$ is \emph{simple},
and denote it by $(V, E)$, where $E=\eta\inv (1)$ is the set of \emph{edges}.
Let $\Gamma=(V, E)$ and $\Gamma\sprime=(V\sprime, E\sprime)$ be simple graphs.
A \emph{map $\gamma\colon \Gamma\to \Gamma\sprime$ of  simple graphs} 
is a pair of maps $\gamma_{V}\colon V\to V\sprime$ and $\gamma_E\colon E\to E\sprime$
such that, for all $\{v, v\sprime\}\in E$,
we have $\gamma_E(\{v, v\sprime\})=\{\gamma_V(v), \gamma_V(v\sprime)\}\in E\sprime$.
A graph is depicted by indicating each vertex by $\wcirc$ and $\eta(\{v, v\sprime\})$ by the number of line segments
connecting $v$ and $v\sprime$.
The \emph{Petersen graph} $\PG=(V_{\PG}, E_{\PG})$ is the simple graph given by Figure~\ref{fig:Petersen}.
It is well-known that the automorphism group 
$\Aut(\PG)$ of $\PG$ is isomorphic to the symmetric group $\SSSS_5$.
\par
A submodule $M$ of a free $\Z$-module $L$ is \emph{primitive} if $L/M$ is torsion free.
A nonzero vector $v$ of $L$ is \emph{primitive} if $\Z v\subset L$ is primitive.
\par 
Let $L$ be a lattice.
We say that  $L$ is \emph{even} if $\intf{x, x}\in 2\Z$ for all $x\in L$.
The \emph{dual lattice} of $L$ is the free $\Z$-module $L\dual:=\Hom(L,\Z)$,
into which $L$ is embedded by $\nullintf$. 
Hence we have $L\dual\subset L\tensor{\Q}$.
The \emph{discriminant group} $\discg{L}$ is 
 the finite abelian group $L\dual/L$.
We say that $L$ is \emph{unimodular} if $A(L)$ is trivial.
\par
With a graph $\Gamma=(V, \eta)$ with $|V|<\infty$,
we associate an even lattice $\latgraph{\Gamma}$  as follows. 
Let $\Z^V$ be the $\Z$-module freely generated by the elements of  $V$.
We define a symmetric bilinear form $\nullintf$ on $\Z^V$ by
$$
\intf{v, v\sprime}=\begin{cases}
-2 & \textrm{if $v= v\sprime$}, \\
\eta(\{v, v\sprime\}) & \textrm{if $v\ne v\sprime$}.
\end{cases}
$$
Let $\Ker \nullintf \subset \Z^V$ denote the submodule 
$\shortset{x\in \Z^V}{\intf{x, y}=0\;\textrm{for all}\; y\in \Z^V}$.
Then $\latgraph{\Gamma}:=\Z^V/\Ker \nullintf$ has 
a natural structure of an even lattice.
\par
Suppose that $Z$ is a $K3$ surface or an Enriques surface
defined over an algebraically closed field.
Let $\LLL$ be a set of smooth rational curves on $Z$.
Then the mapping $C\mapsto [C]$ embeds 
$\LLL$ into the N\'eron-Severi lattice $\NS{Z}$ of $Z$.
The \emph{dual graph} of $\LLL$ is the graph $(\LLL, \eta)$,
where $\eta(\{C_1, C_2\})$ is 
the intersection number of two distinct  curves $C_1, C_2\in \LLL$.
By abuse of notation, we sometimes use $\LLL$ to denote 
the dual graph $(\LLL, \eta)$ or the image of the embedding $\LLL\inj \NS{Z}$.
Then the  even lattice $\latgraph{\LLL}$ constructed from the dual graph of $\LLL$
is canonically identified with the sublattice of $\NS{Z}$
generated by $\LLL\subset \NS{Z}$,
because every smooth rational curve on $Z$ has self-intersection number $-2$.
\begin{example}
Let $\Gamma$ be the graph given by Figure~\ref{fig:Enrstar}.
Then $\gen{\Gamma}$ is an even  hyperbolic lattice of rank $10$ with $\discg{\gen{\Gamma}}\cong (\Z/2\Z)^2$.
Since the N\'eron-Severi lattice of  an Enriques surface  is unimodular of rank $10$, 
the classes of $20$ smooth rational curves on $\YIV{p}$
generate a sublattice of index $2$ in the N\'eron-Severi lattice.
\end{example}
\subsection{$\QP$-graph}
We introduce the notion of \emph{$\QP$-graphs},
where $\QP$ stands for a \emph{quadruple covering of the Petersen graph}.
In the following, 
a \emph{quadrangle} means the simple graph
$\mystruthd{15pt}{12pt}\raise -.25cm \hbox{
\setlength\unitlength{.6truecm}
\begin{picture}(1,1)(.2,.0)
\put(0,0){\circle{.3}} 
\put(0,1){\circle{.3}} 
\put(1,0){\circle{.3}} 
\put(1,1){\circle{.3}} 
\put(1,1){\circle{.3}} 
\put(.15,0){\line(1,0){0.7}}
\put(.15,1){\line(1,0){0.7}}
\put(0,.15){\line(0,1){0.7}}
\put(1,.15){\line(0,1){0.7}}
\end{picture}
}$.
\begin{definition}\label{def:QPG}
A \emph{$\QP$-graph} is a pair $(\QPG, \gamma)$
of a simple graph $\QPG=(V_{\QPG}, E_{\QPG})$
and a map $\gamma\colon \QPG\to \PG$
to the Petersen graph with the following properties.
\begin{enumerate}[{\rm (i)}]
\item The map $\gamma_{V}\colon V_{\QPG}\to V_{\PG}$ is surjective,
and every fiber of $\gamma_{V}$ is of size $4$.
\item
For any edge $e$
of $\PG$, the subgraph 
$(\gamma_V\inv (e),  \gamma_E\inv (\{e\}))$
of $\QPG$ is isomorphic to the disjoint union of two quadrangles.
$$
\includegraphics{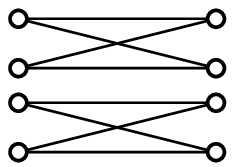}%
$$
\item
Any two distinct quadrangles in $\QPG$ have at most one common  vertex.
\end{enumerate}
A map $\gamma\colon \QPG\to \PG$ satisfying conditions (i)-(iii) is 
called a \emph{$\QP$-covering map}.
Two $\QP$-graphs $(\QPG, \gamma)$ and $(\QPG\sprime, \gamma\sprime)$
are said to be \emph{isomorphic} if there exists an isomorphism $h\colon \QPG\to \QPG\sprime$
such that $\gamma\sprime\circ h=\gamma$.
\end{definition}
%
%We first prove the following purely graph-theoretic proposition:
%
\begin{proposition}\label{prop:2QPGs}
Up to isomorphism,
there  exist exactly two $\QP$-graphs $(\QPG_0, \gamma_0)$ and $(\QPG_1, \gamma_1)$.
The even lattices $\latgraph{\QPG_0}$ and $\latgraph{\QPG_1}$ are hyperbolic
of rank $20$.
The discriminant group $\discg{\latgraph{\QPG_0}}$ of $\latgraph{\QPG_0}$  is isomorphic to $(\Z/2\Z)^2$,
whereas 
$\discg{\latgraph{\QPG_1}}$  
is isomorphic to $(\Z/4\Z)^2$.
\end{proposition}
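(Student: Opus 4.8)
The plan is to first reduce condition (iii) to a purely local statement, then classify the resulting gluing data by a gauge-theoretic argument, and finally read off the lattice invariants from the two graphs produced.

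\emph{Local structure.} By (i) each fiber $F_v=\gamma_V\inv(v)$ has four elements, and by (ii) the two quadrangles over an edge $e=\{u,v\}$ of $\PG$ are two copies of $K_{2,2}$; each splits $F_u$ and $F_v$ into two pairs and joins them completely. Thus over $e$ the data amount to a partition of $F_u$ into two pairs (a \emph{perfect matching} $M_u(e)$), a matching $M_v(e)$ of $F_v$, and a bijection between the two pairs on each side, i.e.\ a single bit $t_e\in\F_2$. I identify each four-element set $F_v$ with the affine plane $\F_2^2$; then its three perfect matchings correspond to the three nonzero \emph{directions} $\alpha\in\F_2^2\setminus\{0\}$ (pairs $=$ cosets of $\gen{\alpha}$), and, crucially, $\mathrm{Sym}(F_v)=\mathrm{AGL}_2(\F_2)$, so every relabelling is affine. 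Two quadrangles can share a vertex only when the underlying edges of $\PG$ share a vertex $v$, in which case the shared vertices lie in $F_v$; since cosets of two distinct directions meet in at most one point, while two edges inducing the \emph{same} matching would share a full pair, condition (iii) is equivalent to requiring that at each vertex $v$ the three incident edges induce the three distinct matchings of $F_v$. Note there is \emph{no} constraint linking $M_u(e)$ to $M_v(e)$ across an edge.

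\emph{Classification.} After these identifications, a $\QP$-graph is the data of, for each vertex $v$, a bijection $d_v$ from the three edges at $v$ to $\F_2^2\setminus\{0\}$, together with a twist $t_e\in\F_2$ for each edge, and two such data give isomorphic $\QP$-graphs iff they differ by the gauge group $\prod_v\mathrm{AGL}_2(\F_2)$. The linear parts $\GL_2(\F_2)=\SSSS_3$ act simply transitively on the bijections $d_v$, so I may normalize every $d_v$ to one fixed standard assignment; as a linear map fixes the origin, it preserves the coset-labellings and hence fixes every $t_e$. The residual gauge group is then $\prod_v\F_2^2$ (the translations), acting on the twist vector $t\in\F_2^{E}=\F_2^{15}$: translation by $w$ at $v$ flips exactly those $t_e$ with $e\ni v$ and $w\notin\gen{d_v(e)}$. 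The key computation is that the image $D\subseteq\F_2^{15}$ of this action has codimension one: a vector $x$ is orthogonal to $D$ iff $x$ is constant on the three edges at every vertex, and since $\PG$ is connected this forces $x\in\{0,\mathbf{1}\}$, so $\dim D=14$. Hence the twists fall into exactly $2^{15}/2^{14}=2$ orbits, distinguished by the gauge-invariant total parity $\sum_{e}t_e\in\F_2$, and both values occur because the twists are otherwise unconstrained. This proves there are exactly two $\QP$-graphs.

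\emph{The lattices.} Each vertex of $\QPG_i$ has degree $3\cdot 2=6$, so the Gram matrix is $A-2I$ with $A$ the adjacency matrix of a $6$-regular graph on $40$ vertices. I would then compute the rank and signature: the radical is $20$-dimensional, so $\latgraph{\QPG_i}$ has rank $20$, and exactly one eigenvalue exceeds $2$ (the Perron eigenvalue $6$), giving signature $(1,19)$, i.e.\ hyperbolic. Finally I would compute the discriminant group $\discg{\latgraph{\QPG_i}}=L\dual/L$ from the explicit Gram matrix of a basis, obtaining $(\Z/2\Z)^2$ for one graph and $(\Z/4\Z)^2$ for the other. Since these groups differ, the two $\QP$-graphs are non-isomorphic even as abstract graphs, which is consistent with (and re-confirms) the count of two.

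\emph{Main obstacle.} The conceptual core, namely the codimension-one computation that yields exactly two classes, is clean; the labor lies in the last step, matching the combinatorial $\F_2$-parity invariant to the lattice dichotomy $(\Z/2\Z)^2$ versus $(\Z/4\Z)^2$. This requires writing the two graphs down explicitly and computing their discriminant forms, and the fact that the two parities produce genuinely \emph{different} discriminant groups (rather than isometric lattices) is the crux, where an explicit, computer-assisted calculation seems unavoidable. A minor point to verify along the way is the connectivity of each $\QPG_i$, needed both for the codimension-one step and for the Perron--Frobenius description of the signature.
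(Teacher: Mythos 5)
Your classification argument is correct, and it reaches the paper's parity dichotomy by a genuinely different mechanism. The paper also reduces everything to a $\Z/2\Z$-invariant, but it stores the residual data at the \emph{vertices} of $\PG$: at each vertex it records which of two $\SSSS_4$-orbits ($o_1$ or $o_2$) the ordered triple of ``ordered matchings'' $[\delta_1,\delta_2,\delta_3]$ on the three incident edges lies in, checks by inspection three orbit facts, and shows that a flip move across an edge changes the orbit at both endpoints, so only $|\psi\inv(o_1)|\bmod 2$ survives. You instead normalize the vertex data away entirely, using $\mathrm{Sym}(F_v)\cong\mathrm{AGL}_2(\F_2)$ and the simple transitivity of $\GL_2(\F_2)$ on the bijections $d_v$, and push all residual freedom into edge twists $t_e\in\F_2$; your key lemma is then the linear-algebra statement that the translation-gauge image $D\subset\F_2^{15}$ has codimension one, with $D^\perp=\{0,\mathbf{1}\}$ forced by connectivity of $\PG$. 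The two bookkeepings are dual (your total twist parity $\sum_e t_e$ plays exactly the role of the paper's vertex-orbit parity), and yours is arguably more systematic: it exhibits the invariant as a holonomy-type class rather than relying on ad hoc orbit facts verified case by case. For the lattice-theoretic assertions (rank $20$, hyperbolicity, and the discriminant groups $(\Z/2\Z)^2$ versus $(\Z/4\Z)^2$) you and the paper are in the same position: both defer to explicit computation on the two $40$-vertex graphs, and your outline of that computation (spectrum of $A-2I$, simplicity of the Perron eigenvalue for a connected $6$-regular graph, Smith normal form for the discriminant) is the right one, with connectivity correctly flagged as a needed check.

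One detail you must patch before the argument is complete: condition (iii) quantifies over \emph{all} quadrangles of $\QPG$, not only those lying over single edges of $\PG$. Since $\gamma$ is a map of graphs and $\PG$ has girth $5$, a quadrangle of $\QPG$ maps onto either an edge or a path of length two of $\PG$ (a closed walk of length $4$ in $\PG$ must backtrack); your local analysis treats only the first kind. The second, ``skew'' kind is also excluded by your distinct-matchings condition: a $4$-cycle $a$--$b$--$c$--$d$ with $a\in F_u$, $b,d\in F_v$, $c\in F_w$, $u\ne w$, forces $\{b,d\}$ to be simultaneously a pair of $M_v(\{u,v\})$ and of $M_v(\{v,w\})$, so those two matchings coincide. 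With this one-line addition, your equivalence ``(iii) $\Leftrightarrow$ the three incident edges induce three distinct matchings at every vertex'' holds in both directions; in particular every twist vector genuinely yields a $\QP$-graph, which is exactly what your ``both parities occur'' step needs. (The paper glosses over the same point when it asserts that $(\QPG_\psi,\gamma_\psi)$ is a $\QP$-graph, so this is a shared, easily repaired omission rather than a defect of your approach.)
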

\begin{proof}
We enumerate  all isomorphism classes of $\QP$-graphs. 
Let $\Delta$ be the set of 
ordered pairs $[\{i_1, i_2\}, \{i_3, i_4\}]$
of non-ordered pairs 
of elements of $\{1,2,3,4\}$ such that $\{i_1, i_2, i_3, i_4\}=\{1,2,3,4\}$.
We have $|\Delta|=6$.
Let $\TTT(\Delta)$ be the set of ordered triples
$[\delta_1, \delta_2, \delta_3]$ of elements of $\Delta$ such that, 
if $\mu\ne\nu$,
then $\delta_{\mu}=[\{i_1, i_2\}, \{i_3, i_4\}]$
and $\delta_{\nu}=[\{i\sprime_1, i\sprime_2\}, \{i\sprime_3, i\sprime_4\}]$
satisfy $|\{i_1, i_2\}\cap \{i_1\sprime, i_2\sprime\}|=1$.
%(and hence we also have  $|\{i_1, i_2\}\cap \{i_3\sprime, i_4\sprime\}|=1$).
Then we have $|\TTT(\Delta)|=48$.
The following facts can be easily verified.
\begin{enumerate}[(a)]
\item The natural action on $\TTT(\Delta)$ of the full permutation group $\SSSS_4$ of  $\{1,2,3,4\}$ 
decomposes $\TTT(\Delta)$ into two orbits $o_1$ and $o_2$ of size $24$.
\item For any triple $[\delta_1, \delta_2, \delta_3]\in \TTT(\Delta)$ 
and any permutation $\mu, \nu, \rho$ of $1, 2, 3$,
the triple $[\delta_{\mu}, \delta_{\nu}, \delta_{\rho}]$
belongs to the same orbit as $[\delta_1, \delta_2, \delta_3]$.
\item For $\delta=[\{i_1, i_2\}, \{i_3, i_4\}]\in \Delta$,
we put $\bar{\delta}:=[\{i_3, i_4\}, \{i_1, i_2\}]\in \Delta$.
Then $[\delta_1, \delta_2, \delta_3]\in \TTT(\Delta)$ 
and  $[\delta_1, \delta_2, \bar{\delta}_3]\in \TTT(\Delta)$ belong to different orbits.
\end{enumerate}
Let $\psi$ be a map from the set $V_{\PG}$ of vertices of $\PG$  to the set $\{o_1, o_2\}$ of the orbits.
We construct a $\QP$-graph $(\QPG_{\psi}, \gamma_{\psi})$ 
with the set of vertices 
$$
V_{\QPG}:=V_{\PG}\times \{1,2,3,4\}
$$
as follows.
For each vertex $v\in V_{\PG}$,
we choose an element $[\delta_1, \delta_2, \delta_3]$
from the orbit $\psi(v)$,
 choose an ordering $e_1, e_2, e_3$ 
on the three edges of $\PG$ emitting from $v$,
and assign $\delta_i$ to the pair $(v, e_i)$ for $i=1,2,3$.
Let $e=\{v, v\sprime\}$ be an edge of $\PG$.
Suppose that $\delta=[\{i_1, i_2\}, \{i_3, i_4\}]$ is assigned to $(v, e)$
and $\delta\sprime=[\{i\sprime_1, i\sprime_2\}, \{i\sprime_3, i\sprime_4\}]$
is assigned to $(v\sprime, e)$.
Then the edges of $\QPG_{\psi}$ lying over the edge $e$ of $\PG$  are the following $8$ edges.
$$
\includegraphics{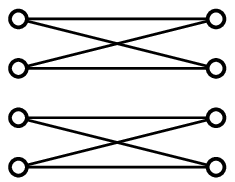}%
\renewcommand{\PStextplot}[3]%
{\rlap{\hskip -85.3727945361pt \hbox{\hskip #1pt%
\raise #2pt \hbox{#3}}}}%%
\PStextplot{-17.07455890722}{2.84575981787}{{\scriptsize $(v, i_4)$}}%%
\PStextplot{-17.07455890722}{17.07455890722}{{\scriptsize $(v, i_3)$}}%%
\PStextplot{-17.07455890722}{31.30335799657}{{\scriptsize $(v, i_2)$}}%%
\PStextplot{-17.07455890722}{45.53215708592}{{\scriptsize $(v, i_1)$}}%%
\PStextplot{79.68127490035999}{2.84575981787}{{\scriptsize $(v\sprime, i\sprime_4)$}}%%
\PStextplot{79.68127490035999}{17.07455890722}{{\scriptsize $(v\sprime, i\sprime_3)$}}%%
\PStextplot{79.68127490035999}{31.30335799657}{{\scriptsize $(v\sprime, i\sprime_2)$}}%%
\PStextplot{79.68127490035999}{45.53215708592}{{\scriptsize $(v\sprime, i\sprime_1)$}}%%

$$
Let $\gamma_{\psi}\colon \QPG_{\psi}\to \PG$ be obtained 
from  the first projection $V_{\QPG}\to V_{\PG}$.
Then $(\QPG_{\psi},  \gamma_{\psi})$ is a $\QP$-graph.
The isomorphism class of $(\QPG_{\psi}, \gamma_{\psi})$ is independent of the choice
of a representative $[\delta_1, \delta_2,\delta_3]$
of each  orbit $\psi(v)$ and the choice of
 the ordering 
of the edges emitting from each vertex of $\PG$.
Indeed, changing these choices merely amounts to relabeling 
the vertices in each fiber of  the first projection $V_{\QPG}\to V_{\PG}$
(see fact (b)).
It is also obvious that every 
 $\QP$-graph
 is isomorphic to 
$(\QPG_{\psi}, \gamma_{\psi})$ 
  for some $\psi\colon V_{\PG}\to \{o_1, o_2\}$.
 \par
 For an orbit $o\in \{o_1, o_2\}$,
 let $\bar{o}$ denote the other orbit; $\{o_1, o_2\}=\{o, \bar{o}\}$.
Let  $\psi\colon V_{\PG}\to \{o_1, o_2\}$ be a map, and 
let $e=\{v, v\sprime\}$ be an edge of $\PG$.
We define $\psi\sprime\colon V_{\PG}\to \{o_1, o_2\}$ by 
$\psi\sprime(v):=\overline{\psi(v)}, \psi\sprime(v\sprime):=\overline{\psi(v\sprime)}$ and 
$\psi\sprime(v\spprime):=\psi(v\spprime)$ for all $v\spprime \in V_{\PG}\setminus\{v, v\sprime\}$.
Then $(\QPG_{\psi}, \gamma_{\psi})$ and
$(\QPG_{\psi\sprime}, \gamma_{\psi\sprime})$ are isomorphic.
(See the picture below and fact (c).)
$$
\includegraphics{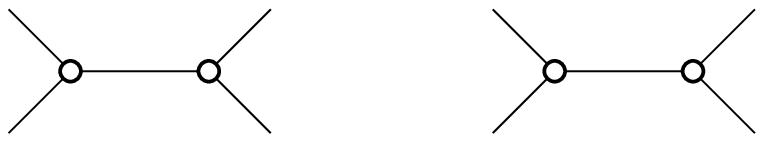}%
\renewcommand{\PStextplot}[3]%
{\rlap{\hskip -318.72509960144pt \hbox{\hskip #1pt%
\raise #2pt \hbox{#3}}}}%%
\PStextplot{77.68924302785099}{21.912350597599}{{\scriptsize $e$}}%%
\PStextplot{217.131474103481}{21.912350597599}{{\scriptsize $e$}}%%
\PStextplot{47.80876494021599}{28.4860557768787}{{\scriptsize $v$}}%%
\PStextplot{187.250996015846}{28.4860557768787}{{\scriptsize $v$}}%%
\PStextplot{104.5816733067225}{28.4860557768787}{{\scriptsize $v\sprime$}}%%
\PStextplot{244.0239043823525}{28.4860557768787}{{\scriptsize $v\sprime$}}%%
\PStextplot{64.7410358565425}{33.864541832653}{{\scriptsize $\delta_3$}}%%
\PStextplot{204.1832669321725}{33.864541832653}{{\scriptsize $\bar{\delta}_3$}}%%
\PStextplot{86.65338645414148}{33.4661354581512}{{\scriptsize $\delta\sprime_3$}}%%
\PStextplot{226.0956175297715}{33.4661354581512}{{\scriptsize $\bar{\delta}\sprime_3$}}%%
\PStextplot{35.856573705162}{39.84063745018}{{\scriptsize $\delta_1$}}%%
\PStextplot{175.298804780792}{39.84063745018}{{\scriptsize $\delta_1$}}%%
\PStextplot{115.537848605522}{39.84063745018}{{\scriptsize $\delta\sprime_1$}}%%
\PStextplot{254.980079681152}{39.84063745018}{{\scriptsize $\delta\sprime_1$}}%%
\PStextplot{35.856573705162}{19.92031872509}{{\scriptsize $\delta_2$}}%%
\PStextplot{175.298804780792}{19.92031872509}{{\scriptsize $\delta_2$}}%%
\PStextplot{115.537848605522}{19.92031872509}{{\scriptsize $\delta\sprime_2$}}%%
\PStextplot{254.980079681152}{19.92031872509}{{\scriptsize $\delta\sprime_2$}}%%

$$
Hence
the isomorphism class of $(\QPG_{\psi}, \gamma_{\psi})$ 
depends only on $|\psi\inv (o_1)|\bmod 2$.
We denote by $(\QPG_0, \gamma_0)$ the $\QP$-graph $(\QPG_{\psi}, \gamma_{\psi})$
with $|\psi\inv (o_1)|\equiv 0\bmod 2$,
and by $(\QPG_1, \gamma_1)$ the $\QP$-graph $(\QPG_{\psi}, \gamma_{\psi})$
with $|\psi\inv (o_1)|\equiv 1\bmod 2$.
Since we have constructed $\QPG_0$ and $\QPG_1$ explicitly,
the assertions on $\latgraph{\QPG_0}$ and $\latgraph{\QPG_1}$ can be proved by direct computation.
\end{proof}
%
%
%The geometric meaning of these propositions will be clear in Sections~\ref{subsec:geomQPcovering}~and~\ref{subsec:AutX0h0}.
%
%
\begin{proposition}\label{prop:AutQPG}
Let $(\QPG, \gamma)$ be a $\QP$-graph.
Each automorphism $g\in \Aut(\QPG)$ maps every fiber of $\gamma_V\colon V_{\QPG}\to V_{\PG}$
to a fiber of $\gamma_V$,
and hence induces  $\bar{g}\in\Aut(\PG)$ such that
$\bar{g}\circ \gamma=\gamma\circ g$.
The mapping $g\mapsto \bar{g}$ 
gives a surjective homomorphism
$$
\Aut(\QPG)\to \Aut(\PG)\cong \SSSS_5, 
$$
and its kernel is isomorphic to $(\Z/2\Z)^6 $.
\end{proposition}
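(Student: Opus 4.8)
The plan is to prove the structural statements first and defer the two numerical claims (surjectivity and the order of the kernel) to the end. The heart of the matter is that the fibers of $\gamma_V$ can be read off from the \emph{abstract} graph $\QPG$, so that every $g\in\Aut(\QPG)$ is forced to permute them. The crucial point is the following characterization of quadrangles: \emph{the $4$-cycles of $\QPG$ are exactly the quadrangles lying over the edges of $\PG$}. Granting this, two vertices of $\QPG$ lie in a common fiber of $\gamma_V$ if and only if they are non-adjacent yet contained in a common $4$-cycle. Indeed, a quadrangle over an edge $\{v,v\sprime\}$ is a complete bipartite graph between its two vertices over $v$ and its two vertices over $v\sprime$, so two of its vertices are non-adjacent precisely when they lie over the same endpoint; conversely, for any two distinct vertices over a common $v$ there is, by the defining condition of $\TTT(\Delta)$, exactly one of the three edges at $v$ whose attached $2{+}2$ partition pairs them, hence exactly one quadrangle containing both. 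Thus the fiber through a vertex $w$ is recovered intrinsically as $w$ together with the three vertices that are non-adjacent to $w$ but share a $4$-cycle with it.

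To establish the characterization I would use that the Petersen graph has girth $5$. Every edge of $\QPG$ joins vertices lying over \emph{adjacent} vertices of $\PG$, so the image under $\gamma_V$ of a $4$-cycle of $\QPG$ is a closed walk of length $4$ in $\PG$; as $\PG$ has no cycle of length $\le 4$, such a walk must have the form $a\to b\to a\to d\to a$ for a vertex $a$ and neighbors $b,d$ of $a$ (possibly $b=d$). If $b=d$, the $4$-cycle lies over the single edge $\{a,b\}$, and since the two quadrangles over that edge are vertex-disjoint it must be one of them. If $b\ne d$, then the two vertices of the $4$-cycle lying over $a$ are simultaneously the $a$-side of a quadrangle over $\{a,b\}$ and the $a$-side of a quadrangle over $\{a,d\}$, i.e.\ a common block of the two partitions attached to these edges; but by the $\TTT(\Delta)$-condition the three edges at $a$ carry the three \emph{distinct} $2{+}2$ partitions of a $4$-set, and distinct such partitions share no block, a contradiction. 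This proves the claim. Consequently every $g\in\Aut(\QPG)$ permutes fibers and sends quadrangles to quadrangles; since a quadrangle lies over an edge, the induced permutation $\bar g$ of $V_{\PG}$ preserves edges, so $\bar g\in\Aut(\PG)$ and $\bar g\circ\gamma=\gamma\circ g$. That $g\mapsto\bar g$ is a homomorphism is immediate from composition.

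For surjectivity I would use the functoriality of the construction $\psi\mapsto(\QPG_\psi,\gamma_\psi)$ from the proof of Proposition~\ref{prop:2QPGs}. Writing $\QPG=\QPG_\psi$, a given $\bar g\in\Aut(\PG)\cong\SSSS_5$ produces the datum $\psi\sprime:=\psi\circ\bar g\inv$, and matching the local model at each vertex $v$ of $\QPG_\psi$ with the one at $\bar g(v)$ of $\QPG_{\psi\sprime}$ (they agree because $\psi\sprime(\bar g(v))=\psi(v)$) yields an isomorphism $\Phi\colon\QPG_\psi\to\QPG_{\psi\sprime}$ lying over $\bar g$. Since $\bar g$ is a bijection of $V_{\PG}$, the parity $|\psi\inv(o_1)|\bmod 2$ is unchanged, so by Proposition~\ref{prop:2QPGs} there is an isomorphism $\Psi\colon\QPG_{\psi\sprime}\to\QPG_\psi$ of $\QP$-graphs, i.e.\ lying over the identity. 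Then $\Psi\circ\Phi\in\Aut(\QPG_\psi)$ maps to $\bar g$, so the homomorphism is surjective.

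Finally I would compute the kernel $K$, consisting of the $g$ fixing every fiber. Such a $g$ must preserve the $2{+}2$ partition of the fiber over $v$ attached to each of the three edges at $v$, and the permutations of a $4$-set preserving all three of its $2{+}2$ partitions form the Klein four-group, acting by ``translations''; identifying it with $(\Z/2\Z)^2$, write $g$ over $v$ as translation by $t_v$. The effect of $t_v$ on the two blocks of the partition attached to an edge $e$ at $v$ is recorded by the nontrivial character $\chi_e\colon(\Z/2\Z)^2\to\Z/2\Z$ whose kernel is the corresponding order-$2$ subgroup, and matching the two quadrangles over $e=\{v,v\sprime\}$ forces $\chi_e(t_v)=\chi_e\sprime(t_{v\sprime})$. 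Putting $s(v,e):=\chi_e(t_v)$ and using that the three nontrivial characters of $(\Z/2\Z)^2$ sum to zero, the edge conditions make $s$ depend only on the edge, giving a labelling $x_e\in\Z/2\Z$ of the edges of $\PG$ subject to $\sum_{e\ni v}x_e=0$ at every vertex $v$ --- that is, an element of the cycle space of $\PG$ over $\F_2$. As composition of translations corresponds to addition of the $t_v$, this identifies $K$ with the cycle space of the Petersen graph, of dimension $|E_{\PG}|-|V_{\PG}|+1=15-10+1=6$, whence $K\cong(\Z/2\Z)^6$. I expect the main obstacle to be the intrinsic characterization of the fibers, namely ruling out ``accidental'' $4$-cycles; this is precisely where the girth-$5$ property of $\PG$ is essential. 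The identification of $K$ with the cycle space, though the most pleasant step, is then routine bookkeeping once the kernel elements have been recognized as translations.
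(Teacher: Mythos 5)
Your proof is correct, and for the quantitative part it takes a genuinely different route from the paper. For the first assertion you do essentially what the paper does: characterize the fibers of $\gamma_V$ intrinsically, by showing that every $4$-cycle of $\QPG$ is one of the designated quadrangles lying over an edge of $\PG$, so that two vertices share a fiber exactly when they are non-adjacent but lie on a common $4$-cycle. In fact you are more thorough than the paper here: its one-line justification (``since $\PG$ contains no quadrangle'') tacitly skips the case where a $4$-cycle of $\QPG$ projects to a path of length two in $\PG$, which you exclude by the pairwise distinctness of the three $2{+}2$ partitions attached to the edges at a vertex (one could equally invoke condition (iii) of Definition~\ref{def:QPG}; your appeal to the $\TTT(\Delta)$-condition is legitimate because Proposition~\ref{prop:2QPGs} lets you assume $\QPG=\QPG_\psi$). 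The real divergence is in proving surjectivity of $\Aut(\QPG)\to\Aut(\PG)$ and computing the kernel: the paper settles both by listing all elements of $\Aut(\QPG)$ by computer, whereas you argue conceptually --- surjectivity by transport of structure (replacing $\psi$ by $\psi\circ\bar g\inv$ does not change the parity $|\psi\inv(o_1)|\bmod 2$, so Proposition~\ref{prop:2QPGs} supplies an isomorphism over the identity that closes the triangle), and the kernel by showing that a fiber-preserving automorphism acts on each fiber as a translation in the Klein four-group, with the matching conditions along the edges of $\PG$ identifying the kernel with the cycle space of $\PG$ over $\F_2$, of dimension $15-10+1=6$. The paper's enumeration buys brevity and, as a by-product, the explicit list of all $7680$ automorphisms, which is reused later (for instance in the proofs of Propositions~\ref{prop:GalGal} and~\ref{prop:GalAutX0h0}); your argument buys a computer-free proof that explains the answer structurally --- $\SSSS_5$ appears because symmetries of $\PG$ cannot change the parity invariant, and $(\Z/2\Z)^6$ is precisely the $\F_2$-cycle space of the Petersen graph --- and it would adapt to analogous quadruple covers of other trivalent graphs of girth at least five.
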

\begin{proof}
Since $\PG$ does not contain a quadrangle, 
every quadrangle of $\QPG$ is mapped to  an edge of $\PG$ by $\gamma$.
Hence two distinct  vertices $v, v\sprime$ of $\QPG$ are mapped to the same vertex of $\PG$ by $\gamma$
if and only if $\{v, v\sprime\}$ is not an edge of $\QPG$ 
and there exists a quadrangle of $\QPG$ containing $v$ and $v\sprime$.
Thus the first assertion follows.
We  make the  complete list of elements of $\Aut(\QPG)$ by computer, and verify 
the  assertion on 
$\Aut(\QPG)\to \Aut(\PG)$.
\end{proof}
\begin{corollary}\label{cor:uniqueQPgamma}
A $\QP$-covering map $\gamma\colon \QPG\to \PG$ from the graph $\QPG$  is 
unique up to the action of $\Aut(\PG)$.
\qed
\end{corollary}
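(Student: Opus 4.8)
The plan is to show that a $\QP$-covering map $\gamma\colon \QPG\to \PG$ is, up to an automorphism of $\PG$, recoverable from the underlying graph $\QPG$ alone. The crucial observation is already recorded in the proof of Proposition~\ref{prop:AutQPG}: since $\PG$ contains no quadrangle, two distinct vertices $v, v\sprime$ of $\QPG$ satisfy $\gamma_V(v)=\gamma_V(v\sprime)$ if and only if $\{v, v\sprime\}$ is not an edge of $\QPG$ and some quadrangle of $\QPG$ contains both $v$ and $v\sprime$. The right-hand condition refers only to the graph $\QPG$ and not to $\gamma$. Hence the partition of $V_{\QPG}$ into the fibers of $\gamma_V$ is an intrinsic invariant of $\QPG$, independent of which $\QP$-covering map one chooses.

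First I would fix two $\QP$-covering maps $\gamma, \gamma\sprime\colon \QPG\to \PG$. By the previous paragraph they induce the same partition of $V_{\QPG}$ into four-element fibers, so there is a unique bijection $\bar h\colon V_{\PG}\to V_{\PG}$ with $\gamma\sprime_V=\bar h\circ \gamma_V$. It then remains to check that $\bar h$ respects edges. Let $\{a, b\}$ be an edge of $\PG$. By condition (ii) of Definition~\ref{def:QPG}, the preimage $\gamma_V\inv(\{a, b\})$ supports a quadrangle, and every edge $\{v, v\sprime\}$ of such a quadrangle joins a vertex over $a$ to a vertex over $b$; say $\gamma_V(v)=a$ and $\gamma_V(v\sprime)=b$. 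Since $\gamma\sprime$ is a map of graphs, $\{\bar h(a), \bar h(b)\}=\{\gamma\sprime_V(v), \gamma\sprime_V(v\sprime)\}$ is an edge of $\PG$. Thus $\bar h$ sends edges to edges, and applying the same argument with the roles of $\gamma$ and $\gamma\sprime$ exchanged shows that $\bar h\inv$ does too, so $\bar h\in \Aut(\PG)\cong \SSSS_5$.

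Finally, because $\PG$ and $\QPG$ are simple graphs, a map of graphs is determined by its action on vertices, so the relation $\gamma\sprime_V=\bar h\circ \gamma_V$ already forces $\gamma\sprime=\bar h\circ \gamma$ as $\QP$-covering maps; this is exactly the assertion that the two maps differ by the action of $\Aut(\PG)$. The only genuinely non-formal input is the intrinsic characterization of the fibers, and I expect this to be the main point to get right; but it is precisely what was established while proving Proposition~\ref{prop:AutQPG}, so no new work beyond invoking that characterization should be needed. (One could alternatively derive the statement by combining the surjectivity of $\Aut(\QPG)\to \Aut(\PG)$ from Proposition~\ref{prop:AutQPG} with the intrinsic description of the fibers, but the direct argument above is more transparent.)
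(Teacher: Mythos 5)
Your proof is correct and follows essentially the same route the paper intends: the corollary is stated as an immediate consequence of Proposition~\ref{prop:AutQPG}, whose proof establishes exactly the intrinsic characterization of the fibers (two vertices lie in the same fiber iff they are non-adjacent and lie on a common quadrangle) that you invoke. Your write-up merely makes explicit the remaining routine steps (constructing $\bar h$ and checking it preserves edges in both directions), so no new ideas beyond the paper's are involved.
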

\subsection{The configurations $\LLL_{40}$ and $\LLL_{112}$}\label{subsec:LLL40}
In this section,
following the argument of  Shioda~\cite{ShiodaManifoldsTokyo}, 
we describe the N\'eron-Severi lattices $\NS{0}$ of $X_0$ and $\NS{3}$ of $X_3$,
and investigate   the embedding $\rho\colon \NS{0}\inj \NS{3}$
induced by the specialization of $X_0$ to $X_3$.
%We mainly follow the argument of Shioda~\cite{ShiodaManifoldsTokyo}.
%A more explicit description of the specialization
%will be given in Section~\ref{subsec:geomQPcovering}.
\par
By Theorem~\ref{thm:Shioda}~(1),
we have a distinguished set of 
$$
6\times 4+4^2=40
$$
smooth rational curves on $X_p$,
where the $6\times 4$ curves 
are the  irreducible components of the $6$ singular fibers of $\sigma\colon X_p\to \P^1$
and the $4^2$ curves are the torsion sections of the Mordell-Weil group.
We denote the configuration of these smooth rational curves by $\LLL_{40, p}$, or 
simply by $\LLL_{40}$.
The specialization of $X_0$ to $X_p$ gives a  bijection 
from $\LLL_{40, 0}$  to $\LLL_{40, p}$,
because the specialization preserves the elliptic  fibration $\sigma\colon X_p\to \P^1$
and its zero section.
This bijection is obviously 
compatible with the specialization homomorphism $S_0\to S_p$.
\par 
The set of lines on the Fermat quartic surface $F_3$ in characteristic $3$
has been studied classically by Segre~\cite{Segre}.
The surface $F_3\subset \P^3$ contains exactly $112$ lines,
and every line on $F_3$ is defined over 
the finite field $\F_9$.
We denote by $\LLL_{112}$ the set of these lines.
We  can easily  make the list of defining equations of all lines on $F_3$,
and calculate the dual graph of $\LLL_{112}$.
It is also known~(\cite{KondoShimadaFQ3}) that the classes of 
$22$ lines appropriately chosen from $\LLL_{112}$  form a basis of  $\NS{F_3}\cong \NS{3}$.
Fixing a basis of  $\NS{3}$,
we can express all classes of lines as integer vectors of length $22$
(see~\cite{thecompdata}).
\par
We show that the specialization of $X_0$ to $X_3\cong F_3$
induces an embedding 
$$
\rho_{\LLL}\colon \LLL_{40}\inj \LLL_{112}
$$
of configurations. 
We recall  the construction of the isomorphism $X_3\cong F_3$ by
Shioda~\cite{ShiodaManifoldsTokyo}.
Let 
$\sigma_F\colon F_3 \to \P^1$
be the morphism 
defined by
\begin{equation}\label{eq:sigmaF}
\sigma_F\;\colon\; [x_1: x_2: x_3: x_4]\;\mapsto\; [x_3^2-  i\,x_4^2: x_1^2+i \,x_2^2]=[-x_1^2+i\,x_2^2: x_3^2+i\, x_4^2], 
\end{equation}
where $i=\sqrt{-1}\in \F_9$. 
The generic fiber of $\sigma_F$ is a curve of genus $1$, and 
$\sigma_F$ has a section (see the next paragraph).
Hence the generic fiber of $\sigma_F$ is isomorphic to its Jacobian,
which is defined by the equation~\eqref{eq:theellfib4} by 
the result of Ba\v smakov and Faddeev~\cite{BF1959}. 
Therefore  $\sigma_F\colon F_3\to \P^1$
is isomorphic to $\sigma\colon X_3\to \P^1$ over $\P^1$.
\begin{remark}
In characteristic $0$,
the morphism~\eqref{eq:sigmaF} with $i\in \C$ from the Fermat quartic surface to $\P^1$  has no sections.
\end{remark}
Using the defining equations of lines and the vector representations of their classes,
we confirm the following facts.
These facts make the isomorphism between $\sigma_F\colon F_3\to \P^1$
and $\sigma\colon X_3\to \P^1$  over $\P^1$ more explicit.
There exist exactly $6\times 4$ lines on $F_3$ that are contracted to points by $\sigma_F$.
These $24$ lines  form, of course, a configuration of $6$ disjoint quadrangles.
Moreover, there  exist exactly $64$ lines on $F_3$ that are mapped to $\P^1$ isomorphically by $\sigma_F$.
Let $z_F\in \LLL_{112}$ be one of these $64$ sections of $\sigma_F$.
To be explicit, we choose the following line as $z_F$. (See~Remark in Section 4 of \cite{ShiodaManifoldsTokyo}):
\begin{equation}\label{eq:thez}
x_1+i\, x_3- x_4=x_2+x_3 -i \,x_4=0.
\end{equation}
Let $\MW(\sigma_F, z_F)$ denote  the Mordell-Weil group of $\sigma_F\colon F_3\to \P^1$
with the zero section $z_F$, and 
let $\Triv (\sigma_F, z_F)$ be the sublattice of $\NS{3}$ generated by the classes of  the zero section $z_F$
and the $24$ lines 
in the singular fibers of $\sigma_F$.
(This lattice is called the \emph{trivial sublattice} of the Jacobian fibration 
$(\sigma_F, z_F)$ in the theory of Mordell-Weil lattices~\cite{ShiodaMWL}.)
Let $\Triv^{-} (\sigma_F, z_F)$ denote the primitive closure
of $\Triv (\sigma_F, z_F)$ in $\NS{3}$.
By~\cite{ShiodaMWL}, we have a canonical isomorphism
\begin{equation}\label{eq:MWtor}
\Triv^{-} (\sigma_F, z_F)/\Triv (\sigma_F, z_F)\;\cong\; 
\textrm{the torsion part of $\MW(\sigma_F, z_F)$.}
\end{equation}
Therefore a section $s\colon \P^1\to F_3$ of $\sigma_F$ is 
a torsion element  of $\MW(\sigma_F, z_F)$
if the class of  $s$ belongs to $\Triv^{-} (\sigma_F, z_F)$.
By this criterion,
we find  $16$ lines among  the $64$ sections of $\sigma_F$
that form the torsion part of $\MW(\sigma_F, z_F)$.
Thus we obtain the configuration $\LLL_{40, 3}$ on  $X_3$ as a sub-configuration
of $\LLL_{112}$.
Combining this embedding $\LLL_{40, 3}\inj \LLL_{112}$ with the bijection  
 $\LLL_{40}=\LLL_{40, 0}\cong \LLL_{40, 3}$
induced by specialization of $X_0$ to $X_3$, 
we obtain the  embedding $\rho_{\LLL}\colon \LLL_{40}\inj \LLL_{112}$
induced by the specialization of $X_0$ to $X_3$.
\par
The dual graph of $\LLL_{40}$
is now calculated explicitly.  
Hence we can prove the following by a direct computation.
\begin{proposition}\label{prop:LLL40QPG1}
The dual graph of $\LLL_{40}$ is isomorphic to the $\QP$-graph $\QPG_1$.
\qed
\end{proposition}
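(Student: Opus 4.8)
The plan is to verify that the dual graph of $\LLL_{40}$ satisfies the defining conditions (i)--(iii) of a $\QP$-graph, identify the covering map to the Petersen graph geometrically, and then distinguish between $\QPG_0$ and $\QPG_1$ using the discriminant-group computation from Proposition~\ref{prop:2QPGs}. First I would set up the covering map $\gamma_V$. By Theorem~\ref{thm:Shioda}~(1), the elliptic fibration $\sigma\colon X_0\to \P^1$ has exactly $6$ singular fibers of type ${\rm I}_4$, each contributing $4$ smooth rational curves, and the torsion Mordell--Weil group $(\Z/4\Z)^2$ gives $16$ sections, for a total of $40=6\times 4+4^2$ curves. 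The $6$ fibers correspond to $\sigma=0,\pm 1,\pm i,\infty$, and I would exhibit an identification of these $6$ fibers with the $6$-element structure underlying the Petersen graph, while the $4$ irreducible components of each ${\rm I}_4$ fiber form a quadrangle; together with the $16$ sections this should organize $V_{\LLL_{40}}$ into the $10$ fibers of size $4$ over $V_{\PG}$ required by condition~(i). (The geometric source of the $10$-vertex Petersen structure is precisely the subject of Section~\ref{sec:S0S3}, and I would lean on that description.)

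Next I would check conditions~(ii) and~(iii). Since each singular fiber is of type ${\rm I}_4$, its four components form a cycle of length $4$, i.e.\ a quadrangle, and the intersection pattern between a section and the components it meets, together with how two fibers' components are joined by sections, should realize the ``disjoint union of two quadrangles over each edge'' of condition~(ii). Here the key input is the explicit dual graph of $\LLL_{40}$, which the excerpt states has been computed directly; so conditions~(ii) and~(iii)---that the edges lying over each Petersen edge form two disjoint quadrangles, and that two distinct quadrangles share at most one vertex---reduce to a finite combinatorial check against that computed graph. This confirms $(\LLL_{40},\gamma)$ is a $\QP$-graph.

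Finally, to pin down \emph{which} of the two $\QP$-graphs it is, I would invoke Proposition~\ref{prop:2QPGs}, which says there are exactly two isomorphism classes $\QPG_0$ and $\QPG_1$, distinguished by their discriminant groups: $\discg{\latgraph{\QPG_0}}\cong(\Z/2\Z)^2$ whereas $\discg{\latgraph{\QPG_1}}\cong(\Z/4\Z)^2$. It then suffices to compute the discriminant group of $\latgraph{\LLL_{40}}$. By the discussion in Section~\ref{subsec:graphsandlattices}, $\latgraph{\LLL_{40}}$ is canonically the sublattice of $\NS{0}$ generated by the $40$ curve classes; since $X_0$ has Picard number $20$ by Theorem~\ref{thm:Shioda}~(2), this lattice has rank $20$, and I would compute its discriminant form from the explicit dual graph. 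The expected outcome is $\discg{\latgraph{\LLL_{40}}}\cong(\Z/4\Z)^2$, matching $\QPG_1$; this is also consistent with the transcendental lattice $\left(\begin{smallmatrix}4&0\\0&4\end{smallmatrix}\right)$ of Theorem~\ref{thm:Shioda}~(3), whose discriminant form is the negative of that of $\NS{0}$.

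The main obstacle I anticipate is not the discriminant computation---which is mechanical once the Gram matrix of $\LLL_{40}$ is fixed---but rather establishing the correct geometric identification of the $10$ fibers of $\gamma_V$ with $V_{\PG}$ in the first place, i.e.\ producing the Petersen structure on the combinatorial data of the six ${\rm I}_4$ fibers and sixteen torsion sections in a way that makes conditions~(ii) and~(iii) transparent. Since the statement is asserted to follow ``by direct computation,'' the cleanest route is to compute the dual graph of $\LLL_{40}$ explicitly and verify the isomorphism with $\QPG_1$ mechanically, rather than by a hand argument; the conceptual content of the identification is deferred to the geometric discussion of Section~\ref{sec:S0S3}.
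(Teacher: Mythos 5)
Your bottom-line strategy coincides with the paper's proof: the paper computes the dual graph of $\LLL_{40}$ explicitly (via the embedding $\rho_{\LLL}\colon \LLL_{40}\inj \LLL_{112}$ into the lines of $F_3$ constructed in Section~\ref{subsec:LLL40}) and then verifies the isomorphism with $\QPG_1$ by direct computation. Your variant of the final identification step is also valid, and arguably cleaner: once the dual graph is known to be a $\QP$-graph, Proposition~\ref{prop:2QPGs} leaves only $\QPG_0$ and $\QPG_1$, and $\QPG_0$ can be excluded without computing the $40\times 40$ determinant. Indeed, by Proposition~\ref{prop:2QPGs} the lattice $\latgraph{\LLL_{40}}$ then has rank $20$, hence is a finite-index sublattice of $\NS{0}$; since $|\discg{\NS{0}}|=16$ by Theorem~\ref{thm:Shioda}~(3) and unimodularity of $H^2(X_0,\Z)$, the order of $\discg{\latgraph{\LLL_{40}}}$ equals $16$ times a square, which is incompatible with $\discg{\latgraph{\QPG_0}}\cong(\Z/2\Z)^2$.

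However, the geometric covering structure you propose is wrong, and it is exactly the step you singled out as the crux. You want the fibers of $\gamma_V$ to be the six ${\rm I}_4$ fibers (one Petersen vertex each) together with four groups of four sections. This cannot happen: in a map of simple graphs, edges are sent to edges, so two adjacent vertices of $\QPG$ can never lie in the same fiber of $\gamma_V$; every fiber of a $\QP$-covering map is an independent set of size $4$, whereas the four components of an ${\rm I}_4$ fiber form a quadrangle. With your grouping, condition (ii) also fails numerically: over an edge of $\PG$ joining a ``fiber vertex'' to a ``section vertex'' one finds only the $4$ section--component incidences rather than the $8$ edges of two disjoint quadrangles, and over an edge joining two ``fiber vertices'' one finds no edges at all, since distinct fibers of $\sigma$ are disjoint. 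The actual structure, visible in Section~\ref{subsec:geomQPcovering}, is different: only the sixteen torsion sections map $4$-to-$1$ onto vertices of $\PG$, namely onto $\bar{t}_1,\bar{l}_4,\bar{l}_5,\bar{l}_6$; each ${\rm I}_4$ fiber maps onto an \emph{edge} of $\PG$, its components splitting $2$--$2$ over the two endpoints, and the six fibers lie in pairs ($\sigma\in\{0,\infty\}$, $\sigma\in\{\pm 1\}$, $\sigma\in\{\pm i\}$) over the three edges $\{\bar{l}_1,\bar{t}_4\}$, $\{\bar{l}_2,\bar{t}_3\}$, $\{\bar{l}_3,\bar{t}_2\}$ coming from the singular fibers of $\varphi_M$. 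They thus supply only $6$ of the $30$ quadrangles of the $\QP$-graph; the remaining $24$ each consist of two sections and two fiber components. Since you ultimately fall back on the mechanical check, your proof can be completed exactly as the paper's is, but the conceptual route as you describe it would fail if carried out.
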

Comparing the ranks and the discriminants of $\latgraph{\LLL_{40}}\cong \latgraph{\QPG_1}$ and $\NS{0}$,
we obtain the following:
\begin{corollary}\label{cor:S0LLL40}
The lattice  $\NS{0}$ is generated by the classes of curves in $\LLL_{40}$. 
\qed
\end{corollary}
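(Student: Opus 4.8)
The plan is to promote the finite-index inclusion $\latgraph{\LLL_{40}} \subseteq \NS{0}$ to an equality by comparing orders of discriminant groups, exactly as the lead-in sentence suggests. First I would record that, since every curve in $\LLL_{40}$ is a smooth rational curve on $X_0$, its class lies in $\NS{0}$, so by the canonical identification of $\latgraph{\LLL}$ with the sublattice generated by $\LLL$, the lattice $\latgraph{\LLL_{40}}$ is a sublattice of $\NS{0}$. By Proposition~\ref{prop:LLL40QPG1} together with Proposition~\ref{prop:2QPGs}, the lattice $\latgraph{\LLL_{40}} \cong \latgraph{\QPG_1}$ is hyperbolic of rank $20$ with $\discg{\latgraph{\LLL_{40}}} \cong (\Z/4\Z)^2$ of order $16$. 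By Theorem~\ref{thm:Shioda}~(2) we have $\rank \NS{0} = 20$ as well, so $\latgraph{\LLL_{40}}$ is a full-rank, hence finite-index, sublattice of $\NS{0}$.

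Next I would compute the order of $\discg{\NS{0}}$ itself. Since $X_0$ is a complex $K3$ surface, $\NS{0}$ and the transcendental lattice $T_0$ are mutually orthogonal primitive sublattices of the even unimodular lattice $H^2(X_0, \Z)$ of signature $(3,19)$, each being the orthogonal complement of the other. By Nikulin's theory of discriminant forms, the orthogonal complement of a primitive sublattice of a unimodular lattice has discriminant group isomorphic to that of the sublattice, so $\discg{\NS{0}} \cong \discg{T_0}$. By Theorem~\ref{thm:Shioda}~(3) we have $T_0 = \begin{pmatrix} 4 & 0 \\ 0 & 4 \end{pmatrix}$, whence $\discg{T_0} \cong (\Z/4\Z)^2$ and therefore $|\discg{\NS{0}}| = 16$.

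Finally I would invoke the standard index–discriminant relation: for a finite-index sublattice $M \subseteq L$ of equal-rank lattices one has $|\discg{M}| = [L:M]^2 \cdot |\discg{L}|$. Applying this to $M = \latgraph{\LLL_{40}}$ and $L = \NS{0}$ yields $16 = [\NS{0} : \latgraph{\LLL_{40}}]^2 \cdot 16$, which forces the index to be $1$; thus $\latgraph{\LLL_{40}} = \NS{0}$, i.e.\ $\NS{0}$ is generated by the classes of the curves in $\LLL_{40}$. The only step needing care is the passage from the transcendental lattice to $|\discg{\NS{0}}|$: this is where one uses the unimodularity of the full $K3$ lattice and Nikulin's isomorphism of discriminant forms, which sidesteps the need for any explicit Gram matrix of $\NS{0}$. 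Everything else is bookkeeping with ranks and orders already supplied by Propositions~\ref{prop:2QPGs} and~\ref{prop:LLL40QPG1} and by Theorem~\ref{thm:Shioda}.
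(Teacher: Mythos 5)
Your proof is correct and follows exactly the paper's (very terse) argument: the paper's proof is the single sentence ``Comparing the ranks and the discriminants of $\latgraph{\LLL_{40}}\cong\latgraph{\QPG_1}$ and $\NS{0}$,'' which is precisely your rank-$20$ comparison plus the index--discriminant relation, with $|\discg{\NS{0}}|=16$ obtained (as you do) from the transcendental lattice of Theorem~\ref{thm:Shioda}~(3) and unimodularity of $H^2(X_0,\Z)$. You have simply made explicit the bookkeeping the paper leaves to the reader.
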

\begin{corollary}\label{cor:rhoS0S3}
The embedding $\rho_{\LLL}\colon \LLL_{40}\inj \LLL_{112}$
induces 
the embedding $\rho\colon \NS{0}\inj \NS{3}$
induced  by the specialization  of $X_0$ to $X_3$.
This embedding $\rho$ is primitive.
\qed
\end{corollary}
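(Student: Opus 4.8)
The plan is to deduce both assertions from Corollary~\ref{cor:S0LLL40} together with the explicit vector data for the lines in $\LLL_{112}\subset\NS{3}$.

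First I would settle the statement that $\rho$ is induced by $\rho_{\LLL}$. By Corollary~\ref{cor:S0LLL40} the classes of the curves in $\LLL_{40}$ generate $\NS{0}$, so any homomorphism out of $\NS{0}$ is determined by its values on $\LLL_{40}$. The specialization homomorphism of Maulik--Poonen~\cite{MP2012} sends the class of a curve on $X_0$ to the class of its specialization on $X_3$, and by construction $\rho_{\LLL}$ records precisely these specializations for the curves of $\LLL_{40}$. Hence $\rho$ is the $\Z$-linear extension of $C\mapsto[\rho_{\LLL}(C)]$, which is what is meant by saying that $\rho$ is induced by $\rho_{\LLL}$. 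Moreover $\rho$ is an isometric embedding: since $\rho_{\LLL}$ is an embedding of configurations it preserves the dual graph, hence induces an isometry $\latgraph{\LLL_{40}}\isom\latgraph{\rho_{\LLL}(\LLL_{40})}\subset\NS{3}$, and the source is $\NS{0}$ by Corollary~\ref{cor:S0LLL40}.

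For primitivity, write $M:=\rho(\NS{0})$ and let $\ol{M}$ denote its primitive closure in $\NS{3}$. Because $\rho$ is isometric, $M\cong\NS{0}\cong\latgraph{\QPG_1}$ by Corollary~\ref{cor:S0LLL40} and Proposition~\ref{prop:LLL40QPG1}, so by Proposition~\ref{prop:2QPGs} we have $\discg{M}\cong(\Z/4\Z)^2$, of order $16$. The standard relation $|\discg{M}|=[\ol{M}:M]^2\,|\discg{\ol{M}}|$ forces $[\ol{M}:M]$ to be a power of $2$ and $\ol{M}/M$ to be an isotropic subgroup of the discriminant form of $M$. I want to stress that primitivity does \emph{not} follow from these abstract invariants: the discriminant form on $\discg{M}\cong(\Z/4\Z)^2$ is the negative of the discriminant form of the transcendental lattice $\mathrm{diag}(4,4)$ of Theorem~\ref{thm:Shioda}(3), and it has the nonzero isotropic element $(2,2)$, so $\NS{0}$ genuinely admits an even overlattice of index $2$. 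Excluding that this overlattice is realized inside $\NS{3}$ is the crux of the argument, and it cannot be carried out without the explicit embedding.

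Finally I would perform the concrete check. Using the length-$22$ integer vectors that represent the classes $[\rho_{\LLL}(C)]$, $C\in\LLL_{40}$, in the fixed basis of $\NS{3}$ (available from the line data of~\cite{thecompdata}), I select $20$ of them forming a basis of $M$ and assemble them into a $20\times 22$ integer matrix $A$. Then $M$ is primitive in $\NS{3}$ if and only if $\NS{3}/M$ is torsion free, equivalently if and only if every nonzero elementary divisor in the Smith normal form of $A$ equals $1$. Computing this Smith normal form in {\tt GAP}~\cite{GAP} shows that all elementary divisors are $1$, so $\ol{M}=M$ and $\rho$ is primitive. The only genuine obstacle is the conceptual one flagged above, namely the competing index-$2$ overlattice permitted by the discriminant form; once the explicit vectors are in hand the verification itself is routine.
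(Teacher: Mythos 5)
Your proposal is correct and follows essentially the same route as the paper: the first assertion comes from the fact that the classes in $\LLL_{40}$ generate $\NS{0}$ (Corollary~\ref{cor:S0LLL40}) together with the construction of $\rho_{\LLL}$ via specialization, and primitivity is settled by an explicit matrix computation with the vector data of~\cite{thecompdata}, which is exactly what the paper invokes. Your extra observation that the discriminant form of $\NS{0}$ has a nonzero isotropic element, so primitivity cannot follow from abstract lattice invariants alone, is a worthwhile clarification but does not change the method.
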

The last assertion follows from the explicit matrix form of the embedding $\rho$
with respect to some bases of $S_0$ and $S_3$ (see~\cite{thecompdata}).
\begin{remark}
The existence of an isomorphism  $X_3\cong F_3$ can be easily seen 
by the following argument.
By~\cite{ShimadaReduction},
we know that $X_3$ is a supersingular $K3$ surface
with Artin invariant $1$,
and hence is isomorphic to $F_3$
by  the uniqueness of a supersingular $K3$ surface
with Artin invariant $1$. 
\end{remark}
%
%\begin{remark}\label{rem:X4422A}
%The extremal rational elliptic surface $X_{[4,4,2,2]}\to \P^1$ in characteristic $3$
%is defined by $y^2=x(x+1)(x+t^2)$ (see Lang~\cite{Lang1991}).
%Hence $\sigma\colon X_3\to \P^1$ is  obtained from $X_{[4,4,2,2]}\to \P^1$
%by a base-change of degree $2$.
%The induced double covering $X_3\to X_{[4,4,2,2]}$ is branched along the two fibers of type $I_2$
%of $X_{[4,4,2,2]}\to \P^1$, 
%and hence the involution of $X_3$ associated with $X_3\to X_{[4,4,2,2]}$
%must fix $4$ smooth rational curves on $X_3$ pointwisely.
%Unfortunately, there exist no involutions in $\PGU(4, \F_9)$
%that fix $4$ lines pointwisely.
%See Remark~\ref{rem:X4422B} on the involution associated with $X_3\to X_{[4,4,2,2]}$.
%\end{remark}
%
\subsection{All embeddings of $ \LLL_{40}$ into $\LLL_{112}$}
The embedding $\rho_{\LLL}\colon \LLL_{40}\inj \LLL_{112}$ constructed 
in the preceding section  depends 
on the choice of $\sigma_F$ and  $z_F$.
In this section, 
we make the complete list of all embeddings $\LLL_{40}\inj \LLL_{112}$.
\par
Let $a\mapsto \bar{a}:=a^3$ denote the Frobenius automorphism of 
the base field $k_3$. 
Then the  projective automorphism group of $F_3\subset \P^3$
is equal to 
$$
\thePGU:=\set{g\in \GL_4(k_3)}{\textrm{${}^{T} g \cdot \bar{g}$ is a scalar matrix}  }/ k_3\sptimes, 
$$
which is of order $13063680$.
We can calculate  the action of $\thePGU$ on $\LLL_{112}$ and on $\NS{3}=\gen{\LLL_{112}}$.
Let $\AAA$ denote the set of all ordered $5$-tuples
$[z, \ell_0, \dots, \ell_3]$ of lines on $F_3$
that form the configuration whose dual graph is as follows.
\begin{equation}\label{eq:alpha}
\raise -.8cm \hbox{
\setlength\unitlength{.6truecm}
\begin{picture}(7,2.9)(0,.6)
\put(1.4,2){\circle{.3}} 
\put(1.3,2.4){$z$}
\put(3,2){\circle{.3}} 
\put(2.6,2.4){$\ell_0$}
\put(5,2){\circle{.3}} 
\put(5.25,2){$\ell_2$}
\put(4,1){\circle{.3}} 
\put(4.27,.8){$\ell_1$}
\put(4,3){\circle{.3}} 
\put(4.27,3){$\ell_3$}
\put(1.55,2){\line(1,0){1.3}}
\put(3.11,2.12){\line(1,1){0.78}}
\put(3.11,1.88){\line(1,-1){0.78}}
\put(4.12,2.88){\line(1,-1){0.78}}
\put(4.12,1.12){\line(1,1){0.78}}
\end{picture}
}
\end{equation}
Note that $\thePGU$ acts on  $\AAA$ naturally.
We have  the following:
\begin{proposition}\label{prop:AAA}
The action of $\thePGU$ on $\AAA$ is simply transitive.
\end{proposition}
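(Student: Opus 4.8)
The plan is to establish the two facts that together amount to simple transitivity: that the action of $G:=\thePGU$ on $\AAA$ is \emph{free} (every stabilizer is trivial), and that $|\AAA|=|G|=13063680$. These suffice: freeness forces every $G$-orbit in $\AAA$ to have cardinality $|G|$, so $|\AAA|$ is a multiple of $|G|$; the equality $|\AAA|=|G|$ then leaves exactly one orbit, so the action is transitive, and being both transitive and free it is simply transitive. I expect the cardinality count, rather than the freeness step, to be the genuine obstacle, since freeness is geometric and uniform whereas transitivity is a global fact resting on the specific $\F_9$-Hermitian geometry of $F_3$.

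For freeness, suppose $g\in G$ fixes a $5$-tuple $[z,\ell_0,\ell_1,\ell_2,\ell_3]\in\AAA$. Then $g$ fixes each of the five lines as a subvariety of $\P^3$, and hence fixes the five marked points $q:=z\cap\ell_0$, $p_{01}:=\ell_0\cap\ell_1$, $p_{12}:=\ell_1\cap\ell_2$, $p_{23}:=\ell_2\cap\ell_3$, $p_{30}:=\ell_3\cap\ell_0$ cut out by the edges of the dual graph~\eqref{eq:alpha}. The three distinct points $q,p_{01},p_{30}$ lie on $\ell_0$, so $g$ acts as the identity on $\ell_0$. Moreover, since the dual graph has no edge $\{\ell_0,\ell_2\}$, the lines $\ell_0$ and $\ell_2$ are skew and hence span $\P^3$; as $p_{01},p_{30}\in\ell_0$ and $p_{12},p_{23}\in\ell_2$ are distinct on each line, the four vertices $p_{01},p_{12},p_{23},p_{30}$ are projectively independent, so $g$ is diagonal in the basis they determine. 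Combined with the triviality of $g$ on $\ell_0$, this reduces the stabilizer to a small torus, and the pendant line $z$ then rigidifies the configuration completely: a direct computation with the explicit $\F_9$-coordinates of the five lines shows that $z$ is in general enough position relative to the quadrangle basis to force $g=1$. Because this argument uses only the combinatorial type~\eqref{eq:alpha}, freeness holds at every point of $\AAA$ simultaneously.

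It remains to compute $|\AAA|$. The plan is to enumerate $\AAA$ from the explicit dual graph of the $112$ lines of $F_3$: first list the $4$-cycles $\ell_0\ell_1\ell_2\ell_3$ among the $112$ lines, then for each attach a pendant line $z$ meeting $\ell_0$, and finally retain only those $5$-tuples realizing \emph{exactly} the adjacencies of~\eqref{eq:alpha}, so that $z$ meets none of $\ell_1,\ell_2,\ell_3$ and $\ell_0\cap\ell_2=\ell_1\cap\ell_3=\varnothing$. The main obstacle here is organizing this enumeration so that it is feasible: a naive search over all $\binom{112}{5}$ labeled tuples is out of reach, so one must exploit the adjacency data of $\LLL_{112}$ to build the admissible tuples incrementally from the quadrangles outward. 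Carrying this out yields $|\AAA|=13063680=|G|$, which by the opening remark completes the proof that $G$ acts simply transitively on $\AAA$.
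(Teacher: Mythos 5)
Your overall route is valid and is genuinely the mirror image of the paper's. The paper proves \emph{transitivity} first, deducing it from three facts about the lines on $F_3$ quoted from~\cite{ShimadaFermat} (each line is disjoint from exactly $81$ others; a disjoint pair has exactly $10$ common transversals; the stabilizer in $\thePGU$ of a disjoint pair surjects onto $\PGL_2(\F_9)$ with kernel of order $2$), and then uses the count $|\AAA|=112\cdot 81\cdot(10\cdot 9)\cdot 16=|\thePGU|$ --- assembled from those same facts --- to conclude that stabilizers are trivial. You instead prove \emph{freeness} first by elementary projective geometry and then use the same numerical coincidence to force a single orbit. Your freeness argument is correct and is in fact cheaper than the paper's transitivity engine: the structure of the stabilizer of a disjoint pair of lines is the delicate input of the paper's proof, and you avoid it entirely. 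The cost is that all global information is concentrated in the equality $|\AAA|=|\thePGU|$, which you propose to obtain by machine enumeration from the dual graph of $\LLL_{112}$ rather than from the structure theory; that is legitimate in this paper's computational context, but note that the paper gets the count essentially for free from the facts it already needed for transitivity.

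Two points as written need attention. First, your freeness paragraph is internally inconsistent: a ``direct computation with the explicit $\F_9$-coordinates of the five lines'' is tied to one particular tuple, while the concluding sentence claims the argument ``uses only the combinatorial type'' and hence is uniform. Either horn can be made to work, but you must choose one. The clean fix is synthetic and uniform: with $g=\mathrm{diag}(a,a,c,d)$ in the basis $p_{01},p_{30},p_{12},p_{23}$, pick a second point $r=[r_1:r_2:r_3:r_4]$ of $z$; the conditions $z\cap\ell_3=\emptyset$ and $z\cap\ell_1=\emptyset$ force $r_3\ne 0$ and $r_4\ne 0$, so $g(r)\in z$ gives $c=d$, and then $z\cap\ell_2=\emptyset$ forces $c=a$, i.e.\ $g=1$. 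Alternatively, triviality of a \emph{single} stabilizer already suffices for your scheme: one free orbit has size $|\thePGU|=|\AAA|$, which is transitivity, and then all stabilizers are conjugate hence trivial --- so the uniformity you worry about is not even needed. Second, the equality $|\AAA|=13063680$ is the load-bearing step and is only asserted; your incremental enumeration would indeed produce it, but until it (or the paper's product formula $112\cdot 81\cdot 10\cdot 9\cdot 16$, with the accompanying check that any two of the ten common transversals are automatically disjoint) is actually carried out, the proof is a promissory note precisely where the paper supplies a derivation.
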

\begin{proof}
By~\cite{ShimadaFermat}, 
we have the following facts.
\begin{enumerate}[(1)]
\item 
Since every line  on $F_3$ is defined over $\F_9$,
the intersection points of 
 $\ell\in \LLL_{112}$ with other lines in $\LLL_{112}$  are  $\F_9$-rational.
For each $\F_9$-rational point $P$ of $\ell$,
there exist exactly three lines in $\LLL_{112} \setminus\{\ell\}$ that intersect $\ell$ at $P$.
Hence there exist exactly $112-3\times 10-1=81$ lines in $\LLL_{112}$ that
are disjoint from $\ell$.
The group $\thePGU$ acts on the set of ordered pairs of
disjoint lines in $\LLL_{112}$.
\item If $\ell_1, \ell_2, \ell_3\in \LLL_{112}$ satisfy
$\intf{\ell_1, \ell_2}=\intf{\ell_2, \ell_3}=\intf{\ell_3, \ell_1}=1$,
then there  exist a plane $\Pi\subset \P^3$ containing  $\ell_1, \ell_2, \ell_3$
and a point $P\in \Pi$ contained in $\ell_1, \ell_2, \ell_3$.
The residual line $\ell_4=(F_3\cap \Pi)-(\ell_1+ \ell_2+ \ell_3)$ also passes through $P$.
\item Let $[\ell_1, \ell_2]$ be an ordered pair of disjoint lines in $\LLL_{112}$.
Then there exist exactly $10$ lines
that intersect both $\ell_1$ and $\ell_2$.
Let $\Stab([\ell_1, \ell_2])$ denote the stabilizer subgroup of $[\ell_1, \ell_2]$
in $\thePGU$.
Then the restriction homomorphism
\begin{equation*}\label{eq:StabKer}
\res_{\ell}\colon \Stab([\ell_1, \ell_2])\to \PGL(\ell_1, \F_9)
\end{equation*}
to the group of linear automorphisms of $\ell_1\cong \P^1$ over $\F_9$ is surjective,
and its kernel is of order $2$.
Let $P$ be an $\F_9$-rational point of $\ell_1$,
and let $m_P, m\sprime_P\in \LLL_{112}$ be the lines that intersect $\ell_1$ at $P$ 
but are disjoint from $\ell_2$.
Then the nontrivial element of  $\Ker (\res_{\ell})$  
exchanges $m_P$ and $m\sprime_P$.
\end{enumerate}
The transitivity of the action of $\thePGU$ on $\AAA$  follows from 
these facts.
Moreover we have
$$
|\AAA|=112\cdot 81\cdot 10\cdot 9\cdot 16=13063680=|\thePGU|,
$$
where the factor $112$ is the number of choices of $\ell_0$ in 
$[z, \ell_0, \dots,  \ell_3]\in \AAA$,
the factor $81$ is the number of choices of $\ell_2$
when $\ell_0$ is given,
the factor $10\cdot 9$ is the number of choices of $\ell_1$ and $\ell_3$
when $\ell_0$ and $\ell_2$ are given,
and the factor $16$ is the number of choices of $z$
for a given quadrangle $[\ell_0, \dots, \ell_3]$.
Therefore the action of $\thePGU$ on $\AAA$  is simply transitive.
\end{proof}
Let $\FFF$ denote the set of sub-configurations of $\LLL_{112}$ isomorphic to $\LLL_{40}$.
Let $\alpha=[z_{\alpha}, \ell_0, \dots, \ell_3]$ be an element of $\AAA$.
Then there exists a unique Jacobian fibration
$$
\sigma_\alpha\colon F_3\to \P^1
$$
with the zero-section $z_{\alpha}$ such that 
$\ell_0+\ell_1+\ell_2+\ell_3$ is a singular fiber of $\sigma_\alpha$.
The Jacobian fibration $(\sigma_F, z_F)$ 
that was used in the construction of  $\rho_{\LLL}$
is obtained as one of the $(\sigma_{\alpha}, z_{\alpha})$.
By Proposition~\ref{prop:AAA},  all Jacobian fibrations
$(\sigma_{\alpha}, z_{\alpha})$ are conjugate under the action of $\thePGU$.
Therefore $(\sigma_{\alpha}, z_{\alpha})$ yields a sub-configuration 
$\LLL_{\alpha}$ of $\LLL_{112}$ isomorphic to $\LLL_{40}$, and 
the map $\alpha\mapsto \LLL_{\alpha}$
gives a surjection $\lambda\colon \AAA\to \FFF$ 
compatible with the action of $\thePGU$.
The size of a fiber of $\lambda$ over $\LLL\sprime\in \FFF$ is
$$
30\times 2\times 16=960,
$$
where the factor $30$ is the number of quadrangles in $\LLL\sprime\cong \LLL_{40}$,
the factor $2$ counts the flipping $\ell_1\leftrightarrow \ell_3$,
and the factor $16$ is the number of choices of the zero-section $z_{\alpha}$.
Thus we obtain the following:
\begin{corollary}\label{cor:13608}
The number of sub-configurations of $\LLL_{112}$
isomorphic to $\LLL_{40}$ is $|\thePGU|/960=13608$, and 
 $\thePGU$ acts on the set of these sub-configurations transitively.
\qed
\end{corollary}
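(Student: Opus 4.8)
The plan is to derive both assertions from the $\thePGU$-equivariant surjection $\lambda\colon\AAA\to\FFF$, combined with the simple transitivity of $\thePGU$ on $\AAA$ established in Proposition~\ref{prop:AAA}. From that proposition I first record $|\AAA|=|\thePGU|=13063680$. Next I note that $\lambda$ is defined on all of $\AAA$ and is onto: each $\alpha=[z_\alpha,\ell_0,\dots,\ell_3]$ determines the Jacobian fibration $(\sigma_\alpha,z_\alpha)$ whose singular fiber $\ell_0+\ell_1+\ell_2+\ell_3$ and $16$ torsion sections assemble into a sub-configuration $\LLL_\alpha\cong\LLL_{40}$, and every $\LLL'\in\FFF$ arises as some $\LLL_\alpha$. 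Since $\thePGU$ permutes the lines of $F_3$ preserving incidence, it carries Jacobian fibrations to Jacobian fibrations, so $\lambda$ is $\thePGU$-equivariant.

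The transitivity claim then comes essentially for free. Given $\LLL_1,\LLL_2\in\FFF$, I choose $\alpha_i\in\lambda\inv(\LLL_i)$ and, by Proposition~\ref{prop:AAA}, an element $g\in\thePGU$ with $\alpha_1^g=\alpha_2$; equivariance of $\lambda$ then forces $\LLL_1^g=\lambda(\alpha_1)^g=\lambda(\alpha_1^g)=\LLL_2$. The same reasoning shows that any $g$ with $\LLL_1^g=\LLL_2$ restricts to a bijection $\lambda\inv(\LLL_1)\isom\lambda\inv(\LLL_2)$, so all fibers of $\lambda$ share a common cardinality $N$, whence $|\FFF|=|\AAA|/N$. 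It remains only to evaluate $N$ on a single fiber.

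For the fiber count I would fix one $\LLL'\in\FFF$ and set up a bijection between $\lambda\inv(\LLL')$ and the internal combinatorial data of $\LLL'$. Because $\LLL'\cong\LLL_{40}$ has dual graph the $\QP$-graph $\QPG_1$ by Proposition~\ref{prop:LLL40QPG1}, and $\QPG_1$ covers the $15$-edge Petersen graph $\PG$ with two quadrangles lying over each edge (condition~(ii) of Definition~\ref{def:QPG}), the configuration $\LLL'$ contains exactly $30$ quadrangles, mutually distinct by condition~(iii). Given $\alpha\in\lambda\inv(\LLL')$, its quadrangle $\{\ell_0,\dots,\ell_3\}$ is one of these $30$, the zero section $z_\alpha$ is one of the $16$ section lines of $\LLL'$, the component $\ell_0$ is forced as the unique fiber component met by $z_\alpha$, $\ell_2$ as the opposite component, and only the labeling of the pair $\{\ell_1,\ell_3\}$ is free. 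This produces the factorization $N=30\times 16\times 2=960$ and hence $|\FFF|=13063680/960=13608$.

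The main obstacle is precisely this fiber count: one must verify that each triple (quadrangle, section line, flip) really yields a tuple in $\AAA$ lying over $\LLL'$ and that distinct triples yield distinct tuples. Concretely, one needs to know that every one of the $30$ quadrangles of $\LLL'$ occurs as a singular fiber of a fibration whose torsion sections are exactly the $16$ section lines, and that all $16$ of these lines are genuinely interchangeable as zero sections. The first point rests on the fact that in an ${\rm I}_4$ fiber a section meets a single component, so that $z_\alpha$ pins down $\ell_0$ and the dual graph~\eqref{eq:alpha} is reproduced exactly; the second rests on the torsion structure $(\Z/4\Z)^2$ identified in~\eqref{eq:MWtor}, which makes the $16$ sections a homogeneous space under translation. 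In practice these incidences are read off the explicitly computed dual graph of $\LLL_{112}$, after which the two divisions above finish the argument.
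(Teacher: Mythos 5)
Your proposal is correct and takes essentially the same route as the paper: the $\thePGU$-equivariant surjection $\lambda\colon \AAA\to\FFF$, the simple transitivity on $\AAA$ from Proposition~\ref{prop:AAA}, and the fiber count $30\times 16\times 2=960$ are exactly the paper's argument, with your proposal merely making explicit the equal-size-of-fibers deduction and the verification points that the paper leaves implicit. One cosmetic slip: $\LLL_\alpha$ consists of the components of \emph{all six} singular fibers of $\sigma_\alpha$ together with the sixteen torsion sections (40 lines in total), not of a single singular fiber plus the sections.
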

\subsection{An elliptic modular surface of level $4$ over a discrete valuation ring}
\label{subsec:geomQPcovering}
Let $R$ be a  discrete valuation ring such that $2\in R\sptimes$ and 
$i=\sqrt{-1}\in R$.
We construct a model of the elliptic modular surface of level $4$ over $R$, 
that is, we perform over $R$  the resolution of the completion of the affine surface defined by~\eqref{eq:theellfib4}.
This construction explains the isomorphism $\LLL_{40}\cong \QPG_1$ of graphs  geometrically.
\par
In this paragraph, all schemes and morphisms are defined  over $R$.
We consider the complete quadrangle on $\P^2$ (Figure~\ref{fig:completequadrangle})
such that each of the triple points $t_1, \dots, t_4$ is an $R$-valued point. 
\begin{figure}
\begin{center}
%\hskip -2cm 
\includegraphics{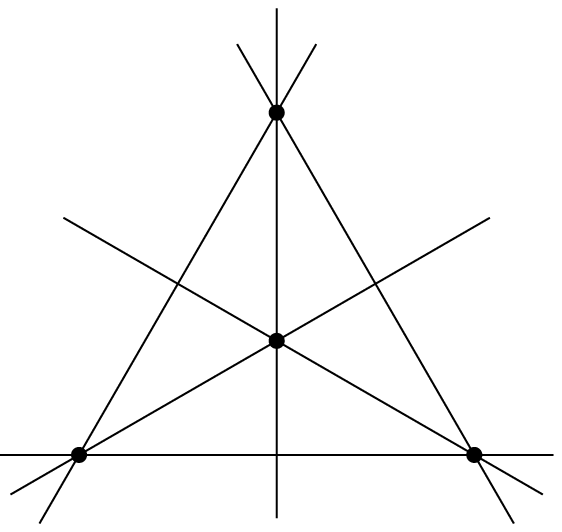}%
\input{completequadrangle.tex}
\end{center}
\caption{Complete quadrangle}\label{fig:completequadrangle}
\end{figure}
Let $M\to \P^2$ be the blow-up of $\P^2$ at $t_1, \dots, t_4$.
Let $\bar{l}_1, \dots, \bar{l}_6$ be 
the strict transforms of the lines $l_1, \dots, l_6$,
and let $\bar{t}_1, \dots, \bar{t}_4$ be 
the exceptional divisors over  $t_1, \dots, t_4$.
It is well-known that these $6+4=10$ smooth rational curves on $M$
form a configuration whose dual graph is the Petersen graph $\PG$.
Let 
\begin{equation}\label{eq:varphiM}
\varphi_M\colon M\to \P^1
\end{equation}
be the fibration induced by
the pencil of lines on $\P^2$ passing through  $t_1$.
(The dependence of the construction on the choice of this $\P^1$-fibration $\varphi_M$ 
will be discussed in Section~\ref{subsec:AutX0h0}.
See Remark~\ref{rem:5fs}.)
Then $\varphi_M$ has exactly three singular fibers 
$\bar{l}_1+\bar{t}_4$, $\bar{l}_2+\bar{t}_3$, $\bar{l}_3+\bar{t}_2$,
and four sections $\bar{t}_1,\bar{l}_4, \bar{l}_5, \bar{l}_6$.
Let $M\sprime\to M$ be the blow-up at the nodes on
$\bar{l}_1+\bar{t}_4$, $\bar{l}_2+\bar{t}_3$, $\bar{l}_3+\bar{t}_2$,
and let $\varphi\sprime_M\colon M\sprime\to \P^1$ 
be the composite of $\varphi_M$ and $M\sprime\to M$.
We choose an affine parameter $\lambda$ on 
the base curve $\P^1$ of $\varphi\sprime_M$ such that
the singular fibers are located over $\lambda=0, 1, \infty$.
Let $ \tilde{M}\sprime\to \P^1$  
be the pull-back of $\varphi_M\sprime\colon M\sprime\to \P^1$  by 
the covering $\P^1\to \P^1$ given by
\begin{equation}\label{eq:lambdasigma}
\sigma\mapsto \lambda=((\sigma+\sigma\inv)/2)^2,
\end{equation}
and let  $\tilde{M}\to \tilde{M}\sprime$ be the normalization of $\tilde{M}\sprime$.
Then $\tilde{M}$ is smooth over $R$, 
and the natural morphism $\tilde\varphi_M\colon \tilde{M}\to \P^1$ to the
$\sigma$-line has exactly  $6$ singular fibers 
over $\sigma=0, \pm 1, \pm i, \infty$.
Each singular fiber is a union of three smooth rational curves
forming the configuration 
$
\setlength\unitlength{.2truecm}
\begin{picture}(8,2)(0,0)
\put(1,.5){\circle{1}}
\put(1.5,.5){\line(1,0){2}}
\put(4,.5){\circle{1}}
\put(4.5,.5){\line(1,0){2}}
\put(7,.5){\circle{1}}
\end{picture},
$ 
the middle of which is with multiplicity $2$.
Let $\tilde{t}_1,\tilde{l}_4, \tilde{l}_5, \tilde{l}_6$ be the pull-backs of 
the sections $\bar{t}_1,\bar{l}_4, \bar{l}_5, \bar{l}_6$ of $\varphi_M$
by $\tilde{M}\to M$.
For a divisor $D$ on $\tilde{M}$,
let $[D]$ denote the class of $D$ in the Picard group
$\Pic\, \tilde{M}$.
Note that, via  $\tilde{M}\to M$, a fiber $F$ of $\varphi_M\colon M\to \P^1$
is pulled back to a sum of two fibers of  $\tilde\varphi_M\colon \tilde{M}\to \P^1$,
and hence the class $[\tilde{F}]$ of the pull-back $\tilde{F}$ of $F$ is 
divisible by $2$ in  $\Pic\, \tilde{M}$.
Let $[H]\in \Pic\, \tilde{M}$ denote the class of
the pull-back of a general line of $\P^2$.
We put $B:=\tilde{t}_1+\tilde{l}_4+\tilde{l}_5+\tilde{l}_6$.
Since $[\tilde{F}]=[H]-[\tilde{t}_1]$ and 
$[\tilde{l}_i]=[H]-[\tilde{t}_j]-[\tilde{t}_{k}]$
for $(i, j, k)=(4,3,4), (5,2,3), (6, 2, 4)$, 
we have 
\[
[B]=3[\tilde{F}]+2[2 \tilde{t}_1-\tilde{t}_2-\tilde{t}_3-\tilde{t}_4].
\]
Therefore $[B]$
is divisible by $2$ in  $\Pic\, \tilde{M}$, 
and we can construct 
a double covering  $\XXX\to \tilde{M}$  branched along $B$.
Then  $\XXX$ is a model of the elliptic modular surface of level $4$ over $R$,
and the Jacobian fibration $\sigma\colon \XXX\to \P^1$ is obtained 
as the composite of the double covering $\XXX\to \tilde{M}$ and  $\tilde\varphi_M\colon \tilde{M}\to \P^1$.
\par
\medskip
The $\QP$-covering map $\LLL_{40}\to \PG$  (see Corollary~\ref{cor:uniqueQPgamma}) is constructed as follows.
We consider an $F$-valued point of $\Spec R$,
where $F$ is a field.
We put $X_F:=\XXX\tensor _R F$, and $\tilde{M}_F:=\tilde{M}\tensor _R F$, $M_F:=M\tensor_R F$.
Let $\EEE_F$ be the generic fiber of $\sigma\tensor F\colon X_F\to \P^1_F$,
which is an elliptic curve over the function field $F(\sigma)$ defined by~\eqref{eq:theellfib4}.
Let $m_2\colon X_F\to X_F$ be the rational map 
induced by multiplication by $2$ 
on $\EEE_F$.
Then  the rational map
\begin{equation}\label{eq:muF}
\mu_F\;\;\colon\;\; X_F \maprightsp{m_2} X_F\; \maprightsp{} 
\tilde{M}_F \maprightsp{} M_F 
\end{equation}
gives a map from $\LLL_{40}$ to the Petersen graph $\PG$
formed by $\{\bar{t}_1, \dots, \bar{t}_4, \bar{\ell}_1, \dots, \bar{\ell}_6\}$.
\begin{proposition}\label{prop:Galois}
The rational map $\mu_F$ induces a Galois extension of the function fields.
Its Galois group $\Gal(\mu)$ is isomorphic to $(\Z/2\Z)^5$ and is 
generated by the inversion $\iota\colon (X, Y, \sigma)\mapsto (X, -Y, \sigma)$  
of the elliptic curve $\EEE_F$, two  involutions 
\begin{equation}\label{eq:2invols}
(X, Y, \sigma)\mapsto (X, Y, -\sigma), \;\;
(X, Y, \sigma)\mapsto (X, Y, 1/\sigma),
\end{equation}
and  the translations by  the $2$-torsion points of  $\EEE_F$.
\end{proposition}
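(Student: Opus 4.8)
The plan is to analyze the tower of rational maps defining $\mu_F$ degree by degree and identify the Galois group as an explicit extension of field automorphisms. The composite $\mu_F$ factors as $X_F \xrightarrow{m_2} X_F \to \tilde M_F \to M_F$, and my first step is to compute the degree of each arrow. The map $m_2$ is multiplication by $2$ on the generic fiber $\EEE_F$, so its degree as a rational self-map equals $|\EEE_F[2]| = 4$ (this is where the four $2$-torsion points enter). The map $X_F \to \tilde M_F$ is the double covering $\XXX \to \tilde M$ constructed in this section via the branch divisor $B$, hence has degree $2$ and corresponds to the inversion $\iota\colon (X,Y,\sigma)\mapsto (X,-Y,\sigma)$. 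Finally, the morphism $\tilde M_F \to M_F$ was obtained by pulling back $\varphi'_M$ along the degree-$2$ base change $\sigma \mapsto \lambda = ((\sigma+\sigma^{-1})/2)^2$ followed by normalization; since this substitution is invariant exactly under $\sigma \mapsto -\sigma$ and $\sigma \mapsto 1/\sigma$, the map $\tilde M_F \to M_F$ has degree $4$ with deck transformations generated by the two involutions in~\eqref{eq:2invols}. Multiplying, $\deg \mu_F = 4\cdot 2 \cdot 4 = 32 = 2^5$, which matches $|(\Z/2\Z)^5|$.

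The second step is to exhibit the five named involutions as genuine automorphisms of $X_F$ over the target and check that they generate a group of the right size and structure. The inversion $\iota$ and the two Möbius-type involutions~\eqref{eq:2invols} are manifestly involutions of the Weierstrass model~\eqref{eq:theellfib4}, and the translations by the three nontrivial $2$-torsion points of $\EEE_F$ are involutions because $2$-torsion translations square to the identity on the fiber. I would verify that these five involutions commute: the two base-curve involutions commute with each other and with $\iota$ and with fiberwise translations since they act on disjoint coordinate groups (the $\sigma$-line versus the elliptic fiber), while the three $2$-torsion translations together with the identity form the Klein four-group $\EEE_F[2]\cong(\Z/2\Z)^2$, on which $\iota$ acts trivially (inversion fixes $2$-torsion) — so the subgroup they generate is abelian of exponent $2$.

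The third step is to confirm that the subgroup $G$ generated by these five elements is precisely the Galois group, i.e.\ that the extension is Galois with $\Gal(\mu)=G$. Here I would argue that each generator fixes the function field of $M_F$: the base-curve involutions fix $\lambda$ (hence the base of $\varphi_M$) and act compatibly, while $\iota$ and the $2$-torsion translations fix the image in $\tilde M_F$ after composing with $m_2$ (multiplication by $2$ kills $2$-torsion, and composing with $m_2$ absorbs both $\iota$ and the translations into the fiber of the map to $\tilde M_F$). Since $|G| = 32 = \deg\mu_F$ and every generator is a deck transformation, the extension is Galois with group exactly $G\cong(\Z/2\Z)^5$.

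The main obstacle I anticipate is the bookkeeping in the third step: one must show that $G$ acts \emph{faithfully} and \emph{transitively} on a generic fiber of $\mu_F$, so that $|G|$ does not collapse below $32$ through an unexpected coincidence among the generators (for instance, a relation between a $2$-torsion translation composed with $m_2$ and one of the base involutions). Establishing faithfulness amounts to checking that no nontrivial product of the five involutions fixes the generic point of $X_F$, which I would reduce to a direct verification on the coordinates $(X,Y,\sigma)$: a translation by a $2$-torsion point visibly moves the generic point, the base involutions move $\sigma$, and $\iota$ negates $Y$, and these actions are independent. Once faithfulness and the order count are in hand, Galois-ness and the structure $(\Z/2\Z)^5$ follow formally.
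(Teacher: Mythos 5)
Your proposal is correct and takes essentially the same route as the paper's proof: both factor $\mu_F$ as $m_2$ followed by $X_F\to \tilde{M}_F\to M_F$, identify the deck transformations of the three factors (translations by $\EEE_F[2]$, the inversion $\iota$, and the two involutions of~\eqref{eq:2invols} via the Galois extension $F(\sigma)/F(\lambda)$), and use that $\iota$ and the base involutions fix the $2$-torsion points so that the five involutions commute and generate $(\Z/2\Z)^5$. The only difference is cosmetic: the paper concludes by viewing the composite directly as the quotient by the group generated by the three deck groups, whereas you match the total degree $4\cdot 2\cdot 4=32$ against the order of the group of deck transformations, which is equivalent bookkeeping.
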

\begin{proof}
The inversion $\iota$ and the involutions in~\eqref{eq:2invols}
fix  each $2$-torsion point of $\EEE_F$.
Hence the  involutions in the statement of Proposition~\ref{prop:Galois} 
generate a group isomorphic to $(\Z/2\Z)^5$.
By~\eqref{eq:lambdasigma},
the function field $F(\sigma)$ is a Galois extension of $F(\lambda)$
with Galois group  generated by $\sigma\mapsto -\sigma$ and $\sigma\mapsto1/\sigma$.
Hence 
the covering $\tilde{M}_F \to M_F$ in~\eqref{eq:muF} is the quotient by the
involutions in~\eqref{eq:2invols}.
The covering $X_F\to \tilde{M}_F $ in~\eqref{eq:muF} is the quotient by $\iota$,
and the map $m_2$ is  the quotient by
the group of  translations by  the $2$-torsion points of  $\EEE_F$.
Thus the proof is completed.
\end{proof}
\subsection{Another model of the elliptic modular surface of level $4$}
\label{subsec:anothermodel}
We give a much simpler construction of a $(\Z/2\Z)^5$-covering  $X_{0}\to M_{\C}$
 over the complex numbers
by means of a Hirzebruch covering~(see Hironaka~\cite{EHironaka1993}).
This section is due to a suggestion by one of the referees of the
first version of the paper.
Let $M_{\C}$ be the complex surface obtained by blowing-up $\P^2_{\C}$ at 
the triple points of the complete quadrangle on $\P^2_{\C}$,
and let $M\sp{\circ}_{\C}$ be the complement of the ten $(-1)$-curves on $M_{\C}$.
We have a canonical surjective homomorphism 
$\pi_1 (M\sp{\circ}_{\C}) \surj H_1(M\sp{\circ}_{\C}, \Z/2\Z)\cong (\Z/2\Z)^5$.
It is known (see~\cite{EHironaka1993})
that the corresponding \'etale covering
$W\sp{\circ}\to M\sp{\circ}_{\C}$ extends to a finite morphism $W\to M_{\C}$
from a smooth surface  $W$, and 
that $W$ is a $K3$ surface.
\begin{proposition}
The surface $W$ has a Jacobian fibration $\sigma_W\colon W\to \P^1$
that is isomorphic to  $\sigma\colon X_0\to \P^1$.
\end{proposition}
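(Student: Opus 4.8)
The plan is to identify $W$ with $X_0$ by reconstructing the same Weierstrass fibration from the Hirzebruch covering data, showing that the $(\Z/2\Z)^5$-cover $W\to M_{\C}$ factors through exactly the same tower of quotients that appeared in Section~\ref{subsec:geomQPcovering}. First I would analyze the homomorphism $\pi_1(M\sp{\circ}_{\C})\surj H_1(M\sp{\circ}_{\C},\Z/2\Z)\cong(\Z/2\Z)^5$. The ten $(-1)$-curves (the six strict transforms $\bar l_i$ and the four exceptional curves $\bar t_j$) removed from $M_{\C}$ give ten meridian loops, and the standard relations coming from the blow-up and from the incidence of the complete quadrangle cut the free abelian group on these ten classes down to rank $5$; I would write out this presentation explicitly so that the ramification of $W\to M_{\C}$ over each of the ten curves is transparent. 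The key point is that the induced double covers along the branch divisor match the branch locus $B=\tilde t_1+\tilde l_4+\tilde l_5+\tilde l_6$ and the base change~\eqref{eq:lambdasigma} appearing in the earlier construction.

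The main body of the argument is to produce the fibration $\sigma_W\colon W\to\P^1$ and match it with $\sigma\colon X_0\to\P^1$. I would start from the pencil $\varphi_M\colon M\to\P^1$ of~\eqref{eq:varphiM} induced by lines through $t_1$, whose three reducible fibers are $\bar l_1+\bar t_4$, $\bar l_2+\bar t_3$, $\bar l_3+\bar t_2$. Restricting the covering $W\to M_{\C}$ to a general fiber of $\varphi_M$, the $(\Z/2\Z)^5$-cover pulls back to a cover of $\P^1$ branched at the four points where that fiber meets the branch curves; this exhibits the generic fiber of $\sigma_W$ as a genus-$1$ curve. Then I would show that the quotients of $W$ by the explicit subgroups of $(\Z/2\Z)^5$ recover precisely the intermediate surfaces of Section~\ref{subsec:geomQPcovering}: quotienting by the subgroup responsible for the $\sigma\mapsto-\sigma$ and $\sigma\mapsto1/\sigma$ symmetries produces $M_{\C}$ via the base change~\eqref{eq:lambdasigma}, quotienting by the inversion $\iota$ produces the intermediate $\tilde M_{\C}$, and the remaining $(\Z/2\Z)^2$ is the translation action by the $2$-torsion. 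By Proposition~\ref{prop:Galois} the Galois group of $\mu_F$ is exactly $(\Z/2\Z)^5$ with this same generating set, so both constructions are the same $(\Z/2\Z)^5$-Galois cover of $M_{\C}$ dominating the line pencil.

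Concretely, the cleanest route is to compute the Weierstrass equation of $\sigma_W$ directly and compare it with~\eqref{eq:theellfib4}. Both fibrations have singular fibers of type $\mathrm{I}_4$ over $\sigma=0,\pm1,\pm i,\infty$ and full level-$4$ torsion $(\Z/4\Z)^2$ in the Mordell-Weil group by Theorem~\ref{thm:Shioda}~(1); since an elliptic $K3$ surface over $\P^1$ with six $\mathrm{I}_4$ fibers and this torsion structure is rigid, the two Jacobian fibrations must be isomorphic over $\P^1$. I would therefore verify that $\sigma_W$ has these six $\mathrm{I}_4$ fibers and that the covering group supplies the $(\Z/4\Z)^2$ of torsion sections, and then invoke rigidity (or directly match the $j$-invariant and the location of the branch points of~\eqref{eq:lambdasigma}) to conclude $\sigma_W\cong\sigma$ over $\P^1$, hence $W\cong X_0$.

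The hard part will be checking that the Hirzebruch covering $W\to M_{\C}$ really factors through the base change~\eqref{eq:lambdasigma} with the \emph{correct} branch behavior along the three reducible fibers of $\varphi_M$, rather than through some other degree-$32$ abelian cover; this amounts to tracking how the five independent mod-$2$ homology classes distribute the ramification among the ten curves, and confirming that the two involutions in~\eqref{eq:2invols} are genuinely realized by the covering rather than only their product. Once the branch data is pinned down, identifying the fiber type and the torsion is routine given Theorem~\ref{thm:Shioda}, and the isomorphism $W\cong X_0$ follows.
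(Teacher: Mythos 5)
Your strategy is sound, but it is a genuinely different route from the paper's. The paper never analyzes $\pi_1(M^{\circ}_{\C})$ or the Galois tower at all: it works with the explicit model $\overline{W}\subset\P^5$, the complete intersection of the three quadrics~\eqref{eq:def3Qs}, extracts from it the configuration $\LLL_W$ of $40$ smooth rational curves (the $24$ conics lying over the six lines $l_i$ and the $16$ exceptional curves over the nodes of $\overline{W}$), verifies by an explicit intersection-matrix computation that $\LLL_W\cong\LLL_{40}$ as graphs, reads off from a suitable isomorphism an elliptic fibration on $W$ with a section and six fibers of type ${\rm I}_4$ (this is the fibration induced by the pencil of conics through $t_1,\dots,t_4$, see Remark~\ref{rem:sigmaW}, not your line pencil; by Remark~\ref{rem:5fs} the difference is harmless), and then concludes by the uniqueness, cited from~\cite{ShimadaConn}, of an elliptic $K3$ surface with a section and six ${\rm I}_4$ fibers. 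You instead propose to identify $W\to M_{\C}$ with the Galois closure of the rational map $\mu$ of Section~\ref{subsec:geomQPcovering}; carried out in full, this identifies the function fields of $W$ and $X_0$ over that of $M_{\C}$, hence gives $W\cong X_0$ directly (birational minimal $K3$ surfaces are isomorphic), with $\sigma$ corresponding to the fibration induced by the line pencil — in this strongest form your route needs no rigidity statement at all. What the paper's computation buys is that everything stays inside finite, checkable configuration data, in the style of the rest of the paper; what yours buys is conceptual transparency and independence from the computer verification.

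Three points in your write-up need tightening. First, the preimage in $W$ of a general fiber of $\varphi_M$ from~\eqref{eq:varphiM} is \emph{not} connected: the fiber meets the branch locus in only the four points where it meets the sections, the product of the four meridians is trivial, and one checks in $H_1(M^{\circ}_{\C},\Z/2\Z)$ that their images span exactly a $(\Z/2\Z)^3$; so the preimage is a disjoint union of four elliptic curves, and your fibration exists only after Stein factorization, whose base is precisely the degree-$4$ base change $\sigma\mapsto\lambda$ of~\eqref{eq:lambdasigma}. Second, the inference ``the Galois group of $\mu$ is also $(\Z/2\Z)^5$, so both constructions are the same cover'' is not valid as stated; the correct argument is that $H_1(M^{\circ}_{\C},\Z/2\Z)\cong(\Z/2\Z)^5$ forces every connected \'etale $(\Z/2\Z)^5$-cover of $M^{\circ}_{\C}$ to correspond to the kernel of $\pi_1(M^{\circ}_{\C})\to H_1(M^{\circ}_{\C},\Z/2\Z)$, hence to be unique up to isomorphism, and to apply this to $\mu$ you must verify that $\mu$ is \'etale over $M^{\circ}_{\C}$ (the base change is branched only over the three reducible fibers, the double cover only along the four sections, the quotient by $2$-torsion translations is \'etale in codimension one; then invoke purity of the branch locus). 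This is what closes the step you flag as the hard part. Third, the ``rigidity'' you invoke in your alternative ending is a theorem, not an obvious fact — it is exactly the uniqueness statement the paper quotes from~\cite{ShimadaConn} — so it needs a citation or proof; note also that the torsion structure is superfluous there, since a section and six ${\rm I}_4$ fibers already suffice.
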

\begin{proof}
Consider the $(\Z/2\Z)^5$-covering $\gamma\colon \P^5\to \PPb^5$ defined by 
\[
%[x_0: x_1: x_2: x_3: x_4: x_5: x_6]\mapsto [X_0: X_1: X_2: X_3: X_4: X_5: X_6]=[x_0^2: x_1^2: x_2^2: x_3^2: x_4^2: x_5^2: x_6^2].
[x_0: x_1: \dots : x_6]\mapsto [X_0: X_1:\dots : X_6]=[x_0^2: x_1^2:\dots : x_6^2].
\]
Let $P\subset \PPb^5$ be the linear plane defined by
\begin{equation*}\label{eq:defP}
X_1-X_2+X_3=-X_3+X_5+X_6=X_2+X_4-X_5=0,
\end{equation*}
and, for $i=1, \dots, 6$,
let $l_i\subset P$ denote the intersection of $P$ and the coordinate hyperplane $X_i=0$.
Then the $6$ lines $l_1, \dots, l_6$ form the complete quadrangle in Figure~\ref{fig:completequadrangle}.
The surface $\overline{W}:=\gamma\inv (P)\subset \P^5$
is the complete intersection of   three quadratic hypersurfaces
\begin{equation}\label{eq:def3Qs}
x_1^2-x_2^2+x_3^2=-x_3^2+x_5^2+x_6^2=x_2^2+x_4^2-x_5^2=0.
\end{equation}
The finite covering  $\gamma|\overline{W}\colon \overline{W}\to P$
extends to the covering $\gamma_W\colon W\to M_{\C}$ by the blowing up  of $M_{\C}\to P$
at the triple points $t_1, \dots, t_4$ of 
the complete quadrangle on $P$.
The pull-back of each line $l_i$ by $\gamma|\overline{W}$ is a union of $4$ conics, and 
$\overline{W}$ has $4\times 4$ nodes over $t_1, \dots, t_4$.
%By the resolution $W\to \overline{W}$,
Thus we obtain a configuration $\LLL_{W}$ of $40$ smooth rational curves on $W$
consisting of $4\times 6$ pullbacks of  conics on $\overline{W}$ and $4\times 4$
exceptional curves over the nodes of $\overline{W}$.
By computing 
the intersection numbers of the $24$ conics and 
the incidence relation between the conics and the $16$ nodes,
we can write the intersection matrix of the  configuration $\LLL_{W}$
explicitly.
Then we confirm that this configuration $\LLL_{W}$ is isomorphic to $\LLL_{40}$.
In fact, by Proposition~\ref{prop:AutQPG},
there exist $7680$ isomorphisms between $\LLL_{W}$ and $\LLL_{40}$.
Among these isomorphisms, we have $1536$ isomorphisms 
such that the $16$ smooth rational curves corresponding to the nodes
of $\overline{W}$ are mapped to  the sections of $\sigma\colon X_0\to \P^1$
and the $24$ smooth rational curves over the lines $l_i$
are mapped to the irreducible components of singular fibers of $\sigma$.
Hence $W$ has an elliptic fibration  $\sigma_W\colon W\to \P^1$ with a section
and  $6$ singular fibers of type $I_4$.
By~\cite{ShimadaConn},
such an elliptic $K3$ surface is unique up to isomorphism.
Hence   $\sigma_W\colon W\to \P^1$ 
is isomorphic to $\sigma\colon X_0\to \P^1$.
\end{proof}
\begin{remark}\label{rem:sigmaW}
The Jacobian fibration $\sigma_W\colon W\to \P^1$ is obtained
from the elliptic fibration $M_{\C}\to \P^1$
induced by the pencil of conics passing through all the triple points $t_1, \dots, t_4$. 
See Remark~\ref{rem:5fs},
which also explains the number $1536=7680/5$ of the special isomorphisms 
$\LLL_{W}\cong \LLL_{40}$
in the proof.
\end{remark}
For  $J\subset \{1, \dots, 6\}$,
let  $\tilde{\tau}_J$ denote the 
involution of $\P^5$ given by
\[
x_m\mapsto -x_m \;\; \textrm{if}\;\; m \in J,
\quad
x_n\mapsto x_n \;\; \textrm{if}\;\; n \notin J.
\]
Note that $\tilde{\tau}_{J}=\tilde{\tau}_{J\sprime}$ if 
$J\cap J\sprime=\emptyset$ and $J\cup J\sprime=\{1, \dots, 6\}$.
The Galois group $\Gal(\gamma_W)$ 
of the covering  $\gamma_W\colon  W\to M_{\C}$
consists of the restrictions $\tau_J:=\tilde{\tau}_J|\overline{W}$
of these involutions $\tilde{\tau}_J$ to $\overline{W}$.
Let $S_W$ denote the N\'eron-Severi lattice of $W$,
which is equal to $\gen{\LLL_W}$.
We can calculate the action of  $\Gal(\gamma_W)$
on $S_W$ explicitly.
\par
For an isomorphism $\varphi\colon \LLL_W\cong \LLL_{40}$
of graphs,
let $\gen{\varphi}\colon S_W\cong S_0$
denote the induced isometry of lattices,
and let $\OG(\gen{\varphi})\colon \OG(S_W)\cong \OG(S_0)$
denote the induced isomorphism  of the automorphism groups of lattices.
By checking  
all the $7680$ isomorphisms $\varphi\colon \LLL_W\cong \LLL_{40}$,
we confirmed the following fact.
See Remark~\ref{rem:5fs} for a geometric reason of this result.
\begin{proposition}\label{prop:GalGal}
For each isomorphism $\varphi\colon \LLL_W\cong \LLL_{40}$
of graphs,  
the isomorphism $\OG(\gen{\varphi})$ maps $\Gal(\gamma_W)\subset\OGplus(S_W)$
to $\Gal(\mu)\subset \OGplus(S_0)$ isomorphically.
\qed
\end{proposition}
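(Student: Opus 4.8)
The plan is to exploit the fibered structure of the graph $\LLL_{40}\cong \QPG_1$ over the Petersen graph $\PG$ in order to reduce the assertion, which is stated for all $7680$ graph isomorphisms $\varphi$, to a single computation together with a normality statement. First I would observe that both Galois groups consist of deck transformations: every element of $\Gal(\gamma_W)$ covers the identity of $M_{\C}$, and every element of $\Gal(\mu)$ covers the identity of $M_F$ under the rational map $\mu_F$ of~\eqref{eq:muF}. After identifying $\LLL_W$ and $\LLL_{40}$ with $\QPG_1$ and using that the two $\QP$-covering maps to $\PG$ are realized geometrically by $\gamma_W$ and by $\mu_F$ (so that, by Corollary~\ref{cor:uniqueQPgamma}, they agree up to $\Aut(\PG)$), both subgroups preserve every fiber of $\gamma_V$ and induce the identity of $\PG$. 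By Proposition~\ref{prop:AutQPG} this means $\Gal(\gamma_W)$ and $\Gal(\mu)$ are contained in $K:=\Ker(\Aut(\QPG_1)\to \SSSS_5)\cong (\Z/2\Z)^6$; since each has order $2^5$, each is an index-$2$ subgroup of $K$.

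Next I would set up the reduction. The set of graph isomorphisms $\varphi\colon \LLL_W\cong \LLL_{40}$ is a torsor under the left action of $\Aut(\LLL_{40})$, and $\OG(\gen{\varphi})$ is conjugation by $\gen{\varphi}$. Writing an arbitrary $\varphi$ as $\varphi=b\circ \varphi_0$ with $b\in \Aut(\LLL_{40})$ and $\varphi_0$ a fixed isomorphism, one gets
\[
\OG(\gen{\varphi})(\Gal(\gamma_W))
=\gen{b}\,\bigl(\OG(\gen{\varphi_0})(\Gal(\gamma_W))\bigr)\,\gen{b}\inv .
\]
Hence, provided that \emph{(i)} $\OG(\gen{\varphi_0})$ carries $\Gal(\gamma_W)$ to $\Gal(\mu)$ for one $\varphi_0$, and \emph{(ii)} $\Gal(\mu)$ is normal in $\Aut(\LLL_{40})$, the desired conclusion follows for every $\varphi$.

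For \emph{(i)} I would use the preceding proposition, which produces an isomorphism $W\cong X_0$ over $\P^1$ and hence a specific graph isomorphism $\varphi_0\colon \LLL_W\cong \LLL_{40}$. Both $\gamma_W\colon W\to M_{\C}$ and $\mu_F\colon X_0\to M_F$ are $(\Z/2\Z)^5$-coverings branched along the ten curves forming $\PG$, and both realize the full group $(\Z/2\Z)^5\cong H_1$ of the complement of these curves; therefore they coincide under the identifications $W\cong X_0$ and $M_{\C}\cong M_F$, so $\OG(\gen{\varphi_0})$ matches the two deck groups. For \emph{(ii)} I would argue that $\Gal(\mu)\subset K$ is fixed pointwise under conjugation by the abelian group $K$, while conjugation by a lift of $\SSSS_5$ preserves it because $\SSSS_5$ is realized by genuine automorphisms: the surface $M_{\C}$ is the del Pezzo surface of degree $5$, whose automorphism group is exactly $\Aut(\PG)\cong \SSSS_5$ (permuting the ten $(-1)$-curves), and these automorphisms lift to $W\cong X_0$ and normalize the deck group (this is the geometric reason alluded to in Remark~\ref{rem:5fs}). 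Since $\Aut(\LLL_{40})$ is generated by $K$ together with such a lift of $\SSSS_5$, normality follows.

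The main obstacle is step \emph{(ii)}, the normality of $\Gal(\mu)$ in the \emph{full} graph automorphism group $\Aut(\LLL_{40})$. The geometric automorphisms of $W\cong X_0$ realize only an index-$2$ subgroup of $\Aut(\LLL_{40})$: they surject onto $\SSSS_5$ but account for just the order-$2^5$ subgroup $\Gal(\mu)$ of $K$, not all of $K\cong(\Z/2\Z)^6$. One must therefore separately dispose of the extra, non-geometric graph symmetry; this is harmless only because it lies in the abelian group $K$ and hence centralizes $\Gal(\mu)$. Making the identification $M_{\C}=\mathrm{dP}_5$ and the coincidence of the two $(\Z/2\Z)^5$-coverings in step \emph{(i)} fully rigorous is where the geometric work concentrates. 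Alternatively, and in keeping with the computational framework of the paper, one may bypass \emph{(i)} and \emph{(ii)} altogether by computing $\gen{\varphi}$ and verifying $\OG(\gen{\varphi})(\Gal(\gamma_W))=\Gal(\mu)$ directly for all $7680$ isomorphisms $\varphi$, which is the route actually taken here.
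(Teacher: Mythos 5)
Your proposal is correct, but it is a genuinely different argument from the paper's: the paper has no structural proof at all. The sentence preceding the proposition (``By checking all the $7680$ isomorphisms $\varphi\colon \LLL_W\cong \LLL_{40}$, we confirmed the following fact'') \emph{is} the proof --- a direct machine verification, exactly the fallback you name in your final sentence --- and the geometry is only gestured at in Remark~\ref{rem:5fs}. Your main route (torsor reduction to a single $\varphi_0$, plus normality of $\Gal(\mu)$ in $\Aut(\LLL_{40})$) is essentially a worked-out version of that hinted geometric reason, and its two pillars hold up: for (i), since $X_0$ is connected, the Galois $(\Z/2\Z)^5$-cover $\mu$ classifies a surjection $H_1(M\sp{\circ}_{\C},\Z/2\Z)\cong(\Z/2\Z)^5\surj (\Z/2\Z)^5$, necessarily an isomorphism, so $\mu$ and $\gamma_W$ restrict to the same cover of $M\sp{\circ}_{\C}$ --- provided you also verify that $\Gal(\mu)$ acts freely off the $40$ curves (base involutions have fixed points only over $\Cr(\sigma)$, inversions/translations only on torsion sections, all inside $\LLL_{40}$) and that the birational identification of the two K3 surfaces extends to an isomorphism carrying $\LLL_W$ to $\LLL_{40}$; for (ii), the order count $64\cdot 120=7680$ shows $K$ and the lifts of $\Aut(M_{\C})\cong\SSSS_5$ generate $\Aut(\LLL_{40})$, and both normalize $\Gal(\mu)$. (Step (ii) can also be obtained from Proposition~\ref{prop:GalAutX0h0}: $\Gal(\mu)$ is the kernel of $\Aut(X_0,h_0)\to\Aut(\PG)$, hence normal there, and $\Aut(\LLL_{40})=\Aut(X_0,h_0)\cdot K$; that proposition's proof rests only on Proposition~\ref{prop:Galois} and the element list, so there is no circularity.) The trade-off: the paper's check is instantaneous given its computational data and requires no further geometric lemmas, but explains nothing; your argument explains the result and reduces the computation to facts already established, at the cost of the covering-space and extension details you rightly identify as the crux.
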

By Barth--Hulek~\cite{BH1985},
we know that 
the sum $I$ of the classes of sections of $\sigma\colon X_0\to \P^1$
is divisible by $2$ in $\Pic\, X_0$.
We put  $h_8:=(1/2)I+F$, where $F\in \Pic\, X_0$ is a fiber of $\sigma$.
Then $h_8$ is primitive in $\Pic\, X_0$ and  nef of degree $8$.
The complete linear system $|h_8|$ is base point free, 
because  there exist no vectors $f\in S_0$
such that $\intf{f, f}=0$ and $\intf{f, h_8}=1$
(see Nikulin~\cite{Nikulin91} and Proposition~12 of~\cite{BH1985}).
Let $\Phi_{8}\colon X_0\to\P^5$ be the morphism induced by $|h_8|$.
The curves contracted by $\Phi_8$
are exactly the sections of $\sigma\colon X_0\to \P^1$, and 
$\Phi_8$ maps each irreducible component of singular fibers of $\sigma$
to a conic.  
Hence the image of  $\Phi_8$
is equal to $\overline{W}$.
We consider the involutions $\tau_J$ of $\overline{W}$
as  elements of $\Aut(X_0)$ via the birational morphism $\Phi_8$.
By Proposition~\ref{prop:GalGal},
we have the following description of 
$\Gal(\mu)$ simpler than Proposition~\ref{prop:Galois}.
\begin{proposition}\label{prop:Galoct}
The Galois group  $\Gal(\mu)$ 
consists of $32$ involutions $\tau_J$.
\qed
\end{proposition}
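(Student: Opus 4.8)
The plan is to deduce the statement from Proposition~\ref{prop:GalGal}, which already carries out the essential computation, together with the faithfulness of the action of $\Aut(X_0)$ on $\NS{0}$ (Proposition~\ref{prop:injpsi}). The group $\Gal(\gamma_W)$ of deck transformations of the $(\Z/2\Z)^5$-covering $\gamma_W\colon W\to M_{\C}$ is, by its very construction, exactly the set of the $32$ involutions $\tau_J=\tilde{\tau}_J|\overline{W}$ (with $\tau_J=\tau_{J\sprime}$ when $J\sprime$ is the complement of $J$), so $|\Gal(\gamma_W)|=32$. Since $\Gal(\mu)\cong (\Z/2\Z)^5$ also has order $32$ by Proposition~\ref{prop:Galois}, it suffices to show that the image of $\{\tau_J\}$ in $\OGplus(\NS{0})$, under the identification of the $\tau_J$ with elements of $\Aut(X_0)$, coincides with $\Gal(\mu)$; faithfulness then upgrades this to an equality of subgroups of $\Aut(X_0)$.

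The bridge I would use is a distinguished isomorphism of surfaces $\kappa\colon X_0\isom W$. Indeed, the image of $\Phi_8\colon X_0\to \P^5$ is the nodal surface $\overline{W}$, and $\Phi_8$ contracts precisely the $16$ sections of $\sigma$ to the $16$ nodes of $\overline{W}$; on the other side, $W$ is obtained from $\overline{W}$ by resolving those same nodes. As $\overline{W}$ has only rational double points, its minimal resolution is unique, so $X_0$ and $W$ are canonically identified over $\overline{W}$. Under $\kappa$ the $16$ sections correspond to the $16$ exceptional curves over the nodes and the $24$ fibre components of $\sigma$ to the $24$ pullbacks of the conics; hence $\kappa$ induces one of the graph isomorphisms $\varphi_0\colon \LLL_W\isom \LLL_{40}$ to which Proposition~\ref{prop:GalGal} applies, with $\gen{\varphi_0}=\kappa^*\colon \NS{W}\isom \NS{0}$. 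Because both $\kappa$ and each $\tau_J$ are defined over $\overline{W}$, the isomorphism $\kappa$ conjugates the deck transformation $\tau_J$ of $W$ to the automorphism $\tau_J\in\Aut(X_0)$ defined via $\Phi_8$; consequently the action of $\{\tau_J\}$ on $\NS{0}$ is precisely $\OG(\gen{\varphi_0})$ applied to the action of $\Gal(\gamma_W)$ on $\NS{W}$.

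With this in place the conclusion is immediate: by Proposition~\ref{prop:GalGal} we have $\OG(\gen{\varphi_0})(\Gal(\gamma_W))=\Gal(\mu)$ inside $\OGplus(\NS{0})$, so the $32$ elements $\tau_J$ and the $32$ elements of $\Gal(\mu)$ have the same image there. By the faithfulness of Proposition~\ref{prop:injpsi}, $\{\tau_J\}=\Gal(\mu)$ as subgroups of $\Aut(X_0)$, which is the assertion.

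I expect the only real obstacle to be the geometric bookkeeping in the middle step: verifying that the two a priori different descriptions of $\overline{W}$ --- as the image $\Phi_8(X_0)$ of the degree-$8$ map and as the nodal complete intersection $\gamma\inv(P)$ resolved by $W$ --- are genuinely identified by $\kappa$ in a way that matches the configurations $\LLL_{40}$ and $\LLL_W$ and intertwines the two incarnations of each $\tau_J$. This amounts to checking that the contraction data of $\Phi_8$ (sections to nodes, fibre components to conics) agree with the resolution data of $W\to\overline{W}$; once the uniqueness of the minimal resolution of the nodal $\overline{W}$ is invoked, this compatibility is forced, and no further computation beyond what Proposition~\ref{prop:GalGal} already supplies is needed.
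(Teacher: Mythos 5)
Your proof is correct and takes essentially the same route as the paper: the paper states Proposition~\ref{prop:Galoct} as an immediate consequence of Proposition~\ref{prop:GalGal}, after regarding the involutions $\tau_J$ as elements of $\Aut(X_0)$ via the birational morphism $\Phi_8\colon X_0\to\overline{W}$ --- exactly the identification you make explicit with $\kappa$. Your additional bookkeeping (uniqueness of the minimal resolution of the nodal surface $\overline{W}$, the induced graph isomorphism $\varphi_0\colon \LLL_W\isom\LLL_{40}$, and the appeal to faithfulness via Proposition~\ref{prop:injpsi}) merely spells out what the paper leaves implicit.
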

%
%\begin{corollary}
%The rational map $\mu_{\C}\colon X_0 \maprightsp{} M_{\C}$ is in fact a morphism.
%\qed
%\end{corollary}
%
\begin{remark}
In~\cite{AST1996},
Abo--Sasakura--Terasoma studied $X_p$, where $p\equiv 1 \bmod 4$,
and obtained an isomorphism from $X_p$
to the reduction of the complete intersection~\eqref{eq:def3Qs}
modulo $p$.
\end{remark}
\section{Borcherds' method}\label{sec:Borcherds}
\subsection{Chambers}\label{subsec:chamber}
We fix notions about tessellation 
of a positive cone of an even hyperbolic lattice 
by chambers.
\par
Let $L$ be an even   lattice.
A vector $r\in L$ is called a \emph{root} if $\intf{r,r}=-2$.
The set of roots of $L$ is denoted by $\Roots(L)$.
%We say that $L$ is a \emph{root lattice} if $L$ is generated by $\Roots(L)$.
\par
Let $L$ be an even  hyperbolic lattice.
Let $\PPP (L)$ be one of the two connected components of $\shortset{x\in L\tensor\R}{\intf{x, x}>0}$.
Then  $\OG^+(L)$ acts on  $\PPP (L)$.
For  $v\in L\tensor \Q$ with $\intf{v,v}<0$,
let $(v)\sperp$ denote the hyperplane of $\PPP(L)$
defined by $\intf{x, v}=0$.
Let $\VVV$ be a set of vectors of $L\tensor\Q$
such that $\intf{v, v}<0$   for all $v\in \VVV$.
We assume  that \emph{the family $\shortset{(v)\sperp}{v\in \VVV}$ of hyperplanes
is locally finite in  $\PPP(L)$}.
A \emph{$\VVV$-chamber} is the closure in $\PPP(L)$ 
of a connected component of
$$
\PPP(L)\;\; \setminus \;\; \bigcup_{v\in \VVV} (v)\sperp.
$$
Typical examples are $\Roots(L)$-chambers defined by the set $\Roots(L)$ of roots of $L$.
\begin{definition}
Let $N$ be a closed subset of $\PPP(L)$.
We say that \emph{$N$ is tessellated by $\VVV$-chambers} if 
$N$ is  a union of $\VVV$-chambers.
Suppose that $N$ is  tessellated by $\VVV$-chambers, and 
let $H$ be a subgroup of $\OGplus(L)$ that preserves $N$.
We say that \emph{$H$ preserves the  tessellation of $N$ by $\VVV$-chambers}
if any $g\in H$ maps each $\VVV$-chamber  in $N$ to a $\VVV$-chamber.
Suppose that  this is the case.
We say that the tessellation of $N$  is \emph{$H$-transitive}
if $H$ acts transitively on the set of  $\VVV$-chambers in $N$.
\end{definition}
\begin{remark}\label{rem:refinement}
Let $U$ be a subset of $\VVV$ such that the closed subset 
$$
N_U:=\set{x\in \PPP(L)}{\intf{x, v}\ge 0\;\;\textrm{for all $v\in U$}}
$$
of $\PPP(L)$ contains an interior point.
Then $N_U$  is tessellated by $\VVV$-chambers.
In particular, if $\VVV\sprime$ is a subset of $\VVV$,
then each $\VVV\sprime$-chamber is tessellated by $\VVV$-chambers.
\end{remark}
Let $D$ be a $\VVV$-chamber.
We put
$$
\Aut (D):=\set{g\in \OG^+(L)}{D^g=D}.
$$
A \emph{wall  of $D$} is a closed subset of $D$ of the form $(v)\sperp\cap D$
such that 
the hyperplane $(v)\sperp$ of $\PPP(L)$ is disjoint from the interior of $D$ and 
$(v)\sperp\cap D$ contains a non-empty open subset of $(v)\sperp$.
We say that a hyperplane $(v)\sperp$ of $\PPP(L)$ \emph{defines a wall of $D$} 
if $(v)\sperp\cap D$ is a wall of $D$.
We say that a vector $v\in L\tensor\Q$ with $\intf{v,v}<0$
\emph{defines a wall of $D$} if $(v)\sperp$ defines a wall of $D$
and $\intf{v, x}\ge 0$ for all $x\in D$.
Note that, 
for each wall of $D$,
there exists a unique \emph{primitive} vector in $L\dual$ defining the wall.
Let $(v)\sperp\cap D$ be a wall of $D$.
Then there exists a unique $\VVV$-chamber $D\sprime$ such that 
the interiors of $D$ and $D\sprime$ are disjoint and 
that $(v)\sperp \cap D$ is equal to $(v)\sperp \cap D\sprime$.
(Hence  $(v)\sperp \cap D\sprime$ is a wall of $D\sprime$.)
We say that $D\sprime$ is a $\VVV$-chamber \emph{adjacent to $D$ across the wall $(v)\sperp\cap D$}.
A \emph{face} of $D$ is a closed subset of $D$ 
of the form $F\cap D$ such that  
$$
F=(v_1)\sperp\cap \dots\cap (v_m)\sperp, \quad \textrm{where $(v_1)\sperp,  \dots,  (v_m)\sperp$ define walls of $D$, }
$$
and  that $F\cap D$ contains a non-empty open subset of $F$.
\begin{example}
We consider 
the tessellation of $\PPP(L)$ by $\Roots(L)$-chambers.
Each root $r$ of $L$ defines a \emph{reflection} $s_r\in \OG^+(L)$ via
$x\mapsto x+\intf{x, r}r$.
Let $W(L)$ denote the subgroup of $\OG^+(L)$ generated by 
all the reflections with respect to the roots.
Then the  tessellation of $\PPP(L)$ by $\Roots(L)$-chambers is $W(L)$-transitive.
An $\Roots(L)$-chamber $N$ is a fundamental domain of the action of $W(L)$ on $\PPP(L)$, and 
$\OG^+(L)$ is equal to $ W(L)\semidirectproduct\Aut(N)$.
Moreover, 
$W(L)$ is generated by the reflections $s_r$
associated with the roots $r$ of $L$ defining the walls of $N$,
and the faces of codimension $2$ of $N$ give the defining relations of $W(L)$
with respect to this set of generators.
\end{example}
Let $L_{26}$ be an even \emph{unimodular} hyperbolic lattice of rank $26$,
which is unique up to isomorphism.
The shape of an $\Roots(L_{26})$-chamber was determined by Conway~\cite{Conway1983},
and hence we call an $\Roots(L_{26})$-chamber  a \emph{Conway chamber}.
Let $w$ be 
a non-zero primitive vector  of $L_{26}$ with $\intf{w, w}=0$ 
such that $w$ is  contained in the closure of $\PPP(L_{26})$ in $L_{26}\tensor\R$.
We say that $w$ is a \emph{Weyl vector}
if  the lattice $\gen{w}\sperp/\gen{w}$
is isomorphic to the negative-definite Leech lattice,
where $\gen{w}\sperp$ is the orthogonal complement in $L_{26}$ of
 $\gen{w}:=\Z w\subset L_{26}$.
Let $w\in L_{26}$ be a Weyl vector.
Then a root $r$ of $L_{26}$ is called a \emph{Leech root with respect to $w$} 
if $\intf{w, r}=1$.
We put
$$
\CCC(w):=\set{x\in \PPP(L_{26})}{\intf{x, r}\ge 0\;\;\textrm{for all Leech roots $r$ with respect to $w$}}.
$$
\begin{theorem}[Conway~\cite{Conway1983}]
The mapping $w\mapsto \CCC(w)$
gives a bijection from the set of Weyl vectors to the set of Conway chambers.
\end{theorem}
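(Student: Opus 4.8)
The plan is to pass to Conway's explicit ``Leech model'' of $L_{26}$ and to deduce the combinatorial shape of the chambers from the covering radius of the Leech lattice. Since a Weyl vector exists by definition (its orthogonal complement modulo $\gen{w}$ being the Leech lattice $\Lambda$), I would first fix one such $w_0$ and choose $g\in L_{26}$ with $\intf{w_0,g}=1$; unimodularity then yields an orthogonal decomposition $L_{26}\isom \gen{w_0,g}\oplus \Lambda$ in which $\gen{w_0,g}$ is a hyperbolic plane and $\Lambda$ (taken negative-definite) is the Leech lattice. A one-line computation shows that the roots $r$ with $\intf{w_0,r}=1$ are exactly the vectors $r_\lambda:=\lambda+g-(1+\intf{\lambda,\lambda}/2)\,w_0$ for $\lambda\in\Lambda$, so that $\CCC(w_0)$ is the set of $x\in\PPP(L_{26})$ with $\intf{x,r_\lambda}\ge 0$ for every $\lambda\in\Lambda$.

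The main step is to prove that $\CCC(w_0)$ is a single $\Roots(L_{26})$-chamber whose walls are exactly the hyperplanes $(r_\lambda)\sperp$. Since $\Lambda$ has no vector of norm $-2$ (the Leech lattice has minimal norm $4$), no root $r$ of $L_{26}$ lies in $w_0\sperp$, so $\intf{w_0,r}\ne 0$; replacing $r$ by $-r$ if necessary, I may assume its height $n:=\intf{w_0,r}$ is a positive integer, the height-$1$ roots being precisely the $r_\lambda$. It then suffices to show (i) each $r_\lambda$ defines a genuine wall, and (ii) every root of height $n\ge 2$ is nonnegative on all of $\CCC(w_0)$, so that no such root-hyperplane cuts the interior and $\CCC(w_0)$ is not subdivided by $\Roots(L_{26})$. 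Writing an interior point in horospherical coordinates at the cusp $w_0$, both statements translate into the assertion that for every $v\in\Lambda\tensor\R$ there is a lattice point $\lambda\in\Lambda$ with $\intf{v-\lambda,v-\lambda}\ge -2$, i.e.\ that the \emph{covering radius} of $\Lambda$ equals $\sqrt2$. This is the hard input, supplied by the Conway--Parker--Sloane determination of the deep holes of the Leech lattice; granting it, a height-reduction argument (reflecting a root of height $\ge 2$ in a suitable $r_\lambda$ so as to strictly decrease its height) finishes (i) and (ii). As the $r_\lambda$ span $L_{26}\tensor\Q$, the chamber $\CCC(w_0)$ has nonempty interior, so it is indeed a Conway chamber.

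With this in hand, the bijection follows formally. For well-definedness and surjectivity I would use that $\OGplus(L_{26})$ acts transitively on Weyl vectors, a consequence of the uniqueness of $L_{26}$ and of the primitive embedding of a hyperbolic plane, together with the equivariance $\CCC(w)^g=\CCC(w^g)$ for $g\in\OGplus(L_{26})$: this shows that every $\CCC(w)$ is a Conway chamber, and, since the tessellation of $\PPP(L_{26})$ by $\Roots(L_{26})$-chambers is $W(L_{26})$-transitive and $W(L_{26})\subset\OGplus(L_{26})$ carries Weyl vectors to Weyl vectors, that every Conway chamber equals $\CCC(w_0)^g=\CCC(w_0^g)$ for some $g\in W(L_{26})$. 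For injectivity I would recover $w$ from the chamber: the walls of $\CCC(w)$ are exactly the $(r_\lambda)\sperp$, the associated primitive wall-defining roots are the Leech roots $\shortset{r}{\intf{w,r}=1}$, and because these span $L_{26}\tensor\Q$ the affine conditions $\intf{w,r_\lambda}=1$ pin down $w$ uniquely. Hence $w\mapsto\CCC(w)$ is a bijection.

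The step I expect to be the real obstacle is statement (ii): controlling the height-$\ge 2$ roots so that they create no interior walls. This is exactly where the value $\sqrt2$ of the covering radius of the Leech lattice is indispensable, and it is the deep geometric fact on which Conway's theorem ultimately rests.
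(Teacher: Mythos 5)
Your proposal is correct, and it is essentially the argument of Conway's paper~\cite{Conway1983}, which is exactly what this paper cites here without giving an independent proof: the Leech model $L_{26}\cong U\oplus\Lambda$, the parametrization $r_\lambda$ of the height-one roots, the covering radius $\sqrt{2}$ of the Leech lattice as the key input, and the height-reduction via reflections in Leech roots. Since the paper imports the theorem verbatim from Conway, there is nothing to compare beyond noting that your reconstruction matches the cited source's proof.
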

\subsection{Borcherds' method}\label{subsec:Borcherds}
Borcherds~\cite{Bor1, Bor2}  developed a method to analyze $\Roots(S)$-chambers of an even hyperbolic lattice $S$
by means of Conway chambers.
We briefly review this method, and fix some terminologies.
See~\cite{ShimadaAlgo} for details of the algorithms. 
\par
Let $S$ be an even hyperbolic lattice.
Suppose that we have  a primitive embedding
$i\colon S\inj L_{26}$
such that the orthogonal complement $R$ of $S$ in $L_{26}$
satisfies the following condition:
\begin{equation}\label{eq:Rcond}
\textrm{$R$ cannot be embedded into the negative-definite Leech lattice.}
\end{equation}
(This condition is fulfilled, for example,  if $R$ contains a root.)
We choose $\PPP(S)$ so that the embedding $i\colon S\inj L_{26}$ induces 
an embedding $i_{\PPP}\colon \PPP(S)\inj \PPP(L_{26})$.
Let
$$
\pr_S\colon L_{26}\tensor\Q\to S\tensor\Q
$$
denote the orthogonal projection.
A hyperplane $(v)\sperp$ of $\PPP(L_{26})$
intersects $\PPP(S)$ in a hyperplane  if and only if $\intf{\pr_S(v), \pr_S(v)}<0$,
and,  if this is the case, we have $\PPP(S)\cap (v)\sperp=(\pr_S(v))\sperp$.
We put
\begin{equation}\label{eq:VVVS}
\prV  (i):=\set{\pr_S(r)}{r\in \Roots(L_{26}), \;\; \intf{\pr_S(r), \pr_S(r)}<0}.
\end{equation}
The tessellation of $\PPP(L_{26})$ by Conway chambers
induces a tessellation of $\PPP(S)$ by  $\prV(i)$-chambers.
Each $\prV(i)$-chamber is of the form $i_{\PPP}\inv (\CCC(w))$.
It is easily seen (see~\cite{ShimadaAlgo}) that the assumption~\eqref{eq:Rcond} implies that 
 each $\prV (i)$-chamber has only a finite number of walls.
The defining vectors  of walls of a $\prV (i)$-chamber $i_{\PPP}\inv (\CCC(w))$
can be calculated from the Weyl vector $w\in L_{26}$ of the Conway chamber $\CCC(w)$.
From this set of walls of  $i_{\PPP}\inv (\CCC(w))$,
 we can  calculate the finite group $\Aut(i_{\PPP}\inv (\CCC(w)))\subset \OGplus(S)$.
Moreover, for each wall $(v)\sperp\cap i_{\PPP}\inv (\CCC(w))$ of a $\prV (i)$-chamber $i_{\PPP}\inv (\CCC(w))$,
we can calculate a Weyl vector $w\sprime$
such that $i_{\PPP}\inv (\CCC(w\sprime))$ is the $\prV (i)$-chamber
adjacent to $i_{\PPP}\inv (\CCC(w))$ across the wall $(v)\sperp\cap i_{\PPP}\inv (\CCC(w))$.
\par
Since   $\Roots(S)\subset  \prV (i)$,
 Remark~\ref{rem:refinement}
implies  the following:
\begin{proposition}\label{prop:RSVS}
An $\Roots(S)$-chamber is tessellated by  $\prV (i)$-chambers.
\qed
\end{proposition}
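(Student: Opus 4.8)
The plan is to deduce this directly from Remark~\ref{rem:refinement}, whose ``in particular'' clause asserts that each $\VVV\sprime$-chamber is tessellated by $\VVV$-chambers whenever $\VVV\sprime\subset\VVV$. Taking $\VVV\sprime=\Roots(S)$ and $\VVV=\prV(i)$, the entire statement reduces to the single inclusion $\Roots(S)\subset\prV(i)$, so essentially all of the (minimal) work lies in verifying that inclusion.

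To check $\Roots(S)\subset\prV(i)$, I would take a root $r\in\Roots(S)$ and view it inside $L_{26}$ through the primitive embedding $i\colon S\inj L_{26}$. Since $i$ is an isometric embedding, norms are preserved, so $\intf{i(r),i(r)}=\intf{r,r}=-2$ and hence $i(r)\in\Roots(L_{26})$. Moreover $r$ lies in $S$, and because $S\tensor\Q$ is a nondegenerate subspace of $L_{26}\tensor\Q$, the orthogonal projection $\pr_S$ restricts to the identity on $S\tensor\Q$; thus $\pr_S(i(r))=r$, and in particular $\intf{\pr_S(i(r)),\pr_S(i(r))}=-2<0$. Feeding $i(r)$ into the defining set~\eqref{eq:VVVS} then exhibits $r=\pr_S(i(r))$ as an element of $\prV(i)$. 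As $r$ was arbitrary, $\Roots(S)\subset\prV(i)$.

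With the inclusion in hand, the conclusion follows from Remark~\ref{rem:refinement} applied to $U=\Roots(S)$ inside $\prV(i)$: any $\Roots(S)$-chamber is an $N_U$ of that kind, hence a union of $\prV(i)$-chambers. The only hypothesis meriting a word is the local finiteness of the family $\shortset{(v)\sperp}{v\in\prV(i)}$ in $\PPP(S)$ that underlies the notion of a $\prV(i)$-chamber; this is part of the standing setup, since the tessellation of $\PPP(S)$ by $\prV(i)$-chambers is induced from the locally finite Conway tessellation of $\PPP(L_{26})$ under condition~\eqref{eq:Rcond} (and local finiteness for the subfamily indexed by $\Roots(S)$ is then automatic). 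Given this, I do not expect any genuine obstacle: the proposition is a formal consequence of the inclusion $\Roots(S)\subset\prV(i)$ together with the general refinement principle of Remark~\ref{rem:refinement}.
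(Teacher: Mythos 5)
Your proposal is correct and matches the paper's own argument: the paper proves this proposition in one line by observing $\Roots(S)\subset \prV(i)$ and invoking Remark~\ref{rem:refinement}, exactly as you do. Your explicit verification of the inclusion (that $\pr_S$ fixes $S\tensor\Q$ pointwise, so a root $r$ of $S$ equals $\pr_S(i(r))$ with $i(r)\in\Roots(L_{26})$) is precisely the step the paper leaves implicit.
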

%
%Using this tessellation by  $\prV (i)$-chambers,
%we obtain a detailed description of an $\Roots(S)$-chamber.
%
\subsection{Discriminant forms}
For the application of Borcherds' method
to $K3$ surfaces,
we need the notion of discriminant forms 
due to Nikulin~\cite{Nikulin79}.
\par 
Let $q\colon A\to \Q/2\Z$ be a nondegenerate quadratic form 
with values in $\Q/2\Z$
on a finite abelian group $A$.
We denote by $\OG(q)$  the automorphism group of  $(A, q)$.
For a prime $p$,
we denote by $A_p$ the $p$-part of $A$ and 
by $q_p\colon A_p\to \Q/2\Z$ the restriction of $q$ to $A_p$.
Then we have a canonical orthogonal direct-sum decomposition
$$
(A, q)=\bigoplus (A_p, q_p).
$$
Hence $\OG(q)$ is canonically isomorphic to the direct product of $\OG(q_p)$.
\par
Let $L$ be an even lattice,
and let $\discg{L}=L\dual/L$ denote the discriminant group of $L$.
We define the  \emph{discriminant form of $L$}
$$
\discf{L}\colon \discg{L}\to \Q/2\Z
$$
by $\discf{L}(\bar x):=\intf{x, x}\bmod 2\Z$,
where $x\mapsto \bar{x}$ is the natural projection $L\dual\to \discg{L}$.
Then we have a natural homomorphism
$$
\eta_L\colon \OG(L)\to \OG(\discf{L}).
$$
\par
Let $M$ be a primitive sublattice of an even lattice $L$,
and  $N$  the orthogonal complement of $M$ in $L$.
Let $\OG(L, M)$ denote  the subgroup
$\shortset{g\in \OG(L)}{M^g=M}$
of $\OG(L)$.
Then we have a canonical embedding
$\OG(L, M)\inj \OG(M)\times \OG(N)$.
The submodule $L\subset M\dual\oplus N\dual$
defines a subgroup $\Gamma_L:=L/(M\oplus N)\subset \discg{M} \times \discg{N}$.
By Nikulin~\cite{Nikulin79}, we have the following:
\begin{proposition}\label{prop:LN}
Let $p$ be a prime that does not divide $|\discg{M}|$.
Then $N\inj L$ induces  an isomorphism
$\discf{L}_p\cong \discf{N}_p$,
which is compatible with the actions of $\OG(L, M)$ on $L$ and  on $N$.
\qed
\end{proposition}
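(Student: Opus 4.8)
The plan is to reduce the statement to a $p$-local orthogonal splitting. Since $N$ is the orthogonal complement of a sublattice it is automatically primitive in $L$, so the projection $\Gamma_L\to\discg{M}$ is injective; hence the glue group $\Gamma_L=L/(M\oplus N)$ embeds into $\discg{M}$ and its order divides $|\discg{M}|$. Under the hypothesis $p\nmid|\discg{M}|$ this order is prime to $p$. Therefore, after tensoring with the localization $\Z_{(p)}$, the finite index $[L:M\oplus N]=|\Gamma_L|$ becomes invertible and $L\tensor\Z_{(p)}=(M\tensor\Z_{(p)})\oplus(N\tensor\Z_{(p)})$ as an orthogonal direct sum of $\Z_{(p)}$-lattices; moreover $p\nmid|\discg{M}|$ forces $M\tensor\Z_{(p)}$ to be unimodular over $\Z_{(p)}$.

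Granting this splitting, the isomorphism is immediate. Because a unimodular summand contributes nothing to the discriminant, one has $L\dual\tensor\Z_{(p)}=(M\tensor\Z_{(p)})\oplus(N\dual\tensor\Z_{(p)})$, and consequently the orthogonal projection $\pr_N\colon L\dual\to N\dual$ induces an isomorphism $\discf{L}_p\isom\discf{N}_p$; it preserves the $\Q/2\Z$-valued forms precisely because the decomposition is orthogonal. This is the map ``induced by $N\inj L$'': on $p$-parts $N\tensor\Z_{(p)}$ is a direct summand with unimodular complement, and $\pr_N$ is the restriction of functionals dual to that inclusion. Equivalently, in the notation of the excerpt it is the composite $\discg{L}\cong\Gamma_L\sperp/\Gamma_L\to\discg{N}$, whose $p$-part is an isomorphism because $(\Gamma_L)_p=0$ and $\discg{M}_p=0$ give $(\Gamma_L\sperp)_p=(\discg{M}\oplus\discg{N})_p=\discg{N}_p$.

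For the compatibility with $\OG(L,M)$, the key point is that the decomposition $L\tensor\Z_{(p)}=(M\oplus N)\tensor\Z_{(p)}$ is canonical, being the orthogonal one. Any $g\in\OG(L,M)$ preserves $M$ and hence $N=M\sperp$, so under the embedding $\OG(L,M)\inj\OG(M)\times\OG(N)$ it acts on $L\tensor\Z_{(p)}$ as $(g|_M)\oplus(g|_N)$. Since the isomorphism $\discf{L}_p\cong\discf{N}_p$ is read off the $N$-summand alone, it intertwines the action of $g$ on $\discf{L}_p$ via $\eta_L$ with the action of $g|_N$ on $\discf{N}_p$ (via the analogous homomorphism $\eta_N$), which is exactly the asserted compatibility.

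The only step carrying real content is the first one: once the $p$-local splitting with a unimodular factor is in place, everything else is formal. The mild subtlety there is to remember that the bound $|\Gamma_L|\mid|\discg{M}|$, and hence the coprimality to $p$, relies on primitivity, and to match the projection $\pr_N$ with the glue-group map so that the phrase ``induced by $N\inj L$'' is justified. Neither step requires computation, so I expect no genuine obstruction beyond this bookkeeping.
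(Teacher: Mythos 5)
Your proof is correct, but there is nothing in the paper to compare it against: the paper states this proposition as a quoted result of Nikulin~\cite{Nikulin79} and closes it with \qed\ without argument, so any proof here is "a different route" by default. Your route is the standard one and it is sound: primitivity of $N=M\sperp$ (automatic for an orthogonal complement) makes the projection $\Gamma_L\to\discg{M}$ injective, so $[L:M\oplus N]=|\Gamma_L|$ is prime to $p$; localizing at $p$ then gives the orthogonal splitting $L\tensor\Z_{(p)}=(M\tensor\Z_{(p)})\perp(N\tensor\Z_{(p)})$ with $M\tensor\Z_{(p)}$ unimodular, and projection onto the $N$-factor yields $\discf{L}_p\cong\discf{N}_p$, equivariantly for $\OG(L,M)$ since any such $g$ preserves $N=M\sperp$ and hence commutes with the projection. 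Two bits of bookkeeping deserve one line each in a final write-up. First, when checking that the $\Q/2\Z$-valued forms agree, represent a $p$-primary class by a vector whose $M$-component is zero --- possible exactly because $M\tensor\Z_{(p)}$ is self-dual, so the $M$-component of any representative already lies in $L\tensor\Z_{(p)}$; orthogonality alone is not quite enough, since a nonzero $M$-component contributes a prime-to-$p$ value in $2\Z_{(p)}/2\Z$ which only disappears after passing to the $p$-primary part of $\Q/2\Z$. Second, the identification of your projection map with the map "induced by $N\inj L$" is exactly the glue-group computation you sketch: $\discg{L}\cong\Gamma_L\sperp/\Gamma_L$ inside $\discg{M}\oplus\discg{N}$, and on $p$-parts both $\Gamma_L$ and $\discg{M}$ vanish. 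With those two remarks made explicit, the argument is complete, and it is presumably the same argument Nikulin gives.
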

 \begin{proposition}\label{prop:MN}
 Let $p$ be a prime that does not divide $|\discg{L}|$.
Then the $p$-part of $\Gamma_L$ is the graph of  an isomorphism
$\discf{M}_p\cong -\discf{N}_p$,
which is compatible with the actions of $\OG(L, M)$ on $M$ and on $N$.
\qed
 \end{proposition}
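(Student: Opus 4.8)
The plan is to deduce this from Nikulin's correspondence between even overlattices and isotropic subgroups of discriminant forms, worked out $p$-locally. Since $M$ and $N$ are mutually orthogonal primitive sublattices with $M=N\sperp$ and $N=M\sperp$, the finite-index inclusion $M\oplus N\inj L$ presents $L$ as an even overlattice of $M\oplus N$. By the standard theory, $\Gamma_L=L/(M\oplus N)$ is then an \emph{isotropic} subgroup of the discriminant form $\discf{M}\oplus\discf{N}$ on $\discg{M}\oplus\discg{N}$; concretely, $\discf{M}(\bar m)+\discf{N}(\bar n)=0$ in $\Q/2\Z$ for every $(\bar m,\bar n)\in\Gamma_L$. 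The whole statement will follow once I understand the two projections $p_M\colon\Gamma_L\to\discg{M}$ and $p_N\colon\Gamma_L\to\discg{N}$ on $p$-parts.

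First I would show that both projections are injective, using primitivity. If $(\bar m,0)\in\Gamma_L$, choose a representative $\ell\in L$; its orthogonal projection to $N\tensor\Q$ has trivial class in $\discg{N}$ and hence lies in $N$, so writing $\ell=m\dual+n$ with $m\dual\in M\dual$ the orthogonal projection to $M\tensor\Q$ and $n\in N$, we get $m\dual=\ell-n\in(M\tensor\Q)\cap L$, which equals $M$ by primitivity of $M$. Thus $\bar m=0$, so $p_M$ is injective; the same argument applied to the primitive sublattice $N$ shows $p_N$ is injective.

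Next I would pin down the orders on $p$-parts. Comparing Gram determinants of $M\oplus N$ and $L$ gives $|\discg{M}|\cdot|\discg{N}|=|\Gamma_L|^2\cdot|\discg{L}|$, so, since $p\nmid|\discg{L}|$, the $p$-parts satisfy $|(\discg{M})_p|\cdot|(\discg{N})_p|=|(\Gamma_L)_p|^2$. Injectivity gives $|(\Gamma_L)_p|\le|(\discg{M})_p|$ and $|(\Gamma_L)_p|\le|(\discg{N})_p|$, and together with the product identity these inequalities are forced to be equalities; hence the restrictions of $p_M$ and $p_N$ to $(\Gamma_L)_p$ are \emph{isomorphisms}. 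Setting $\varphi:=p_N\circ p_M\inv$ exhibits $(\Gamma_L)_p$ as the graph of an isomorphism $\varphi\colon(\discg{M})_p\isom(\discg{N})_p$, and the isotropy relation above becomes $\discf{N}_p(\varphi(x))=-\discf{M}_p(x)$, i.e.\ $\varphi\colon\discf{M}_p\isom-\discf{N}_p$ is an isomorphism of quadratic forms.

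Finally, for equivariance I would note that every $g\in\OG(L,M)$ preserves $M$ and hence $N=M\sperp$, inducing $\bar g_M\in\OG(\discf{M})$ and $\bar g_N\in\OG(\discf{N})$; since $g$ preserves $L$, the pair $(\bar g_M,\bar g_N)$ preserves $\Gamma_L$, and therefore $(\Gamma_L)_p$. Preservation of the graph of $\varphi$ reads exactly $\varphi\circ\bar g_M=\bar g_N\circ\varphi$ on $(\discg{M})_p$, which is the asserted compatibility with the actions on $M$ and $N$. The only genuinely substantive step is the order count that upgrades the injections to isomorphisms; this is where the hypothesis $p\nmid|\discg{L}|$ (that is, $p$-unimodularity of $L$) is essential, for otherwise $(\Gamma_L)_p$ would be a proper isotropic subgroup rather than a Lagrangian one and would fail to be the graph of an isomorphism.
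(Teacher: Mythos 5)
Your proof is correct. The paper offers no argument for this proposition at all---it is stated with a \qed as a quotation from Nikulin's theory of discriminant forms---and your argument is exactly the standard one from that source that the citation points to: $\Gamma_L$ is isotropic in $\discf{M}\oplus\discf{N}$ because $L$ is even, both projections are injective by primitivity of $M$ and of $N=M\sperp$, the determinant--index identity $|\discg{M}|\cdot|\discg{N}|=|\Gamma_L|^2\cdot|\discg{L}|$ together with $p\nmid|\discg{L}|$ upgrades the injections to isomorphisms on $p$-parts, and equivariance follows since any $g\in\OG(L,M)$ preserves $L$ and $M\oplus N$, hence $\Gamma_L$ and its $p$-part.
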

 \begin{proposition}\label{prop:unimoduoverlat}
 Suppose that $L$ is unimodular, and 
let $\gamma_L\colon \discf{M}\cong -\discf{N}$ be the isomorphism 
with the graph $\Gamma_L$.
Let  $H$ be a subgroup of $\OG(N)$.
Then $g\in \OG(M)$ extends to $\tilde{g}\in \OG(L, M)$ with $\tilde{g}|_N\in H$
if and only if the isomorphism $\OG( \discf{M})\cong \OG(\discf{N})$
induced by $\gamma_L$ maps 
$\eta_M(g)\in \OG(\discf{M})$ into $\eta_N(H)\subset \OG(\discf{N})$.
\qed
 \end{proposition}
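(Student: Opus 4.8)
The plan is to reduce the assertion to Nikulin's gluing criterion for isometries of an overlattice, and then to exploit the fact that, $L$ being unimodular, the gluing subgroup $\Gamma_L$ is the \emph{graph} of $\gamma_L$, so that the behaviour on $N$ of any extendable isometry is completely dictated by its behaviour on $M$.

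First I would record the gluing criterion. Because $N=M^\perp$ in $L$, one has $\OG(L,M)=\OG(L,N)$, and restriction defines an injection $\OG(L,M)\inj\OG(M)\times\OG(N)$, $\tilde g\mapsto(\tilde g|_M,\tilde g|_N)$. By Nikulin~\cite{Nikulin79} (the same mechanism underlying Propositions~\ref{prop:LN} and~\ref{prop:MN}), a pair $(g,h)\in\OG(M)\times\OG(N)$ is the restriction of some $\tilde g\in\OG(L,M)$ if and only if the automorphism $(\eta_M(g),\eta_N(h))$ of $\discg{M}\times\discg{N}$ preserves the subgroup $\Gamma_L=L/(M\oplus N)$. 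Here the unimodularity of $L$ is what guarantees that both projections $\Gamma_L\to\discg{M}$ and $\Gamma_L\to\discg{N}$ are isomorphisms, so that $\Gamma_L$ is indeed the full graph $\{(x,\gamma_L(x)):x\in\discg{M}\}$ of the anti-isometry $\gamma_L$ supplied in the statement.

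Next I would translate the preservation condition for a graph. Since $\Gamma_L$ is the graph of $\gamma_L$, the pair $(\eta_M(g),\eta_N(h))$ preserves $\Gamma_L$ exactly when $\eta_N(h)\circ\gamma_L=\gamma_L\circ\eta_M(g)$; equivalently, $\eta_N(h)$ is the image of $\eta_M(g)$ under the isomorphism $\OG(\discf{M})\cong\OG(\discf{N})$ induced by $\gamma_L$. Thus, for a given $g$, the $N$-part of any extension is forced to be the single element $c(\eta_M(g))$, where $c$ denotes that induced isomorphism.

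Finally I would assemble both implications. The isometry $g$ extends to a $\tilde g\in\OG(L,M)$ with $\tilde g|_N\in H$ if and only if there exists $h\in H$ for which $(g,h)$ is extendable, i.e.\ for which $\eta_N(h)=c(\eta_M(g))$; and such an $h\in H$ exists precisely when $c(\eta_M(g))\in\eta_N(H)$. This is exactly the stated condition that $c$ maps $\eta_M(g)$ into $\eta_N(H)$, and both directions fall out simultaneously. I anticipate no real difficulty beyond careful bookkeeping: the one point to watch is the order of composition in $\eta_N(h)\circ\gamma_L=\gamma_L\circ\eta_M(g)$ under the paper's right-action convention, but this only affects whether $c$ is conjugation by $\gamma_L$ or by $\gamma_L\inv$, and not the logical structure of the argument.
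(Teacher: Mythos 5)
Your proof is correct and follows exactly the route the paper intends: the paper states this proposition without proof, as a direct consequence of Nikulin's gluing theory~\cite{Nikulin79}, and your argument (unimodularity of $L$ forces $\Gamma_L$ to be the full graph of the anti-isometry $\gamma_L$, extendability of a pair $(g,h)$ is equivalent to $(\eta_M(g),\eta_N(h))$ preserving $\Gamma_L$, and preservation of a graph is equivalent to $\eta_N(h)$ being the conjugate of $\eta_M(g)$ by $\gamma_L$) is precisely the omitted verification. Your closing remark about the composition-order convention is the right thing to flag and, as you say, does not affect the logic.
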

\subsection{Geometric application of Borcherds' method}\label{subsec:geomBorcherds}
Let  $Z$ be a $K3$ surface defined over an algebraically closed field.
We use the notation $\NS{Z}$, $\PP{Z}$ and $\NN{Z}$ defined in Section~\ref{subsec:ShiodaKK}.
The following is well-known.
\begin{proposition}\label{prop:NZNZg}
The closed subset  $\NN{Z}$ of $ \PP{Z}$ is an $\Roots(\NS{Z})$-chamber.
The mapping $C\mapsto ([C])\sperp\cap \NN{Z}$ gives a one-to-one correspondence
between the set of smooth rational curves on $Z$ and the set of walls of $\NN{Z}$.
\qed
\end{proposition}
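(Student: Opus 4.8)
The plan is to establish the two assertions separately, the main input being the Riemann--Roch theorem on the $K3$ surface $Z$ (so that $\chi(\OOO_Z(D))=2+\tfrac12\intf{[D],[D]}$, since $K_Z=0$), together with the facts that an ample class pairs strictly positively with every nonzero effective class and that two classes in the closed positive cone $\overline{\PP{Z}}$ pair nonnegatively. First I would cut down the inequalities defining $\NN{Z}$: writing a curve as an effective sum of irreducible curves, $\NN{Z}$ is cut out by the inequalities $\intf{x,[C]}\ge 0$ with $C$ irreducible. If $\intf{[C],[C]}\ge 0$ then $[C]\in\overline{\PP{Z}}$, so the inequality holds automatically on $\PP{Z}$ and is redundant; if $\intf{[C],[C]}<0$ then the adjunction relation $p_a(C)=1+\tfrac12\intf{[C],[C]}\ge 0$ forces $\intf{[C],[C]}=-2$, so that $C$ is a smooth rational curve and $[C]$ is a root. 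Hence $\NN{Z}=\set{x\in\PP{Z}}{\intf{x,[C]}\ge 0 \text{ for all smooth rational curves }C}$, a region of the type considered in Remark~\ref{rem:refinement} whose defining set lies inside $\Roots(\NS{Z})$.

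To see that $\NN{Z}$ is a single $\Roots(\NS{Z})$-chamber, I would first observe that its interior, the ample cone, meets no root hyperplane: by Riemann--Roch any root $r$ has $\chi(\OOO_Z(r))=1$, so $r$ or $-r$ is effective, whence $\intf{h,r}\ne 0$ for every ample $h$. Thus the ample cone lies in one connected component of $\PP{Z}\setminus\bigcup_{r\in\Roots(\NS{Z})}(r)\sperp$, and $\NN{Z}$ is contained in the corresponding chamber $D$. Conversely, for each smooth rational $C$ the chamber $D$ lies entirely on one side of $([C])\sperp$, and since $\intf{h,[C]}>0$ for an interior point $h\in D$ this side is $\{\intf{\cdot,[C]}\ge 0\}$; therefore $D\subseteq\NN{Z}$ and $\NN{Z}=D$.

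For the wall correspondence I would argue both directions. Given a smooth rational curve $C$, set $a:=\intf{h,[C]}>0$ and consider $x:=h+\tfrac{a}{2}[C]$; a short computation gives $\intf{x,[C]}=0$, $\intf{x,x}>0$, and $\intf{x,[C']}>0$ for every other smooth rational $C'$ (using $\intf{[C],[C']}\ge 0$), so $x$ is a relative interior point of $([C])\sperp\cap\NN{Z}$, which is therefore a wall. Conversely, let a root $r$ define a wall, normalized so that $\intf{r,\cdot}\ge 0$ on $\NN{Z}$; then $\intf{h,r}>0$ forces $r$ effective, and writing $r=\sum n_i C_i$ with $C_i$ irreducible, the identity $0=\intf{x,r}=\sum n_i\intf{x,[C_i]}$ with nonnegative summands along the wall makes each $\intf{x,[C_i]}$ vanish on an open subset of $(r)\sperp$, so $[C_i]\in\R r$. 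As every root is primitive and $\intf{[C_i],[C_i]}\ge -2$ for the irreducible $C_i$, this yields $[C_i]=r$, a single component, and $r=[C]$ with $C$ smooth rational. Distinct smooth rational curves carry distinct classes, hence distinct walls, which gives the bijection.

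The step I expect to be the main obstacle is the wall-existence claim, that is, that each smooth rational curve bounds a genuine facet rather than contributing a redundant inequality. Concluding that $([C])\sperp\cap\NN{Z}$ is a wall at the point $x$ above requires that only finitely many hyperplanes $([C'])\sperp$ approach $x$; this is precisely the local finiteness of the root arrangement on $\PP{Z}$, which rests on the standard finiteness, in a hyperbolic lattice, of the set of roots $r$ with $\intf{h,r}$ bounded. Everything else reduces to signature bookkeeping and the Riemann--Roch inequality.
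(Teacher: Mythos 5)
The paper offers no argument here: it labels the proposition ``well-known'' and closes it immediately with \qed, so there is no internal proof to compare against. Your proposal is a correct and essentially complete rendition of the standard argument that the paper is implicitly invoking: Riemann--Roch plus adjunction to reduce the defining inequalities of $\NN{Z}$ to smooth rational curves and to show that every root or its negative is effective; convexity to identify $\NN{Z}$ with the chamber containing the ample cone; the explicit point $x=h+\tfrac{a}{2}[C]$ to exhibit each $([C])\sperp\cap\NN{Z}$ as a genuine wall; and the effective-decomposition argument to show every wall arises this way. You also correctly isolated the only technical underpinning, namely local finiteness of the root hyperplane arrangement in $\PPP(\NS{Z})$ (finiteness of roots with bounded pairing against a fixed positive class), which is exactly what the paper's Section 3 framework assumes when it speaks of $\Roots(L)$-chambers. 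Two small points you elide but which are routine: the inclusion $\NN{Z}\subseteq D$ needs a one-line justification (e.g.\ $\NN{Z}$ is closed convex with interior avoiding all root hyperplanes, hence lies in the closure of the component containing the ample classes), and in the converse direction one should note that a wall of an $\Roots(\NS{Z})$-chamber is necessarily cut out by a root hyperplane, which again follows from local finiteness.
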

Since the action of  $\OG^+(\NS{Z})$ on $ \PP{Z}$ preserves the tessellation by $\Roots(\NS{Z})$-chambers
and an ample class is an interior point of $\NN{Z}\subset \PP{Z}$, 
we obtain the following.
\begin{corollary}\label{cor:Npreserve}
Let $a\in \NS{Z}$ be an ample class.
%Since $a$ is an interior point of $\NN{Z}$, 
Then  the following three conditions on $g\in \OG^+(\NS{Z})$ are equivalent:
{\rm (i)} $\NN{Z}=\NN{Z}^g$.
{\rm (ii)} $\NN{Z}\cap \NN{Z}^g$ contains an interior point of $\NN{Z}$.
{\rm (iii)} There exist no roots $r$ of $\NS{Z}$ such that $\intf{r, a}$ and $\intf{r, a^g}$ have different signs.
\qed
\end{corollary}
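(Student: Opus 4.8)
The plan is to deduce everything from the two facts recorded just above the statement: by Proposition~\ref{prop:NZNZg} the set $\NN{Z}$ is an $\Roots(\NS{Z})$-chamber with the ample class $a$ in its interior, and $\OGplus(\NS{Z})$ preserves the tessellation of $\PP{Z}$ by $\Roots(\NS{Z})$-chambers. The first consequence I would record is that $\NN{Z}^g$ is again an $\Roots(\NS{Z})$-chamber and that $a^g$ is an interior point of it. I would also note at the outset that $a^g$ lies on no root hyperplane: for any root $r$ the vector $r^{g\inv}$ is again a root, so
$$
\intf{a^g, r}=\intf{a, r^{g\inv}}\ne 0,
$$
the nonvanishing coming from the fact that $a$ is interior to $\NN{Z}$. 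This is what makes the phrase ``different signs'' in (iii) unambiguous, since both $\intf{a, r}$ and $\intf{a^g, r}$ are then nonzero. Rather than run a cycle of implications, I would prove the two equivalences (i)$\Leftrightarrow$(ii) and (i)$\Leftrightarrow$(iii) separately.

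For (i)$\Leftrightarrow$(ii): the direction (i)$\Rightarrow$(ii) is immediate, since $\NN{Z}\cap\NN{Z}^g=\NN{Z}$ then contains the interior point $a$. For (ii)$\Rightarrow$(i) I would take an interior point $p$ of $\NN{Z}$ lying in $\NN{Z}^g$; such a $p$ lies on no root hyperplane, and I would invoke the local finiteness of the family $\shortset{(r)\sperp}{r\in\Roots(\NS{Z})}$ to conclude that a point of a chamber lying off all root hyperplanes is automatically an interior point of that chamber. Thus $p$ is interior to both $\NN{Z}$ and $\NN{Z}^g$, and since two chambers sharing an interior point must coincide, $\NN{Z}=\NN{Z}^g$.

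For (i)$\Leftrightarrow$(iii) the key geometric input is that a chamber is determined by a ``sign vector.'' Concretely, to each point $x$ off all root hyperplanes I attach the collection of signs of $\intf{x, r}$ over $r\in\Roots(\NS{Z})$, and I would check that the corresponding sign cell is the intersection of the convex cone $\PP{Z}$ with the open half-spaces cut out by those sign conditions, hence convex and in particular connected. Consequently the connected components of $\PP{Z}\setminus\bigcup_r(r)\sperp$ are exactly the sign cells, so two points off all root hyperplanes lie in the same $\Roots(\NS{Z})$-chamber precisely when they have the same sign vector. Applying this to the interior points $a$ and $a^g$, whose chambers are $\NN{Z}$ and $\NN{Z}^g$, I get that $\NN{Z}=\NN{Z}^g$ holds if and only if $\intf{a, r}$ and $\intf{a^g, r}$ agree in sign for every root $r$, which is precisely the content of (iii).

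The computations here are all routine; the points that need genuine care, and which I regard as the crux, are the two structural facts about the tessellation: that a point of a chamber lying on no root hyperplane is an interior point, and that each sign cell is connected. Both rest on the local finiteness of the hyperplane arrangement together with the convexity of $\PP{Z}$, and once they are in hand the three equivalences fall out immediately.
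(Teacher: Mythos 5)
Your proof is correct and takes essentially the same route as the paper: the paper deduces the corollary immediately from Proposition~\ref{prop:NZNZg} together with exactly the two facts you start from, namely that $\OGplus(\NS{Z})$ preserves the tessellation of $\PP{Z}$ by $\Roots(\NS{Z})$-chambers and that an ample class is an interior point of $\NN{Z}$. Your write-up simply makes explicit the standard chamber-geometry details the paper leaves implicit (sign vectors, local finiteness of the root hyperplanes, convexity of $\PP{Z}$), so there is nothing to correct.
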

Let $Z$ be a complex  $K3$ surface.
Let $\TL{Z}$ denote the orthogonal complement of $\NS{Z}=H^2(Z, \Z)\cap H^{1,1}(Z)$ in 
the even unimodular lattice $H^2(Z, \Z)$ with the cup-product.
Then $\TL{Z}\tensor\C$ contains a  one-dimensional subspace $H^{2, 0}(Z)=\C\, \omega$,
where $\omega$ is a non-zero holomorphic $2$-form on $Z$.
We put
$$
\OG(\TL{Z}, \omega):=\set{g\in \OG(\TL{Z})}{\C\, \omega^g=\C\, \omega}.
$$
Recall that we have a natural homomorphism 
$\eta_{\TL{Z}}\colon \OG(\TL{Z})\to \OG(\discf{\TL{Z}})$.
We  put
 $$
 \OG(\discf{\TL{Z}}, \omega):=\textrm{the image of $\OG(\TL{Z}, \omega)$ 
 under $\eta_{\TL{Z}}$.}
$$
The  even unimodular overlattice $H^2(Z, \Z)$ of $\NS{Z}\oplus \TL{Z}$ 
induces  an isomorphism 
$\gamma_H$ between  $\discf{\NS{Z}}$ and $-\discf{\TL{Z}}$.
Let $\OG(\discf{\NS{Z}}, \omega)$ 
denote the subgroup of $\OG(\discf{\NS{Z}})$ corresponding to 
$\OG(\discf{\TL{Z}}, \omega)$ 
via the isomorphism $\OG(\discf{\TL{Z}})\cong \OG(\discf{\NS{Z}})$
induced by  $\gamma_H$.
By Proposition~\ref{prop:unimoduoverlat},
an isometry  $g\in \OG(\NS{Z})$ extends to an isometry $\tilde{g}$ of $H^2(Z, \Z)$
that preserves $H^{2, 0} (Z)$
if and only if
$\eta_{\NS{Z}}(g)\in \OG(\discf{\NS{Z}}, \omega)$.
\par
Let $Z$ be a supersingular $K3$ surface
defined over an algebraically closed field $k_p$ of odd characteristic $p$.
Then $\discg{\NS{Z}}$ is an $\F_p$-vector space,
and we have the \emph{period} of $Z$,
which is a  subspace of $\discg{\NS{Z}}\tensor k_p$.
(See Ogus~\cite{Ogus1, Ogus2}.)
Let $\OG(\discf{\NS{Z}}, \omega)$ denote the subgroup of $\OG(\discf{\NS{Z}})$ consisting 
of automorphisms that preserve the period.
\par
In the two cases where $Z$ is defined over $\C$  or  supersingular  in odd characteristic,
we call the condition
\begin{equation}\label{eq:periodcondition}
\eta_{\NS{Z}}(g)\in \OG(\discf{\NS{Z}}, \omega)
\end{equation}
on $g\in \OGplus(\NS{Z})$ the \emph{period condition}.
In these two cases,
we have the Torelli theorem.
(See Piatetski-Shapiro and Shafarevich~\cite{PSS71}, Ogus~\cite{Ogus1, Ogus2} for $p>3$
and Bragg and Lieblich~\cite{BL2018} for $p\ge 3$.)
By virtue of this theorem, we have  the following:
\begin{theorem}\label{thm:TorelliAut}
Let $Z$ be a complex $K3$ surface or a supersingular $K3$ surface in odd characteristic,
and let 
$\psi_Z\colon \Aut(Z)\to \OGplus(\NS{Z})$
be the natural representation of $\Aut(Z)$ on $\NS{Z}$.
Then an isometry $g\in \OGplus (\NS{Z})$
belongs to the image of $\psi_Z$ if and only if $g$ preserves $\NN{Z}$ and 
satisfies the period condition~\eqref{eq:periodcondition}.
\qed
\end{theorem}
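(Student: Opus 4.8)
The plan is to deduce both cases from the appropriate Torelli theorem --- Piatetski-Shapiro--Shafarevich~\cite{PSS71} over $\C$, and Ogus~\cite{Ogus1, Ogus2} together with the extension to $p=3$ by Bragg--Lieblich~\cite{BL2018} in the supersingular case --- the only real work being to translate the abstract period condition~\eqref{eq:periodcondition}, phrased on the discriminant form, into the Hodge-theoretic (resp.\ crystalline) hypothesis that the Torelli theorem actually consumes. First I would dispose of the part of the statement concerning $\NN{Z}$: since the pull-back $f^*$ of any $f\in\Aut(Z)$ permutes the smooth rational curves and sends ample classes to ample classes, it preserves $\NN{Z}$, so this condition is automatic for $g$ in the image of $\psi_Z$; conversely it is one of the two hypotheses to be used.

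The heart of the complex case is to show that~\eqref{eq:periodcondition} is \emph{equivalent} to the assertion that $g$ extends to a Hodge isometry of $H^2(Z,\Z)$. Here I would invoke Proposition~\ref{prop:unimoduoverlat} with $M=\NS{Z}$, $N=\TL{Z}$, $L=H^2(Z,\Z)$ (even unimodular) and $H=\OG(\TL{Z}, \omega)$. Because an isometry of $H^2(Z,\Z)$ preserving $\NS{Z}=H^2(Z,\Z)\cap H^{1,1}(Z)$ is a Hodge isometry exactly when its restriction to $\TL{Z}$ preserves the line $\C\,\omega$, i.e.\ lies in $\OG(\TL{Z}, \omega)$, and because $\gamma_H$ identifies $\OG(\discf{\NS{Z}}, \omega)$ with the image $\eta_{\TL{Z}}(\OG(\TL{Z}, \omega))$ by the very definitions preceding the theorem, Proposition~\ref{prop:unimoduoverlat} gives: $g$ extends to a Hodge isometry $\tilde{g}$ of $H^2(Z,\Z)$ if and only if $\eta_{\NS{Z}}(g)\in\OG(\discf{\NS{Z}}, \omega)$. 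This settles the period condition in both directions simultaneously.

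It then remains to feed $\tilde{g}$ into the Torelli theorem. For necessity, if $g=\psi_Z(f)$ then $\tilde{g}=f^*$ is already a Hodge isometry extending $g$, so~\eqref{eq:periodcondition} holds and $\NN{Z}$ is preserved. For sufficiency, assume $g\in\OGplus(\NS{Z})$ preserves $\NN{Z}$ and satisfies~\eqref{eq:periodcondition}, and take the Hodge isometry $\tilde{g}$ extending $g$ produced above. Since $g\in\OGplus(\NS{Z})$ and $g$ preserves $\NN{Z}$, the isometry $\tilde{g}$ sends an ample class of $Z$ to an ample class (Corollary~\ref{cor:Npreserve}), hence maps the Kähler cone to itself; the strong Torelli theorem then yields a unique $f\in\Aut(Z)$ with $f^*=\tilde{g}$, so $\psi_Z(f)=\tilde{g}|_{\NS{Z}}=g$. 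The supersingular case runs in parallel, with $\C\,\omega\subset\TL{Z}\tensor\C$ replaced by the Ogus period inside $\discg{\NS{Z}}\tensor k_p$ and $\OG(\discf{\NS{Z}}, \omega)$ its stabilizer: here the crystalline Torelli theorem of Ogus, in the form valid for $p\ge 3$ due to Bragg--Lieblich, states directly that $g$ is realized by an automorphism precisely when it preserves $\NN{Z}$ and the period.

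The hard part will be the bookkeeping of the period condition rather than any geometric input: one must check with care that $\OG(\discf{\NS{Z}}, \omega)$ as defined equals the $\eta_{\NS{Z}}$-image of those isometries of $\NS{Z}$ that extend to Hodge isometries preserving $\C\,\omega$ (resp.\ crystalline isometries preserving the Ogus period), so that Proposition~\ref{prop:unimoduoverlat} applies verbatim and the two Torelli theorems close the argument uniformly. A secondary point requiring attention is that the supersingular statement needs the $p=3$ case, which is exactly why the Bragg--Lieblich strengthening of Ogus's theorem is cited; without it the residue characteristic $3$ relevant to this paper would not be covered.
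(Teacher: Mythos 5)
Your proposal is correct and follows essentially the same route as the paper: the paper derives this theorem directly from the strong Torelli theorem of Piatetski-Shapiro--Shafarevich and the crystalline Torelli theorem of Ogus (extended to $p\ge 3$ by Bragg--Lieblich), with the translation of the period condition~\eqref{eq:periodcondition} into the existence of an extension $\tilde{g}$ preserving $H^{2,0}(Z)$ (resp.\ the Ogus period) already carried out via Proposition~\ref{prop:unimoduoverlat} in the text immediately preceding the statement. Your write-up merely makes these standard steps explicit, including the correct observation that the $p=3$ case relevant to this paper requires the Bragg--Lieblich strengthening.
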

We explain the procedure of Borcherds' method
in the simplest case. 
See~\cite{ShimadaAlgo} for more general cases.
In the following,
we assume that 
$Z$ is a complex $K3$ surface or a supersingular $K3$ surface in odd characteristic.
We also assume that 
 \emph{$\psi_Z$ is injective, and
regard $\Aut(Z)$ as a subgroup of $\OGplus(\NS{Z})$.}
We search for a primitive embedding $i\colon \NS{Z}\inj L_{26}$
inducing  $i_{\PPP}\colon  \PP{Z}\inj \PPP(L_{26})$
and a Weyl vector $w_0\in L_{26}$
with the following properties,
and look at the tessellation of the $\Roots(\NS{Z})$-chamber
$\NN{Z}$ by $\prV  (i)$-chambers,
where $\prV (i)$ is defined by~\eqref{eq:VVVS}.
\par
\medskip
(I) Let $R$ denote the orthogonal complement of $\NS{Z}$ in $L_{26}$.
We require that $R$ satisfies~\eqref{eq:Rcond},
so that each  $\prV (i)$-chamber has only a finite number of walls.
We also require that $\eta_R\colon \OG(R)\to \OG(\discf{R})$ is surjective.
By Proposition~\ref{prop:unimoduoverlat},
every isometry $g\in \OGplus(\NS{Z})$ 
extends to an isometry  of $L_{26}$.
Hence the action of $\OGplus(\NS{Z})$
preserves the tessellation of  
 $\PP{Z}$ by $\prV (i)$-chambers.
In particular, 
the action of $\Aut(Z)$ on $\NN{Z}$
preserves the tessellation of  
 $\NN{Z}$ by $\prV (i)$-chambers. 
\par
\medskip
(II)
Let $ D$ be the closed subset $i_{\PPP} \inv(\CCC(w_0))$ of $ \PP{Z}$.
We require that $ D$ contains an ample class in its interior.
Then  $ D$ is a $\prV (i)$-chamber contained in  $\NN{Z}$.
\begin{definition}
The $\prV (i)$-chamber $D$ is called the \emph{initial chamber} of this procedure.
A wall $(v)\sperp\cap D$ of $ D$ is called an \emph{outer-wall}
if $(v)\sperp$ defines a wall of the $\Roots(\NS{Z})$-chamber $\NN{Z}$, 
that is, if there exists a root $r$ of $\NS{Z}$ such that $(v)\sperp=(r)\sperp$.
We call the wall $(v)\sperp\cap D$   an \emph{inner-wall} otherwise.
Let $\WWW_{\out}(D)$ and $\WWW_{\inn}(D)$  denote the set of outer-walls and inner-walls,
respectively.
\end{definition}
We calculate the set of  walls of the initial chamber $ D$.
Since each outer-wall corresponds to a smooth rational curve on $Z$ by Proposition~\ref{prop:NZNZg},
we obtain  a configuration of smooth rational curves on $Z$
from $\WWW_{\out}(D)$.
\par
\medskip
(III)
We calculate $\Aut( D):=\shortset{g\in \OGplus (\NS{Z})}{ D^g= D}$.
By Corollary~\ref{cor:Npreserve},
any element of $\Aut( D)$
preserves  $\NN{Z}$.
Therefore the group
\begin{equation}\label{eq:AutZindD}
\Aut(Z,  D):=\set{g\in \Aut( D)}{\textrm{$g$ satisfies the period condition~\eqref{eq:periodcondition}}}
\end{equation}
is contained in $\Aut(Z)$.
We find  an ample class $h$ in the interior  of $ D$
such that $h^g=h$  for all $g\in \Aut(Z,  D)$.
Then $\Aut(Z,  D)$ is equal to the projective automorphism group
$\Aut(Z, h)$.
\par
\medskip
(IV)
Note that $\Aut(Z,  D)=\Aut(Z, h)$ acts on $\WWW_{\out}(D)$ and $\WWW_{\inn}(D)$.
We decompose $\WWW_{\inn}(D)$ into the orbits under the action 
of  $\Aut(Z,  h)$:
$$
\WWW_{\inn}(D)=O_1\cup \dots \cup O_J.
$$
From each orbit $O_j$,
we choose a wall $(v_j)\sperp \cap D$,
and calculate a Weyl vector $w_j\in L_{26}$
such that $ D_j:=i_{\PPP}\inv (\CCC(w_j))$ is the $\prV (i)$-chamber
adjacent to $ D$ across $(v_j)\sperp\cap D$.
Since $(v_j)\sperp\cap \NN{Z}$ is not a wall of $\NN{Z}$,
the $\prV (i)$-chamber $ D_j$ is contained in $\NN{Z}$.
For each $j=1, \dots, J$, we find  an isometry $g_j$ of $\OGplus (\NS{Z})$
that satisfies the period condition~\eqref{eq:periodcondition} and $ D^{g_j}= D_j$.
Note that each $g_j$ preserves $\NN{Z}$ by  Corollary~\ref{cor:Npreserve},
and hence  $g_j\in \Aut(Z)$.
Note also that, 
for each inner-wall  $(v\sprime)\sperp \cap D\in O_j$,
there exists 
 a conjugate $g\sprime\in \Aut(Z)$ of  $g_j$ by $\Aut(Z, h)$ 
that maps $D$  to the $\prV(i)$-chamber adjacent to $D$ across the wall $(v\sprime)\sperp \cap D$.
\par
\medskip
(V)
Under the assumptions given in (I)-(IV), 
the group $\Aut(Z)$ is 
generated by $\Aut(Z, h)$ and the automorphisms  $g_1, \dots, g_J$.
Moreover, 
the tessellation of $\NN{Z}$ by $\prV(i)$-chambers is $\Aut(Z)$-transitive, and 
the mappings $g\mapsto h^{g}$ and $g\mapsto  D^{g}$ 
give one-to-one correspondences between the following sets:
\begin{itemize}
\item The set of cosets $\Aut(Z, h)\backslash\Aut(Z)$.
\item The set of $\prV (i)$-chambers contained in $\NN{Z}$.
\item The subset $\shortset{h^{g}}{g\in \Aut(Z)}$ of $\NS{Z}$.
\end{itemize}
Moreover, 
considering 
the reflections with respect to the roots $r$ defining the outer-walls $(r)\sperp\cap D$ of $D$,
we see that, under the assumptions given in (I)-(IV),
the tessellation of $\PP{Z}$ by $\prV(i)$-chambers is $\OGplus(\NS{Z})$-transitive.
\par
\medskip
The method  described in this section was applied by Kondo~\cite{KondoKmJac}
to the calculation of  the automorphism group of a generic Jacobian Kummer surface,
and since then, 
many studies have been done on the automorphism groups of various $K3$ surfaces
(see the references of~\cite{ShimadaAlgo}).
This method was also applied to the study of
automorphism group of an Enriques surface in~\cite{ShimadaSch}~and~\cite{SV2019}.
\section{Borcherds' method for $X_0$ and $X_3$}\label{sec:BorcherdsX0X3}
Recall from Section~\ref{subsec:ShiodaKK}  that we use the following notation:
$$
\NS{3}:=\NS{X_3}, \;
\PP{3}:= \PP{X_3},  \;
\NN{3}:=\NN{X_3},  \;
\quad
\NS{0}:=\NS{X_0},  \;
\PP{0}:= \PP{X_0},  \;
\NN{0}:=\NN{X_0}.
$$
\subsection{Borcherds' method for $X_3$}\label{subsec:AutX3} 
%
%______found autreldeg 10 polreldeg 6 [ "A2", "A2", "A2", "A2", "A1", "A1", "A1", "A1", "A1", "A1" ] 1 
%______found autreldeg 31 polreldeg 9 [ "A3", "A3", "A3", "A3", "A1", "A1", "A1", "A1", "A1", "A1" ] 1 
\begin{table}
$$
\renewcommand{\arraystretch}{1.2}
\begin{array}{cccclc}
\textrm{orbit} & \intf{v, v} & \intf{v, h_3} & \intf{h_3, \dpp\sprime_d} & \Sing (\dpp\sprime_d)  &  d=\intf{h_3, h_3^{g(\dpp\sprime_d)}} \\
\hline
O\sprime_{648} & -4/3 & 2 & 6 & 4 A_2+6 A_1  & 10 \\
O\sprime_{5184} & -2/3 & 3  & 9 & 4 A_3+6 A_1   & 31
\end{array}
$$
\caption{Inner-walls of  $D_3$}\label{table:extraautsX3}
\end{table}
We identify $X_3$ and $F_3$ via Shioda's isomorphism explained  
in Section~\ref{subsec:LLL40}.
Hence $\NS{3}$ is the N\'eron-Severi lattice  of $F_3$.
In~\cite{KondoShimadaFQ3},
we have obtained  a generating set of $\Aut(X_3)$ by finding
a primitive embedding
$ i_{3}\colon \NS{3}\inj L_{26}$ inducing $ i_{3, \PPP}\colon \PP{3}\inj \PPP(L_{26})$
and a Weyl vector $w_0\in L_{26}$ that satisfy the requirements 
in~Section~\ref{subsec:geomBorcherds}.
The result is as follows.
See~\cite{thecompdata} or~\cite{KondoShimadaFQ3} for the explicit descriptions of 
$ i_{3}$, $w_0$, and other computational data.
\par
We have $\discg{\NS{3}}\cong (\Z/3\Z)^2$.
The group $\OG(\discf{\NS{3}})$ is a dihedral group of order $8$,
and $\OG(\discf{\NS{3}}, \omega)$ is a cyclic subgroup  of order $4$.
The orthogonal complement $R_3$ of $\NS{3}$ in $L_{26}$ is 
a negative-definite root lattice of type $2A_2$.
The order of $\OG(R_3)$ is $288$,
the order of $\OG(\discf{R_3})$ is $8$,
and the natural homomorphism $\OG(R_3)\to \OG(\discf{R_3})$ is surjective.
We put
$$
 D_3:= i_{3, \PPP}\inv (\CCC(w_0)).
$$
Then $ D_3$   contains the class
$h_3\in \NS{3}$ of a hyperplane section of $X_3=F_3\subset \P^3$ in its interior.
Hence $ D_3$ is a $\prV( i_{3})$-chamber.
The set $\WWW_{\out} ( D_3)$ of outer-walls of the initial chamber  $ D_3$ is equal to 
$\shortset{(\ell)\sperp\cap D_3}{\ell\in \LLL_{112}}$.
Because
$$
h_3=\frac{1}{28}\sum_{\ell\in \LLL_{112}} [\ell],
$$
the group  $\Aut(X_3,  D_3)$
defined by~\eqref{eq:AutZindD} is equal to $\Aut(X_3, h_3)$,
which is  the projective automorphism group 
$\shortset{g\in \PGL_4(k_3)}{g(F_3)=F_3}=\thePGU$
of $F_3\subset\P^3$.
Hence  $\Aut(X_3,  D_3)$ is of order $13063680$.
The class $h_3$ is in fact  the image of $w_0$ 
under the orthogonal projection $L_{26}\tensor\Q\to \NS{3}\tensor\Q$.
Under the action of $\Aut(X_3, h_3)=\thePGU$,
the set $\WWW_{\inn}( D_3)$ of inner-walls of $ D_3$
is decomposed into two orbits $O\sprime_{648}$ and $O\sprime_{5184}$
of size $648$ and $5184$, respectively.
Each inner-wall $(v)\sperp\cap D_3$ in the orbit $O\sprime_s$ is defined by a primitive  vector $v$ of $\NS{3}\dual$
with the properties given in Table~\ref{table:extraautsX3}, and
there exists a double-plane polarization $\dpp\sprime_d \in \NS{3}$
such that the corresponding double-plane involution
$g(\dpp\sprime_d)\in \Aut(X_3)$ maps $ D_3$ to the $\prV (i_3)$-chamber adjacent to $ D_3$
across the wall $(v)\sperp \cap D_3$.
These results prove  the following: 
\begin{theorem}[Kondo--Shimada~\cite{KondoShimadaFQ3}]\label{thm:KondoShimadaFQ3}
The automorphism group  $\Aut(X_3)$  is generated by 
the projective automorphism group $ \Aut(X_3, h_3)= \thePGU$  %of order $13063680$
and two double-plane  involutions $g(\dpp\sprime_{10}), g(\dpp\sprime_{31})$
corresponding the orbits $O\sprime_{648}, O\sprime_{5184}$  
of the action of $\thePGU$ on the set $\WWW_{\inn} (D_3)$ of inner-walls of
the initial chamber $D_3$.
\end{theorem}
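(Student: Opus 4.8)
The plan is to apply the procedure (I)--(V) of Section~\ref{subsec:geomBorcherds} verbatim to the supersingular $K3$ surface $Z=X_3$ in characteristic $3$. By Proposition~\ref{prop:injpsi} the representation $\psi_{X_3}\colon \Aut(X_3)\to \OGplus(\NS{3})$ is injective, so we regard $\Aut(X_3)$ as a subgroup of $\OGplus(\NS{3})$; by the Torelli-type Theorem~\ref{thm:TorelliAut} (available for $p\ge 3$), an isometry $g\in \OGplus(\NS{3})$ lies in $\Aut(X_3)$ if and only if it preserves $\NN{3}$ and satisfies the period condition~\eqref{eq:periodcondition}. All the work is therefore reduced to producing the embedding data $(i_3,w_0)$ and checking the hypotheses of steps (I)--(IV); step (V) then yields the stated generating set.

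First I would verify steps (I)--(III). The orthogonal complement $R_3\cong 2A_2$ of $\NS{3}$ in $L_{26}$ is a root lattice, hence contains roots and so satisfies~\eqref{eq:Rcond}; thus each $\prV(i_3)$-chamber has only finitely many walls. A direct count ($|\OG(R_3)|=288$, $|\OG(\discf{R_3})|=8$) shows $\eta_{R_3}\colon \OG(R_3)\to \OG(\discf{R_3})$ is surjective, so by Proposition~\ref{prop:unimoduoverlat} every isometry of $\NS{3}$ extends to $L_{26}$ and the action of $\OGplus(\NS{3})$ preserves the tessellation of $\PP{3}$ by $\prV(i_3)$-chambers. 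Next one checks that the initial chamber $D_3:=i_{3,\PPP}\inv(\CCC(w_0))$ contains the hyperplane class $h_3$ of $F_3\subset\P^3$ in its interior, so $D_3\subset\NN{3}$ is a genuine $\prV(i_3)$-chamber. The heart of step (III) is the identification $\Aut(X_3,D_3)=\Aut(X_3,h_3)=\thePGU$: I would compute the outer-walls of $D_3$, confirm they are exactly the $112$ walls $(\ell)\sperp\cap D_3$ indexed by $\ell\in\LLL_{112}$, and then use that $h_3=\tfrac{1}{28}\sum_{\ell}[\ell]$ is fixed by every projective symmetry, so that the group $\Aut(X_3,D_3)$ of~\eqref{eq:AutZindD} is the projective automorphism group $\thePGU$ of $F_3$, of order $13063680$.

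The main obstacle is the content of step (IV). After enumerating the full set of walls of $D_3$ and splitting the inner-walls $\WWW_{\inn}(D_3)$ into $\thePGU$-orbits --- finding exactly two, $O\sprime_{648}$ and $O\sprime_{5184}$, with the numerical data of Table~\ref{table:extraautsX3} --- one must, for a representative inner-wall in each orbit, exhibit a concrete isometry satisfying the period condition that carries $D_3$ onto the adjacent $\prV(i_3)$-chamber $D_j=i_{3,\PPP}\inv(\CCC(w_j))$. I would produce these as double-plane involutions: using the algorithm of Remark~\ref{rem:dppalgo} I verify that the primitive defining vector of each representative wall is a double-plane polarization $\dpp\sprime_d$ with the stated $ADE$-type of $\Sing(\dpp\sprime_d)$, and that the associated involution $g(\dpp\sprime_d)$ --- automatically an element of $\Aut(X_3)$, being the covering involution of a double plane --- has matrix on $\NS{3}$ sending $D_3$ to $D_j$. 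Matching the abstract wall-crossing isometry determined by the Weyl vector $w_j$ with this geometric involution is the delicate computational step on which the whole argument rests.

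Finally, step (V) furnishes the conclusion. Since $\NN{3}$ is tessellated by $\prV(i_3)$-chambers (Proposition~\ref{prop:RSVS}) and is connected, the adjacency graph of the chambers inside $\NN{3}$ is connected; because every inner-wall of $D_3$ is crossed by a conjugate under $\Aut(X_3,h_3)$ of $g(\dpp\sprime_{10})$ or $g(\dpp\sprime_{31})$, this property propagates along the graph, so the subgroup generated by $\thePGU$ and the two involutions acts transitively on the $\prV(i_3)$-chambers in $\NN{3}$. Given any $g\in\Aut(X_3)$, Corollary~\ref{cor:Npreserve} shows it preserves $\NN{3}$, hence maps $D_3$ to some chamber $D_3^{g}\subset\NN{3}$; composing $g$ with a suitable element of the generated subgroup returns $D_3$ to itself, and since $\Aut(X_3,D_3)=\thePGU$ the resulting stabilizing element already lies in the subgroup. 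Therefore $\Aut(X_3)$ is generated by $\Aut(X_3,h_3)=\thePGU$ together with the two double-plane involutions $g(\dpp\sprime_{10})$ and $g(\dpp\sprime_{31})$.
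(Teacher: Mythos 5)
Your proposal is correct and follows essentially the same route as the paper: the paper's proof of this theorem is precisely the execution of steps (I)--(V) of Borcherds' method for the data $(i_3, w_0)$ — verifying the conditions on $R_3\cong 2A_2$, identifying $\Aut(X_3,D_3)=\Aut(X_3,h_3)=\thePGU$ via $h_3=\tfrac{1}{28}\sum_{\ell\in\LLL_{112}}[\ell]$, splitting $\WWW_{\inn}(D_3)$ into the two $\thePGU$-orbits of Table~\ref{table:extraautsX3}, and realizing the wall-crossings by the double-plane involutions $g(\dpp\sprime_{10})$, $g(\dpp\sprime_{31})$ — exactly as you describe, including the concluding chamber-transitivity argument of step (V).
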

\subsection{Borcherds' method for $X_0$}\label{subsec:AutX0} 
\begin{table}
$$
\renewcommand{\arraystretch}{1.2}
\begin{array}{cccclc}
\textrm{orbit} & \intf{v, v} & \intf{v, h_0} & \intf{h_0, \dpp_d} & \Sing (\dpp_d)  &  d=\intf{h_0, h_0^{g(\dpp_d)}} \\
\hline
O_{64} & -5/4 & 5  & 16 & 2 A_3 +3 A_2 +2A_1  & 80 \\
O_{40} & -1 & 6  & 18 & 4 A_3 +3A_1  & 112 \\
O_{160} & -1/2 & 8  & 26 & A_5+2A_4+A_3  & 296 \\
O_{320} & -1/4 & 9  &  38 & 2A_7+A_3+A_1 & 688
\end{array}
$$
\caption{Inner-walls of  $D_0$}\label{table:extraautsX0}
\end{table}
We define an embedding $i_0\colon \NS{0}\inj L_{26}$ by 
\begin{equation}\label{eq:i0i3rho}
 i_{0}:= i_{3}\circ\rho,
\end{equation}
where $ i_{3}\colon \NS{3}\inj L_{26}$ is the embedding used in Section~\ref{subsec:AutX3},
and $\rho\colon \NS{0}\inj \NS{3}$ is the embedding given by the specialization of $X_0$ to $X_3$.
The key observation
of this article is that $ i_{0}$ is equal to the embedding used by Keum--Kondo~\cite{KK2001} for the calculation of $\Aut(X_0)$.
\par
We have $\discg{\NS{0}}\cong (\Z/4\Z)^2$.
The group $\OG(\discf{\NS{0}})$ is  isomorphic to the dihedral group of order $8$,
and the subgroup $\OG(\discf{\NS{0}}, \omega)$ is cyclic of order $4$.
The embedding $ i_{0}$ is primitive and induces
$ i_{0, \PPP}\colon \PP{0}\inj \PPP(L_{26})$.
The orthogonal complement $R_0$ of $\NS{0}$ in $L_{26}$ is a negative-definite root lattice of type $2A_3$.
The order of $\OG(R_0)$ is $4608$,
the order of $\OG(\discf{R_0})$ is $8$,
and the natural homomorphism $ \OG(R_0)\to \OG(\discf{R_0})$ is surjective.
The vector
\begin{equation}\label{eq:h0sum}
h_0:=\frac{1}{2}\sum_{\ell\in \LLL_{40}} [\ell]\;\; \in \;\; \NS{0}\tensor\Q
\end{equation}
is in fact in $\NS{0}$,  and we have $\intf{h_0, h_0}=40$. 
Since $\intf{h_0, \ell}=2$ for all $\ell\in \LLL_{40}$, the class $h_0$ is nef.
Since there exist no roots $r$ of $\NS{0}$ such that $h_0\in (r)\sperp$, 
the class $h_0$ is  ample.
\emph{Let $w_0\in L_{26}$ be the same Weyl vector 
that was used in Section~\ref{subsec:AutX3}.}
The orthogonal projection of $w_0$ to $\NS{0}\tensor\Q$ is equal to $h_0/2$.
(In~\cite{KK2001}, the vector $h_0/2$ is used instead of $h_0$.)
We put
$$
 D_0:= i_{0, \PPP}\inv (\CCC(w_0)).
$$
Then $ D_0$ contains $h_0$ in its interior,
and hence  $ D_0$ is a $\prV ( i_{0})$-chamber.
The set $\WWW_{\out}( D_0)$ of outer-walls of the initial chamber $ D_0$ is equal to 
$\shortset{(\ell)\sperp\cap D_0}{\ell\in \LLL_{40}}$.
We have
\begin{equation}\label{eq:AutX0D0AutX0h0}
\Aut(X_0,  D_0)=\Aut(X_0, h_0), 
\end{equation}
which is of order $3840$ and acts on  $\WWW_{\out}( D_0)$
transitively.
Using the algorithms in Remark~\ref{rem:dppalgo},
we search for double-plane polarizations in $\NS{0}$,
and obtain the following proposition,
which proves Theorem~\ref{thm:KK2001}.
\begin{proposition}\label{prop:X0gdpp}
The action of $\Aut(X_0, h_0)$ 
decomposes 
the set $\WWW_{\inn}(D_0)$ of inner-walls of the initial chamber $ D_0$ 
into four orbits
$O_{64}$, $O_{40}$, $O_{160}$, $O_{320}$,
where $|O_s|=s$.
For each inner-wall $(v)\sperp\cap D_0\in O_s$, 
there exists a double-plane polarization $\dpp_d\in \NS{0}$
such that the corresponding double-plane involution
$g(\dpp_d)\in \Aut(X_0)$ maps $ D_0$ to the $\prV (i_0)$-chamber adjacent to $ D_0$
across the wall $(v)\sperp\cap D_0$.
\qed
\end{proposition}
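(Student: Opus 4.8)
The plan is to carry out Step~(IV) of the procedure in Section~\ref{subsec:geomBorcherds} for the initial chamber $D_0$. First I would enumerate the complete set of walls of $D_0= i_{0, \PPP}\inv(\CCC(w_0))$ from the Weyl vector $w_0$ using the algorithm of~\cite{ShimadaAlgo}; this terminates with finitely many walls because the orthogonal complement $R_0$ of $\NS{0}$ in $L_{26}$ is of type $2A_3$, hence contains roots and satisfies condition~\eqref{eq:Rcond}. I would then sort the defining vectors into outer-walls, those $(v)\sperp$ with $(v)\sperp=(r)\sperp$ for some root $r$ of $\NS{0}$, and inner-walls. The outer-walls are exactly $\shortset{(\ell)\sperp\cap D_0}{\ell\in \LLL_{40}}$, so the remaining walls constitute $\WWW_{\inn}(D_0)$.

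Next I would let the finite group $\Aut(X_0, D_0)=\Aut(X_0, h_0)$ of order $3840$ (see~\eqref{eq:AutX0D0AutX0h0}), whose matrix action on $\NS{0}$ is explicitly available in~\cite{thecompdata}, act on the finite set $\WWW_{\inn}(D_0)$, and compute the orbit decomposition directly. The assertion to verify is that there are exactly four orbits, of sizes $64, 40, 160, 320$; this is a finite group action on a finite set, so it reduces to bookkeeping, the only content being that the four listed orbits are distinct and exhaust $\WWW_{\inn}(D_0)$ (so that $|\WWW_{\inn}(D_0)|=584$).

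For the second assertion I would treat each orbit $O_s$ separately. Choosing a representative inner-wall $(v)\sperp\cap D_0$, I would compute, by the adjacency step of~\cite{ShimadaAlgo}, a Weyl vector $w_j$ of the $\prV(i_0)$-chamber $D_j$ adjacent to $D_0$ across it, and then search for a double-plane polarization $b\in \NN{0}\cap\NS{0}$ realizing the crossing. The candidates, which I denote $\dpp_{80}, \dpp_{112}, \dpp_{296}, \dpp_{688}$, are matched to $O_{64}, O_{40}, O_{160}, O_{320}$ through the invariants $\intf{v, v}$, $\intf{v, h_0}$, $\intf{h_0, \dpp_d}$ listed in Table~\ref{table:extraautsX0}. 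For each candidate $b$ I would run the algorithms of Remark~\ref{rem:dppalgo}: confirm that the linear system is base-point free---reducing, following~\cite{ShimadaChar5}, to the check that there is no $f\in\NS{0}$ with $\intf{f,f}=0$ and $\intf{f, b}=1$---thereby certifying that $b$ is a double-plane polarization and computing the $ADE$-type of $\Sing(\dpp_d)$; compute the matrix on $\NS{0}$ of the associated double-plane involution $g(\dpp_d)\in\Aut(X_0)$; and verify that $g(\dpp_d)$ maps $D_0$ to $D_j$, which I would check by confirming that $h_0^{g(\dpp_d)}$ lies in the interior of $D_j$ and that $\intf{h_0, h_0^{g(\dpp_d)}}=d$ as in the table.

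The main obstacle will be this last identification: showing that the abstract crossing isometry furnished by the adjacency algorithm is represented by a genuine \emph{double-plane} involution, and not merely by some element of $\OGplus(\NS{0})$ preserving $\NN{0}$ and satisfying the period condition~\eqref{eq:periodcondition}. This is exactly what the algorithms of Remark~\ref{rem:dppalgo} supply, since they certify that $g(\dpp_d)$ arises from an actual degree-$2$ morphism $\Phi_{\dpp_d}\colon X_0\to\P^2$; the base-point-freeness test is the delicate lattice-theoretic input. A secondary point demanding care is the completeness of the wall enumeration in the first paragraph, upon which both the orbit count and the claim that these four crossings account for all adjacent chambers of $D_0$ ultimately rest.
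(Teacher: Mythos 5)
Your proposal is correct and takes essentially the same approach as the paper: Proposition~\ref{prop:X0gdpp} is obtained there precisely by carrying out step~(IV) of the procedure of Section~\ref{subsec:geomBorcherds} for $D_0$ --- enumerating the walls from the Weyl vector $w_0$, decomposing $\WWW_{\inn}(D_0)$ under the explicitly known finite group $\Aut(X_0,h_0)$, and certifying each crossing isometry as a genuine double-plane involution via the algorithms of Remark~\ref{rem:dppalgo}, with the resulting data recorded in~\cite{thecompdata}. The only difference is one of exposition: the paper states the result with a pointer to this computation, whereas you spell out the verification steps (wall completeness, orbit bookkeeping, base-point-freeness test) that the computation must perform.
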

Each inner-wall $(v)\sperp \cap D_0\in O_s$ 
is defined by a primitive vector $v\in \NS{0}\dual$
with the properties given in Table~\ref{table:extraautsX0}.
See~\cite{thecompdata} 
for the matrix representations of double-plane involutions $g(\dpp_d)$.
\subsection{The group $\Aut(X_0, h_0)$}\label{subsec:AutX0h0}
We investigate the finite group $\Aut(X_0, h_0)$ more closely.
Note that the order $3840$ of this group is the maximum among all
finite subgroups of automorphisms of complex $K3$ surfaces (see Kondo~\cite{KondoMax}).
There exists a natural identification between $\WWW_{\out}(D_0)$ and $\LLL_{40}$.
Therefore, by~\eqref{eq:AutX0D0AutX0h0}, the group $\Aut(X_0, h_0)$ acts on $\LLL_{40}$ faithfully,
and hence $\Aut(X_0, h_0)$ is embedded into  the automorphism group $\Aut(\LLL_{40})$ of the dual graph of $\LLL_{40}$.
On the other hand, since $\gen{\LLL_{40}}=\NS{0}$ (Corollary~\ref{cor:S0LLL40}),
we have an embedding $\Aut(\LLL_{40}) \inj \OGplus(\NS{0})$.
In fact, we confirm by direct calculation the following:
\begin{equation*}
\Aut(X_0, h_0)=\sethd{g\in \Aut(\LLL_{40})}{5.5cm}{\textrm{$g$, as an element of $\OGplus(\NS{0})$,
 satisfies the period condition~\eqref{eq:periodcondition}}}, 
\end{equation*}
and  $\Aut(X_0, h_0)$ is of index $2$ in $\Aut(\LLL_{40})$.
By Propositions~\ref{prop:AutQPG} and~\ref{prop:LLL40QPG1}, we have a natural homomorphism
$\Aut(\LLL_{40})\to \Aut(\PG)$ to the  automorphism group of the Petersen graph $\PG$.
Recall that, in Sections~\ref{subsec:geomQPcovering} and~\ref{subsec:anothermodel},   
we have constructed 
a morphism $\mu_{\C} \colon X_0\to M_{\C}$
that induces the $\QP$-covering map
$\LLL_{40}\to \PG$, and calculated the Galois group $\Gal(\mu)$ in Propositions~\ref{prop:Galois}~and~\ref{prop:Galoct}.
\begin{proposition} \label{prop:GalAutX0h0}
The homomorphism
\begin{equation}\label{eq:fromAutX0h0toAutPG}
\Aut(X_0, h_0)\inj \Aut(\LLL_{40})\to \Aut(\PG)
\end{equation}
is surjective, and its kernel is equal to the Galois group $\Gal(\mu)\cong (\Z/2\Z)^5$.
\end{proposition}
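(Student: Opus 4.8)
The plan is to combine an order count with the geometry of the covering $\mu_{\C}\colon X_0\to M_{\C}$. By Propositions~\ref{prop:AutQPG} and~\ref{prop:LLL40QPG1}, the homomorphism $\Aut(\LLL_{40})\to\Aut(\PG)\cong\SSSS_5$ is surjective with kernel $(\Z/2\Z)^6$, so $|\Aut(\LLL_{40})|=7680$; since $\Aut(X_0,h_0)$ has index $2$ in $\Aut(\LLL_{40})$ it has order $3840$. Write $\phi$ for the composite~\eqref{eq:fromAutX0h0toAutPG} and put $K:=\Ker\phi=\Aut(X_0,h_0)\cap(\Z/2\Z)^6$. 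Once the inclusion $\Gal(\mu)\subseteq K$ is established, $K$ lies between $\Gal(\mu)\cong(\Z/2\Z)^5$ and the full kernel $(\Z/2\Z)^6$, so $|K|\in\{32,64\}$ and correspondingly $|\mathrm{im}\,\phi|\in\{120,60\}$; that is, $\mathrm{im}\,\phi$ is either all of $\SSSS_5$ or the subgroup $\AAAA_5$. The whole proof reduces to excluding the latter.

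First I would verify $\Gal(\mu)\subseteq K$. Each $g\in\Gal(\mu)$ is a deck transformation of $\mu_{\C}$, so it covers $\mathrm{id}_{M_{\C}}$ and therefore induces the identity on $\PG$; thus $g\in\Ker\phi$. Moreover $g$ preserves each fibre of $\gamma_V$, hence permutes the $40$ curves of $\LLL_{40}$, and so fixes $h_0=\tfrac12\sum_{\ell\in\LLL_{40}}[\ell]$ by~\eqref{eq:h0sum}; thus $g\in\Aut(X_0,h_0)$. This gives $\Gal(\mu)\subseteq K$ and $|K|\ge 32$.

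The heart of the matter is the surjectivity of $\phi$, for which I would use the model of Section~\ref{subsec:anothermodel}. The surface $M_{\C}$ is the blow-up of $\P^2$ at the four triple points of the complete quadrangle, i.e.\ a del Pezzo surface of degree $5$, whose automorphism group is $\SSSS_5$ and which permutes its ten $(-1)$-curves exactly as $\Aut(\PG)$ permutes $V_{\PG}$. The covering $\gamma_W\colon X_0\cong W\to M_{\C}$ is the one attached to the \emph{canonical} surjection $\pi_1(M^{\circ}_{\C})\surj H_1(M^{\circ}_{\C},\Z/2\Z)\cong(\Z/2\Z)^5$, so its covering datum is preserved by every automorphism of $M^{\circ}_{\C}$. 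Hence each $\bar g\in\Aut(M_{\C})\cong\Aut(\PG)$ lifts to an automorphism $g$ of $X_0$ (unique up to $\Gal(\gamma_W)$); as in the previous paragraph $g$ permutes $\LLL_{40}$ and fixes $h_0$, so $g\in\Aut(X_0,h_0)$ and $\phi(g)=\bar g$. Therefore $\phi$ is surjective.

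Surjectivity forces $|\mathrm{im}\,\phi|=120$, whence $|K|=3840/120=32$; combined with $\Gal(\mu)\subseteq K$ and $|\Gal(\mu)|=32$ this yields $K=\Gal(\mu)$, proving both assertions. The main obstacle is precisely this surjectivity step: the order count by itself cannot distinguish $\SSSS_5$ from $\AAAA_5$, so one must genuinely produce, for some $\bar g\in\Aut(\PG)$ acting as an odd permutation, an honest lift in $\Aut(X_0,h_0)$. The geometric lifting argument above supplies such lifts uniformly; alternatively one may exhibit the odd element by direct computation with the explicit action of $\Aut(X_0,h_0)$ on $\LLL_{40}$ recorded in~\cite{thecompdata}.
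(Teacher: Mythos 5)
Your proof is correct, but it reaches surjectivity by a genuinely different route than the paper. The paper settles both assertions at once by machine computation: from the stored list of the $3840$ elements of $\Aut(X_0,h_0)$ it simply reads off that the composite~\eqref{eq:fromAutX0h0toAutPG} is surjective with kernel of order $32$, and then, exactly as you do, notes that $\Gal(\mu)$ lies in $\Aut(X_0,h_0)$ and in the kernel (because $\mu$ induces the $\QP$-covering map), so the order count finishes the argument. You replace the computational surjectivity step by a geometric lifting argument: $M_{\C}$ is the quintic del Pezzo surface, whose automorphism group is $\SSSS_5$ acting on the ten $(-1)$-curves as the full $\Aut(\PG)$, and since the covering $\gamma_W\colon W\to M_{\C}$ of Section~\ref{subsec:anothermodel} corresponds to the \emph{characteristic} subgroup $\Ker\bigl(\pi_1(M^{\circ}_{\C})\to H_1(M^{\circ}_{\C},\Z/2\Z)\bigr)$, every automorphism of $M_{\C}$ lifts to $W\cong X_0$; the lift permutes $\LLL_W\cong\LLL_{40}$, hence fixes $h_0$ and maps onto the prescribed element of $\Aut(\PG)$. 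This is sound, and it buys a conceptual explanation of \emph{why} the full $\SSSS_5$ (rather than $\AAAA_5$) is realized, which the paper's proof leaves opaque. Two points deserve explicit mention to make it airtight: first, the fact that $\Aut(M_{\C})\cong\SSSS_5$ induces all of $\Aut(\PG)$ is a classical statement about the degree-$5$ del Pezzo surface (equivalently $\overline{M}_{0,5}$) that the paper nowhere proves, so you are trading the paper's computational input for an external classical one; second, your lift covers $\bar{g}$ with respect to the $\QP$-covering map induced by $\gamma_W$, which need not coincide with the one induced by $\mu$, but by Corollary~\ref{cor:uniqueQPgamma} the two differ only by composition with an element of $\Aut(\PG)$, so neither surjectivity nor the kernel is affected (and Proposition~\ref{prop:GalGal} identifies $\Gal(\gamma_W)$ with $\Gal(\mu)$). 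Also, your inclusion $\Gal(\mu)\subseteq\Aut(X_0,h_0)$ is stated a little circularly (one must first know a deck transformation permutes $\LLL_{40}$ before speaking of fibres of $\gamma_V$); the paper fills this by inspecting the generators in Propositions~\ref{prop:Galois} and~\ref{prop:Galoct}, which your argument should cite. Finally, note that both proofs still rest on the computational fact $\lvert\Aut(X_0,h_0)\rvert=3840$ (index $2$ in $\Aut(\LLL_{40})$); your argument eliminates the element-by-element check only within this proposition.
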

\begin{proof}
By the list of elements of $\Aut(X_0, h_0)$ (see~\cite{thecompdata}),
we see that the homomorphism~\eqref{eq:fromAutX0h0toAutPG} is surjective, 
and its kernel is of order $32$.
Each  generator of $\Gal(\mu)$ given in Propositions~\ref{prop:Galois}~or~\ref{prop:Galoct} preserves $\LLL_{40}$,
and hence   $\Gal(\mu)$ is contained in $\Aut(X_0, h_0)$.
Since  $\mu$  induces the $\QP$-covering map
$\LLL_{40}\to \PG$,  it follows that $\Gal(\mu)$ is contained in the kernel of~\eqref{eq:fromAutX0h0toAutPG}.
Comparing the order, we complete the proof.
\end{proof}
For $v\in \NS{0}$, we put
$$
\Aut(X_0, v):=\set{g\in \Aut(X_0)}{v^g=v}.
$$
Let $f\in \NS{0}$ be the class of a fiber of the Jacobian fibration $\sigma\colon X_0\to \P^1$
defined by~\eqref{eq:theellfib4}.
For each  element $g$ of $\Aut(X_0, f)$, there exists an automorphism  $\bar{g}\in \Aut(\P^1)$
such that the diagram
\begin{equation}\label{eq:gbarg}
\begin{array}{ccc}
X_0 & \xrightarrow{g} & X_0\\
\mapdownleft{\sigma}\;\; & & \mapdownright{\sigma} \\
\P^1 & \xrightarrow{\bar{g}} & \P^1
\end{array}
\end{equation}
commutes,  and hence 
$g$ preserves $\LLL_{40}$.
Therefore
$\Aut(X_0, f)$ is contained in $\Aut(X_0, h_0)$,
and we have a homomorphism
$$
\beta\colon \Aut(X_0, f)\to \Stab (\Cr(\sigma)),
$$
where 
 $\Cr(\sigma):=\{0,\infty, \pm 1, \pm i\}$ is  the set
of critical values of $\sigma$,
and $\Stab (\Cr(\sigma))$ is
the stabilizer subgroup of  $\Cr(\sigma)$ in  $\Aut(\P^1)$.
\par
We  have the inversion  $\iota_{\sigma}\colon X_0\to X_0$
of the Jacobian fibration $\sigma$.
We also have a subgroup $T_{\sigma}$ of $\Aut(X_0, f)$ consisting of  translations by 
the $16$ sections of $\sigma$.
\begin{proposition}\label{prop:exactAutX0f}
The order of $\Aut(X_0, f)$ is $768$.
The image of    $\beta$ is isomorphic to $\SSSS_4$,
and the kernel of $\beta$ is equal to 
the subgroup $T_{\sigma}\semidirectproduct \gen{\iota_{\sigma}}$ of $\Aut(X_0, f)$.
\end{proposition}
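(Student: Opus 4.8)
The plan is to study $\Aut(X_0, f)$ through the short exact sequence $1\to\ker\beta\to\Aut(X_0,f)\xrightarrow{\beta}\mathrm{image}(\beta)\to 1$, computing the kernel and the image separately. First I would identify $\ker\beta$, the group of automorphisms inducing the identity on the base, i.e.\ the automorphisms of the Jacobian elliptic surface $\sigma\colon X_0\to\P^1$ over $\P^1$. By the theory of elliptic surfaces these are in bijection with the automorphisms of the generic fibre $E_\eta$ over $K=k_0(\sigma)$, which form $E_\eta(K)\rtimes\Aut_K(E_\eta,O)$. Here $E_\eta(K)$ is the Mordell--Weil group: its rank is $0$, since the sublattice of $\NS{0}$ spanned by the zero section, a general fibre and the $6\times 3$ non-identity components of the six $\mathrm{I}_4$-fibres already has rank $2+18=20=\rank\NS{0}$, so by Shioda--Tate the Mordell--Weil group coincides with its torsion part $(\Z/4\Z)^2$ of Theorem~\ref{thm:Shioda}~(1); thus $E_\eta(K)=T_\sigma$. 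Moreover $\Aut_K(E_\eta,O)=\{\pm1\}=\gen{\iota_\sigma}$, because the $j$-invariant of $E_\eta$ is a non-constant function of $\sigma$. Since conjugation by $\iota_\sigma$ inverts every translation and $T_\sigma\cap\gen{\iota_\sigma}=1$, this gives $\ker\beta=T_\sigma\rtimes\gen{\iota_\sigma}$ of order $16\cdot2=32$.

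Next I would pin down the image. By construction $\mathrm{image}(\beta)\subseteq\Stab(\Cr(\sigma))$, and the six critical values $\Cr(\sigma)=\{0,\infty,\pm1,\pm i\}$ are the two poles together with the four fourth-roots of unity on the Riemann sphere, i.e.\ the vertices of a regular octahedron. Hence the Möbius transformations preserving $\Cr(\sigma)$ form the octahedral rotation group, so $\Stab(\Cr(\sigma))\cong\SSSS_4$ has order $24$. It therefore suffices to prove that $\beta$ is surjective. The mechanism is the degree-four cover $\pi\colon\P^1_\sigma\to\P^1_\lambda$, $\sigma\mapsto\lambda=((\sigma+\sigma^{-1})/2)^2$, underlying~\eqref{eq:theellfib4}. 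Its Galois group is $V:=\gen{\sigma\mapsto-\sigma,\ \sigma\mapsto\sigma^{-1}}$, and I would check that, as a subgroup of $\Stab(\Cr(\sigma))\cong\SSSS_4$, $V$ is exactly the normal Klein four-group, realized by the three $180^\circ$ rotations about the vertex-axes $0$--$\infty$, $1$--$(-1)$ and $i$--$(-i)$. Consequently every $\bar g\in\Stab(\Cr(\sigma))$ normalizes $V$ and descends to a transformation of $\P^1_\lambda$ permuting the branch points $\{0,1,\infty\}$, that is, to an element of the anharmonic group $\SSSS_3\cong\SSSS_4/V$.

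The surjectivity of $\beta$ --- the main obstacle --- then reduces to lifting: one must show every such base symmetry actually extends to $X_0$. Here the Legendre shape of~\eqref{eq:theellfib4} is decisive. Each anharmonic transformation of $\lambda$ is induced by a permutation of the three roots $0,1,\lambda$, hence by an explicit isomorphism of Legendre curves (for instance $\lambda\mapsto1-\lambda$ via $X\mapsto1-X$); composing such a fibrewise isomorphism with the base map $\bar g$ produces a birational self-map of $X_0$, which is automatically biregular since $X_0$ is a minimal $K3$ surface. Thus each $\bar g$ lifts and $\beta$ is onto, so $\mathrm{image}(\beta)=\Stab(\Cr(\sigma))\cong\SSSS_4$. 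Explicitly, $\sigma\mapsto-\sigma$ and $\sigma\mapsto\sigma^{-1}$ lift for free (they fix $\lambda$), $\sigma\mapsto i\sigma$ lifts through $\lambda\mapsto1-\lambda$, and together with a single order-three lift these generate $\SSSS_4$; the involutions are easy, and the real content lies in the order-four and order-three symmetries.

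Multiplying, $|\Aut(X_0,f)|=|\ker\beta|\cdot|\mathrm{image}(\beta)|=32\cdot24=768$, with the asserted descriptions of kernel and image. As a check this equals $3840/5$, the order of $\Aut(X_0,h_0)$ (Proposition~\ref{prop:GalAutX0h0}) divided by the number of elliptic fibrations of this type permuted transitively by $\Aut(X_0,h_0)$, consistent with the count $1536=7680/5$ in Remark~\ref{rem:sigmaW}.
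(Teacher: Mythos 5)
Your proof is correct, and it takes a genuinely different route from the paper's. The paper argues computationally: since every $g\in\Aut(X_0,f)$ preserves $\LLL_{40}$, one has $\Aut(X_0,f)\subset\Aut(X_0,h_0)$, so the paper extracts $\Aut(X_0,f)$ as a stabilizer from the explicit list of the $3840$ elements of $\Aut(X_0,h_0)$ (a by-product of Borcherds' method), reads off the order $768$, sees from the action on the six quadrangles $F_c$ that the image of $\beta$ is an $\SSSS_4$ generated by two explicit permutations of $\Cr(\sigma)$, deduces that the kernel has order $768/24=32$, and only then identifies the kernel with $T_\sigma\semidirectproduct\gen{\iota_\sigma}$ because that subgroup has order $32$ and is contained in the kernel. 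You go in the opposite direction, computer-free: the kernel is computed exactly as the automorphism group of the generic fibre, $E_\eta(K)\semidirectproduct\{\pm1\}=T_\sigma\semidirectproduct\gen{\iota_\sigma}$, via Shioda--Tate (Mordell--Weil rank $0$, torsion $(\Z/4\Z)^2$) together with non-constancy of $j$; and surjectivity of $\beta$ onto the octahedral group $\Stab(\Cr(\sigma))\cong\SSSS_4$ is proved by descending along the Klein-four cover $\sigma\mapsto\lambda$ and lifting the anharmonic transformations through Legendre isomorphisms, so that $768=32\cdot 24$ is the output rather than the input. Your route explains why the answer is what it is and pins the kernel down directly instead of by an order count; the paper's route needs no extra theory and stays inside its uniform computational framework. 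One step you should make explicit in the lifting argument: the Legendre isomorphisms realizing $\lambda\mapsto 1/\lambda$ and the $3$-cycles rescale $Y$ by $\lambda^{3/2}$, $(1-\lambda)^{3/2}$, and the like, so a priori they are isomorphisms only up to quadratic twist; they are genuine isomorphisms over $K=k_0(\sigma)$ precisely because $\lambda=\bigl((\sigma+\sigma^{-1})/2\bigr)^2$ and $1-\lambda=-\bigl((\sigma-\sigma^{-1})/2\bigr)^2$ are squares in $K$ (using $i\in k_0$). This one-line verification is where your phrase ``the Legendre shape is decisive'' has to be cashed out; without it the fibrewise maps exist only over a quadratic extension of the function field and would not descend to automorphisms of $X_0$.
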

\begin{proof}
By means of $\rho_{\LLL}\colon \LLL_{40}\inj \LLL_{112}$ and~\eqref{eq:sigmaF}, 
we can calculate the  quadrangle $F_c$ in $ \LLL_{40}$ 
consisting of the classes of irreducible components of the singular fiber $\sigma\inv(c)$
for each $c\in \Cr(\sigma)$.
Then $f$ is the sum of vectors in one of these $F_c$,
and hence we can calculate $\Aut(X_0, f)$
from the list of elements of $\Aut(X_0, h_0)$.
Looking at the action of $\Aut(X_0, f)$ on the set of the quadrangles $F_c$,
we see that the image of $\beta$
is isomorphic to $\SSSS_4$
generated by  permutations $(0,-1,-i)(\infty,1,i)$ 
and $(0,-i)(\infty, i)(1, -1)$ of $\Cr(\sigma)$.
Therefore the kernel is of order $32$.
Since $T_{\sigma}\semidirectproduct \gen{\iota_{\sigma, z}}$ 
is of order $32$ and contained in the kernel, we complete the proof. 
\end{proof}
\begin{remark}\label{rem:5fs}
Since $|\Aut(X_0, h_0)|/|\Aut(X_0, f)|=5$,
 the orbit of $f$ under the action of  $\Aut(X_0, h_0)$ 
 consists of $5$ elements $f=f\spar{1}, f\spar{2}, \dots, f\spar{5}$.
 We can easily confirm that 
 $$
 \Gal(\mu)=\bigcap_{\nu=1}^{5} \Aut(X_0, f\spar{\nu}).
 $$
%$\Gal(\mu)$ is the intersection of these $5$ subgroups $ \Aut(X_0, f\spar{\nu})$ 
%of  $\Aut(X_0, h_0)$.
The $5$ classes $f\spar{\nu}$
give rise to $5$ elliptic fibrations $\sigma\spar{\nu}\colon X_0\to \P^1$.
These elliptic fibrations correspond to the choices of 
the $\P^1$-fibration $\varphi_M\colon M\to \P^1$
in~\eqref{eq:varphiM}:  for $\nu=1, \dots, 4$, 
 the class $f\spar{\nu}$ is induced by the pencil of 
 lines passing through the triple point $t_\nu$,
and $f\spar{5}$ is  induced by the pencil of conics passing through all the triple points
(see Remark~\ref{rem:sigmaW}).
Let $h_8\spar{\nu}\in S_0$ be the  polarization of degree $8$
constructed from $\sigma\spar{\nu}\colon X_0\to \P^1$
via the recipe of Barth--Hulek 
explained in Section~\ref{subsec:anothermodel}.
Then we have $\Aut(X_0, f\spar{\nu})=\Aut(X_0, h_8\spar{\nu})$.
\end{remark}
\section{Proof of Theorems~\ref{thm:main}~and~\ref{thm:mainR}}\label{sec:proof} 
We use the same notation as in Section~\ref{sec:BorcherdsX0X3}.
The following fact has been  established. 
\begin{proposition}\label{prop:2simple}
{\rm (1)}
The  tessellation of $\NN{3}$ by $\prV ( i_{3})$-chambers is $\Aut(X_3)$-transitive, and 
the  tessellation of $\PP{3}$ by $\prV ( i_{3})$-chambers is $\OGplus(\NS{3})$-transitive.
\par
{\rm (2)}
The  tessellation of $\NN{0}$ by $\prV ( i_{0})$-chambers is $\Aut(X_0)$-transitive, and
the  tessellation of $\PP{0}$ by $\prV ( i_{0})$-chambers is $\OGplus(\NS{0})$-transitive.
\qed
\end{proposition}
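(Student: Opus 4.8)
The plan is to recognize both (1) and (2) as direct instances of the conclusion (V) of the general procedure described in Section~\ref{subsec:geomBorcherds}, applied to the two sets of data $(i_{3}, w_0, D_3)$ and $(i_{0}, w_0, D_0)$ assembled in Sections~\ref{subsec:AutX3} and~\ref{subsec:AutX0}. Accordingly, the substantive task is only to confirm that the hypotheses (I)--(IV) of that procedure hold in each case, whereupon (V) applies. For hypothesis (I), I would note that the orthogonal complements $R_3$ and $R_0$ of $\NS{3}$ and $\NS{0}$ in $L_{26}$ are the negative-definite root lattices of types $2A_2$ and $2A_3$; they therefore contain roots and satisfy~\eqref{eq:Rcond}, and the homomorphisms $\eta_{R_3}\colon \OG(R_3)\to\OG(\discf{R_3})$ and $\eta_{R_0}\colon\OG(R_0)\to\OG(\discf{R_0})$ are surjective, as recorded in those sections. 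By Proposition~\ref{prop:unimoduoverlat} this surjectivity forces every isometry of $\NS{3}$ (resp.~$\NS{0}$) to extend to $L_{26}$, so that the full group $\OGplus(\NS{3})$ (resp.~$\OGplus(\NS{0})$) preserves the tessellation by $\prV(i_{3})$-chambers (resp.~$\prV(i_{0})$-chambers)---a point I shall need for the second halves of the statement. Hypotheses (II) and (III) are read off from the same sections: $D_3$ and $D_0$ contain the ample classes $h_3$ and $h_0$ in their interiors, and the finite groups $\Aut(X_3, D_3)=\thePGU$ and $\Aut(X_0, D_0)=\Aut(X_0, h_0)$ fix these classes.

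For hypothesis (IV) I would invoke the wall analyses already carried out. By Theorem~\ref{thm:KondoShimadaFQ3} together with Table~\ref{table:extraautsX3}, the set $\WWW_{\inn}(D_3)$ of inner-walls decomposes into the two $\thePGU$-orbits $O\sprime_{648}$ and $O\sprime_{5184}$, and for a representative of each the double-plane involution $g(\dpp\sprime_{10})$, resp.~$g(\dpp\sprime_{31})$, carries $D_3$ to the $\prV(i_{3})$-chamber adjacent across that wall. Symmetrically, by Proposition~\ref{prop:X0gdpp} and Table~\ref{table:extraautsX0}, the set $\WWW_{\inn}(D_0)$ splits into four $\Aut(X_0, h_0)$-orbits, each of whose representative walls is crossed by one of the double-plane involutions $g(\dpp_d)$. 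With (I)--(IV) verified, conclusion (V) yields at once that $\Aut(X_3)$ acts transitively on the $\prV(i_{3})$-chambers in $\NN{3}$ and that $\Aut(X_0)$ acts transitively on the $\prV(i_{0})$-chambers in $\NN{0}$; these are the first halves of (1) and (2).

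It remains to promote these to $\OGplus$-transitivity on the full positive cones, and this is the step I expect to require the most care. The key is that the surjectivity of $\eta_{R_p}$ (hypothesis (I)) makes all of $\OGplus(\NS{p})$, and in particular the Weyl group $W(\NS{p})$ generated by the reflections in the roots of $\NS{p}$, preserve the $\prV(i_{p})$-tessellation; recall also that $\PP{p}$ is tessellated by $\Roots(\NS{p})$-chambers, on which $W(\NS{p})$ acts simply transitively, and that $\OGplus(\NS{p})=W(\NS{p})\semidirectproduct\Aut(\NN{p})$. Given an arbitrary $\prV(i_{p})$-chamber $D\sprime$, I would let $N\sprime$ be the $\Roots(\NS{p})$-chamber containing it, choose $w\in W(\NS{p})$ with $(N\sprime)^{w}=\NN{p}$, and observe that $(D\sprime)^{w}$ is then a $\prV(i_{p})$-chamber inside $\NN{p}$. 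By the transitivity already established there is $g\in\Aut(X_p)\subseteq\OGplus(\NS{p})$ with $D_p^{\,g}=(D\sprime)^{w}$, and hence $g\,w\inv\in\OGplus(\NS{p})$ sends $D_p$ to $D\sprime$. This proves the $\OGplus(\NS{p})$-transitivity of the tessellation of $\PP{p}$ for $p=0,3$, completing the second halves of (1) and (2). The only genuinely new content beyond the cited computations is this reduction to the fundamental $\Roots(\NS{p})$-chamber, so the main obstacle is ensuring that the reflections in the outer-walls---not merely the automorphisms of $X_p$---lie in a group preserving the finer $\prV(i_{p})$-tessellation, which is exactly what the surjectivity of $\eta_{R_p}$ supplies.
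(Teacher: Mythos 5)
Your proposal is correct and takes essentially the same approach as the paper: Proposition~\ref{prop:2simple} is stated there as ``already established,'' precisely because hypotheses (I)--(IV) of the procedure of Section~\ref{subsec:geomBorcherds} were verified for the data $(i_3, w_0, D_3)$ and $(i_0, w_0, D_0)$ in Section~\ref{sec:BorcherdsX0X3}, so both transitivity claims follow from conclusion (V). Your explicit reduction of the $\OGplus(\NS{p})$-transitivity on $\PP{p}$ to the $\Aut(X_p)$-transitivity on $\NN{p}$, using the surjectivity of $\eta_{R_p}$, the $W(\NS{p})$-transitivity on $\Roots(\NS{p})$-chambers, and the decomposition $\OGplus(\NS{p})=W(\NS{p})\semidirectproduct\Aut(\NN{p})$, is exactly the argument the paper compresses into the final sentence of step (V).
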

From now on,
we consider $\NS{0}$ as a sublattice of $\NS{3}$ via $\rho\colon \NS{0}\inj \NS{3}$
and $\PP{0}$ as a subspace of $\PP{3}$.
For example, we use notation such as $h_0\in \NS{3}$, $D_0\subset \PP{3}$, $\PP{0}\subset \PP{3}$,~\dots.
By the definition~\eqref{eq:i0i3rho} of $i_0$, we have the following:
\begin{proposition}\label{prop:2tessellations}
The tessellation of $\PP{0}$ by $\prV ( i_{0})$-chambers is
obtained as the restriction to  $\PP{0}$ of the  tessellation of $\PP{3}$ by $\prV ( i_{3})$-chambers.
\qed
\end{proposition}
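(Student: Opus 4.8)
The plan is to show that the two hyperplane arrangements defining the tessellations agree under the inclusion $\PP{0}\subset\PP{3}$, by tracking how the orthogonal projections onto $\NS{0}$ and onto $\NS{3}$ are related. Since $\rho$ is primitive (Corollary~\ref{cor:rhoS0S3}) and $i_3$ is primitive, the definition $i_0=i_3\circ\rho$ exhibits $\NS{0}$ as a primitive sublattice both of $\NS{3}$ and of $L_{26}$, and by our convention the positive cones are nested, $\PP{0}=\PP{3}\cap(\NS{0}\tensor\R)$. Write $\pr_{\NS{0}}^{\NS{3}}\colon\NS{3}\tensor\Q\to\NS{0}\tensor\Q$ for the orthogonal projection taken inside $\NS{3}$. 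The one structural input I would record is that orthogonal projections onto nested nondegenerate subspaces compose,
\[
\pr_{\NS{0}}=\pr_{\NS{0}}^{\NS{3}}\circ\pr_{\NS{3}},
\]
which follows from uniqueness of the orthogonal decomposition. I would also note that the orthogonal complement of $\NS{0}$ in $\NS{3}$ has signature $(1,21)-(1,19)=(0,2)$, hence is negative-definite of rank $2$.

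Next I would apply the elementary fact recalled just before \eqref{eq:VVVS}, now for the primitive embedding $\NS{0}\inj\NS{3}$ in place of $S\inj L_{26}$: a hyperplane $(v')\sperp$ of $\PP{3}$, with $v'\in\NS{3}\tensor\Q$ and $\intf{v',v'}<0$, meets $\PP{0}$ in a hyperplane of $\PP{0}$ if and only if $\intf{\pr_{\NS{0}}^{\NS{3}}(v'),\pr_{\NS{0}}^{\NS{3}}(v')}<0$, and then $\PP{0}\cap(v')\sperp=(\pr_{\NS{0}}^{\NS{3}}(v'))\sperp$. Taking $v'=\pr_{\NS{3}}(r)$ for $r\in\Roots(L_{26})$ and using the composition of projections gives $\pr_{\NS{0}}^{\NS{3}}(v')=\pr_{\NS{0}}(r)$. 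Thus the $\prV(i_3)$-wall $(\pr_{\NS{3}}(r))\sperp$ slices $\PP{0}$ in a genuine hyperplane exactly when $\intf{\pr_{\NS{0}}(r),\pr_{\NS{0}}(r)}<0$, i.e.\ when $\pr_{\NS{0}}(r)\in\prV(i_0)$, and that slice is then the $\prV(i_0)$-wall $(\pr_{\NS{0}}(r))\sperp$.

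Finally I would verify that no $\prV(i_0)$-wall is lost and none is spurious. Writing $\pr_{\NS{3}}(r)=\pr_{\NS{0}}(r)+u$ with $u$ in the negative-definite complement of $\NS{0}$ in $\NS{3}$, orthogonality gives $\intf{\pr_{\NS{3}}(r),\pr_{\NS{3}}(r)}=\intf{\pr_{\NS{0}}(r),\pr_{\NS{0}}(r)}+\intf{u,u}\le\intf{\pr_{\NS{0}}(r),\pr_{\NS{0}}(r)}$, so $\intf{\pr_{\NS{0}}(r),\pr_{\NS{0}}(r)}<0$ already forces $\pr_{\NS{3}}(r)\in\prV(i_3)$; hence every $\prV(i_0)$-wall arises as the $\PP{0}$-slice of a $\prV(i_3)$-wall. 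Conversely a $\prV(i_3)$-wall whose $\NS{0}$-projection has nonnegative square meets $\PP{0}$ either emptily or in all of $\PP{0}$, contributing no hyperplane. Therefore the hyperplanes cut on $\PP{0}$ by $\prV(i_3)$ are precisely $\shortset{(\pr_{\NS{0}}(r))\sperp}{r\in\Roots(L_{26}),\ \intf{\pr_{\NS{0}}(r),\pr_{\NS{0}}(r)}<0}$, the defining arrangement of $\prV(i_0)$. Since both arrangements are locally finite by the Borcherds setup of Section~\ref{subsec:Borcherds}, identical arrangements have identical chambers, and the claim follows. The only delicate point is this last signature bookkeeping, ensuring that the positivity condition defining $\prV(i_3)$ imposes nothing beyond that defining $\prV(i_0)$, which rests entirely on the rank-$2$ complement being negative-definite.
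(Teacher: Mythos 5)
Your proposal is correct and follows essentially the same route as the paper: the paper records Proposition~\ref{prop:2tessellations} as an immediate consequence of the definition $i_0=i_3\circ\rho$, the underlying point being exactly the projection identity $\pr_0=\pr_{30}\circ\pr_3$ that you isolate. Your additional signature bookkeeping (that the negative-definite rank-$2$ complement of $\NS{0}$ in $\NS{3}$ forces $\intf{\pr_3(r),\pr_3(r)}\le\intf{\pr_0(r),\pr_0(r)}$, so no $\prV(i_0)$-hyperplane is lost) is a correct and welcome elaboration of what the paper leaves implicit.
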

\subsection{Proof of Theorem~\ref{thm:main}}
First, we show that the restriction homomorphism $\tilde{\rho}$ from $\theOGpair$ to 
$ \OGplus(\NS{0})$ maps 
$\theOGpair\cap\Aut(X_3)$ to $\Aut(X_0)$.
By~Theorem~\ref{thm:TorelliAut},  it suffices to show that,
for each $g\in  \theOGpair\cap\Aut(X_3)$,
the restriction $g|_{\NS{0}}\in \OGplus(\NS{0})$  
satisfies the period condition~\eqref{eq:periodcondition}
and preserves $\NN{0}$.
\begin{lemma}\label{lem:period}
If   $g\in \theOGpair$ satisfies the period condition
$\eta_{\NS{3}}(g)\in \OG(\discf{\NS{3}}, \omega)$ for $X_3$,
then $g|_{\NS{0}}\in \OGplus(\NS{0})$ satisfies the period condition
$\eta_{\NS{0}}(g|_{\NS{0}})\in \OG(\discf{\NS{0}}, \omega)$ for $X_0$.
\end{lemma}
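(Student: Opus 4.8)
The plan is to relate the two period conditions through the rank-$2$ lattice $N$ that is the orthogonal complement of $\NS{0}$ in $\NS{3}$. Since $\rho$ is primitive (Corollary~\ref{cor:rhoS0S3}), $N$ is a negative-definite lattice of rank $2$, and restriction embeds $\theOGpair$ into $\OGplus(\NS{0})\times\OG(N)$; in particular each $g\in\theOGpair$ determines an isometry $g|_N\in\OG(N)$ and restricts to $g|_{\NS{0}}\in\OGplus(\NS{0})$. The idea is that $\eta_{\NS{3}}(g)$ is controlled by $g|_N$ on the $3$-part of $\discf{N}$, while $\eta_{\NS{0}}(g|_{\NS{0}})$ is controlled by $g|_N$ on the $2$-part of $\discf{N}$; since both parts come from the single isometry $g|_N$, the hypothesis at $3$ should force the conclusion at $2$.

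First I would record that $|\discg{\NS{0}}|=16$ and $|\discg{\NS{3}}|=9$, so $\discf{\NS{0}}$ is a $2$-group and $\discf{\NS{3}}$ a $3$-group. Applying Proposition~\ref{prop:LN} with $p=3$ (valid since $3\nmid|\discg{\NS{0}}|$), the inclusion $N\inj\NS{3}$ induces an isomorphism $\discf{\NS{3}}=\discf{\NS{3}}_3\cong\discf{N}_3$ compatible with the action of $\theOGpair$; hence $\eta_{\NS{3}}(g)$ is carried to the $3$-part of $\eta_N(g|_N)$. Applying Proposition~\ref{prop:MN} with $p=2$ (valid since $2\nmid|\discg{\NS{3}}|$), the $2$-part of $\NS{3}/(\NS{0}\oplus N)$ is the graph of an isomorphism $\discf{\NS{0}}=\discf{\NS{0}}_2\cong-\discf{N}_2$, again compatible with $\theOGpair$; hence $\eta_{\NS{0}}(g|_{\NS{0}})$ is carried to the $2$-part of $\eta_N(g|_N)$ (negating the form does not alter its automorphism group).

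After this transport the statement becomes a claim about the single element $\eta_N(g|_N)\in\OG(\discf{N})=\OG(\discf{N}_2)\times\OG(\discf{N}_3)$: the hypothesis is that its $3$-part lies in the order-$4$ cyclic subgroup $C^{(3)}$ corresponding to $\OG(\discf{\NS{3}},\omega)$, and I must deduce that its $2$-part lies in the order-$4$ cyclic subgroup $C^{(2)}$ corresponding to $\OG(\discf{\NS{0}},\omega)$. I anticipate that $N$ is isometric to $\langle-12\rangle\oplus\langle-12\rangle$ (consistent with $\discg{N}\cong(\Z/12\Z)^2$), whose special orthogonal group is cyclic of order $4$, generated by the $90^{\circ}$ rotation. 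This one rotation should map onto $C^{(3)}$ on the $3$-part and onto $C^{(2)}$ on the $2$-part simultaneously, so that the single condition $\det(g|_N)=1$ governs both period conditions at once; granting this, the lemma follows (in fact as an equivalence). Rigorously, I would identify $N$ from the explicit data, compute $\eta_N(\OG(N))$ inside $\OG(\discf{N}_2)\times\OG(\discf{N}_3)$, read off $C^{(2)}$ and $C^{(3)}$, and check the implication directly.

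The main obstacle is not any single calculation but the conceptual point that the two period conditions a priori concern different primes and different characteristics, so it is not obvious that they interact at all. The resolution is exactly the rank-$2$ lattice $N$: its discriminant group splits as $\discg{N}_2\oplus\discg{N}_3$, with the $2$-part matched to $\NS{0}$ and the $3$-part matched to $\NS{3}$, so that one isometry $g|_N$ simultaneously governs both sides. Verifying that the image of $\OG(N)$ in $\OG(\discf{N}_2)\times\OG(\discf{N}_3)$ is constrained enough to force the implication — equivalently, that the orientation of $g|_N$ controls both period conditions — is the crux of the argument.
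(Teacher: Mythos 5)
Your proposal is correct and takes essentially the same route as the paper's own proof: the paper also passes to the rank-$2$ orthogonal complement $Q\cong\langle-12\rangle\oplus\langle-12\rangle$ of $\NS{0}$ in $\NS{3}$, transports the two period conditions to $\OG(\discf{Q}_3)$ and $\OG(\discf{Q}_2)$ via Propositions~\ref{prop:LN} and~\ref{prop:MN}, and verifies that both projections $\OG(Q)\to\OG(\discf{Q}_3)$ and $\OG(Q)\to\OG(\discf{Q}_2)$ are isomorphisms of dihedral groups of order $8$. Your $\det(g|_N)=1$ criterion is just another phrasing of the paper's closing observation that the cyclic subgroup of order $4$ (namely the image of the rotation subgroup of $\OG(Q)$) is characteristic in the dihedral group of order $8$, so the two formulations coincide.
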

\begin{proof}
Let $Q$ denote the orthogonal complement of $\NS{0}$ in $\NS{3}$.
Then $Q$ is an even negative-definite lattice of rank $2$ with discriminant group 
isomorphic to $(\Z/4\Z)^2\times (\Z/3\Z)^2$.
By the classical theory of Gauss, 
such a lattice is unique up to isomorphism,
and the lattice $Q$ is given by 
a Gram matrix
$$
\left(
\begin{array}{cc}
-12 & 0 \\ 0 & -12
\end{array}
\right).
$$
We consider the commutative diagram in Figure~\ref{fig:period}.
The two isomorphisms in the bottom line of this diagram 
are derived  from the isomorphism $\discf{\NS{3}}\cong \discf{Q}_3$ given by Proposition~\ref{prop:LN}
and the isomorphism $\discf{Q}_2\cong -\discf{\NS{0}}$ given by Proposition~\ref{prop:MN}.
It is easy to verify that  $\OG(Q)$ is a dihedral group  of order $8$, and 
the composites $p_3\circ \eta_Q\colon \OG(Q)\to \OG(\discf{Q}_3)$
 and $p_2\circ \eta_Q\colon \OG(Q)\to \OG(\discf{Q}_2)$
 are isomorphisms,
 where $p_2$ and $p_3$ are projections to the $2$-part and $3$-part,
 respectively.
 Using the image of $\eta_Q\colon \OG(Q)\to \OG(q(Q))$ as the graph of an isomorphism between
 $\OG(\discf{Q}_3)$ and $\OG(\discf{Q}_2)$,
 we obtain an isomorphism $\OG(\discf{\NS{3}})\cong \OG(\discf{\NS{0}})$
that is compatible with the homomorphisms from $\theOGpair$.
 Recall that $\OG(\discf{\NS{3}}, \omega)$ and $\OG(\discf{\NS{0}}, \omega)$ are cyclic of order $4$.
 Since the cyclic subgroup of order $4$ is a characteristic subgroup of the dihedral group of order $8$,
the isomorphism
 $\OG(\discf{\NS{3}})\cong \OG(\discf{\NS{0}})$
 maps $\OG(\discf{\NS{3}}, \omega)$ to $\OG(\discf{\NS{0}}, \omega)$.
\end{proof}
\begin{figure}
\newcommand{\xdownarrow}[1]{\left\downarrow\vbox to #1{}\right.\kern-\nulldelimiterspace}
\newcommand{\mapname}[1]{ \rlap{{\scriptsize $#1$}}}
\newcommand{\longetathree}{\smash{\raise -.4cm \hbox{$\xdownarrow{.6cm}$\mapname{\eta_{\NS{3}}}}}}
\newcommand{\longetazero}{\smash{\raise -.4cm \hbox{$\xdownarrow{.6cm}$\mapname{\eta_{\NS{0}}}}}}
\newcommand{\longetaQ}{\smash{\raise -.1cm \hbox{$\xdownarrow{.3cm}$\mapname{\eta_{Q}}}}}
\newcommand{\ptwopthree}{ \raise 8pt \hbox{{\scriptsize $p_3$}}\swarrow\quad\quad\qquad\searrow \raise 8pt \hbox{{\scriptsize $p_2$}}}
$$
\begin{array}{ccc}
\hbox to 3cm {\qquad$\theOGpair  \qquad \rlap{$\inj$}$} \quad&  \OG(Q)\times \OGplus (\NS{0}) & \hbox to 4.5cm {} \\
\raise 8pt  \hbox{\hskip -4pt\rotatebox{270}{$\inj$}} & \mapdownright{\pr_1} & 
\raise 8pt \hbox{\hskip -3.3cm  
\hbox{\hskip -4pt\rotatebox{340}{$\longrightarrow$}} %$\searrow$  
\mapname{\pr_2}}  \\
\OGplus(\NS{3})  & \OG(Q) & \OGplus(\NS{0})\\
\longetathree & \longetaQ &  \longetazero\\
&  \raise -6pt \hbox{$\OG(\discf{Q})$} & \\
 &\raise 10pt \hbox{$\ptwopthree$}&\\
\hbox to 0cm {\hskip -1cm$\OG(\discf{\NS{3}})  \;\;\cong\;\;\  \OG(\discf{Q}_3)$} &\vbox to .2cm {}&\hbox to 0cm {\hskip -3cm $\OG(\discf{Q}_2) \;\;\cong\;\;  \OG(\discf{\NS{0}})$} 
\end{array}
$$
\caption{Commutative diagram for the period condition}\label{fig:period}
\end{figure}
Since we have calculated the embedding $\rho\colon \NS{0}\inj \NS{3}$
in the form of a matrix
and the set $\WWW_{\out}(D_3)\cup \WWW_{\inn}(D_3)$  
of walls  of the initial chamber $ D_3$ for $X_3$ 
 in the form of a list of vectors~(see~\cite{thecompdata}),
we can easily prove the following:
\begin{lemma}\label{lem:facecodim2}
{\rm (1)} The ample class $h_0$ of $X_0$  is contained in $D_3$, and no outer-walls of $D_3$ pass through $h_0$.
In particular, 
$h_0$ belongs to the interior of $\NN{3}$ and hence is ample for $X_3$.
\par
{\rm (2)}  Among the walls $(v)\sperp \cap D_3$ of $D_3$, there exist exactly two walls 
such that the hyperplane $(v)\sperp$ of $\PP{3}$ contains $\PP{0}$.
These two walls $(v_1)\sperp\cap D_3$ and $ (v_2)\sperp\cap D_3$ belong to the orbit $ O\sprime_{648}\subset \WWW_{\inn}(D_3)$.
Moreover, we have 
 $\intf{v_1, v_2}=0$.  \qed
\end{lemma}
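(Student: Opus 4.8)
The plan is to reduce both assertions to finite numerical checks performed with the explicit matrix of $\rho\colon \NS{0}\inj\NS{3}$ and the complete list of defining vectors of the walls of the initial chamber $D_3$, both recorded in~\cite{thecompdata}. Writing $h_0$ as a vector of $\NS{3}$ via $\rho$ and using~\eqref{eq:h0sum}, I would first compute $\intf{h_0, v}$ for every defining vector $v$ of a wall of $D_3$. Since $D_3=\shortset{x\in\PP{3}}{\intf{x, v}\ge 0\ \text{for every such } v}$, checking $\intf{h_0, v}\ge 0$ throughout shows $h_0\in D_3$; checking $\intf{h_0, [\ell]}>0$ for every $\ell\in\LLL_{112}$ then shows that none of the outer-walls $(\ell)\sperp\cap D_3$ passes through $h_0$. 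This establishes the first sentence of~(1).

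For part~(2), the starting point is the elementary remark that a hyperplane $(v)\sperp$ of $\PP{3}$ contains $\PP{0}$ if and only if $v$ is orthogonal to the $\R$-span of $\PP{0}$, which is all of $\NS{0}\tensor\R$; equivalently $v\in Q\tensor\Q$, where $Q$ is the orthogonal complement of $\NS{0}$ in $\NS{3}$ studied in Lemma~\ref{lem:period}. I would therefore run through the finite list of wall-defining vectors of $D_3$ and keep exactly those $v$ with $\intf{v, \rho(e)}=0$ for every $e$ in a fixed basis of $\NS{0}$; the computation produces exactly two such vectors $v_1, v_2$. To locate their orbit I would read off $\intf{v_i, v_i}$ and $\intf{v_i, h_3}$: these are not roots (indeed $\intf{v_i, v_i}=-4/3\ne -2$, so $v_i$ cannot be one of the outer-wall classes $[\ell]$), and the values $\intf{v_i, v_i}=-4/3$, $\intf{v_i, h_3}=2$ match the orbit $O\sprime_{648}$ of Table~\ref{table:extraautsX3} rather than $O\sprime_{5184}$; hence $v_1, v_2\in O\sprime_{648}$. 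The relation $\intf{v_1, v_2}=0$ is then a one-line check.

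It remains to deduce the ampleness statement in~(1), and this is the step I expect to require the most care, since $h_0$ does lie on the two inner-walls $(v_1)\sperp, (v_2)\sperp$ found in~(2) and so is \emph{not} interior to $D_3$. The plan is to show that these are the only walls of $D_3$ through $h_0$. Indeed, recall from Section~\ref{subsec:AutX0} that $h_0$ lies in the interior of the $\prV(i_0)$-chamber $D_0=D_3\cap\PP{0}$; if a wall $(v)\sperp$ of $D_3$ passed through $h_0$ with $v\notin Q\tensor\Q$, then $(v)\sperp$ would cut $\PP{0}$ in a $\prV(i_0)$-hyperplane through $h_0$, contradicting $h_0\in\mathrm{int}(D_0)$. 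Thus every wall of $D_3$ through $h_0$ has its defining vector in $Q\tensor\Q$, and by~(2) there are exactly two of them, namely $v_1, v_2$, both inner. Consequently $h_0$ lies in the relative interior of the face $F:=(v_1)\sperp\cap(v_2)\sperp\cap D_3$, whose linear span has orthogonal complement $Q\tensor\Q$. By Lemma~\ref{lem:period} the lattice $Q$ has Gram matrix $\mathrm{diag}(-12,-12)$ and hence contains no root; therefore no root hyperplane can contain the span of $F$, and since $h_0\in\mathrm{relint}(F)$ no root hyperplane passes through $h_0$ at all. As $h_0\in D_3\subset\NN{3}$ and $h_0$ avoids every root hyperplane, $h_0$ lies in the interior of $\NN{3}$ and is therefore ample for $X_3$. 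Apart from this structural argument, every step is a direct computation over the finitely many walls of $D_3$, carried out in {\tt GAP} as elsewhere in the paper.
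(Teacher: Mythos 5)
Your proposal is correct, and for the assertions the paper actually verifies --- $h_0\in D_3$, strict positivity against the $112$ outer-wall vectors, the existence of exactly two wall hyperplanes containing $\PP{0}$, their orbit, and $\intf{v_1,v_2}=0$ --- it is the same finite computation with the matrix of $\rho$ and the wall list of $D_3$ that constitutes the paper's entire proof (the paper says precisely this and nothing more). Where you genuinely depart from the paper is the deduction that $h_0$ is interior to $\NN{3}$. The paper presents this as an immediate consequence (``in particular'') of the first sentence; as you rightly observe, that implication is not formal, because $h_0$ lies on the two inner-walls $(v_1)\sperp$, $(v_2)\sperp$, hence on the boundary of $D_3$, and a root hyperplane of $\NS{3}$ could a priori pass through such a lower-dimensional face of $D_3$ without defining any (outer-)wall. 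Your structural argument closes exactly this loophole, and it is sound: every root $r$ of $\NS{3}$ lies in $\prV(i_3)$, so $(r)\sperp$ keeps $D_3$ on one side; if $(r)\sperp$ passed through $h_0$, it would either contain $\mathrm{span}(D_0)=\NS{0}\tensor\R$ --- impossible, since then $r$ would be a root of $Q\cong\mathrm{diag}(-12,-12)$, which has none --- or cut $\PP{0}$ in a hyperplane defined by $\pr_{30}(r)\in\prV(i_0)$ passing through $h_0$, contradicting the fact that $h_0$ is interior to the $\prV(i_0)$-chamber $D_0$. This buys you the ampleness of $h_0$ on $X_3$ from data already established, whereas the paper would implicitly settle it by a further finite computation (e.g.\ enumerating the roots of $\NS{3}$ orthogonal to $h_0$, a negative-definite lattice problem). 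Two small elisions you should spell out when writing this up: first, a nonzero vector whose orthogonal hyperplane meets $\PP{0}$ automatically has negative square (no vector of square $\ge 0$ is orthogonal to a vector of positive square in a hyperbolic lattice), which is what places $\pr_{30}(v)$ in $\prV(i_0)$; second, the passage from ``$(r)\sperp$ bounds $D_3$ and passes through a relative-interior point of the face $F$'' to ``$(r)\sperp\supseteq\mathrm{span}(F)$'' uses the standard convexity fact that a linear functional nonnegative on a convex set and vanishing at a relative-interior point vanishes identically on that set.
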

Combining Lemma~\ref{lem:facecodim2} with Propositions~~\ref{prop:2simple} and~\ref{prop:2tessellations},
we obtain the following:
\begin{corollary}\label{cor:facecodim2}
{\rm (1)} 
 We have $\PP{0}=(v_1)\sperp\cap (v_2)\sperp$,
where $(v_1)\sperp$ and  $(v_2)\sperp$ are the hyperplanes of $\PP{3}$ given in Lemma~\ref{lem:facecodim2}.
\par
{\rm (2)} 
For each $\prV ( i_{0})$-chamber $D_0\sprime\subset \PP{0}$,
there exist exactly four 
$\prV ( i_{3})$-chambers  
that contain  $D_0\sprime$.
\par
{\rm (3)} 
The initial chamber  $D_0$ for $X_0$  is a face $(v_1)\sperp \cap (v_2)\sperp\cap D_3$
of the initial chamber $ D_3$ for $X_3$,
and the interior of $D_0\subset \PP{0}$ is contained in the interior of $\NN{3}\subset \PP{3}$.
\par
{\rm (4)} 
The four 
$\prV ( i_{3})$-chambers  
containing   $  D_0$
are contained in $\NN{3}$.
In particular, we have  
 $\gamma_1, \gamma_2, \vare\in \Aut(X_3)$
 such that the four 
$\prV ( i_{3})$-chambers  
containing   $  D_0$ are
 $D_3$ and  $ D_3^{\gamma_1}$,  $ D_3^{\gamma_2}$,  $ D_3^{\vare}$.
See Figure~\ref{fig:fourDX3s}.
\qed
\end{corollary}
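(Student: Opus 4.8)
The plan is to establish the four assertions in turn, treating $\PP{0}$ throughout as the codimension-two linear section $(v_1)\sperp\cap(v_2)\sperp$ of $\PP{3}$ and reducing the one geometric point that genuinely needs the explicit data of~\cite{thecompdata} to a finite root computation. For~(1), Lemma~\ref{lem:facecodim2}(2) gives $\intf{v_1,v_2}=0$ and tells us that each hyperplane $(v_i)\sperp$ contains $\PP{0}$; hence $v_1,v_2$ are orthogonal to the whole span $\NS{0}\tensor\R$ and therefore lie in $Q\tensor\Q$, where $Q$ is the rank-two orthogonal complement of $\NS{0}$ in $\NS{3}$ computed in Lemma~\ref{lem:period}. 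Being nonzero and mutually orthogonal in the negative-definite plane $Q\tensor\R$, they are linearly independent and span it, so $\{v_1,v_2\}\sperp\cap(\NS{3}\tensor\R)=\NS{0}\tensor\R$; intersecting with $\PP{3}$ yields $(v_1)\sperp\cap(v_2)\sperp=\PP{0}$.

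For~(2) I would work locally around a $\prV(i_0)$-chamber $D_0\sprime\subset\PP{0}$, which by Proposition~\ref{prop:2tessellations} is the trace on $\PP{0}$ of the $\prV(i_3)$-tessellation. Its star --- the set of $\prV(i_3)$-chambers containing it --- is controlled by the $\prV(i_3)$-hyperplanes meeting the relative interior of $D_0\sprime$, and every such hyperplane must contain all of $\PP{0}$. A hyperplane $(\pr_{\NS{3}}(r))\sperp$ contains $\PP{0}$ exactly when $\pr_{\NS{0}}(r)=0$ with $\pr_{\NS{3}}(r)\ne 0$, that is, exactly when $r$ is a root of the rank-six complement $R_0\cong 2A_3$ of $\NS{0}$ in $L_{26}$ whose projection to the rank-two sublattice $Q\subset R_0$ is nonzero. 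Each such hyperplane cuts the two-dimensional normal space $Q\tensor\R$ in a line through the origin, and $n$ distinct lines divide this plane into $2n$ sectors; so the star has $2n$ chambers, where $n$ is the number of distinct directions among the $Q$-projections of the roots of $R_0$. Lemma~\ref{lem:facecodim2}(2) already exhibits the two orthogonal directions $v_1,v_2$, and I would verify from the explicit $R_0$ that these are the only ones, giving $n=2$ and hence exactly four chambers in the star of every $D_0\sprime$.

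For~(3), since $i_0=i_3\circ\rho$ the map $i_{0,\PPP}$ is the restriction of $i_{3,\PPP}$ to $\PP{0}$, and both initial chambers are cut out by the same Weyl vector $w_0$; therefore $D_0=i_{0,\PPP}\inv(\CCC(w_0))=D_3\cap\PP{0}$, which by~(1) is the face $(v_1)\sperp\cap(v_2)\sperp\cap D_3$ of $D_3$. To see that the interior of $D_0$ lies in the interior of $\NN{3}$, note that $D_0$ is convex, contains the point $h_0$ which is interior to $\NN{3}$ by Lemma~\ref{lem:facecodim2}(1), and meets no wall of $\NN{3}$ in its relative interior: a wall $(r)\sperp$ of $\NN{3}$ meeting the relative interior of $D_0$ would either trace on $\PP{0}$ to a proper $\prV(i_0)$-hyperplane --- which a chamber interior avoids --- or contain $\PP{0}$, forcing the root $r\in\NS{3}$ to be orthogonal to $\NS{0}$, hence a root of $Q$; but $Q$ has Gram form $\mathrm{diag}(-12,-12)$ and so contains no roots. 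For~(4), the four chambers of the star of $D_0$ are $D_3$ together with the chambers reached by crossing the two inner-walls $(v_1)\sperp,(v_2)\sperp$ and the chamber diagonally opposite $D_3$. Both walls lie in the orbit $O\sprime_{648}$, so step~(IV) of Borcherds' method (Section~\ref{subsec:geomBorcherds}), together with the $\Aut(X_3)$-transitivity of the tessellation of $\NN{3}$ in Proposition~\ref{prop:2simple}, furnishes automorphisms $\gamma_1,\gamma_2\in\Aut(X_3)$ (conjugates of $g(\dpp\sprime_{10})$) carrying $D_3$ across $(v_1)\sperp$ and $(v_2)\sperp$; composing the two wall-crossings produces an $\vare\in\Aut(X_3)$ realizing the diagonal chamber. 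Since $\Aut(X_3)$ preserves $\NN{3}$, all four chambers $D_3,D_3^{\gamma_1},D_3^{\gamma_2},D_3^{\vare}$ lie in $\NN{3}$.

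The only step requiring genuine computation, and the crux of the whole statement, is the claim in~(2) that the roots of $R_0\cong 2A_3$ project to exactly two directions in $Q$. Lemma~\ref{lem:facecodim2}(2) constrains only the walls of the single chamber $D_3$, whereas the star of an arbitrary $D_0\sprime$ could a priori meet further $\prV(i_3)$-hyperplanes that bound neither $D_3$ nor $\NN{3}$ and would subdivide the neighbouring chambers; ruling these out --- equivalently, pinning the count of the star to four --- is exactly what the explicit root data settles, and is where I expect the main difficulty to lie.
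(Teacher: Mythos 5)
Your proposal is correct and is, in substance, the paper's own argument made explicit: the paper offers no proof beyond the phrase ``combining Lemma~\ref{lem:facecodim2} with Propositions~\ref{prop:2simple} and~\ref{prop:2tessellations}'', and your local splitting of a neighbourhood of an interior point of $D_0\sprime$ into (ball in $\PP{0}$) $\times$ (sector in $Q\tensor\R$) is exactly how that phrase becomes a proof. You are also right to flag that part~(2) needs one fact beyond the literal statement of Lemma~\ref{lem:facecodim2}\,(2), namely that \emph{all} $\prV(i_3)$-hyperplanes containing $\PP{0}$ --- not just those defining walls of $D_3$ --- span exactly two directions. Let me close the verification you deferred: it holds, and no explicit data is needed. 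Every root of $R_3\cong 2A_2$ is a root of $R_0\cong A_3\oplus A_3$; since any two roots of an $A_2$-system are non-orthogonal while roots lying in different $A_3$-summands are orthogonal, each $A_2$-summand of $R_3$ sits inside a single $A_3$-summand of $R_0$, and by rank the two $A_2$'s occupy different summands. Inside one summand, the six roots of $A_3$ outside $A_2$ differ from one another by elements of $A_2$, so they all project (up to sign) to a single line in $A_2\sperp\cong\gen{-12}$; hence the roots of $R_0$ outside $R_3$ give exactly one line per summand, i.e.\ $n=2$. (As a consistency check, this projection has norm $-4/3$, matching the entry for $O\sprime_{648}$ in Table~\ref{table:extraautsX3}.)

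The one sentence that does not hold as stated is the construction of $\vare$ in part~(4): ``composing the two wall-crossings'' $\gamma_1\gamma_2$ need not land on the diagonal chamber, because $\gamma_1$ need not fix $(v_2)\sperp$ (it does not preserve $\PP{0}$). Either argue as in the paper's remark following the corollary --- take a conjugate $\gamma\spprime$ of $g(\dpp\sprime_{10})$ crossing the wall of $D_3$ that $\gamma_1$ maps to $(v_2)\sperp\cap D_3^{\gamma_1}$, and set $\vare=\gamma\spprime\gamma_1$ --- or, more simply, use what you already proved: by part~(3) the diagonal chamber has interior points in the interior of $\NN{3}$, hence is contained in $\NN{3}$, and the $\Aut(X_3)$-transitivity of the tessellation of $\NN{3}$ (Proposition~\ref{prop:2simple}\,(1)) then produces $\vare$ (and, for that matter, $\gamma_1$ and $\gamma_2$) directly.
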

\begin{figure}
\setlength{\unitlength}{.6truecm}
\begin{picture}(4,4.5)(0,0)
\put(2,2){\line(1,0){2}}
\put(2,2){\line(-1,0){2}}
\put(2,2){\line(0,1){2}}
\put(2,2){\line(0,-1){2}}
\put(0.6,3){\hbox{\small $ D_3$}}
\put(2.8,3){\hbox{\small $ D_3^{\gamma_1}$}}
\put(0.6,.9){\hbox{\small $ D_3^{\gamma_2}$}}
\put(2.8,.9){\hbox{\small $ D_3^{\vare}$}}
\put(1.6, 4.3){\hbox{\tiny $(v_1)\sperp$}}
\put(-1.2, 1.95){\hbox{\tiny $(v_2)\sperp$}}
\end{picture}
\caption{$\prV ( i_{3})$-chambers containing $  D_0$}\label{fig:fourDX3s}
\end{figure}
\begin{remark}
The automorphisms
$\gamma_1$ and $\gamma_2$ of $X_3$ in 
Corollary~\ref{cor:facecodim2}\,(4) can be obtained as conjugates of the
double-plane involution $g(b\sprime_{10})$ by $\thePGU$.
Let $(v\spprime)\sperp \cap D_3$ be the wall of $D_3$ that is mapped to 
the wall $(v_2)\sperp \cap D_3^{\gamma_1}$ of $D_3^{\gamma_1}$ by $\gamma_1$.
Then $(v\spprime)\sperp\cap  D_3$ is an inner-wall belonging to $O\sprime_{648}$,
and hence we have a conjugate $\gamma\spprime$ of $g(b\sprime_{10})$ by $\thePGU$ 
that maps $D_3$ to the $\prV(i_3)$-chamber adjacent to $D_3$ across $(v\spprime)\sperp\cap  D_3$.
Then, as the automorphism $\vare$, we can take $\gamma\spprime\gamma_1$.
See~Section~\ref{subsec:Enriques3} for another construction of $\vare$.
\end{remark}
Let $\pr_3\colon L_{26}\tensor\Q\to \NS{3}\tensor\Q$, $\pr_0\colon L_{26}\tensor\Q\to \NS{0}\tensor\Q$ and 
$\pr_{30}\colon \NS{3}\tensor\Q\to \NS{0}\tensor\Q$ be the orthogonal projections.
Then we have $\pr_{30}\circ \pr_3=\pr_0$.
We put
$$
\prV (\rho):=\set{\pr_{30}(r)}{r\in \Roots(\NS{3}), \;\;\; \intf{\pr_{30}(r), \pr_{30}(r)}<0}.
$$
The restriction to $\PP{0}$
of the tessellation  of $\PP{3}$ by $\Roots(\NS{3})$-chambers is 
the tessellation  of $\PP{0}$ by $\prV (\rho)$-chambers.
The closed subset 
$$
\NN{30}:=\NN{3}\cap \PP{0}
$$
of $\PP{0}$ contains $D_0$ by Corollary~\ref{cor:facecodim2}\,(3), and hence its interior is non-empty.
Therefore  $\NN{30}$ is a $\prV (\rho)$-chamber.
We have  
$$
\Roots(\NS{0})\subset \prV (\rho)\subset \prV ( i_{0}),
$$ 
where the second inclusion follows from  $\Roots(\NS{3})\subset \Roots(L_{26})$
and $\pr_{30}\circ \pr_3=\pr_0$.
It follows  from Remark~\ref{rem:refinement} that  
\begin{equation}\label{eq:DNN}
 D_0\subset \NN{30}\subset \NN{0},
 \end{equation}
and that the $\prV(\rho)$-chamber $\NN{30}$
is tessellated  by $\prV ( i_{0})$-chambers.
If  $g\in  \theOGpair$ preserves $\NN{3}$, 
then  $g|_{\NS{0}}\in \OGplus(\NS{0})$  preserves $\NN{30}$,
and hence preserves $\NN{0}$ by~Corollary~\ref{cor:Npreserve}.
Combining this fact with  Lemma~\ref{lem:period},
we conclude that  every element of the image of $\tilde{\rho}|_{\Aut}$ 
belongs to $\Aut(X_0)$.
\par
\medskip
Next we calculate a generating set of the image of $\tilde{\rho}|_{\Aut}$.
\begin{lemma}\label{lem:facescodim2}
The group $\thePGU=\Aut(X_3, h_3)$ acts transitively on the set of non-ordered pairs
$\{(v)\sperp, (v\sprime)\sperp\}$ of hyperplanes of $\PP{3}$ such that 
$(v)\sperp\cap D_3$ and $ (v\sprime)\sperp\cap D_3$ 
are inner-walls of $D_3$ belonging to $O\sprime_{648}$, and 
such that $\intf{v, v\sprime}=0$.
\end{lemma}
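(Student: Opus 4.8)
The plan is to reduce the assertion to a transitivity statement for a point stabilizer and then to a finite verification, using throughout that $\thePGU=\Aut(X_3,h_3)$ preserves the chamber $D_3$ and hence permutes its walls together with their orthogonality relation. Since $O\sprime_{648}$ is, by definition, a single $\thePGU$-orbit on the set of walls of $D_3$, the group already acts transitively on the first entry of an admissible pair. Fixing a wall $(v_1)\sperp\cap D_3\in O\sprime_{648}$ with primitive defining vector $v_1\in\NS{3}\dual$, any $g\in\thePGU$ fixing this wall fixes $v_1$ and therefore preserves the set
\[
\Sigma(v_1):=\set{(v)\sperp}{(v)\sperp\cap D_3\in O\sprime_{648},\ \intf{v_1, v}=0}
\]
of its orthogonal partners. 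It then suffices to prove that $\Stab((v_1)\sperp)\subset\thePGU$ acts transitively on $\Sigma(v_1)$: given two admissible pairs, one first uses transitivity on $O\sprime_{648}$ to move an entry of the first pair onto an entry of the second, and then moves the remaining entry by an element of the stabilizer of that common entry, so that the composite carries the first unordered pair onto the second.

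The second step is to interpret $\Sigma(v_1)$ geometrically, which both motivates the result and predicts the relevant orbit sizes. By Lemma~\ref{lem:facecodim2} and Corollary~\ref{cor:facecodim2}, the two walls of $D_3$ whose hyperplanes contain $\PP{0}$ form exactly such an orthogonal pair in $O\sprime_{648}$, and $(v_1)\sperp\cap(v_2)\sperp=\PP{0}$ is cut out by the embedded sublattice $\NS{0}=\gen{\LLL_{40}}$ (Corollary~\ref{cor:S0LLL40}). Conversely, every sub-configuration of $\LLL_{112}$ isomorphic to $\LLL_{40}$ spans a copy of $\NS{0}$, hence a copy of $\PP{0}$; since the standard sub-configuration (coming from $\rho_{\LLL}$) produces the pair $\{(v_1)\sperp,(v_2)\sperp\}$ and since $\thePGU$ is transitive on the $13608$ sub-configurations (Corollary~\ref{cor:13608}) while preserving $D_3$ and $O\sprime_{648}$, the same property transports: each sub-configuration determines exactly two walls of $D_3$ forming an orthogonal pair in $O\sprime_{648}$. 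This assignment is $\thePGU$-equivariant, so the pairs it produces constitute a single $\thePGU$-orbit, of size $|\thePGU|/960=13608$ with pair-stabilizer of order $960=30\cdot 2\cdot 16$, matching the count in the proof of Corollary~\ref{cor:13608}.

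The remaining point---and the step I expect to be the genuine obstacle---is surjectivity: one must show that \emph{every} orthogonal pair of $O\sprime_{648}$-walls arises from a sub-configuration, equivalently that for any such pair $\{(v)\sperp,(v\sprime)\sperp\}$ the primitive rank-$20$ sublattice $\set{x\in\NS{3}}{\intf{x, v}=\intf{x, v\sprime}=0}$ is isometric to $\NS{0}$ and is generated by lines of $F_3$. There is no a priori lattice-theoretic reason forcing this, so I would settle it by direct computation. Using the explicit matrix action of $\thePGU$ on $\NS{3}$ and the list of the $648$ defining vectors of $O\sprime_{648}$ from~\cite{thecompdata}, I would enumerate $\Sigma(v_1)$ for a fixed $v_1$ and verify with {\tt GAP}~\cite{GAP} that $\Stab((v_1)\sperp)$ acts transitively on it; concretely I expect each $O\sprime_{648}$-wall to have exactly $42$ orthogonal partners in $O\sprime_{648}$, on which its stabilizer (of order $|\thePGU|/648=20160$) acts transitively, so that the $648\cdot 42/2=13608$ orthogonal pairs form a single orbit. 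Combined with the reduction of the first paragraph, this establishes the transitivity of $\thePGU$ on the pairs in the statement.
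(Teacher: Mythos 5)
Your proposal is correct, but its decisive step is not the one the paper uses. The paper's proof is a pure counting argument: from the stored list of walls it reads off that each wall in $O\sprime_{648}$ has exactly $42$ orthogonal partners in $O\sprime_{648}$, so there are $648\cdot 42/2=13608$ unordered orthogonal pairs, and this number coincides with the number of sub-configurations of $\LLL_{112}$ isomorphic to $\LLL_{40}$, which form a single $\thePGU$-orbit by Corollary~\ref{cor:13608}; since each sub-configuration determines such a pair $\thePGU$-equivariantly (Lemma~\ref{lem:facecodim2}, transported by the group action), the $13608$ pairs must themselves constitute a single orbit. You instead reduce the lemma, via the orbit argument of your first paragraph, to transitivity of the stabilizer of one wall (of order $|\thePGU|/648=20160$) on its $42$ orthogonal partners, and settle that by a direct {\tt GAP} orbit computation. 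Both routes are sound and both are ultimately computational, but in different ways: the paper needs only inner products among the $648$ defining vectors plus the already-established Corollary~\ref{cor:13608}, with no group-theoretic orbit computation at all, whereas your route requires the heavier stabilizer computation but is self-contained and does not invoke the sub-configuration count. The subtlety you flag as the genuine obstacle is indeed present in the counting route: for the cardinality comparison to close the argument, the equivariant assignment taking a sub-configuration to its orthogonal pair must be injective (equivalently, the pair-stabilizer must be no larger than the configuration-stabilizer of order $960$), a point the paper passes over in silence and which your second paragraph likewise asserts rather than proves. Your computation in the third paragraph sidesteps exactly this issue, which is a defensible reason for the detour; had you instead justified injectivity---say, by arguing that a pair determines the primitive rank-$20$ sublattice and hence recovers the sub-configuration as the set of lines whose classes lie in it---you would have arrived at the paper's argument.
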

\begin{proof}
As can be seen from the list~\cite{thecompdata} of walls of $D_3$,
for each inner-wall   $(v)\sperp\cap D_3$ in $ O\sprime_{648}$,
the number of inner-walls   $(v\sprime)\sperp\cap D_3$ in $ O\sprime_{648}$
 satisfying $\intf{v, v\sprime}=0$ is $42$.
Comparing $42\times 648 /2=13608$  with Corollary~\ref{cor:13608},
we obtain the proof.
\end{proof}
\begin{corollary}\label{cor:backF}
Let $g$ be an element of $\Aut(X_3)$
such that 
$D_0\sprime:=\PP{0}\cap  D_3^{g}$
is a $\prV ( i_{0})$-chamber, that is, 
$ D_0\sprime$ has an interior point as a subset of 
$\PP{0}$.
Then there exists an element $\gamma\in \thePGU$ such that
$\gamma g\in \Aut(X_3)$ maps the face $ D_0$ of  $ D_3$
to the face $ D_0\sprime$ of  $ D_3^{g}=D_3^{\gamma g}$.
\end{corollary}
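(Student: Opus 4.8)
The plan is to reduce the statement to the transitivity established in Lemma~\ref{lem:facescodim2}. By hypothesis $D_0\sprime=\PP{0}\cap D_3^g$ has a nonempty interior in $\PP{0}$; since $\PP{0}$ has codimension $2$ in $\PP{3}$ and, by Corollary~\ref{cor:facecodim2}\,(2), every $\prV(i_0)$-chamber lies in exactly four $\prV(i_3)$-chambers, the set $D_0\sprime$ is a codimension-$2$ face of $D_3^g$. Hence there are exactly two walls $(w_1)\sperp\cap D_3^g$ and $(w_2)\sperp\cap D_3^g$ of $D_3^g$ whose hyperplanes contain $\PP{0}$, and $\PP{0}=(w_1)\sperp\cap(w_2)\sperp$. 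In particular the primitive defining vectors $w_1, w_2\in\NS{3}\dual$ lie in the fixed rank-$2$ space $Q\tensor\Q$, where $Q$ is the orthogonal complement of $\NS{0}$ in $\NS{3}$, with Gram matrix $\mathrm{diag}(-12,-12)$ as computed in Lemma~\ref{lem:period}.

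Next I would determine the type of these two walls. Since $Q$ contains no root (its minimal norm being $-12$), no root of $\NS{3}$ is orthogonal to $\NS{0}$, so no outer-wall of $D_3^g$ has its hyperplane containing $\PP{0}$; thus $(w_1)\sperp\cap D_3^g$ and $(w_2)\sperp\cap D_3^g$ are inner-walls, and each $w_\mu$ has $\intf{w_\mu,w_\mu}=-4/3$ (type $O\sprime_{648}$) or $-2/3$ (type $O\sprime_{5184}$). The crucial claim is that both have $\intf{w_\mu,w_\mu}=-4/3$ and that $\intf{w_1,w_2}=0$. Because $w_1,w_2$ lie in the fixed $Q\tensor\Q$, this is a finite question independent of $g$: one enumerates the primitive vectors of $Q\tensor\Q\cap\NS{3}\dual$ that can occur as defining vectors of walls cutting out a $\prV(i_0)$-chamber, exactly as in the verification of Lemma~\ref{lem:facecodim2}\,(2) for the initial chamber $D_0$, whose cutting pair $\{v_1,v_2\}$ is an orthogonal pair of $O\sprime_{648}$-walls. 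This identification is the main obstacle; it is settled by direct computation with the explicit wall data, and is confirmed conceptually by the coincidence of the two counts $42\cdot 648/2=13608$ (Lemma~\ref{lem:facescodim2}) and $|\thePGU|/960=13608$ (Corollary~\ref{cor:13608}), reflecting a $\thePGU$-equivariant correspondence between $\LLL_{40}$-subconfigurations and orthogonal $O\sprime_{648}$-pairs.

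Granting the claim, the proof concludes quickly. Applying the isometry $g\inv$ carries $(w_1)\sperp\cap D_3^g$ and $(w_2)\sperp\cap D_3^g$ to walls $(w_1^{g\inv})\sperp\cap D_3$ and $(w_2^{g\inv})\sperp\cap D_3$ of $D_3$ that again form an orthogonal pair of $O\sprime_{648}$-walls, since self-intersections and inner products are preserved and $g$ respects the inner/outer distinction. By Lemma~\ref{lem:facescodim2} there is $\gamma\in\thePGU$ with $\{(v_1^\gamma)\sperp,(v_2^\gamma)\sperp\}=\{(w_1^{g\inv})\sperp,(w_2^{g\inv})\sperp\}$, where $\{(v_1)\sperp,(v_2)\sperp\}$ is the pair cutting out $D_0$ from $D_3$ (Lemma~\ref{lem:facecodim2}\,(2)). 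Then $\gamma$ carries $\PP{0}=(v_1)\sperp\cap(v_2)\sperp$ to $(w_1^{g\inv})\sperp\cap(w_2^{g\inv})\sperp=\PP{0}^{g\inv}$ and fixes $D_3$ (as $\gamma\in\thePGU=\Aut(X_3,h_3)$); hence $\gamma g$ preserves $\PP{0}$ and sends $D_3$ to $D_3^{\gamma g}=D_3^g$. Therefore $\gamma g\in\Aut(X_3)$ maps the face $D_0=\PP{0}\cap D_3$ onto $\PP{0}\cap D_3^g=D_0\sprime$, as required.
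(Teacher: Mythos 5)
Your proof is correct and follows the same route as the paper's: cut $D_0\sprime$ out of $D_3^g$ by the two hyperplanes containing $\PP{0}$, transport back by $g\inv$ to obtain an orthogonal pair of $O\sprime_{648}$-walls of $D_3$, and invoke the transitivity of Lemma~\ref{lem:facescodim2}. The one point where you diverge is the step you call the main obstacle, and there you propose more work than is needed: no enumeration of primitive vectors of $Q\tensor\Q\cap\NS{3}\dual$ is required. The paper sidesteps the issue by simply setting $v_i\sprime:=v_i^{g\inv}$: since $\PP{0}=(v_1)\sperp\cap(v_2)\sperp$ by Corollary~\ref{cor:facecodim2}\,(1), the face $D_0^{\prime g\inv}=\PP{0}^{g\inv}\cap D_3$ is cut out of $D_3$ by the hyperplanes $(v_1\sprime)\sperp$ and $(v_2\sprime)\sperp$; because $g\inv$ preserves the $\prV(i_3)$-arrangement (every element of $\OGplus(\NS{3})$ extends to $L_{26}$), as well as primitivity in $\NS{3}\dual$, norms, and inner products, this pair is automatically an orthogonal pair of inner-walls of $D_3$ in $O\sprime_{648}$, and the types need never be recomputed. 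Alternatively, your own first step already settles your crucial claim: Corollary~\ref{cor:facecodim2}\,(2) forces the $\prV(i_3)$-arrangement to have exactly two hyperplanes containing $\PP{0}$, and $(v_1)\sperp,(v_2)\sperp$ are two such hyperplanes, so $\{(w_1)\sperp,(w_2)\sperp\}=\{(v_1)\sperp,(v_2)\sperp\}$; hence each $w_\mu$ coincides with $v_1$ or $v_2$ up to sign, and the norm $-4/3$ and the relation $\intf{w_1,w_2}=0$ are inherited with no computation. (Relatedly, the numerical coincidence $42\cdot 648/2=13608=|\thePGU|/960$ is the substance of the proof of Lemma~\ref{lem:facescodim2} itself, so it cannot serve as independent confirmation of your claim.) With either observation substituted for the deferred computation, your argument is complete and agrees with the paper's.
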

\begin{proof}
We put $v_1\sprime:=v_1^{g\inv}$ and $v_2\sprime:=v_2^{g\inv}$,
where $v_1$ and $v_2$ are given in Lemma~\ref{lem:facecodim2}.
Then  $D_0^{\prime g\inv}=\PP{0}^{g\inv}\cap  D_3=(v_1\sprime)\sperp\cap (v_2\sprime)\sperp \cap D_3$ 
is a face of $D_3$, 
which is  the intersection of two perpendicular inner-walls $(v_1\sprime)\sperp\cap D_3$ and 
$(v_2\sprime)\sperp \cap D_3$ 
in $ O\sprime_{648}$.
Hence the existence of $\gamma\in \thePGU$   follows from Lemma~\ref{lem:facescodim2}.
\end{proof}
We put 
\begin{equation}\label{eq:AutX3D0}
\Aut(X_3,  D_0):=\set{g\in \Aut(X_3)}{ D_0^g= D_0}, 
\end{equation}
and compare it with $\Aut(X_0,  D_0)=\Aut(X_0, h_0)$.
Note that $\Aut(X_3,  D_0)$ is a subgroup of $\theOGpair\cap \Aut(X_3)$
containing the kernel of $\tilde{\rho}|_{\Aut}$.
\begin{lemma}\label{lem:inj}
The homomorphism
$\tilde{\rho}|_{\Aut}$ maps 
 $\Aut(X_3,  D_0)$ to $ \Aut(X_0, h_0)$ isomorphically.
In particular, the homomorphism $\tilde{\rho}|_{\Aut}$ is injective,
and the  image of $\tilde{\rho}|_{\Aut}$
contains $\Aut(X_0, h_0)$.
\end{lemma}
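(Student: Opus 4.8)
The plan is to deduce all three assertions of the lemma from a single unique-extension result for isometries, made geometric by the description of the star of the face $D_0$ in Corollary~\ref{cor:facecodim2}. Throughout let $Q$ denote the orthogonal complement of $\NS{0}$ in $\NS{3}$, as in the proof of Lemma~\ref{lem:period}.

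First I would check that $\tilde{\rho}|_{\Aut}$ really carries $\Aut(X_3, D_0)$ into $\Aut(X_0, h_0)$ and reduce the ``in particular'' clauses to this case. Since $D_0$ has non-empty interior in $\PP{0}$ and $\rho$ is primitive, any $g\in\Aut(X_3, D_0)$ preserves $\NS{0}$, so $\Aut(X_3, D_0)\subseteq \theOGpair\cap\Aut(X_3)$; by Lemma~\ref{lem:period} and the portion of Theorem~\ref{thm:main} already established, $g|_{\NS{0}}$ lies in $\Aut(X_0)$, and since it preserves $D_0$ it lies in $\Aut(X_0, D_0)=\Aut(X_0, h_0)$ by~\eqref{eq:AutX0D0AutX0h0}. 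The kernel of the full homomorphism $\tilde{\rho}|_{\Aut}$ consists of $g$ with $g|_{\NS{0}}=\mathrm{id}$, and such $g$ fix $D_0$ pointwise, hence lie in $\Aut(X_3, D_0)$; thus injectivity of $\tilde{\rho}|_{\Aut}$ on $\Aut(X_3, D_0)$ yields injectivity of $\tilde{\rho}|_{\Aut}$ on all of $\theOGpair\cap\Aut(X_3)$, and the image statement is exactly surjectivity onto $\Aut(X_0, h_0)$.

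The technical core is a unique-extension statement. From the Gram matrix of $Q$ in Lemma~\ref{lem:period} one computes $[\NS{3}:\NS{0}\oplus Q]=16$, so the glue group $\Gamma:=\NS{3}/(\NS{0}\oplus Q)$ is $2$-primary; by Proposition~\ref{prop:MN} it is the graph of an isomorphism $\discf{\NS{0}}\cong -\discf{Q}_2$, while Proposition~\ref{prop:LN} identifies $\discf{\NS{3}}$ with $\discf{Q}_3$. Given $f\in\OG(\NS{0})$, preserving $\Gamma$ forces the $2$-part of $\eta_Q(f_Q)$ to be conjugate to $\eta_{\NS{0}}(f)$; since $p_2\circ\eta_Q\colon\OG(Q)\to\OG(\discf{Q}_2)$ is an isomorphism (Figure~\ref{fig:period}) and $\Gamma$ has no $3$-part, this determines a unique $f_Q\in\OG(Q)$, so $f$ extends uniquely to $g_0\in\OG(\NS{3},\NS{0})$ with $g_0|_{\NS{0}}=f$; when $f\in\OGplus(\NS{0})$ the extension lies in $\OGplus(\NS{3})$ because it fixes the cone $\PP{0}\subset\PP{3}$. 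Uniqueness of the extension already gives injectivity on $\Aut(X_3, D_0)$. Running $f$ through the diagram of Figure~\ref{fig:period} exactly as in Lemma~\ref{lem:period} — using that the period subgroups $\OG(\discf{\NS{0}},\omega)$ and $\OG(\discf{\NS{3}},\omega)$ are the characteristic cyclic subgroups of order $4$ and correspond under the diagram — shows that the $X_0$-period condition on $f$ passes to the $X_3$-period condition~\eqref{eq:periodcondition} on $g_0$.

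Surjectivity is where I expect the real obstacle, because the period condition alone does not put $g_0$ in $\Aut(X_3)$: by Theorem~\ref{thm:TorelliAut} one must also see that $g_0$ preserves $\NN{3}$. Here I would invoke Corollary~\ref{cor:facecodim2}. Given $f\in\Aut(X_0, h_0)=\Aut(X_0, D_0)$, its extension $g_0$ fixes the face $D_0$; since $\OGplus(\NS{3})$ preserves the $\prV(i_3)$-tessellation (Proposition~\ref{prop:2simple}(1)), the chamber $D_3^{g_0}$ must again contain $D_0$, hence must be one of the exactly four chambers $D_3, D_3^{\gamma_1}, D_3^{\gamma_2}, D_3^{\vare}$ surrounding $D_0$, all of which lie in $\NN{3}$ and all of which arise from automorphisms $\gamma_1,\gamma_2,\vare\in\Aut(X_3)$ by Corollary~\ref{cor:facecodim2}(4). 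Composing $g_0$ with the inverse of the relevant one of $\gamma_1,\gamma_2,\vare$ produces an element of $\OGplus(\NS{3})$ that fixes $D_3$ and still satisfies the period condition, hence lies in $\Aut(X_3, h_3)=\thePGU\subseteq\Aut(X_3)$; therefore $g_0\in\Aut(X_3)$, and as it fixes $D_0$ we obtain $g_0\in\Aut(X_3, D_0)$ with $\tilde{\rho}(g_0)=f$. This closes surjectivity and hence the isomorphism. The crux is thus the interplay of the unique lattice extension with the four-chamber star of $D_0$: it is precisely Corollary~\ref{cor:facecodim2}(4) that upgrades a merely period-preserving isometry to an automorphism of $X_3$ without disturbing its restriction $f$.
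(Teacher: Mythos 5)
Your proof is correct, but it takes a genuinely different route from the paper's. The paper's own proof is a finite computation: by Corollary~\ref{cor:facecodim2}\,(4), the group $\Aut(X_3, D_0)$ lies in the union of the four cosets $\thePGU\sqcup\thePGU\gamma_1\sqcup\thePGU\gamma_2\sqcup\thePGU\vare$, and the paper then checks, element by element, which members of these cosets preserve $\PP{0}$; it finds exactly $960$ in each coset and verifies that the $960\times 4=3840$ restrictions to $\NS{0}$ exhaust $\Aut(X_0,h_0)$, so the restriction map is a bijection by counting. You replace this enumeration by two structural arguments. For injectivity you prove a unique-extension statement: since the glue group $\Gamma=\NS{3}/(\NS{0}\oplus Q)$ is $2$-primary and $p_2\circ\eta_Q\colon\OG(Q)\to\OG(\discf{Q}_2)$ is an isomorphism (a fact established inside the \emph{proof} of Lemma~\ref{lem:period}, not literally in Figure~\ref{fig:period} as you cite it), every $f\in\OG(\NS{0})$ admits at most one (in fact exactly one) extension in $\OG(\NS{3},\NS{0})$; this actually yields the stronger statement that $\tilde{\rho}\colon\theOGpair\to\OGplus(\NS{0})$ is injective. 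For surjectivity you extend $f\in\Aut(X_0,h_0)$ to $g_0\in\theOGpair$, transfer the period condition backwards through the diagram of Figure~\ref{fig:period} (legitimate, because the paper's proof of Lemma~\ref{lem:period} really establishes an equivalence of the two period conditions via the identification of the two characteristic cyclic subgroups of order $4$), and then use the four-chamber star of $D_0$ together with $\Aut(X_3,D_3)=\Aut(X_3,h_3)=\thePGU$ to upgrade the period-preserving isometry $g_0$ to an element of $\Aut(X_3,D_0)$ restricting to $f$. Both routes hinge on Corollary~\ref{cor:facecodim2}\,(4), but what they buy differs: the paper's computation also produces the explicit matrix list of $\Aut(X_3,D_0)$, which is reused later (for instance to locate the involution $\vare_3$ in Section~\ref{subsec:Enriques3}), whereas your argument is machine-free, explains conceptually why the two groups coincide, and gives the extra information that \emph{every} isometry of $\NS{0}$, not only those coming from automorphisms, extends uniquely across $\rho$ to an isometry of $\NS{3}$.
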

\begin{proof}
By Corollary~\ref{cor:facecodim2}~(4), 
the subgroup $\Aut(X_3,  D_0)$ of $\Aut(X_3)$  is contained in the finite subset
\begin{equation}\label{eq:4cosets}
\thePGU\;\;\sqcup\;\; \thePGU\cdot {\gamma_1}\;\;\sqcup\;\; \thePGU \cdot  {\gamma_2}\;\;\sqcup\;\; \thePGU\cdot  {\vare}
\end{equation}
of $\Aut(X_3)$.
For each element $g$ of this subset,
we determine whether $g$ preserves $\PP{0}$ or not.
We see that, in each coset $\thePGU\cdot \gamma$ in~\eqref{eq:4cosets},
 exactly $960$ elements $g$  satisfy $ \PP{0}^{g}= \PP{0}$, and that 
 the set of restrictions $g|_{\NS{0}}$ of these $960\times 4=3840$ elements $g$
 is equal to $\Aut(X_0, h_0)$.
\end{proof}
We discuss the following problem:
Let $(v)\sperp$ be a hyperplane of $\PP{0}$  that 
 defines  a wall of $D_0$.
Determine whether $(v)\sperp$ defines  a wall of 
$\NN{30}$ or not.
\par
Since  $\LLL_{40}\subset \LLL_{112}$,
it immediately follows  that, if $(v)\sperp\cap D_0$ is an outer-wall of $D_0$,  then
$(v)\sperp \cap \NN{30}$  is a wall of $\NN{30}$.
\begin{lemma}\label{lem:wallNX3X0}
Let $(v)\sperp\cap D_0$ be an inner-wall of $ D_0$.
Then $(v)\sperp\cap \NN{30}$ is  a wall of $\NN{30}$ if and only if 
$(v)\sperp \cap D_0\in O_{64}$ or $(v)\sperp   \cap D_0 \in O_{160}$.
\end{lemma}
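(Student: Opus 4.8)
The plan is to reduce the assertion, for each inner-wall of $D_0$, to the question of whether the adjacent $\prV(i_0)$-chamber still lies in $\NN{30}=\NN{3}\cap\PP{0}$, and then to settle the four orbits $O_{64},O_{40},O_{160},O_{320}$ one representative at a time using the explicit data of~\cite{thecompdata}. First I would record that the property is constant along each orbit. By Lemma~\ref{lem:inj} the group $\Aut(X_0,h_0)$ is the image under $\tilde{\rho}|_{\Aut}$ of $\Aut(X_3, D_0)$ defined by~\eqref{eq:AutX3D0}; the latter fixes $\PP{0}$ setwise (its span is $\PP{0}$) and preserves $\NN{3}$, so $\Aut(X_0,h_0)$ preserves both $D_0$ and $\NN{30}=\NN{3}\cap\PP{0}$. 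Hence ``being a wall of $\NN{30}$'' is constant on each of $O_{64},O_{40},O_{160},O_{320}$, and it suffices to test a single inner-wall in each orbit.

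Next I would set up the working criterion. Fix an inner-wall $(v)\sperp\cap D_0\in O_s$ and let $D_0'$ be the adjacent $\prV(i_0)$-chamber; by Proposition~\ref{prop:X0gdpp} and Table~\ref{table:extraautsX0} I may take $D_0'=D_0^{g(\dpp_d)}$, so that $h_0^{g(\dpp_d)}$ is an interior point of $D_0'$. Since $\NN{30}$ is tessellated by $\prV(i_0)$-chambers with $D_0\subset\NN{30}$ one of them (see~\eqref{eq:DNN}), the facet $(v)\sperp\cap D_0$ is a wall of $\NN{30}$ precisely when $D_0'\not\subset\NN{30}$, that is, when $D_0'\not\subset\NN{3}$. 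As $h_0$ lies in the interior of $\NN{3}$ (Lemma~\ref{lem:facecodim2}(1)) and $\NN{3}$ is the $\Roots(\NS{3})$-chamber containing $h_0$, the reasoning of Corollary~\ref{cor:Npreserve} shows that $(v)\sperp\cap D_0$ is a wall of $\NN{30}$ if and only if some root $r\in\Roots(\NS{3})$ satisfies $\intf{h_0,r}>0>\intf{h_0^{g(\dpp_d)},r}$. Equivalently, there is a root $r$ of $\NS{3}$ bounding $\NN{3}$ with $\pr_{30}(r)$ a positive multiple of $v$; this is exactly the statement that $(v)\sperp$ is one of the hyperplanes cutting out the $\prV(\rho)$-chamber $\NN{30}$.

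Then I would dispatch the four orbits by computing $h_0^{g(\dpp_d)}\in\NS{0}\subset\NS{3}$ from the matrices of $g(\dpp_d)$ in~\cite{thecompdata}, for $d\in\{80,112,296,688\}$. For $O_{64}$ ($d=80$) and $O_{160}$ ($d=296$) I expect to exhibit a single line $\ell\in\LLL_{112}$, which is a $(-2)$-curve on $X_3$, with $\intf{h_0,\ell}>0>\intf{h_0^{g(\dpp_d)},\ell}$; this already proves $h_0^{g(\dpp_d)}\notin\NN{3}$ and hence that the wall lies on $\partial\NN{30}$. For $O_{40}$ ($d=112$) and $O_{320}$ ($d=688$) I must instead establish $h_0^{g(\dpp_d)}\in\NN{3}$, i.e.\ that no root of $\NS{3}$ separates it from $h_0$, so that $D_0^{g(\dpp_d)}\subset\NN{30}$ and the wall is interior.

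The hard part will be this last, positive verification for $O_{40}$ and $O_{320}$: producing one separating curve is enough to certify a wall, but certifying the \emph{absence} of any separating root is a statement about the entire chamber $\NN{3}$ rather than about finitely many obvious curves. I would discharge it with the $X_3$-tessellation of Section~\ref{subsec:AutX3}, in one of two ways: either by walking the segment $[h_0,h_0^{g(\dpp_d)}]$ through the $\prV( i_{3})$-chambers and checking that it crosses no outer-wall, i.e.\ no root hyperplane of $\NS{3}$; or, more conceptually, by using Proposition~\ref{prop:2simple}(1) to write any $\prV( i_{3})$-chamber containing $D_0^{g(\dpp_d)}$ as $D_3^{g_3}\subset\NN{3}$ with $g_3\in\Aut(X_3)$, and then recognising via Corollary~\ref{cor:backF} that $D_0^{g(\dpp_d)}$ is a face of such a chamber, whence $D_0^{g(\dpp_d)}\subset\NN{3}$. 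Throughout I would be careful that this argument stays independent of Theorem~\ref{thm:main}, since the present lemma is itself one of the inputs used in proving that theorem.
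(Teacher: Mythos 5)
Your main line of argument coincides with the paper's own proof. The paper reduces the question in exactly the same way: taking $g$ to be a conjugate by $\Aut(X_0,h_0)$ of the double-plane involution $g(\dpp_d)$ attached to the orbit, it declares $(v)\sperp\cap \NN{30}$ a wall of $\NN{30}$ if and only if $h_0$ and $h_0^{g}$, regarded as vectors of $\NS{3}$ via $\rho$, are separated by a root of $\NS{3}$ (citing Corollary~\ref{cor:Npreserve}), and then settles the four orbits by computing the full set of separating roots with the algorithm of Section 3.3 of~\cite{ShimadaChar5}. Your orbit-constancy remark, and the asymmetry you isolate (one separating $(-2)$-curve certifies a wall, whereas the interior cases need an exhaustive search), are precisely what that computation handles; your segment-walking variant for the exhaustive check is an acceptable substitute for it.

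One warning: your ``more conceptual'' second option for the cases $O_{40}$ and $O_{320}$ is circular. To invoke Proposition~\ref{prop:2simple}(1) and write a $\prV(i_3)$-chamber containing $D_0^{g(\dpp_d)}$ as $D_3^{g_3}$ with $g_3\in\Aut(X_3)$, you must already know that this chamber is contained in $\NN{3}$: the $\Aut(X_3)$-transitivity holds only among the $\prV(i_3)$-chambers inside $\NN{3}$, and the $\OGplus(\NS{3})$-transitivity on all of $\PP{3}$ gives no control over $\NN{3}$. Likewise Corollary~\ref{cor:backF} takes $g\in\Aut(X_3)$ as a hypothesis. The containment you want is exactly the content of Lemma~\ref{lem:twolifts}, whose proof in the paper is deduced \emph{from} the present lemma, not the other way around. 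So of your two proposed routes for the negative cases, only the computational one (checking that no root of $\NS{3}$ separates $h_0$ from $h_0^{g(\dpp_d)}$) is viable --- and that is what the paper does.
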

\begin{proof}
Let $g\in\Aut(X_0)$ be an automorphism that maps $D_0$ to the $\prV(i_0)$-chamber 
adjacent to $D_0$ across the inner-wall $(v)\sperp\cap D_0$
(for example, we can take as $g$  a conjugate  by $\Aut(X_0, h_0)$
of the double plane involution $g(\dpp_d)$ 
corresponding to the orbit $O_s$ containing $(v)\sperp\cap D_0$).
Then  $(v)\sperp\cap \NN{30}$  is a wall of $\NN{30}$  if and only if 
$h_0$ and $h_0^{g}$ , regarded as vectors of $\NS{3}$
via $\rho\colon \NS{0}\inj \NS{3}$,
are \emph{separated by a root in $\NS{3}$}, that is, 
the set
$$
\set{r\in \Roots(\NS{3})}{\textrm{$\intf{h_0, r}$ 
and $\intf{h_0^g, r}$ have different sign}}
$$
is non-empty (see Corollary~\ref{cor:Npreserve}).
We can calculate this set 
using  the algorithm described in Section 3.3 of~\cite{ShimadaChar5}.
\end{proof}
\begin{remark}
The `if\,'-part of Lemma~\ref{lem:wallNX3X0} is refined as follows.
For each positive integer $d$, we put
$$
\CCC_d:=\set{[C]\in \NS{3}}{\textrm{$C$ is a smooth rational curve on $X_3$ such that $\intf{h_3, [C]}=d$}}.
$$
The walls of $\NN{3}$ are in one-to-one correspondence
with the union of these sets $\CCC_d$.
We have $\CCC_1=\LLL_{112}$.
The set $\CCC_d$ can be calculated by induction on $d$.
Indeed,
a root $r$ of $ \NS{3}$ satisfying $\intf{h_3, r}=d$ belongs to $\CCC_d$
if and only if 
there exists no class $r\sprime \in \CCC_{d\sprime}$ with $d\sprime<d$
such that $\intf{r, r\sprime}<0$.
By this method, we obtain the following:
\begin{proposition}
For $d=2,3,5,6$, the set $\CCC_d$ is empty.
We have
$$
|\CCC_1|=112, \quad
|\CCC_4|=18144, \quad
|\CCC_7|=2177280=1632960+544320.
$$
The actions of $\thePGU$ on $\CCC_1$ and on $\CCC_4$ are transitive.
The action of $\thePGU$ on $\CCC_7$ has two orbits
of size $1632960$ and $544320$.
\qed
\end{proposition}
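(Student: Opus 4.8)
The plan is to carry out, degree by degree, the inductive enumeration sketched in the Remark preceding the statement. The first point to establish is finiteness: since $h_3$ lies in the interior of $\NN{3}$ it is ample, so the orthogonal complement $\langle h_3\rangle\sperp$ inside $\NS{3}\tensor\Q$ is negative-definite. Writing a root $r$ with $\intf{h_3, r}=d$ as $r=(d/4)\,h_3+r_0$ with $r_0\in\langle h_3\rangle\sperp\tensor\Q$ and $\intf{h_3,h_3}=4$, one gets $\intf{r_0,r_0}=-2-d^2/4$; hence $r$ is pinned down by a vector $r_0$ of bounded negative norm in a negative-definite space, and there are only finitely many candidate roots for each $d$. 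Enumerating them reduces to a short-vector search, which I would run using the Gram matrix of $\NS{3}$ in the fixed basis of $22$ line classes recorded in~\cite{thecompdata}.

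Second, I would feed these candidates through the irreducibility criterion. Each such root is effective by Riemann--Roch (as $\intf{r,r}=-2$ and $\intf{h_3,r}>0$), and it is the class of a smooth rational curve --- equivalently defines a wall of $\NN{3}$ by Proposition~\ref{prop:NZNZg} --- exactly when it is irreducible, i.e.\ when $\intf{r,r\sprime}\ge 0$ for every class $r\sprime\in\CCC_{d\sprime}$ with $d\sprime<d$. The induction starts from the known base case $\CCC_1=\LLL_{112}$, the $112$ lines of the Fermat quartic. Running the criterion for $d=2,3,\dots,7$ then yields the asserted data: no candidate survives for $d=2,3,5,6$, giving $\CCC_d=\emptyset$ there, while the surviving classes number $18144$ for $d=4$ and $2177280$ for $d=7$.

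Third, for the orbit statements I would use the explicit matrix action of $\thePGU$ on $\NS{3}=\gen{\LLL_{112}}$, already available from Section~\ref{subsec:AutX3}. Since the criterion is $\thePGU$-equivariant (the group preserves $h_3$ and hence each $\CCC_d$), the set $\CCC_d$ is a union of $\thePGU$-orbits. Transitivity on $\CCC_1$ is classical, and for $\CCC_4$ I would verify transitivity by fixing one class $c\in\CCC_4$, computing its stabilizer in $\thePGU$ by orbit--stabilizer, and checking that the orbit length equals $18144$. For $\CCC_7$ the same computation produces an orbit of length $1632960$; a second representative then produces an orbit of length $544320$, and $1632960+544320=2177280$ accounts for all of $\CCC_7$, so there are exactly two orbits. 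To certify that these really are distinct orbits, I would exhibit a $\thePGU$-invariant separating them --- for instance the multiset of intersection numbers $\intf{c,\ell}$ with the $112$ lines $\ell\in\LLL_{112}$, which is constant on orbits since $\thePGU$ permutes $\LLL_{112}$ and fixes $\CCC_7$ setwise.

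The main obstacle is the sheer scale of the case $d=7$: one is enumerating short vectors of norm $-2-49/4$ in a rank-$22$ lattice and then organizing over two million curve classes under a group of order $13063680$. A naive orbit enumeration is infeasible, so the computation must proceed by the orbit--stabilizer method on single representatives together with a cheaply computable separating invariant, exactly as implemented in~\cite{thecompdata} using~\cite{GAP}; the finiteness and the inductive criterion established above are what make this a terminating, verifiable calculation rather than an open-ended search.
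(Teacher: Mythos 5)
Your proposal is correct and follows essentially the same route as the paper: the paper's proof is exactly the inductive, computer-aided enumeration described in the remark preceding the statement (starting from $\CCC_1=\LLL_{112}$ and testing each root $r$ with $\intf{h_3,r}=d$ against the classes in $\CCC_{d\sprime}$, $d\sprime<d$), followed by a direct computation of the $\thePGU$-orbits. Your additional details --- the reduction to a short-vector search in $\gen{h_3}\sperp$, Riemann--Roch for effectivity, and the orbit--stabilizer bookkeeping with a separating invariant --- are just an explicit implementation of that same computation.
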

%
%
%1 112 
%2 0 
%3 0 
%4 18144 
%5 0 
%6 0 
%7 2177280 
%
Then we have the following:
\begin{itemize}
\item Among the $64$ walls in $O_{64}$,
$32$ walls are defined by   $(\pr_{30}(r))\sperp$ with $r\in \CCC_1$,
and the other $32$ walls are defined by   $(\pr_{30}(r))\sperp$ with  $r\in \CCC_4$.
\item Among the $160$ walls in $O_{160}$,
$40$ walls are defined by  $(\pr_{30}(r))\sperp$ with  $r\in \CCC_1$,
$80$ walls are defined by  $(\pr_{30}(r))\sperp$ with  $r\in \CCC_4$,
and  $40$ walls are defined by  $(\pr_{30}(r))\sperp$ with  $r\in \CCC_7$.
\end{itemize}
\end{remark}
Note that, if   $g\in \Aut(X_0)$ belongs to the image of $\tilde{\rho}|_{\Aut}$, then 
 $g$ preserves $\NN{30}\subset \NN{0}$.
Hence
the double-plane involutions   $\dppinvol(\dpp_{80})$ and $\dppinvol(\dpp_{296})$ 
corresponding to the orbits $O_{64}$ and $O_{160}$ are \emph{not} in the  image of $\tilde{\rho}|_{\Aut}$.
\begin{lemma}\label{lem:twolifts}
Let $O$ be either $O_{40}$ or $O_{320}$, 
and let $(v)\sperp\cap D_0$ be an element of $O$.
Let $ D_0\sprime$ be the  $\prV ( i_{0})$-chamber
adjacent to $ D_0$ across  $(v)\sperp\cap D_0$.
Then there exists an element $g\sprime$ of $\theOGpair\cap \Aut(X_3)$
such that $g\sprime|_{\NS{0}}$ maps $ D_0$ to $ D_0\sprime$.
\end{lemma}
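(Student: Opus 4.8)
The plan is to realize the target chamber $D_0\sprime$ as a face of a $\prV(i_3)$-chamber that lies inside $\NN{3}$, and then to transport $D_0$ onto $D_0\sprime$ by an element of $\Aut(X_3)$ furnished by Corollary~\ref{cor:backF}.

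First I would exploit the hypothesis $O\in\{O_{40}, O_{320}\}$ through Lemma~\ref{lem:wallNX3X0}: for exactly these orbits the inner-wall $(v)\sperp\cap D_0$ is \emph{not} a wall of the $\prV(\rho)$-chamber $\NN{30}=\NN{3}\cap\PP{0}$. Consequently crossing this wall does not leave $\NN{30}$, so $D_0\sprime\subset \NN{30}\subset \NN{3}$, and any interior point $x$ of $D_0\sprime$ (relative to $\PP{0}$) is an interior point of $\NN{30}$.

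The key step is to upgrade this to the statement that $x$ lies in the interior of $\NN{3}\subset\PP{3}$, i.e. that no root of $\NS{3}$ vanishes at $x$. For a root $r$ of $\NS{3}$ write $r=\pr_{30}(r)+r_Q$ with $r_Q$ in the orthogonal complement $Q$ of $\NS{0}$ in $\NS{3}$; since $x\in\PP{0}$ we have $\intf{x, r}=\intf{x, \pr_{30}(r)}$. If $\pr_{30}(r)$ has negative norm, then $(\pr_{30}(r))\sperp$ is a $\prV(\rho)$-wall and $\intf{x, \pr_{30}(r)}\neq 0$ because $x$ is interior to $\NN{30}$. Otherwise $\intf{\pr_{30}(r), \pr_{30}(r)}\ge 0$; here the Gram matrix $\mathrm{diag}(-12,-12)$ of $Q$ found in Lemma~\ref{lem:period} shows that $Q$ has no root, so $\pr_{30}(r)\neq 0$, and a signature argument in the hyperbolic space $\NS{0}\tensor\R$ forces $\intf{x, \pr_{30}(r)}\neq 0$ (a positive-norm vector is never orthogonal to a nonzero vector of non-negative norm). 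Hence $x$ is interior to $\NN{3}$, and—exactly as in the proof of Corollary~\ref{cor:facecodim2}\,(4)—the four $\prV(i_3)$-chambers containing $D_0\sprime$ all lie in $\NN{3}$.

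Finally, by the $\Aut(X_3)$-transitivity of the tessellation of $\NN{3}$ (Proposition~\ref{prop:2simple}\,(1)), one of these four chambers equals $D_3^{g}$ for some $g\in\Aut(X_3)$; since $\PP{0}$ meets a single $\prV(i_3)$-chamber in one $\prV(i_0)$-chamber (Corollary~\ref{cor:facecodim2}\,(2)), we get $\PP{0}\cap D_3^{g}=D_0\sprime$. Corollary~\ref{cor:backF} then produces $\gamma\in\thePGU$ with $g\sprime:=\gamma g\in\Aut(X_3)$ mapping the face $D_0$ of $D_3$ onto the face $D_0\sprime$ of $D_3^{g\sprime}$. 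As $g\sprime$ sends the full-dimensional cone $D_0\subset\PP{0}$ into $\PP{0}$, it preserves $\PP{0}$ and hence its linear span $\NS{0}\tensor\R$; by primitivity of $\rho$ this gives $\NS{0}^{g\sprime}=\NS{0}$, so $g\sprime\in\theOGpair\cap\Aut(X_3)$ with $g\sprime|_{\NS{0}}$ carrying $D_0$ to $D_0\sprime$, as required. I expect the main obstacle to be the middle step—promoting ``interior to $\NN{30}$'' to ``interior to $\NN{3}$''—which is exactly where the hypothesis $O\in\{O_{40},O_{320}\}$ and the rootlessness of $Q$ are indispensable.
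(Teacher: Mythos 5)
Your proof is correct, and its overall skeleton coincides with the paper's: both use Lemma~\ref{lem:wallNX3X0} as the place where the hypothesis $O\in\{O_{40},O_{320}\}$ enters, both realize $D_0\sprime$ as the trace on $\PP{0}$ of a $\prV(i_3)$-chamber lying inside $\NN{3}$, and both finish identically via the $\Aut(X_3)$-transitivity of Proposition~\ref{prop:2simple}\,(1), Corollary~\ref{cor:backF}, and the observation that an isometry carrying one full-dimensional subset of $\PP{0}$ to another must preserve $\NS{0}$ by primitivity of $\rho$. Where you diverge is the middle step. The paper works with the codimension-$3$ subspace $F=(v)\sperp\subset\PP{3}$ and a single chosen lift $D_3\sprime$: Lemma~\ref{lem:wallNX3X0} shows no root hyperplane of $\NS{3}$ contains $F$, and since $D_3$ and $D_3\sprime$ share the set $F\cap D_3=F\cap D_3\sprime$, which is open in $F$, they lie on the same side of every root hyperplane, whence $D_3\sprime\subset\NN{3}$. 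You instead first place $D_0\sprime$ inside $\NN{30}$ (the same inference ``not a wall $\Rightarrow$ crossing stays inside'' that the paper itself uses in step (IV) of Section~\ref{subsec:geomBorcherds}), and then show that interior points of $D_0\sprime$ avoid \emph{every} root hyperplane of $\NS{3}$ by splitting the roots according to the sign of $\intf{\pr_{30}(r),\pr_{30}(r)}$, handling the nonnegative case with the rootlessness of $Q\cong\mathrm{diag}(-12,-12)$ plus the hyperbolic signature argument. This is slightly longer but proves a slightly stronger intermediate statement (the analogue of Corollary~\ref{cor:facecodim2}\,(3),(4) for $D_0\sprime$: all four lifts lie in $\NN{3}$), and it makes explicit the fact, stated without proof in the paper just before~\eqref{eq:DNN}, that the restriction to $\PP{0}$ of the $\Roots(\NS{3})$-arrangement is exactly the $\prV(\rho)$-arrangement; the paper's route is shorter because it needs only one lift and exploits the shared wall directly.
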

\begin{proof}
Let $F$ denote the  hyperplane  $(v)\sperp$ of $\PP{0}$
considered as a linear subspace of $\PP{3}$ of codimension $3$.
Let $ D_3\sprime$ be  one of the four $\prV( i_{3})$-chambers such that $D_0\sprime=\PP{0}\cap  D\sprime_3$.
(See~Corollary~\ref{cor:facecodim2}~(2).)
We have
$F\cap D_0=F\cap D_0\sprime=F\cap D_3=F\cap D_3\sprime$,
and this set contains a non-empty open subset of $F$.
Lemma~\ref{lem:wallNX3X0} implies that 
there exists no root $r$ of $\NS{3}$ such that
the hyperplane $(r)\sperp$ of $\PP{3}$ contains 
$F$.
Since $F\cap D_3=F\cap D_3\sprime$,
we see that $D_3$ and $D_3\sprime$ are on the same side of $(r)\sperp$
for any root $r$ of $\NS{3}$, and hence 
$ D_3\sprime$ 
is contained in $\NN{3}$.
Therefore 
we have an element $g\sprime$ of $\Aut(X_3)$
such that $ D_3^{g\sprime}= D_3\sprime$.
By Lemma~\ref{cor:backF},
there exists an element $\gamma$ of $\thePGU$ such that
$\gamma g\sprime$ maps the face $ D_0$ of  $ D_3$
to the face $ D_0\sprime$ of  $ D_3\sprime=D_3^{g\sprime}=D_3^{\gamma g\sprime}$.
Since each of  $ D_0$ and $ D_0\sprime$ contains 
a non-empty open subset of $\PP{0}$,
we see that $\gamma g\sprime\in \Aut(X_3)$ belongs to $\theOGpair$.
Then $\gamma g\sprime|_{\NS{0}}$ maps $D_0$ to $D_0\sprime$.
\end{proof}
Lemmas~\ref{lem:inj}~and~\ref{lem:twolifts} imply that 
  $\dppinvol(\dpp_{112})$ and $\dppinvol(\dpp_{688})$
are in the  image of $\tilde{\rho}|_{\Aut}$.
Let $G$ be the subgroup of $\Aut(X_0)$ generated by $\Aut(X_0, h_0)$ and   
$\dppinvol(\dpp_{112})$ and $\dppinvol(\dpp_{688})$.
Since $G$ is contained in the image of $\tilde{\rho}|_{\Aut}$,
 each $g\in G$ preserves $\NN{30}$.
\begin{lemma}
If a $\prV ( i_{0})$-chamber $D\sprime$ is contained in $\NN{30}$,
then there exists an element $g\in G$ such that $D\sprime=D_0^g$.
\end{lemma}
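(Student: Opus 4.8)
The plan is to establish the $G$-transitivity of the tessellation of $\NN{30}$ by $\prV(i_0)$-chambers by induction on chamber-distance from $D_0$, following the pattern of step~(V) in Section~\ref{subsec:geomBorcherds} but with $\NN{30}$, $G$ and $D_0$ in place of $\NN{0}$, $\Aut(X_0)$ and the full positive cone.

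First I would record that $\NN{30}$ is convex. Being a $\prV(\rho)$-chamber, it coincides with $\set{x\in \PP{0}}{\intf{x, v}\ge 0 \text{ for all } v \text{ defining its walls}}$, an intersection of the convex cone $\PP{0}$ with half-spaces through the origin. Hence, given a $\prV(i_0)$-chamber $D\sprime\subset\NN{30}$, I can join an interior point of $D_0$ to an interior point of $D\sprime$ by a segment lying in $\NN{30}$; since the family $\prV(i_0)$ is locally finite, a general-position segment meets only finitely many walls and yields a gallery $D_0=D^{(0)}, D^{(1)}, \dots, D^{(n)}=D\sprime$ of $\prV(i_0)$-chambers inside $\NN{30)}$ in which consecutive chambers share a wall. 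I take the distance of $D\sprime$ to be the least such $n$, and induct on it.

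The base case $n=0$ gives $D\sprime=D_0$, so $g=\mathrm{id}$ works. For $n>0$, let $D\spprime=D^{(n-1)}$ be the chamber adjacent to $D\sprime$ at distance $n-1$; by the inductive hypothesis there is $\phi\in G$ with $D\spprime=D_0^{\phi}$. Writing $D\sprime$ as the chamber adjacent to $D\spprime$ across a wall $(v)\sperp\cap D\spprime$ and transporting by $\phi\inv\in G$, I get that $D_0=(D\spprime)^{\phi\inv}$ is adjacent to $(D\sprime)^{\phi\inv}$ across the inner-wall $(v^{\phi\inv})\sperp\cap D_0$. Because every element of $G$ preserves $\NN{30}$ and both $D\sprime, D\spprime$ lie in $\NN{30}$, this transported wall is again an inner-wall of $\NN{30}$, i.e.\ the hyperplane $(v^{\phi\inv})\sperp$ does not define a wall of $\NN{30}$. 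The key input is now Lemma~\ref{lem:wallNX3X0}: the inner-walls of $D_0$ that are not walls of $\NN{30}$ are exactly those in $O_{40}$ or $O_{320}$, so $(v^{\phi\inv})\sperp\cap D_0$ lies in $O_{40}\cup O_{320}$. By Proposition~\ref{prop:X0gdpp} and the transitivity of $\Aut(X_0, h_0)$ on each orbit, a suitable $\Aut(X_0, h_0)$-conjugate of $\dppinvol(\dpp_{112})$ (if the wall lies in $O_{40}$) or of $\dppinvol(\dpp_{688})$ (if it lies in $O_{320}$) is an element $g_{\ast}\in G$ mapping $D_0$ to the $\prV(i_0)$-chamber adjacent to $D_0$ across this wall, namely $(D\sprime)^{\phi\inv}$. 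Thus $(D\sprime)^{\phi\inv}=D_0^{g_{\ast}}$, so $D\sprime=D_0^{g_{\ast}\phi}$ with $g_{\ast}\phi\in G$, completing the induction.

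The main obstacle is precisely the verification that every wall crossed while remaining inside $\NN{30}$ belongs, after transport into $D_0$, to one of the two \emph{crossable} orbits $O_{40}, O_{320}$; this is the content of Lemma~\ref{lem:wallNX3X0}, which separates the genuine inner-walls of $\NN{30}$ (crossed by $\dppinvol(\dpp_{112})$ and $\dppinvol(\dpp_{688})$) from the walls of orbits $O_{64}, O_{160}$ that lie on the boundary of $\NN{30}$ and along which one would leave $\NN{30}$. Once this is in hand, the remaining argument is the standard gallery-connectivity induction for a convex chamber, using only the fact (step~(IV) of Section~\ref{subsec:geomBorcherds}) that each orbit-representative inner-wall of $D_0$ is crossed by the corresponding double-plane involution together with its $\Aut(X_0, h_0)$-conjugates.
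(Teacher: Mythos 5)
Your proposal is correct and takes essentially the same route as the paper's proof: an induction along a gallery of $\prV(i_0)$-chambers inside $\NN{30}$, transporting the last adjacency back to $D_0$ by the group element from the inductive hypothesis, invoking Lemma~\ref{lem:wallNX3X0} to conclude the crossed wall lies in $O_{40}\cup O_{320}$, and crossing it by an $\Aut(X_0,h_0)$-conjugate of $\dppinvol(\dpp_{112})$ or $\dppinvol(\dpp_{688})$, all of which lie in $G$. The only (minor) difference is that you justify the existence of the connecting gallery via convexity of $\NN{30}$ and a general-position segment, a step the paper asserts directly from the tessellation of $\NN{30}$ by $\prV(i_0)$-chambers.
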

\begin{proof}
Since $\NN{30}$ is tessellated by $\prV ( i_{0})$-chambers, 
there exists a sequence
$$
D\spar{0}= D_0, \; D\spar{1}, \;  \dots\; , \;  D\spar{N}= D\sprime
$$
of $\prV ( i_{0})$-chambers such that each $D\spar{\nu}$ is contained in $\NN{30}$
and that $D\spar{\nu}$ is adjacent to $D\spar{\nu-1}$ for $\nu=1, \dots, N$.
We prove the existence of $g\in G$ by induction on $N$.
The case $N=0$ is trivial.
Suppose that $N>0$, and let
$g\sprime\in  G$ be an element such that 
$ D_0^{g\sprime}=D\spar{N-1}$.
Note that 
 $g\sprime$ preserves $\NN{30}$.
The $\prV ( i_{0})$-chambers $ D_0$ and $ D^{\sprime g\sprimeinv}$ are adjacent,
and both are contained in $\NN{30}$.
Hence, by Lemma~\ref{lem:wallNX3X0},
the wall of $ D_0$ across which $D^{\sprime g\sprimeinv}$ is adjacent to $ D_0$
is either in $O_{40}$ or in $O_{320}$.
Therefore we have an element 
$g\spprime \in G$ (a conjugate of $\dppinvol(\dpp_{112})$ or $\dppinvol(\dpp_{688})$ by $\Aut(X_0, h_0)$)
such that $D^{\sprime g\sprimeinv}= D_0^{g\spprime}$.
Then $g\spprime g\sprime\in G$
maps $D_0$ to $D\sprime$.
\end{proof}
Let $g$ be an arbitrary element of the image of $\tilde{\rho}|_{\Aut}$.
Since $g$ preserves
$\NN{30}$, 
 there exists an element $g\sprime\in G$ such that $D_0^g=D_0^{g\sprime}$.
 Then $g\sprime g\inv \in \Aut(X_0, h_0)$, and hence $g\in G$.
Thus  the proof of Theorem~\ref{thm:main} is completed.
\qed
\subsection{Proof of Theorem~\ref{thm:mainR}}
By the commutativity of the diagram~\eqref{eq:triangle} and Theorem~\ref{thm:main},
it suffices to prove that the image of 
$\res_0\colon \Aut(\overline{\XXX/R})\to \Aut(X_0)$ contains $\Aut(X_0, h_0)$
and the double-plane involutions $\dppinvol(\dpp_{112})$ and $\dppinvol(\dpp_{688})$.
Let $\pi\colon \XXX\to \Spec R$ be the elliptic modular surface of level $4$
over a discrete valuation ring $R$ of mixed characteristic 
with residue field $k$ of characteristic $3$.
Let $K$ be the fraction field of $R$. 
We put $X_K:=\XXX\tensor_R K$ and $X_{k}:=\XXX\tensor_R k$,
and identify $X_0$ with $X_K\tensor_K\bar{K}$  and $X_3$ with  $X\tensor_k{\bar{k}}$, 
where $\bar{K}$ and $\bar{k}$ are algebraic closures of $K$ and $k$, respectively.
\par
Replacing $R$ by a finite extension of $R$,
we can assume that $h_0$ is the class of a line bundle $L_K$ on $X_K$,
and that every element of $\Aut(X_0, h_0)$ is defined over $K$.
We can extend $L_K$ to a line bundle $\LLL$ on $\XXX$ 
by (21.6.11) of EGA, IV~\cite{EGAIV4}.
Then the class of the line bundle $L_{k}:=\LLL|X_{k}$ on $X_{k}$ 
is $\rho(h_0)\in \NS{3}$.
Hence $L_{k}$ is ample by Lemma~\ref{lem:facecodim2}.
Therefore $\LLL$ is ample relative to $\Spec R$ by  
(4.7.1) of EGA, III~\cite{EGAIII1}. 
We choose $n>0$ such that $\LLL^{\tensor n}$ is very ample relative to $\Spec R$,
embed 
$\XXX$ into  a projective space $\P^N_R$ over $\Spec R$  by $\LLL^{\tensor n}$, 
and regard $\Aut(X_0, h_0)$ 
as the group of projective automorphisms  of $X_K\subset \P^{N}_K$.
Since $X_3$ is not birationally ruled,
we can apply 
the theorem of Matsusaka-Mumford~\cite{MM1964} and conclude that 
every element of $\Aut(X_0, h_0)$ has a lift in $\Aut(\XXX/R)$.
\begin{remark}
The argument in the preceding paragraph is a special case of 
Theorem 2.1 of Lieblich and Maulik~\cite{LM2011}.
\end{remark}
Let $\dpp$ be either $\dpp_{112}$ or $\dpp_{688}$.
Replacing $R$ by a finite extension of $R$,
we can assume that $\dpp$ is the class of a line bundle $M_K$ on $X_K$,
and that each smooth rational curve contracted by $\Phi_{\dpp}\colon X_K\to \P^2_K$
is defined over $K$.
Let $\Sigma(\dpp)\subset \NS{0}$  
be the set of classes of smooth rational curves contracted by $\Phi_{\dpp}$.
We extend $M_K$ to a line bundle $\MMM$ on $\XXX$.
Then the class of the line bundle 
$M_{k}:=\MMM|X_{k}$ on $X_{k}$ is $\rho(b)\in \NS{3}$.
Using the algorithms in  Remark~\ref{rem:dppalgo}, 
we can verify   
that $\rho(\dpp)$ is a double-plane polarization of $X_3$,
and calculate 
the set $\Sigma(\rho(\dpp))\subset \NS{3}$
of classes of smooth rational curves 
contracted by $\Phi_{\rho(\dpp)}\colon X_k\to \P^2_k$.
Then we have the following equality:
\begin{equation}\label{eq:rhoSigma}
\Sigma(\rho(\dpp))=\rho(\Sigma(\dpp)).
\end{equation}
Since the complete linear systems  
$|M_K|$ and $|M_k|$ are of dimension $2$ and fixed-point free,
we see that $\pi_* \MMM$ is  free of rank $3$  over $R$
and defines  a morphism
$$
\wt{\Phi}\colon \XXX \to \P^2_R
$$
over $R$.
We execute,  over $R$,  Horikawa's canonical resolution for double coverings
branched along a curve with only $ADE$-singularities
(see Section 2 of~\cite{Horikawa1975}).
Let $C_{1, K}, \dots, C_{\mu, K}$ be the smooth rational curves on $X_K$ contracted by $\Phi_{\dpp}$,
where $\mu$ is the total Milnor number of the singularities of the branch curve of $\Phi_{b}$ 
(and hence  of $\Phi_{\rho(b)}$).
It follows from~\eqref{eq:rhoSigma} that 
the closure $\CCC_j$ of each $C_{j, K}$ in $\XXX$ is a smooth family of rational curves over $\Spec R$,
that $\wt{\Phi}$ contracts $\CCC_j$ to an $R$-valued point $q_{0j}$ of $\P^2_R$
(that is, a section of the structure morphism $\P^2_R\to \Spec R$),
and that $\wt{\Phi}$ is finite of degree $2$  over the complement of $\{q_{01}, \dots, q_{0\mu}\}$ in $ \P^2_R$.
We put $J_0:=\{1, \dots, \mu\}$, $P_0:= \P^2_R$, and let $\beta_0\colon P_0\to  \P^2_R$ be the identity.
Suppose that we have a morphism $\beta_i\colon  P_i\to  \P^2_R$  over $R$ from a smooth $R$-scheme $P_i$
and a subset $J_i\subset J_0$ such that
\begin{enumerate}[(i)]
\item $\wt{\Phi}$ factors as
$$
\XXX\maprightsp{\alpha_i}  P_i \maprightsp{\beta_i}   \P^2_R,
$$ 
\item
$\alpha_i$ contracts $\CCC_j$  to an $R$-valued point $q_{ij}$ of $P_i$ for each $j\in J_i$, and 
\item
$\alpha_i$ is finite of degree $2$  over the complement of $\shortset{q_{ij}}{j\in J_i}$ in $P_i$.
\end{enumerate}
Suppose that $J_i$ is non-empty.
We choose an index $j_0\in J_i$,
and let $\beta\sprime \colon P_{i+1}\to P_{i}$ be the blow-up at the $R$-valued point $q_{ij_0}$.
Let $\beta_{i+1}\colon P_{i+1}\to \P^2_R$ be the composite of $\beta\sprime$ and $\beta_i$.
Then properties (i)-(iii) are satisfied with $i$ replaced by $i+1$
for some $J_{i+1}\subset J_{i}$ with $J_{i+1}\neq J_{i}$.
Indeed, $\alpha_{i+1}$ induces a finite morphism from at least one of 
the $\CCC_{j}$ with $j\in J_{i}$
to the exceptional divisor of $\beta\sprime$.
Therefore, after finitely many steps, we obtain a finite double covering 
$\XXX\to P$ that factors $\wt{\Phi}$,
where $P$ is obtained from $\P^2_R$ by a finite number of blow-ups at $R$-valued points.
Then the deck-transformation of $\XXX\to P$ gives a lift of the double-plane involution $\dppinvol(\dpp)\in \Aut(X_0)$
to  $\Aut(\XXX/R)$.
\qed
\begin{remark}
The double-plane polarizations $\rho(\dpp_{112}), \rho(\dpp_{688})\in \NS{3}$ 
have the following properties with respect to $h_3$:
\begin{eqnarray*}
&&\intf{h_3, \rho(\dpp_{112})}=9,
\;\;
\intf{h_3, h_3^{\gdpp{\rho(\dpp_{112})}}}=34,
\\
&&\intf{h_3, \rho(\dpp_{688})}=19,
\;\;
\intf{h_3, h_3^{\gdpp{\rho(\dpp_{688})}}}=178.
\end{eqnarray*}
\end{remark}
\section{Enriques surface of type $\IV$}\label{sec:Enriques}
Let $Z$ be a $K3$ surface 
defined over an algebraically closed field of characteristic $\ne 2$.
For an element $g\in \OG^+(\NS{Z})$ of order $2$, we put
$$
\NS{Z}^{+g}:=\set{v\in \NS{Z}}{v^{g}=v},
\quad
\NS{Z}^{-g}:=\set{v\in \NS{Z}}{v^{g}=-v}.
$$
Suppose that $\vare\colon Z\to Z$ is an Enriques  involution,
and let $\pi\colon Z\to Y:=Z/\gen{\vare}$ be the quotient morphism.
Note that the lattice $\NS{Y}$ of numerical equivalence classes of divisors 
on the Enriques surface $Y$ 
is an even unimodular hyperbolic lattice  of rank $10$,
which is unique up to isomorphism.
Then the pull-back homomorphism $\pi^*\colon \NS{Y}\inj \NS{Z}$ induces
an isometry of lattices from $\NS{Y}(2)$ to
$\NS{Z}^{+\vare}$, where $\NS{Y}(2)$ is the lattice obtained from $\NS{Y}$ by multiplying the intersection form by $2$.
Hence the following are satisfied:
 (i) $\NS{Z}^{+\vare}$ is a hyperbolic lattice of rank $10$, 
and (ii) if $M$ is a Gram matrix of $\NS{Z}^{+\vare}$, then $(1/2)M$ is an integer matrix that defines an even unimodular lattice.
Moreover, since $\pi$ is \'etale,
we have that (iii) the orthogonal complement
$\NS{Z}^{-\vare}$
of $\NS{Z}^{+\vare}$ in $\NS{Z}$ contains no roots.
\subsection{Proof of  Proposition~\ref{prop:Enriques}}\label{subsec:Enriques0}
We check conditions (i), (ii), (iii)  for all involutions 
in the finite group $\Aut(X_0, h_0)$.
It turns out that there exist exactly $6$ involutions $\vare\spar{1}, \dots, \vare\spar{6}$ 
satisfying these conditions.
They are conjugate to each other,
and they belong to the subgroup $\Gal(\mu)$ of $\Aut(X_0, h_0)$ (see Proposition~\ref{prop:GalAutX0h0}).
We show that these involutions are Enriques involutions of type $\IV$.
\par
Let $\vare_0$ be one of  $\vare\spar{1}, \dots, \vare\spar{6}$.
Recall that  $\sigma\colon X_0\to \P^1$ is the Jacobian fibration defined by~\eqref{eq:theellfib4},
and let $f\in \NS{0}$ be the class of a fiber of $\sigma$.
Since $\vare_0\in \Gal(\mu)$,  we have
$\vare_0\in \Aut(X_0, f)$ by Remark~\ref{rem:5fs}.
Let $F_c\subset \LLL_{40}$ be the  set of classes of  irreducible components of 
the singular fiber $\sigma\inv (c)$ over $c\in \Cr(\sigma)$.
Looking at the action of $\vare_0$ on these $6$ quadrangles $F_{c}$,  we see that 
the element 
 $\bar{\vare}_0\in \Stab(\Cr(\sigma))$  
 defined by the diagram~\eqref{eq:gbarg}
 is of  order $2$ and fixes exactly $2$ points 
of $\Cr(\sigma)$.
Suppose that $F_{c}$ is  fixed by $\vare_0$.
Then $\vare_0$ acts on $F_{c}$ as 
$\ell_0\leftrightarrow \ell_2$ and $\ell_1\leftrightarrow \ell_3$,
where $\ell_0, \dots, \ell_3$ are labelled as in~\eqref{eq:alpha}.
Therefore $\vare_0$ is fixed-point free,
and $Y_0:=X_0/\gen{\vare_0}$ is an Enriques surface.
\par
The Enriques involution $\vare_0$ acts on $\LLL_{40}$ in such a way that,
for any curve $C\in \LLL_{40}$,
we have $C\cap C^{\vare_0}=\emptyset$.
Hence we obtain a configuration of $20$ smooth rational curves on $Y_0$.
It is easy to check that this configuration is isomorphic to
the configuration  of type $\IV$.
By Theorem 6.1~of~\cite{Martin}, 
we see that $Y_0$ is an Enriques surface of type $\IV$.
\qed
\par
\medskip
Using Proposition~\ref{prop:Galoct},
we can describe  the $6$ Enriques involutions $\vare\spar{\nu}$
in $\Gal(\mu)$ as follows.
\begin{proposition}\label{prop:tauijkenr}
The involution $\tau_J \in \Gal(\mu)$
is an Enriques involution if and only if
$|J|=3$ and $J$ contains $\{1, 5\}$ or $\{2,6\}$ or $\{3, 4\}$.
\qed
\end{proposition}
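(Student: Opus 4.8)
The plan is to combine the enumeration of Enriques involutions already carried out in Section~\ref{subsec:Enriques0} with the explicit coordinate description of the involutions $\tau_J$ from Section~\ref{subsec:anothermodel}. By Proposition~\ref{prop:Enriques} there are exactly $6$ Enriques involutions $\vare\spar{1}, \dots, \vare\spar{6}$ in $\Aut(X_0, h_0)$, and all of them lie in $\Gal(\mu)$; by Proposition~\ref{prop:Galoct} the group $\Gal(\mu)$ is precisely the set of the $32$ involutions $\tau_J$, subject to the identification $\tau_J = \tau_{J^c}$ with $J^c := \{1,\dots,6\}\setminus J$. Thus the task reduces to deciding which of these $32$ involutions are the $6$ Enriques ones.

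First I would settle the combinatorics. The three pairs $\{1,5\}, \{2,6\}, \{3,4\}$ are pairwise disjoint and partition $\{1,\dots,6\}$, so a subset $J$ with $|J|=3$ contains at most one of them, and there are exactly $3\times 4 = 12$ such subsets. Since $|J^c|=3$ whenever $|J|=3$, and a direct inspection shows that $J^c$ again contains one of the three pairs (for instance $\{1,2,5\}^c = \{3,4,6\}\supset\{3,4\}$), the complement map pairs these $12$ subsets into $6$ pairs $\{J, J^c\}$. Hence the condition in the statement singles out exactly $6$ \emph{distinct} involutions $\tau_J$, in agreement with the count in Proposition~\ref{prop:Enriques}.

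Second, because the action of each $\tau_J$ on $S_W$ is available as an explicit integer matrix (via the identification of $\Gal(\gamma_W)$ with $\Gal(\mu)$ in Proposition~\ref{prop:GalGal} and the data computed in Section~\ref{subsec:anothermodel}), I would transport it to $\NS{0}$ through an isomorphism $\LLL_W\cong\LLL_{40}$ and verify by direct calculation that each of these $6$ involutions satisfies the three lattice-theoretic conditions (i), (ii), (iii) recorded at the beginning of Section~\ref{sec:Enriques}: the invariant lattice $\NS{0}^{+\tau_J}$ is hyperbolic of rank $10$, its Gram matrix is twice an even unimodular lattice, and the anti-invariant lattice $\NS{0}^{-\tau_J}$ contains no roots. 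Since Section~\ref{subsec:Enriques0} establishes that these conditions single out precisely the Enriques involutions and that exactly $6$ involutions pass them, exhibiting $6$ that pass forces them to be all of them. This settles both implications at once, with no need to examine the remaining involutions separately.

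The main obstacle is the bookkeeping that links the combinatorial index $J$ to the lattice action: one must correctly push the coordinate involutions $\tilde\tau_J$ through the birational morphism $\Phi_8$ and the graph isomorphism $\LLL_W\cong\LLL_{40}$ of Proposition~\ref{prop:GalGal} to read off their matrices on $\NS{0}$, and then keep careful track of the identification $\tau_J = \tau_{J^c}$ so as not to overcount. It is worth noting geometrically that $\{1,5\}, \{2,6\}, \{3,4\}$ are the three pairs of opposite sides of the complete quadrangle of Figure~\ref{fig:completequadrangle}, which is what makes the answer natural to phrase in terms of these pairs.
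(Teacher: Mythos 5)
Your proof is correct and follows essentially the same route as the paper: the paper's (implicit, computational) proof likewise combines the enumeration of the six Enriques involutions via conditions (i)--(iii) in Section~\ref{subsec:Enriques0} with the identification of $\Gal(\mu)$ as the $32$ involutions $\tau_J$ from Proposition~\ref{prop:Galoct}, and then matches the two lists by explicit matrix computation. Your added bookkeeping --- the count $3\times 4=12$ of admissible index sets, their pairing under $J\mapsto J^c$ into the $6$ involutions, and the observation that $\{1,5\},\{2,6\},\{3,4\}$ are the pairs of opposite sides of the complete quadrangle --- is exactly the content the paper leaves implicit behind its \textit{``Using Proposition~\ref{prop:Galoct}, we can describe\dots''} remark.
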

\subsection{Proof of  Theorem~\ref{thm:main2}}\label{subsec:Enriques3}
Let $\vare_0\in \Aut(X_0, h_0)$ be the image of $\vare_3$ 
under  $\tilde{\rho}|_{\Aut}$,
which is one of $\vare\spar{1}, \dots, \vare\spar{6}$.
Since $\vare_0\in \Aut(X_0, h_0)$,
the involution $\vare_3$  preserves the face  $ D_0=\PP{0}\cap D_3$ of $ D_3$.
Therefore $\vare_3$  belongs to  the finite group $\Aut(X_3, D_0)$ defined by~\eqref{eq:AutX3D0}.
We check all involutions in $\Aut(X_3, D_0)$
and  find $\vare_3$ in the form of a matrix acting on $\NS{3}$.
We have $\intf{h_3, h_3^{\vare_3}}=16$.
Indeed,  the $\prV ( i_{3})$-chamber $ D_3^{\vare_3}$  is the chamber $ D_3^{\vare}$
in Figure~\ref{fig:fourDX3s}.
The  action of  $\vare_3$
on the fibers  of
the Jacobian fibration $\sigma\colon X_3\to \P^1$  defined by~\eqref{eq:theellfib4}
is exactly the same  as 
the action of $\vare_0$ on the fibers of the corresponding fibration of $X_0$.
Hence $\vare_3$ is fixed-point free.
Moreover the configuration on 
$Y_3:=X_3/\gen{\vare_3}$
of $20$ smooth rational curves obtained from $\LLL_{40}\subset \LLL_{112}$
is isomorphic to the configuration of type $\IV$, and hence 
$Y_3$ is of type $\IV$ by Theorem 6.1~of~\cite{Martin}.
The set of pull-backs of the smooth rational curves on $Y_3$ by $\pi_3$  
is $\LLL_{40}\subset \LLL_{112}$.
Hence they are lines on $F_3$.
\qed
%
%\begin{remark}\label{rem:X4422B}
%In the set of involutions of $X_3$ that map $D_3$ to $D_3^{\vare}$,
%there exist exactly $3$ involutions 
%that preserve (i) the subset $\LLL_{40}\subset \LLL_{112}$,
%(ii) the classes of a fiber and the zero section of $\sigma\colon X_3\to \P^1$,
%and (iii) exactly $2$ singular fibers of  $\sigma$
%component-wise,
%whereas exchange  the other $2+2$ singular fibers.
%These $3$ involutions are associated with the 
%double covering $X_3\to X_{[4,4,2,2]}$
%in Remark~\ref{rem:X4422A}.
%\end{remark}
%
%\begin{remark}
%In~\cite{KondoFinite},
%Kondo showed that
%the covering $K3$ surface of the complex Enriques surface of type III  is also isomorphic to $X_0$.
%It seems to be an  interesting problem to find 
%an Enriques involution on 
%the Fermat quartic surface $F_3$ that gives rise to the  Enriques surface of type III in characteristic $3$.
%\end{remark}
%
\begin{remark}
Recently,
we have shown in~\cite{SV2019} that $X_0$ has exactly 
$9$ Enriques involutions modulo conjugation in $\Aut(X_0)$,
and that $4$ of the quotient Enriques surfaces
have finite automorphism groups
(of type I, II, III, IV),
whereas the other $5$ have infinite automorphism groups.
\end{remark}
\bibliographystyle{plain}

\end{document}